\numberwithin{equation}{section}
\numberwithin{figure}{section}
\theoremstyle{plain}
\newtheorem{thm}{\protect\theoremname}[section]
\theoremstyle{plain}
\newtheorem{cor}[thm]{\protect\corollaryname}
\theoremstyle{plain}
\newtheorem{prop}[thm]{\protect\propositionname}
\theoremstyle{remark}
\newtheorem{rem}[thm]{\protect\remarkname}
\theoremstyle{plain}
\newtheorem{lem}[thm]{\protect\lemmaname}
\theoremstyle{definition}
\newtheorem{defn}[thm]{\protect\definitionname}
\theoremstyle{plain}
\newtheorem{fact}[thm]{\protect\factname}
\theoremstyle{definition}
\newtheorem{example}[thm]{\protect\examplename}
\journal{~}
\pgfplotsset{compat=1.15}
\renewcommand{\d}{\mathrm{d}}
\renewcommand{\rho}{\varrho}
\newcommand{\1}{\mathbbm{1}} 
\newcommand{\e}{\mathrm{e}} 
\DeclareMathOperator{\dom}{dom}
\DeclareMathOperator{\spann}{span}
\DeclareMathOperator{\supp}{supp}
\DeclareMathOperator{\card}{card}
\renewcommand{\emptyset}{\varnothing}
\renewcommand{\phi}{\varphi}
\def\N{\mathbb N}
\def\d{\,\mathrm d}
\def\R{\mathbb{R}}
\def\E{\mathcal{E}}
\providecommand{\corollaryname}{Corollary}
\providecommand{\definitionname}{Definition}
\providecommand{\examplename}{Example}
\providecommand{\factname}{Fact}
\providecommand{\lemmaname}{Lemma}
\providecommand{\propositionname}{Proposition}
\providecommand{\remarkname}{Remark}
\providecommand{\theoremname}{Theorem}
\begin{document}

\begin{frontmatter}{}

\title{Spectral dimensions of Kre\u{\i}n-Feller operators and $L^{q}$-spectra}

\author{Marc~Kesseböhmer\fnref{fn1}\corref{cor1}}

\ead{mhk@uni-bremen.de}

\author{Aljoscha~Niemann\corref{cor1}}

\ead{niemann1@uni-bremen.de}

\address{Fachbereich 3 -- Mathematik und Informatik, University of Bremen,
Bibliothekstr. 1, 28359 Bremen, Germany}

\fntext[fn1]{This research was supported by the DFG grant Ke 1440/3-1.}

\cortext[cor1]{Corresponding authors}
\begin{abstract}
We study the spectral dimensions and spectral asymptotics of Kre\u{\i}n-Feller
operators for arbitrary finite Borel measures on $\left(0,1\right).$
Connections between the spectral dimension, the $L^{q}$-spectrum,
the partition entropy and the optimized coarse multifractal dimension
are established. In particular, we show that the upper spectral dimension
always corresponds to the fixed point of the $L^{q}$-spectrum of
the corresponding measure. Natural bounds reveal intrinsic connections
to the Minkowski dimension of the support of the associated Borel
measure. Further, we give a sufficient condition on the $L^{q}$-spectrum
to guarantee the existence of the spectral dimension. As an application,
we confirm the existence of the spectral dimension of self-conformal
measures with or without overlap as well as of certain measures of
pure point type. We construct a simple example for which the spectral
dimension does not exist and determine explicitly its upper and lower
spectral dimension.
\end{abstract}
\begin{keyword}
Kre\u{\i}n-Feller operator \sep spectral asymptotics \sep $L^{q}$-spectrum
\sep Dirichlet forms \sep Minkowski dimension \sep coarse multifractal
formalism \MSC[2020] 35P20; 35J05; 28A80; 42B35; 45D05
\end{keyword}

\end{frontmatter}{}

\tableofcontents{}

\section{Introduction and statement of main results}

This article is devoted to the subtle connections between Kre\u{\i}n-Feller
operators, geometric measure theory, and concepts of fractality and
complexity and thus gives new partial answers to Mark Kac's famous
question\emph{ ``Can One Hear the Shape of a Drum?''}.

We investigate the spectral properties of the Kre\u{\i}n-Feller operator
$\Delta_{\nu,[a,b]}$ for a given finite Borel measure $\nu$ on the
interval $\left(a,b\right)$. The Kre\u{\i}n-Feller operator has
been introduced in \citep{Fe57,KK68,MR0042045}. We will elaborate
the abstract approach via the (Dirichlet) form $\E_{\nu,\left[a,b\right]}\left(f,g\right)\coloneqq\int_{a}^{b}\nabla_{\Lambda}f\nabla_{\Lambda}g\d\Lambda$
(with $\Lambda$ denoting the Lebesgue measure) densely defined on
$L_{\nu}^{2}=L_{\nu}^{2}\left(\left[a,b\right]\right)$ as proposed
e.~g\@. in \citep{MR2563669,MR2103098,MR2261337}. This approach
gives rise to an associated Laplace operator $\Delta_{\nu,\left[a,b\right]}$
and an orthonormal basis of eigenfunctions with non-negative eigenvalues
$\left(\lambda^{n}\left(\E_{\nu,\left[a,b\right]}\right)\right)_{n\in\N}$
tending to $\infty$. As a consequence of the Dirichlet/Neumann bracketing
(Prop. \ref{prop:D-NBracketing}) we see that the asymptotic behavior
of the eigenvalues does not depend on the chosen boundary condition
nor on the interval containing $\supp\nu$ and we state the corresponding
results neither referring to boundary conditions nor to the ambient
interval. We denote the number of eigenvalues of $\E_{\nu,\left[a,b\right]}$
not exceeding $x\geq0$ by $N_{\nu,[a,b]}\left(x\right)$ and refer
to $N_{\nu,[a,b]}$ as the \emph{eigenvalue counting function. }If
the ambient interval is clear from the context and $\supp\nu\subset\left[a,b\right]$,
we will also suppress the dependence on the interval, i.~e\@. we
write $\E_{\nu}=\E_{\nu,\left[a,b\right]}$, $N_{\nu}=N_{\nu,[a,b]}$,
$\Delta_{\nu}=\Delta_{\nu,\left[a,b\right]}$, $\lambda_{\nu}^{n}=\lambda^{n}\left(\E_{\nu,\left[a,b\right]}\right)$
etc. Using some invariance properties and bi-Lipschitz equivalence
we can assume without loss of generality that the ambient interval
is given by $\left[0,1\right]$ and we will assume this throughout
the paper. In this setup we give rigorous proofs for the sub- and
superadditivity of the eigenvalue counting function which is crucial
in our analysis. We define the upper and lower exponent of divergence
of $N_{\nu}$ by
\[
\underline{s}_{\nu}\coloneqq\liminf_{x\rightarrow\infty}\frac{\log\left(N_{\nu}(x)\right)}{\log(x)}\quad\text{and }\;\overline{s}_{\nu}\coloneqq\limsup_{x\to\infty}\frac{\log\left(N_{\nu}(x)\right)}{\log(x)}
\]
and refer to these numbers as the \emph{upper}, resp. \emph{lower},
\emph{spectral dimension} of $\E_{\nu}$, $\Delta_{\nu}$ or, just
$\nu$, respectively. If the two values coincide we denote the common
value by $s_{\nu}$ and call it the \emph{spectral dimension}. Note
that we always have $\overline{s}_{\nu}\leq1/2$ which has been shown
in \citep{MR0209733,MR0217487}. The case for measures with non-trivial
absolutely continuous part was completely solved in an elegant way
in \citep{MR0278126} using a variational approach. In this case,
for a Borel measure $\nu$ on the unit interval with absolutely continuous
non-vanishing part $\sigma\Lambda$ and singular part $\eta$, we
have 
\[
\lim_{x\to\infty}\frac{N_{\eta+\sigma\Lambda}(x)}{x^{1/2}}=\frac{1}{\pi}\int\sqrt{\sigma}\d\Lambda
\]
and particularly the spectral dimension exists and equals $1/2$.
Besides this estimate, many partial results have been obtained showing
that there is a subtle relation between spectral properties and geometric
data of $\nu$, which has been a major line of investigation since
the famous result by Hermann Weyl \citep{zbMATH02629881}. Weyl's
result includes the $d$-dimensional Lebesgue case with spectral dimension
$d/2$; for this reason, some authors prefer to set the spectral dimension
to twice the value defined here.

We will make use of the following notation. For any two functions
$f,g:\R_{\geq0}\rightarrow\R_{>0}$ we write $f\ll g$ if there exists
a positive constant $c$ such that $cf(x)\leq g(x)$ for all $x$
large; we write $f\asymp g$, if both $f\ll g$ and $g\ll f$ holds.
The asymptotic behavior of $N_{\nu}$ strongly depends on the measure
$\nu$. In general, one can not expect that $N_{\nu}$ obeys a power
law with a positive exponent $s>0$, i.~e\@. $N_{\nu}(x)\asymp x^{s}$
(for counter examples see e.~g\@. \citep{Arzt_diss} or Examples
\ref{NonExistenceSpectralDim} and \ref{exmpl:ExponentialDecay}).
However, if we restrict our attention to the spectral dimension this
connection becomes more feasible and surprisingly, we are able to
treat arbitrary Borel measures on $(0,1)$ and determine the upper
spectral dimension solely from the data provided by the measure-geometric
information carried by the $L^{q}$-spectrum of $\nu$. Under mild
regularity conditions on the measure we can guarantee the existence
of the spectral dimension. Also with the help of the $L^{q}$-spectrum
we are able to construct first examples for which the spectral dimension
does not exist. In this way we give a partial answer to Kac's question
in terms of the spectral dimension revealing how the measure theoretic
properties of $\nu$ and the topological properties of its ``fractal''
support are interwoven to determine the spectral dimension (see Theorem
\ref{Thm:MainChain_of_Inequalities+Regularity} and Corollary \ref{cor:spectralDimGeneralUpperBound}).
This striking connection will be the main line of investigation in
this paper.

Our approach lies in the construction of optimal partitions as introduced
in Section \ref{sec:OptimalPartitions} and to make use of the min-max
principle. This idea goes back to the fundamental works of \citep{MR0217487,MR0482138,MR0278126}
which deal with a systematic treatment of Sobolev embedding theorems
in tandem with variational principles. The following observation motivates
the definition of optimal $\nu$-partitions. Due to Proposition \ref{prop:D-NBracketing},
it is sufficient to consider $\Delta_{\nu}$ under Dirichlet boundary
conditions and we will therefore mainly restrict ourselves to this
case. The first key property to determine the spectral dimension is
the sub- and superadditivity of the eigenvalue counting function with
respect to interval partitions. Let $P$ be a $\nu$-\emph{partition}
of $\left[0,1\right]$, that is a partition with half open intervals
$\left\{ (c_{k},c_{k+1}]:k=1,\ldots,n-1\right\} $, $n\in\N_{\geq2}$,
such that $0=c_{1}<c_{2}\cdots<c_{n}=1$ and there are no $\nu$-atoms
on the endpoints $\nu\left(\left\{ c_{k}\right\} \right)=0$. Then
by Proposition \ref{prop:SubandSupperAdditivity}, we have
\[
\text{\ensuremath{\sum_{k=1}^{n}N_{\nu,[c_{k},c_{k+1}]}}}(x)\leq\text{\ensuremath{N_{\nu}}}(x)\leq\text{\ensuremath{\sum_{k=1}^{n}N_{\nu,[c_{k},c_{k+1}]}}}(x)+n-2.
\]
The second key observation is the Poincaré inequality, i.~e\@. the
first eigenvalue $\lambda_{\nu,[a,b]}^{1}$ of $\Delta_{\nu,[a,b]}$
can be bounded from below by $\left(\Lambda([a,b])\nu([a,b])\right)^{-1}$
(see Lemma \ref{lem:Poincare_inequality}). In particular, if $\Lambda([a,b])\nu([a,b])<1/x$,
then $N_{\nu,[a,b]}(x)=0.$ For a $\nu$-partition $P$ of $\left[0,1\right]$
such that $\Lambda(C)\nu(C)<1/x$ for all $C\in P$ and some $x>0$,
the superadditivity implies 
\[
\text{\ensuremath{N_{\nu}}}(x)\leq\card(P).
\]
Similar ideas will be employed to derive appropriate lower bounds.
These observations lead to the following definitions.\textbf{ }Let
us consider the set $\Pi$ of all finite $\nu$-partitions of $\left[0,1\right]$
and set
\[
\mathcal{N}^{R}\left(x\right)\coloneqq\inf\left\{ \card\left(P\right):P\in\Pi:\max_{C\in P}\nu\left(C\right)\Lambda\left(C\right)<1/x\right\} .
\]
The following quantities 
\[
\overline{h}_{\nu}\coloneqq\limsup_{x\to\infty}\frac{\log\mathcal{N}^{R}\left(x\right)}{\log x},\quad\underline{h}_{\nu}\coloneqq\liminf_{x\to\infty}\frac{\log\mathcal{N}^{R}\left(x\right)}{\log x},
\]
will be called the\emph{ upper, }resp.\emph{ lower, $\nu$-partition
entropy.} These quantities have interesting connections to the classical
work of Birman and Solomjak \citep{MR0209733,MR0217487,MR0281860}
as explained in Section \ref{subsec:First-observations}. Note that
ignoring intervals of measure zero in the definition of the $\nu$-partition
will not change the value of the\emph{ $\nu$-}partition\emph{ }entropy.

For effective lower bounds, we need the following variant of this
notion. Let $\Pi_{0}$ denote the set of finite disjoint collections
of subintervals of $I$ such that there are no atoms on the endpoints
of each interval, and for $m>1$, set
\[
\mathcal{N}_{m}^{L}\left(x\right)\coloneqq\sup\left\{ \card\left(P\right):P\in\Pi_{0}:\min_{C\in P}\nu\left(\left\langle I\right\rangle _{m}\right)\Lambda\left(\left\langle I\right\rangle _{m}\right)\geq\frac{4}{x(m-1)}\right\} ,
\]
where $\left\langle I\right\rangle _{m}\subset I$ denotes the interval
of length $\Lambda\left(I\right)/m$ centered in $I$. Then the \emph{lower
}and\emph{ upper $m$-reduced $\nu$-partition entropy} is given by
\[
\underline{h}_{\nu}^{m}\coloneqq\liminf_{x\to\infty}\frac{\log\mathcal{N}_{m}^{L}\left(x\right)}{\log x}\:\text{and }\overline{h}_{\nu}^{m}\coloneqq\limsup_{x\to\infty}\frac{\log\mathcal{N}_{m}^{L}\left(x\right)}{\log x}.
\]

The second important object depends on dyadic partitions of the unit
interval and gives rise to the notion of the \emph{$L^{q}$-spectrum}
of $\nu$ given by 
\[
\beta_{\nu}\left(q\right)\coloneqq\limsup_{n\rightarrow\infty}\beta_{n}^{\nu}\left(q\right)\quad\text{with\quad\ }\beta_{n}^{\nu}\left(q\right)\coloneqq\frac{1}{\log2^{n}}\log\sum_{C\in\mathcal{D}_{\nu,n}}\nu\left(C\right)^{q},
\]
where $\mathcal{D}_{\nu,n}\coloneqq\left\{ A_{n}^{k}:k\in\left\{ 1,\ldots,2^{n}\right\} ,\,\nu\left(A_{n}^{k}\right)>0\right\} $
and $A_{n}^{k}\coloneqq\left(\left(k-1\right)2^{-n},k2^{-n}\right]$
and $q\in\R$. Note that this notion has recently gained a lot of
attention in the context of fractal geometry in the work of \citep{MR3224722,MR3919361}
--~connections to our work on spectral dimensions will be provided
in the forthcoming paper \citep{KN21}. Each $\beta_{n}^{\nu}$ is
differentiable and convex with $\beta_{n}^{\nu}\left(1\right)=0$
and there exists a unique fixed point $q_{n}$, i.~e\@. $\beta_{n}^{\nu}\left(q_{n}\right)=q_{n}$.
Also $\beta_{\nu}\left(0\right)\eqqcolon\delta_{\nu}^{*}=\overline{\dim}_{M}\left(\supp\nu\right),$
where $\overline{\dim}_{M}\left(A\right)$ denotes the upper Minkowski
dimension of $A\subset\mathbb{R}$. As a shorthand notation we will
also write $\overline{\dim}_{M}\left(\nu\right)$ instead of $\overline{\dim}_{M}\left(\supp\nu\right)$.
It will turn out (see Theorem \ref{Thm:MainChain_of_Inequalities+Regularity})
that 
\[
\overline{q}_{\nu}\coloneqq\limsup q_{n}
\]
completely determines the upper spectral dimension. We also need the
following left and right derivatives of $\beta_{\nu}$ in $1$: 
\[
\overline{\delta}_{\nu}\coloneqq\lim_{t\nearrow1}\beta_{\nu}\left(t\right)/\left(1-t\right)\;\text{and }\;\underline{\delta}_{\nu}\coloneqq\lim_{t\searrow1}\beta_{\nu}\left(t\right)/\left(1-t\right).
\]
Conceptually, for the lower bound we borrow ideas from the \emph{coarse
multifractal analysis} (see \citep{MR3236784,MR1312056}). For all
$n\in\N$, we define for $\alpha>0$
\[
\overline{F}_{\nu}\left(\alpha\right)\coloneqq\limsup_{n}\frac{\log^{+}\left(\mathcal{N}_{\alpha}\left(n\right)\right)}{\log2^{n}}\,\,\text{and }\underline{F}_{\nu}\left(\alpha\right)\coloneqq\liminf_{n}\frac{\log^{+}\left(\mathcal{N}_{\alpha}\left(n\right)\right)}{\log2^{n}},
\]
where
\[
\mathcal{N}_{\alpha}\left(n\right)\coloneqq\card M_{n}\left(\alpha\right),\quad M_{n}\left(\alpha\right)\coloneqq\left\{ 0\leq k<2^{n}:\nu\left(A_{k}^{n}\right)\geq2^{-\alpha n}\right\} 
\]
and refer to the quantities 
\[
\overline{F}_{\nu}\coloneqq\sup_{\alpha>0}\frac{\overline{F}_{\nu}\left(\alpha\right)}{\left(1+\alpha\right)}\:\text{and \,\,\ensuremath{\underline{F}_{\nu}\coloneqq\sup_{\alpha>0}\frac{\underline{F}_{\nu}\left(\alpha\right)}{\left(1+\alpha\right)}}}
\]
as the \emph{upper}, resp. \emph{lower, optimized coarse multifractal
dimension}. We shall see later that the highly relevant connections
between the optimal and the dyadic partition approach are strongly
linked by ideas from the theory of large deviations.

The following list of results gives the main achievements of this
paper. The proofs are postponed to Sections \ref{sec:OptimalPartitions}
and \ref{sec:-spectrum-and-coarse}.
\begin{thm}
\label{Thm:MainChain_of_Inequalities+Regularity} For all $1<m\leq3$,
we have
\begin{equation}
\underline{F}_{\nu}\leq\underline{h}_{\nu}^{m}\leq\underline{s}_{\nu}\leq\underline{h}_{\nu}\leq\overline{h}_{\nu}=\overline{h}_{\nu}^{m}=\overline{s}_{\nu}=\overline{q}_{\nu}=\overline{F}_{\nu}.\label{eq:MainInequalities}
\end{equation}
In particular, $\overline{q}_{\nu}\leq1/2$ and the following necessary
and sufficient conditions hold for the spectral dimension to exist:
\[
\underline{s}_{\nu}=\overline{s}_{\nu}\implies\underline{h}_{\nu}=\overline{h}_{\nu}=s_{\nu}\;\quad\text{and }\quad\:\sup_{m>1}\underline{h}_{\nu}^{m}=\overline{h}_{\nu}\implies\underline{s}_{\nu}=\overline{s}_{\nu}=\overline{h}_{\nu}.
\]
We would like to emphasize that if the spectral dimension exists,
then it is given by purely measure-geometric data, which is encoded
in the $\nu$-partition entropy $\overline{h}_{\nu}=\underline{h}_{\nu}$.
We call $\nu$ \emph{regular}, if $\sup_{m>1}\underline{h}_{\nu}^{m}=\overline{h}_{\nu}$,
in which case the spectral dimension exists. If $\nu$ is \emph{MF-regular,}
that is $\underline{F}_{\nu}=\overline{F}_{\nu}$, then in the above
chain of inequalities \eqref{eq:MainInequalities} we have everywhere
equality and especially $\nu$ is\emph{ regular.} Moreover, if for
some $m>1$ we have $\underline{h}_{\nu}^{m}\geq1/2$, then $s_{\nu}=1/2=\overline{h}_{\nu}$.
\end{thm}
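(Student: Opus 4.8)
The plan is to establish every equality in \eqref{eq:MainInequalities} as a single \emph{cycle} of inequalities, run in parallel with the linear chain of ``$\liminf$''-inequalities, from five building blocks — two of a spectral and three of a measure-geometric nature. Throughout I would work with Dirichlet boundary conditions (legitimate by Proposition~\ref{prop:D-NBracketing}), dispose of $\nu$-null intervals, and translate the dyadic grid by an arbitrarily small amount to keep the countably many atoms off the endpoints; these are standard technicalities affecting none of the entropies, and I may also assume $\nu$ is not a single Dirac mass — for which every quantity in \eqref{eq:MainInequalities} is $0$ — so that $\#\mathcal{D}_{\nu,n}\ge2$ and $q_n\in(0,1)$ for all large $n$. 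The first spectral block is $N_{\nu}(x)\le\mathcal{N}^{R}(x)$ for all $x$: if $P$ is a $\nu$-partition with $\max_{C\in P}\nu(C)\Lambda(C)<1/x$, the Poincaré inequality (Lemma~\ref{lem:Poincare_inequality}) forces $N_{\nu,C}(x)=0$ for each $C\in P$, and superadditivity (Proposition~\ref{prop:SubandSupperAdditivity}) gives $N_{\nu}(x)\le\card(P)-1$; taking the infimum over admissible $P$ (finite, as fine dyadic partitions qualify) and then $\liminf$/$\limsup$ of $\log(\cdot)/\log x$ yields $\underline{s}_{\nu}\le\underline{h}_{\nu}$ and $\overline{s}_{\nu}\le\overline{h}_{\nu}$. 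The second spectral block is $N_{\nu}(x)\ge\mathcal{N}_{m}^{L}(x)$ for every $m>1$: to each $C$ in an admissible $P\in\Pi_{0}$ attach the \emph{tent} $f_{C}$ equal to $1$ on $\langle C\rangle_{m}$, supported in $C$, linear to $0$ on $\partial C$; then $\int|f_{C}'|^{2}\d\Lambda=4m/((m-1)\Lambda(C))$ while $\int f_{C}^{2}\d\nu\ge\nu(\langle C\rangle_{m})$, so the constraint $\nu(\langle C\rangle_{m})\Lambda(\langle C\rangle_{m})\ge4/(x(m-1))$ makes its Rayleigh quotient $\le x$. The $f_{C}$ have pairwise disjoint supports, hence span a $\card(P)$-dimensional subspace of the Dirichlet form domain with Rayleigh quotient $\le x$, and the min-max principle gives $\lambda_{\nu}^{\card(P)}\le x$, i.e.\ $N_{\nu}(x)\ge\card(P)$; passing to the supremum over $P$ then to $\liminf$/$\limsup$ yields $\underline{h}_{\nu}^{m}\le\underline{s}_{\nu}$ and $\overline{h}_{\nu}^{m}\le\overline{s}_{\nu}$.

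Two combinatorial blocks complete the easy part. For $\overline{F}_{\nu}\le\overline{h}_{\nu}^{m}$ and $\underline{F}_{\nu}\le\underline{h}_{\nu}^{m}$: given $\alpha>0$ and $n$, keep an $m$-dependent positive fraction of the cells in $M_{n}(\alpha)$, enclose each in an interval $I$ of length slightly above $m2^{-n}$ positioned so that the cell sits inside $\langle I\rangle_{m}$, and translate all of them by a common infinitesimal amount to avoid atoms on the endpoints, so that the $I$'s become pairwise disjoint; with $x_{n}:=4\cdot2^{(1+\alpha)n}/(m-1)$ one has $\nu(\langle I\rangle_{m})\Lambda(\langle I\rangle_{m})\ge2^{-(1+\alpha)n}=4/(x_{n}(m-1))$, so $\mathcal{N}_{m}^{L}(x_{n})\gg_{m}\mathcal{N}_{\alpha}(n)$, and monotonicity of $\mathcal{N}_{m}^{L}$ together with $\log x_{n}=(1+\alpha)n\log2+O(1)$ gives the bounds after $\limsup_{n}$ (resp.\ $\liminf_{n}$) and $\sup_{\alpha>0}$. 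For $\overline{h}_{\nu}\le\overline{q}_{\nu}$: for each $x$, descend the dyadic tree from $[0,1]$ and stop at the first cell $C$ with $\nu(C)\Lambda(C)<1/x$ — this halts by level $\log_{2}x+O(1)$ and the stopping cells form an admissible $\nu$-partition, so $\mathcal{N}^{R}(x)$ is at most their number. A level-$j$ stopping cell has a parent of $\nu$-mass $\ge2^{j-1}/x$, so the number of level-$j$ stopping cells is at most twice the number of level-$(j-1)$ cells of $\nu$-mass $\ge2^{j-1}/x$, which by Chebyshev's inequality is $\le(2^{j-1}/x)^{-q}2^{(j-1)\beta_{j-1}^{\nu}(q)}$ for every $q\ge0$ and, choosing $q=q_{j-1}$ and using $\beta_{j-1}^{\nu}(q_{j-1})=q_{j-1}$, collapses to $x^{q_{j-1}}$. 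Summing over the $O(\log x)$ levels above a cutoff $J_{0}$ (the first $J_{0}$ levels contribute $O(2^{J_{0}})$) gives $\mathcal{N}^{R}(x)\le O(2^{J_{0}})+O(\log x)\,x^{\sup_{j\ge J_{0}}q_{j}}$, so $\overline{h}_{\nu}\le\sup_{j\ge J_{0}}q_{j}$ for every $J_{0}$, whence $\overline{h}_{\nu}\le\overline{q}_{\nu}$.

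The one substantial step is $\overline{q}_{\nu}\le\overline{F}_{\nu}$. Splitting $\mathcal{D}_{\nu,n}$ into dyadic \emph{mass-shells} $\{C:2^{-(j+1)}\le\nu(C)<2^{-j}\}$, whose cardinalities are dominated by the coarse counts $\mathcal{N}_{(j+1)/n}(n)$, a large-deviation estimate — whose delicate point is uniformity over the continuum of Hölder exponents — yields the \emph{Legendre-type bound} $\beta_{\nu}(q)\le\sup_{\alpha>0}(\overline{F}_{\nu}(\alpha)-\alpha q)$ for all $q\ge0$. Granting it, suppose $\overline{q}_{\nu}>\overline{F}_{\nu}$ and fix $q^{*}\in(\overline{F}_{\nu},\overline{q}_{\nu})\subset(0,1)$; since $q_{n}>q^{*}$ for infinitely many $n$, and for those $n$ the convex function $q\mapsto\beta_{n}^{\nu}(q)-q$ vanishes at $q_{n}\in(q^{*},1)$ and equals $-1$ at $1$, convexity forces $\beta_{n}^{\nu}(q^{*})>q^{*}$, so $\beta_{\nu}(q^{*})\ge q^{*}$; the Legendre-type bound then produces $\alpha>0$ with $\overline{F}_{\nu}(\alpha)/(1+\alpha)\ge q^{*}>\overline{F}_{\nu}=\sup_{\alpha>0}\overline{F}_{\nu}(\alpha)/(1+\alpha)$, a contradiction.

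The blocks assemble into the cycle $\overline{h}_{\nu}\le\overline{q}_{\nu}\le\overline{F}_{\nu}\le\overline{h}_{\nu}^{m}\le\overline{s}_{\nu}\le\overline{h}_{\nu}$, forcing equality throughout for every $1<m\le3$; combined with $\underline{F}_{\nu}\le\underline{h}_{\nu}^{m}\le\underline{s}_{\nu}\le\underline{h}_{\nu}\le\overline{h}_{\nu}$ (the first three blocks plus $\liminf\le\limsup$) this is precisely \eqref{eq:MainInequalities}, and $\overline{q}_{\nu}=\overline{s}_{\nu}\le1/2$ by the classical estimate of \citep{MR0209733,MR0217487}. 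The remaining assertions are bookkeeping along the chain: $\underline{s}_{\nu}=\overline{s}_{\nu}$ squeezes the lower run to equality, giving $\underline{h}_{\nu}=\overline{h}_{\nu}=s_{\nu}$; since $\underline{h}_{\nu}^{m}\le\underline{s}_{\nu}$ for every $m>1$, the hypothesis $\sup_{m>1}\underline{h}_{\nu}^{m}=\overline{h}_{\nu}=\overline{s}_{\nu}$ forces $\overline{s}_{\nu}\le\underline{s}_{\nu}$, so $s_{\nu}$ exists and equals $\overline{h}_{\nu}$; MF-regularity collapses all of \eqref{eq:MainInequalities}, in particular making $\underline{h}_{\nu}^{m}=\overline{h}_{\nu}$ for $1<m\le3$, i.e.\ $\nu$ regular; and $\underline{h}_{\nu}^{m}\ge1/2$ for some $m>1$ gives $1/2\le\underline{h}_{\nu}^{m}\le\underline{s}_{\nu}\le\overline{s}_{\nu}\le1/2$, whence $s_{\nu}=1/2=\overline{h}_{\nu}$. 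I expect the main obstacle to be the Legendre-type bound underlying $\overline{q}_{\nu}\le\overline{F}_{\nu}$ — turning the heuristic that $\beta_{\nu}$ is the Legendre transform of the coarse multifractal spectrum into a rigorous statement, uniform across dyadic scales and Hölder exponents — while the remaining steps are routine once the tent test functions and the dyadic stopping-time partition are in place.
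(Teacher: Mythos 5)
Your proposal is correct, and for the spectral and combinatorial blocks it is essentially the paper's own argument: the Poincar\'e-plus-subadditivity bound $N_{\nu}\le\mathcal{N}^{R}$, the tent functions (exactly the test functions of Lemma~\ref{lem:UpperBoundForDirichlet}; note they lie in $H_{0}^{1}$ rather than $\dom(\E_{\nu})$, so you should invoke the extended min--max principle of Proposition~\ref{prop:dom_vs_H01 Minmax}), and the ``keep a positive fraction and pad'' construction reproduce Propositions~\ref{prop:estimateOfSpectralDim_N^L_m_N^R} and \ref{prop:lower_Bound_Multifractal_F_=00005Cnu} (Lemma~\ref{lem:CoarseEstimate}). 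Where you genuinely diverge is in the two $L^{q}$ links. For $\overline{h}_{\nu}\le\overline{q}_{\nu}$ the paper (Proposition~\ref{prop:UpperBoundFixpoint}) uses the same dyadic stopping partition but concludes via the summability characterization of $\overline{q}_{\nu}$ from Fact~\ref{fact:CriticalExponentBETA}; your level-by-level Chebyshev count with the finite-scale fixed points $q_{j}$, collapsing each level to $x^{q_{j}}$, is a more elementary, self-contained route to the same inequality. For the crux $\overline{q}_{\nu}\le\overline{F}_{\nu}$ the paper proves a lower large-deviation bound for tilted measures (Lemma~\ref{lem:exponential decay-1}, Proposition~\ref{prop:GeneralBound.}) and then, in Proposition~\ref{prop:FixpointLowerBound}, extracts by Arzel\`a--Ascoli a uniformly convergent subsequence $\beta_{n_{k}}\to\beta^{\sharp}$ and works at differentiability points near $\overline{q}_{\nu}$; you instead use only the ``easy direction'' of Legendre duality, $\beta_{\nu}(q)\le\sup_{\alpha>0}\bigl(\overline{F}_{\nu}(\alpha)-\alpha q\bigr)$, plus convexity of $\beta_{n}-\mathrm{id}$ at the fixed points. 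That bound, which you only grant, is in fact true and your shell decomposition does prove it with no large-deviation input: it is a union bound, and the uniformity you flag is settled by monotonicity of $\alpha\mapsto\mathcal{N}_{\alpha}(n)$, passing to a subsequence along which the optimizing exponent converges, and an $\epsilon$ of room (the shells with $j\gtrsim n/q$ contribute a bounded tail), so your route closes the cycle with lighter machinery than the paper's. What the paper's stronger tilted-measure estimate buys is the quantitative lower bound of Proposition~\ref{prop:GeneralBound.}, which also underlies Proposition~\ref{prop:loverbound_by_diff_in_1}, Theorem~\ref{thm:LqRegularImpliesRegular} and Corollary~\ref{cor:q-1 implies s0=00003D1/2}; your one-sided inequality cannot deliver those, but for Theorem~\ref{Thm:MainChain_of_Inequalities+Regularity} itself it suffices. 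The remaining bookkeeping ($\overline{q}_{\nu}\le1/2$ via $\overline{q}_{\nu}=\overline{s}_{\nu}$ and the classical bound, and the regularity implications) matches the paper.
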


The following theorem shows that the $L^{q}$-spectrum is a valuable
auxiliary concept to determine the spectral behavior of a given measure
$\nu$. We call the measure $\nu$ \emph{$L^{q}$-regular}, if $\beta_{\nu}\left(q\right)=\liminf_{n}\beta_{n}^{\nu}\left(q\right)$
for $q\in(\overline{q}_{\nu}-\varepsilon,\overline{q}_{\nu})$, for
some $\varepsilon>0$, or $\beta_{\nu}\left(\overline{q}_{\nu}\right)=\liminf_{n}\beta_{n}^{\nu}\left(\overline{q}_{\nu}\right)$
and $\beta_{\nu}$ is differentiable in $\overline{q}_{\nu}$.
\begin{thm}
\label{thm:LqRegularImpliesRegular}If $\nu$ is $L^{q}$-regular,
then it is MF-regular.
\end{thm}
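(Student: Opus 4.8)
The plan is to establish the only nontrivial inequality, $\underline{F}_{\nu}\geq\overline{F}_{\nu}$: since Theorem \ref{Thm:MainChain_of_Inequalities+Regularity} gives $\overline{F}_{\nu}=\overline{q}_{\nu}$ and $\underline{F}_{\nu}\leq\overline{F}_{\nu}$ holds trivially, this yields $\underline{F}_{\nu}=\overline{F}_{\nu}$, i.e.\ MF-regularity. One may assume $\overline{q}_{\nu}>0$ (otherwise $\underline{F}_{\nu}=0=\overline{F}_{\nu}$) and, after normalising $\nu$ to a probability measure (which changes none of the limiting quantities), that $\beta_{n}^{\nu}\geq0$ on $(-\infty,1]$, so each $\beta_{n}^{\nu}$ is non-increasing on $[0,1]$. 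A short preliminary step, using this monotonicity and the convexity of $\beta_{\nu}=\limsup_{n}\beta_{n}^{\nu}$ to compare the fixed points $q_{n}$ of $\beta_{n}^{\nu}$ with that of $\beta_{\nu}$, shows that $\overline{q}_{\nu}$ is precisely the fixed point of $\beta_{\nu}$, i.e.\ $\beta_{\nu}(\overline{q}_{\nu})=\overline{q}_{\nu}$; convexity together with $\beta_{\nu}(1)=0<\overline{q}_{\nu}<1$ then forces $\beta_{\nu}'(\overline{q}_{\nu}^{-})<0$ and $\beta_{\nu}(q)>\overline{q}_{\nu}$, $\beta_{\nu}'(q^{-})<0$ for all $q<\overline{q}_{\nu}$.

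The core is a finite-level Legendre / large-deviation estimate relating the cell counts $\mathcal{N}_{\alpha}(n)$ to $\beta_{n}^{\nu}$. Grouping the cells $C\in\mathcal{D}_{\nu,n}$ according to the dyadic band containing $\nu(C)$ and writing $m_{j}$ for the number of $C$ with $\nu(C)\in(2^{-j},2^{-j+1}]$, one has $\sum_{C\in\mathcal{D}_{\nu,n}}\nu(C)^{q}\asymp\sum_{j}m_{j}2^{-jq}$ for $q\in(0,1]$, and for $q$ bounded away from $0$ only $O(n)$ bands contribute non-negligibly; hence, setting $\tau_{n}(j/n)\coloneqq(\log2^{n})^{-1}\log m_{j}$,
\[
\beta_{n}^{\nu}(q)=\max_{j}\left(\tau_{n}(j/n)-(j/n)q\right)+O(\log n/n)
\]
uniformly for $q$ in compact subsets of $(0,\infty)$. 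If $j_{n}(q)$ realises the maximum and $\alpha_{n}(q)\coloneqq j_{n}(q)/n$, then every cell in band $j_{n}(q)$ has $\nu(C)>2^{-\alpha_{n}(q)n}$, so $\mathcal{N}_{\alpha_{n}(q)}(n)\geq m_{j_{n}(q)}$, and rearranging the displayed identity gives
\[
\frac{\log^{+}\mathcal{N}_{\alpha_{n}(q)}(n)}{\log2^{n}}\geq\beta_{n}^{\nu}(q)+\alpha_{n}(q)\,q-o(1);
\]
reading the same representation as a supremum of affine functions also shows that $-\alpha_{n}(q)$ is an $o(1)$-approximate subgradient of $\beta_{n}^{\nu}$ at $q$, i.e.\ $\beta_{n}^{\nu}(q')\geq\beta_{n}^{\nu}(q)-\alpha_{n}(q)(q'-q)-o(1)$ for $q'$ near $q$.

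It remains to use $L^{q}$-regularity to force $\alpha_{n}(q)$ to converge and then to optimise in $q$. In the first case of $L^{q}$-regularity, fix a differentiability point $q$ of $\beta_{\nu}$ in the interval $(\overline{q}_{\nu}-\varepsilon,\overline{q}_{\nu})$ on which $\beta_{n}^{\nu}\to\beta_{\nu}$; by convexity this convergence is locally uniform, and evaluating the approximate-subgradient inequality at $q\pm\delta$, then letting $n\to\infty$ and $\delta\to0$, gives $\alpha_{n}(q)\to-\beta_{\nu}'(q)\eqqcolon\alpha^{*}(q)\in(0,\infty)$. For any $\epsilon>0$ one eventually has $\alpha_{n}(q)<\alpha^{*}(q)+\epsilon$, so $\mathcal{N}_{\alpha^{*}(q)+\epsilon}(n)\geq\mathcal{N}_{\alpha_{n}(q)}(n)$; taking $\liminf_{n}$ in the second display and using the two convergences yields $\underline{F}_{\nu}(\alpha^{*}(q)+\epsilon)\geq\beta_{\nu}(q)+\alpha^{*}(q)q$, hence $\underline{F}_{\nu}\geq(\beta_{\nu}(q)+\alpha^{*}(q)q)/(1+\alpha^{*}(q)+\epsilon)$, and $\epsilon\to0$ gives $\underline{F}_{\nu}\geq(\beta_{\nu}(q)+\alpha^{*}(q)q)/(1+\alpha^{*}(q))$. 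Finally let $q\nearrow\overline{q}_{\nu}$ through differentiability points: then $\beta_{\nu}(q)\to\overline{q}_{\nu}$ and $\alpha^{*}(q)\to-\beta_{\nu}'(\overline{q}_{\nu}^{-})\eqqcolon\alpha^{*}_{-}\in(0,\infty)$, and — precisely because $\beta_{\nu}(\overline{q}_{\nu})=\overline{q}_{\nu}$ — the bound tends to $(\overline{q}_{\nu}+\alpha^{*}_{-}\overline{q}_{\nu})/(1+\alpha^{*}_{-})=\overline{q}_{\nu}$, so $\underline{F}_{\nu}\geq\overline{q}_{\nu}$. In the second case of $L^{q}$-regularity one argues directly at $q=\overline{q}_{\nu}$: the one-point convergence $\beta_{n}^{\nu}(\overline{q}_{\nu})\to\beta_{\nu}(\overline{q}_{\nu})=\overline{q}_{\nu}$, differentiability of $\beta_{\nu}$ at $\overline{q}_{\nu}$, and the always-valid uniform bound $\beta_{n}^{\nu}\leq\beta_{\nu}+o(1)$ (from convexity and $\beta_{\nu}=\limsup_{n}\beta_{n}^{\nu}$) force, via the approximate-subgradient inequality, $\alpha_{n}(\overline{q}_{\nu})\to-\beta_{\nu}'(\overline{q}_{\nu})\eqqcolon\alpha^{*}\in(0,\infty)$, whence $\underline{F}_{\nu}(\alpha^{*}+\epsilon)\geq\overline{q}_{\nu}(1+\alpha^{*})$ for all $\epsilon>0$ and $\underline{F}_{\nu}\geq\overline{q}_{\nu}$ as before.

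The main obstacle will be making the Legendre identification quantitatively honest — controlling the $O(\log n/n)$ discretisation errors uniformly in $q$, keeping the optimal band index $\alpha_{n}(q)$ in a fixed compact set, and, above all, upgrading mere boundedness of $\alpha_{n}(q)$ to convergence. This last point is exactly where $L^{q}$-regularity is indispensable: either through locally uniform convergence of the convex functions $\beta_{n}^{\nu}$ on an interval and the ensuing convergence of their subdifferentials at differentiability points of the limit, or through the one-point-convergence-plus-differentiability pinching at $\overline{q}_{\nu}$. A pleasant feature is that the final passage $q\nearrow\overline{q}_{\nu}$ is automatically driven to the value $\overline{q}_{\nu}$ by the fixed-point identity $\beta_{\nu}(\overline{q}_{\nu})=\overline{q}_{\nu}$, so in the first case no regularity of $\beta_{\nu}$ at $\overline{q}_{\nu}$ itself is required.
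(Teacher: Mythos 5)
Your proposal is correct, and it reaches the decisive inequality $\underline{F}_{\nu}\geq\overline{q}_{\nu}$ (which, together with $\overline{F}_{\nu}=\overline{q}_{\nu}$, gives MF-regularity exactly as in the paper) by a genuinely different engine. The paper's proof rests on Proposition \ref{prop:GeneralBound.}, whose core is probabilistic: at each level one tilts the counting problem by the measure $\mu_{k}(\{\ell\})=\nu(A_{n_{k}}^{\ell})^{q}2^{-n_{k}\beta_{n_{k}}(q)}$ and applies the Chernoff-type estimate of Lemma \ref{lem:exponential decay-1} to show that the cells with $\nu(A_{n_{k}}^{\ell})\in(2^{-tn_{k}},2^{-sn_{k}})$ carry almost all tilted mass whenever $(s,t)$ contains $-\partial\beta_{\nu}(q)$, yielding $\liminf_{k}\log\mathcal{N}_{t}(n_{k})/\log2^{n_{k}}\geq sq+\beta_{\nu}(q)$. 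You replace this by a deterministic band decomposition: grouping the cells of $\mathcal{D}_{\nu,n}$ according to $\nu(C)\in(2^{-j},2^{-j+1}]$, writing $m_{j}$ for their number and $\tau_{n}(j/n)=\log m_{j}/\log2^{n}$, you get the finite-level Legendre identity $\beta_{n}^{\nu}(q)=\max_{j}\left(\tau_{n}(j/n)-(j/n)q\right)+O(\log n/n)$, and the maximizing band simultaneously furnishes the counting bound $\log\mathcal{N}_{\alpha_{n}(q)}(n)\geq\left(\beta_{n}^{\nu}(q)+\alpha_{n}(q)q\right)\log2^{n}-o(n)$ and the approximate-subgradient inequality; $L^{q}$-regularity then pins $\alpha_{n}(q)\to-\beta_{\nu}'(q)$, after which the passage through differentiability points $q\nearrow\overline{q}_{\nu}$, the left-continuity of left derivatives of convex functions, and the fixed-point identity $\beta_{\nu}(\overline{q}_{\nu})=\overline{q}_{\nu}$ coincide with the paper's final optimization. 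The technical debts you flag are genuine but benign: since the relevant $q$ stay in a compact subset of $(0,1)$ and one may normalize so that $\sum_{C}\nu(C)^{q}\geq1$, only bands $j\leq Cn$ contribute, which controls the $O(\log n/n)$ error and automatically keeps $\alpha_{n}(q)$ bounded; and in your second case the ``uniform'' bound $\beta_{n}^{\nu}\leq\beta_{\nu}+o(1)$ is only needed pointwise at $\overline{q}_{\nu}\pm\delta$, where it is just the definition of the limit superior. What the paper's formulation buys is a reusable, sharper statement (Proposition \ref{prop:GeneralBound.} works along subsequences and at non-differentiability points with the full subdifferential, and also feeds Proposition \ref{prop:loverbound_by_diff_in_1} and Proposition \ref{prop:FixpointLowerBound}); what yours buys is elementarity, dispensing with the tilted measures and the large-deviation lemma altogether.
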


This result is optimal in the sense that there is an easily accessible
example of a measure $\nu$ which is not $L^{q}$-regular and for
which $\overline{s}_{\nu}=\overline{h}_{\nu}>\underline{h}_{\nu}=\underline{s}_{\nu}$
and the spectral dimension does not exist (see Section \ref{NonExistenceSpectralDim}).
It should be noted that $L^{q}$ regularity is much easier accessible
than regularity or MF regularity and that, to the best of our knowledge,
all measures examined in the literature for which the spectral dimension
is known are $L^{q}$-regular.

As a first application of these considerations, we present the case
of measures $\nu$ with absolutely continuous part based solely on
the $L^{q}$-spectrum.
\begin{cor}
\label{cor:spec_absolutely_cont}If $\nu$ has an absolutely continuous
part with respect to $\Lambda$, then for $q\in[0,1]$, the $L^{q}$-spectrum
exists as a limit with 
\[
\beta_{\nu}(q)=1-q.
\]
 In particular, $\nu$ is $L^{q}$-regular and the spectral dimension
exists and equals $1/2$.
\end{cor}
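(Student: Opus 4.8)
The plan is to compute the $L^{q}$-spectrum directly from its definition, to show that the defining $\limsup$ is in fact a limit, and then to invoke Theorems~\ref{thm:LqRegularImpliesRegular} and~\ref{Thm:MainChain_of_Inequalities+Regularity}. Decompose $\nu=\sigma\Lambda+\eta$ with $\sigma\in L_{\Lambda}^{1}$, $\int\sigma\,\d\Lambda>0$ and $\eta\perp\Lambda$; since $\{\sigma>0\}$ has positive Lebesgue measure we may fix $\varepsilon>0$ with $\delta\coloneqq\Lambda(E)>0$ for $E\coloneqq\{\sigma\geq\varepsilon\}$. For the upper bound, fix $q\in(0,1)$ and use the concavity of $t\mapsto t^{q}$ (applied to the uniform average over $k\in\{1,\dots,2^{n}\}$) to get $\sum_{C\in\mathcal{D}_{\nu,n}}\nu(C)^{q}=\sum_{k=1}^{2^{n}}\nu(A_{n}^{k})^{q}\leq 2^{n}\bigl(2^{-n}\nu([0,1])\bigr)^{q}=2^{n(1-q)}\nu([0,1])^{q}$, hence $\beta_{\nu}(q)\leq 1-q$; this also holds trivially for $q\in\{0,1\}$.

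For the matching lower bound one exploits absolute continuity to spread the mass. Put $a_{k}\coloneqq\Lambda(E\cap A_{n}^{k})$, so that $0\leq a_{k}\leq 2^{-n}$, $\sum_{k=1}^{2^{n}}a_{k}=\delta$, and $\nu(A_{n}^{k})\geq\int_{A_{n}^{k}}\sigma\,\d\Lambda\geq\varepsilon a_{k}$. Since $q<1$, the function $t\mapsto t^{q-1}$ is non-increasing, so $a_{k}^{q}\geq 2^{n(1-q)}a_{k}$ for every $k$; summing over the (finitely many) $k$ with $a_{k}>0$, each of which has $A_{n}^{k}\in\mathcal{D}_{\nu,n}$, gives $\sum_{C\in\mathcal{D}_{\nu,n}}\nu(C)^{q}\geq\varepsilon^{q}\sum_{k=1}^{2^{n}}a_{k}^{q}\geq\varepsilon^{q}\delta\,2^{n(1-q)}$, so $\beta_{n}^{\nu}(q)\geq(1-q)+\tfrac{\log(\varepsilon^{q}\delta)}{n\log 2}\to 1-q$. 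The same counting with $q=0$ gives $\card\mathcal{D}_{\nu,n}\geq\delta 2^{n}$, hence $\beta_{n}^{\nu}(0)\to 1$; and $\beta_{n}^{\nu}(1)\to 0$ trivially. Combining the bounds, $\lim_{n}\beta_{n}^{\nu}(q)=\beta_{\nu}(q)=1-q$ for every $q\in[0,1]$.

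It remains to harvest the consequences. Since $\beta_{n}^{\nu}(q)\to 1-q$ pointwise on $[0,1]$, given $\theta\in(0,1/2)$ the continuous function $q\mapsto\beta_{n}^{\nu}(q)-q$ is, for all large $n$, positive at $1/2-\theta$ and negative at $1/2+\theta$, so its unique zero $q_{n}$ lies in $(1/2-\theta,1/2+\theta)$; hence $q_{n}\to 1/2$ and $\overline{q}_{\nu}=\limsup_{n}q_{n}=1/2$. Because $\beta_{\nu}(q)=\liminf_{n}\beta_{n}^{\nu}(q)$ holds for every $q\in(\overline{q}_{\nu}-\tfrac12,\overline{q}_{\nu})=(0,\tfrac12)$, the measure $\nu$ is $L^{q}$-regular. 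Theorem~\ref{thm:LqRegularImpliesRegular} then yields MF-regularity, and Theorem~\ref{Thm:MainChain_of_Inequalities+Regularity} yields regularity (so the spectral dimension exists) together with equality throughout~\eqref{eq:MainInequalities}, whence $s_{\nu}=\overline{q}_{\nu}=1/2$.

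The only genuinely delicate point is the lower bound: absolute continuity has to be used quantitatively, and the elementary estimate $a_{k}^{q}\geq 2^{n(1-q)}a_{k}$ valid on $[0,2^{-n}]$ is exactly what converts the ``uniformly spread'' mass identity $\sum_{k}a_{k}=\delta$ into the growth rate $2^{n(1-q)}$. The Jensen-type upper bound, the convergence $q_{n}\to 1/2$, and the invocation of the two theorems are all routine.
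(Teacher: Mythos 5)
Your proof is correct, and it follows the same overall strategy as the paper — show that $\beta_{n}^{\nu}(q)\to 1-q$ on $[0,1]$ and then harvest the general machinery — but the execution differs at three points. For the lower bound the paper applies Hölder's inequality to the density $\rho$ of the absolutely continuous part, getting $\sum_{C}\bigl(\int_{C}\rho\,\d\Lambda\bigr)^{q}\geq 2^{n(1-q)}\int\rho^{q}\,\d\Lambda$, and it first treats a purely absolutely continuous measure, passing to $\nu$ itself via Fact \ref{fact:MGF_for_sum} (the $L^{q}$-spectrum of a sum is the maximum, with existence of the limit inherited); you instead truncate the density on $E=\{\sigma\geq\varepsilon\}$ and use the elementary bound $a_{k}^{q}\geq 2^{n(1-q)}a_{k}$ for $a_{k}\in[0,2^{-n}]$, working with $\nu\geq\sigma\Lambda$ directly, which makes both Hölder and Fact \ref{fact:MGF_for_sum} unnecessary — a slightly more elementary route to the same estimate. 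For the ``in particular'' part the paper simply quotes Corollary \ref{cor:q-1 implies s0=00003D1/2}(2), whereas you re-derive $\overline{q}_{\nu}=1/2$ from the convergence of the fixed points $q_{n}$ (this also follows at once from Fact \ref{fact:CriticalExponentBETA}, since $\beta_{\nu}(q)=1-q$ has fixed point $1/2$) and then pass through $L^{q}$-regularity, Theorem \ref{thm:LqRegularImpliesRegular} and Theorem \ref{Thm:MainChain_of_Inequalities+Regularity}; this is a bit longer but equally valid. All the individual steps — the Jensen-type upper bound, the counting argument at $q=0$, the vanishing correction $\log(\varepsilon^{q}\delta)/(n\log 2)$, and the localization of $q_{n}$ by the intermediate value theorem combined with uniqueness of the fixed point — check out.
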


Also for measures without an absolutely continuous part we have the
following rigidity result in terms of reaching the maximum possible
value $1/2$ of the spectral dimension.
\begin{cor}
\label{cor:q-1 implies s0=00003D1/2}The following rigidity result
holds:
\begin{enumerate}
\item If $\overline{s}_{\nu}=1/2$, then $\beta_{\nu}(q)=1-q$ for all $q\in[0,1]$.
\item If $\beta_{\nu}\left(q\right)=\lim\beta_{n}^{\nu}\left(q\right)=1-q$
for some $q\in\left(0,1\right)$, then $\beta_{\nu}(q)=1-q$ for all
$q\in[0,1]$ and $s_{\nu}=1/2$.
\end{enumerate}
\end{cor}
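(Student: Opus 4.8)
The plan is to exploit three facts about the functions $\beta_n^\nu$ and their $\limsup$, namely $\beta_\nu$. First, each $\beta_n^\nu$ is convex with $\beta_n^\nu(1)=0$ and a unique fixed point $q_n$, so $\beta_\nu$, being a pointwise $\limsup$ of convex functions, is itself convex and hence continuous on $[0,\infty)$. Second, the power-mean (Hölder) inequality gives $\sum_{C\in\mathcal{D}_{\nu,n}}\nu(C)^q\le\bigl(\card\mathcal{D}_{\nu,n}\bigr)^{1-q}\nu([0,1])^q$ for $q\in[0,1]$, so, using $\card\mathcal{D}_{\nu,n}\le 2^n$, we get $\beta_n^\nu(q)\le(1-q)+q\log\nu([0,1])/\log 2^n$; passing to the $\limsup$ yields the universal bound $\beta_\nu(q)\le(1-q)\delta_\nu^*\le 1-q$ on $[0,1]$. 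Third, Theorem \ref{Thm:MainChain_of_Inequalities+Regularity} gives $\overline{s}_\nu=\overline{q}_\nu=\limsup_n q_n$. All lower bounds below come from three-point convexity inequalities for $\beta_n^\nu$ combined with the normalisation $\beta_n^\nu(1)=0$.

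For item (1), assume $\overline{s}_\nu=1/2$, so there is a subsequence $(n_k)$ with $q_{n_k}\to 1/2$. Fix $q\in[0,1/2)$; for large $k$ we have $q<q_{n_k}<1$, and convexity of $\beta_{n_k}^\nu$ applied to the three points $q<q_{n_k}<1$, together with $\beta_{n_k}^\nu(q_{n_k})=q_{n_k}$ and $\beta_{n_k}^\nu(1)=0$, yields $\beta_{n_k}^\nu(q)\ge q_{n_k}(1-q)/(1-q_{n_k})$; letting $k\to\infty$ gives $\beta_\nu(q)\ge 1-q$, so with the universal bound $\beta_\nu(q)=1-q$ on $[0,1/2)$. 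Continuity forces $\beta_\nu(1/2)=1/2$, and for $q\in(1/2,1)$ convexity through two points $a<b<1/2$ (where $\beta_\nu$ already equals $q'\mapsto 1-q'$, so the chord has slope $-1$) gives $\beta_\nu(q)\ge 1-q$; with the universal bound and $\beta_\nu(1)=0$ this proves $\beta_\nu(q)=1-q$ on all of $[0,1]$.

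For item (2), I first upgrade the hypothesis to $\lim_n\beta_n^\nu(q)=1-q$ for every $q\in[0,1]$. For $q<q_0$, convexity through $q<q_0<1$ gives $\beta_n^\nu(q)\ge(1-q)\beta_n^\nu(q_0)/(1-q_0)\to 1-q$; choosing then $0<a<b<q_0$ (at which $\beta_n^\nu$ now converges to $1-a$ and $1-b$) and extending the chord through $a$ and $b$ past $b$ gives, for $q\in(q_0,1)$, $\liminf_n\beta_n^\nu(q)\ge 1-q$; combined with the universal bound this proves the claimed pointwise convergence, whence $\beta_\nu(q)=1-q$ on $[0,1]$, which is the first assertion. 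For the second, note that for large $n$ the map $q\mapsto\beta_n^\nu(q)-q$ is strictly decreasing with derivative at most $\log\nu([0,1])/\log 2^n-1=:-c_n$, where $c_n\to 1$; since this map vanishes at $q_n$ and $\beta_n^\nu(1/2)-1/2\to 0$, integrating the slope bound between $1/2$ and $q_n$ gives $|q_n-1/2|\le|\beta_n^\nu(1/2)-1/2|/c_n\to 0$. Hence $\overline{q}_\nu=1/2$, so $\overline{s}_\nu=1/2$ by Theorem \ref{Thm:MainChain_of_Inequalities+Regularity}. Finally, $\beta_n^\nu$ converges pointwise to the differentiable function $q\mapsto 1-q=\beta_\nu(q)$, so $\nu$ is $L^q$-regular; Theorem \ref{thm:LqRegularImpliesRegular} gives MF-regularity, and Theorem \ref{Thm:MainChain_of_Inequalities+Regularity} then forces equality throughout \eqref{eq:MainInequalities}, so $\underline{s}_\nu=\overline{s}_\nu=1/2$, i.e.\ $s_\nu=1/2$.

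I expect the main obstacle to be the last quantitative step of (2): converting the statement ``$\beta_\nu$ is the full line $1-q$'' into $q_n\to 1/2$. Pointwise convergence of $\beta_n^\nu$ at $1/2$ alone does not pin down the fixed point; one needs a one-sided slope bound on the map $q\mapsto\beta_n^\nu(q)-q$, and one must check that the easy upper slope bound is enough — which it is, because it controls $q_n$ precisely on the side where $q\mapsto\beta_n^\nu(q)-q$ is steep. The remaining ingredients are elementary three-point convexity estimates and the universal upper bound.
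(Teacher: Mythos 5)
Your proof is correct, and for item (1) it is essentially the paper's argument in a slightly different costume: the paper notes $1/2=\overline{s}_{\nu}\leq\overline{q}_{\nu}\leq1/2$ (Proposition \ref{prop:UpperBoundFixpoint}), uses Fact \ref{fact:CriticalExponentBETA} to see that $\overline{q}_{\nu}=1/2$ is a fixed point of $\beta_{\nu}$ itself, and then applies convexity of $\beta_{\nu}$ with $\beta_{\nu}(1)=0$, $\beta_{\nu}(0)\leq1$; you run the same convexity computation at level $n$ along a subsequence $q_{n_k}\to1/2$, which works but is a bit more laborious than quoting the fixed-point property of $\beta_{\nu}$ directly. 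For item (2) your route genuinely differs. The paper stays local: from $\beta_{\nu}=1-\mathrm{id}$ on $[0,1]$ it gets differentiability of $\beta_{\nu}$ at the particular point $q$ where the limit is assumed to exist, and applies Proposition \ref{prop:loverbound_by_diff_in_1} (i.e.\ the large-deviation bound of Proposition \ref{prop:GeneralBound.}) at that single point to get $\underline{s}_{\nu}\geq(q+1-q)/2=1/2$, finishing with $\overline{s}_{\nu}\leq1/2$. You instead first upgrade the hypothesis, via three-point convexity and the universal bound $\beta_{\nu}(q)\leq1-q$, to pointwise convergence $\beta_{n}^{\nu}(q)\to1-q$ on all of $[0,1]$, then verify $\overline{q}_{\nu}=1/2$, and conclude by the chain $L^{q}$-regular $\Rightarrow$ MF-regular $\Rightarrow$ equality in \eqref{eq:MainInequalities} (Theorems \ref{thm:LqRegularImpliesRegular} and \ref{Thm:MainChain_of_Inequalities+Regularity}). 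Since Theorem \ref{thm:LqRegularImpliesRegular} is itself proved with Proposition \ref{prop:GeneralBound.}, the two arguments share the same analytic core; yours is longer but buys the stronger intermediate statement that the full $L^{q}$-spectrum exists as a limit on $[0,1]$ under the hypothesis of (2). Two small economies you could make: the quantitative slope estimate showing $q_{n}\to1/2$ is unnecessary, because $\overline{q}_{\nu}=1/2$ follows at once from $\beta_{\nu}(q)=1-q$ and Fact \ref{fact:CriticalExponentBETA} (or simply from $\overline{q}_{\nu}\leq 1/2$ together with $\beta_{\nu}(\overline q_{\nu})=\overline q_{\nu}$); and the Hölder bound should be stated with $\card\mathcal{D}_{\nu,n}$ kept as is (giving $\beta_{\nu}(q)\leq(1-q)\delta_{\nu}^{*}$) rather than after replacing it by $2^{n}$, which only yields $\beta_{\nu}(q)\leq1-q$ — enough for your purposes, but not the sharper bound you quote.
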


In the following proposition we present lower bounds of the lower
spectral dimension in terms of the subdifferential $\partial\beta_{\nu}\left(q\right)\coloneqq\left\{ a\in\R:\forall t\in\R:\beta_{\nu}\left(t\right)\geq a\left(t-q\right)+\beta_{\nu}\left(q\right)\right\} $
of $\beta_{\nu}$ in $q$.
\begin{prop}
\label{prop:loverbound_by_diff_in_1}If for $q\in[0,1]$ we have $\beta_{\nu}(q)=\lim_{n}\beta_{n}^{\nu}\left(q\right)$
and $-\partial\beta_{\nu}(q)=[a,b]$, then 
\[
\frac{aq+\beta_{\nu}(q)}{1+b}\leq\underline{s}_{\nu}.
\]
\end{prop}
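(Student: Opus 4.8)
The strategy is to read the estimate off the chain of inequalities in Theorem~\ref{Thm:MainChain_of_Inequalities+Regularity}: since $\underline F_\nu\le\underline s_\nu$, it suffices to show $\underline F_\nu\ge(aq+\beta_\nu(q))/(1+b)$. Recall that $\underline F_\nu=\sup_{\alpha>0}\underline F_\nu(\alpha)/(1+\alpha)$ with $\underline F_\nu(\alpha)=\liminf_n\log^+(\mathcal N_\alpha(n))/\log2^n$ and $\mathcal N_\alpha(n)=\card\{0\le k<2^n:\nu(A_k^n)\ge2^{-\alpha n}\}$. We may assume $\nu$ is a probability measure (rescaling affects neither $\underline s_\nu$ nor $\beta_\nu$), so that $\beta_\nu(q)\ge0$ for $q\in[0,1]$. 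Since $\beta_\nu$ is convex (a $\limsup$ of the convex $\beta_n^\nu$) and $-\partial\beta_\nu(q)=[a,b]$ is a bounded interval, $q$ is an interior point of $\{\beta_\nu<\infty\}$, and $-b$, resp.\ $-a$, is the left, resp.\ right, derivative of $\beta_\nu$ at $q$; by hypothesis $\beta_n^\nu(q)\to\beta_\nu(q)$.

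Fix $\alpha<a$ and $\alpha'>b$. By convexity the secant slopes $(\beta_\nu(q)-\beta_\nu(r))/(r-q)$ increase to $a$ as $r\downarrow q$, and $(\beta_\nu(p)-\beta_\nu(q))/(q-p)$ decrease to $b$ as $p\uparrow q$; hence we may pick $p<q<r$ in the domain of $\beta_\nu$ (with $0\le p$ whenever $q>0$) and $\varepsilon>0$ so small that
\[
\alpha(r-q)+\beta_\nu(r)+\varepsilon<\beta_\nu(q)\qquad\text{and}\qquad\beta_\nu(p)-\alpha'(q-p)+\varepsilon<\beta_\nu(q).
\]
For all large $n$ we split $2^{n\beta_n^\nu(q)}=\sum_{C\in\mathcal D_{\nu,n}}\nu(C)^q$ according to the size of $\nu(C)$: the intervals with $\nu(C)>2^{-\alpha n}$, those with $2^{-\alpha' n}<\nu(C)\le2^{-\alpha n}$, and those with $\nu(C)\le2^{-\alpha' n}$. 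On the first class $\nu(C)^q\le2^{\alpha(r-q)n}\nu(C)^r$, so it contributes at most $2^{\alpha(r-q)n}2^{n\beta_n^\nu(r)}\le2^{n(\alpha(r-q)+\beta_\nu(r)+\varepsilon)}$ once $n$ is large, using $\beta_\nu(r)=\limsup_n\beta_n^\nu(r)$; symmetrically, the third class contributes at most $2^{n(\beta_\nu(p)-\alpha'(q-p)+\varepsilon)}$. By the choice of $p,r,\varepsilon$ and $\beta_n^\nu(q)\to\beta_\nu(q)$, both bounds are at most $\tfrac14\,2^{n\beta_n^\nu(q)}$ for $n$ large, so the middle class carries at least half of the sum; since each of its members satisfies $\nu(C)^q\le2^{-\alpha qn}$ and there are at most $\mathcal N_{\alpha'}(n)$ of them, we get $\mathcal N_{\alpha'}(n)\ge\tfrac12\,2^{n(\beta_n^\nu(q)+\alpha q)}$ for all large $n$.

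Taking $\liminf$ (again with $\beta_n^\nu(q)\to\beta_\nu(q)$) gives $\underline F_\nu(\alpha')\ge\beta_\nu(q)+\alpha q$, hence
\[
\underline s_\nu\ \ge\ \underline F_\nu\ \ge\ \frac{\underline F_\nu(\alpha')}{1+\alpha'}\ \ge\ \frac{\beta_\nu(q)+\alpha q}{1+\alpha'},
\]
and letting $\alpha\uparrow a$ and $\alpha'\downarrow b$ yields the claim. The delicate step is the three-way decomposition: the thresholds must be placed at exactly the exponents $a,b$ extracted from $-\partial\beta_\nu(q)$, so that the two Hölder-type interpolation estimates make the large- and small-mass blocks negligible while the surviving block is still controlled by $\mathcal N_{\alpha'}$ together with the factor $2^{-\alpha qn}$, which is what produces the summand $aq$ in the numerator; the remaining manipulations are routine convexity bookkeeping. (The hypothesis $\beta_\nu(q)=\lim_n\beta_n^\nu(q)$ is used exactly here, to keep $2^{n\beta_n^\nu(q)}$ from falling below the tails and to pass to the $\liminf$ above.)
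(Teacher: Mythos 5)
Your proof is correct, and in substance it follows the same route as the paper: both arguments reduce the claim to the coarse\-/multifractal bound $\underline{F}_{\nu}\geq\bigl(aq+\beta_{\nu}(q)\bigr)/(1+b)$ and then use $\underline{F}_{\nu}\leq\underline{h}_{\nu}^{m}\leq\underline{s}_{\nu}$ from Theorem \ref{Thm:MainChain_of_Inequalities+Regularity} (i.e.\ Proposition \ref{prop:lower_Bound_Multifractal_F_=00005Cnu}). The only real difference is packaging: the paper quotes Proposition \ref{prop:GeneralBound.}, whose proof tilts to the probability weights $\nu(A_{n}^{\ell})^{q}2^{-n\beta_{n}(q)}$ and invokes the Chernoff-type large-deviation Lemma \ref{lem:exponential decay-1} to show that cells with $\nu(A_{n}^{\ell})$ outside the band $\left(2^{-tn},2^{-sn}\right)$ carry exponentially small tilted mass; your three-way decomposition with the interpolation bounds $\nu(C)^{q}\leq2^{\alpha(r-q)n}\nu(C)^{r}$ and $\nu(C)^{q}\leq2^{-\alpha'(q-p)n}\nu(C)^{p}$ is exactly that Chebyshev/Chernoff estimate written out directly, with your secant-slope choice of $p<q<r$ playing the role of the subdifferential argument inside the lemma. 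What your version buys is self-containedness (no explicit large-deviation lemma and no subsequence formulation, which the paper only needs later for Proposition \ref{prop:FixpointLowerBound}); what it gives up is precisely that extra generality. Your bookkeeping of the hypotheses is accurate: boundedness of $\partial\beta_{\nu}(q)$ forces $q$ into the interior of the finiteness domain so that $p$ and $r$ exist, the $\limsup$ definition suffices for the upper bounds at $p$ and $r$, and the limit hypothesis is used only at $q$, exactly as in the paper.
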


\begin{rem}
In the case that $\beta_{\nu}(\overline{q}_{\nu})=\lim\beta_{n}^{\nu}\left(\overline{q}_{\nu}\right)$
and $\beta_{\nu}$ is differentiable in $\overline{q}_{\nu}$, we
infer $\overline{q}_{\nu}\leq\underline{s}_{\nu}$ and hence obtain
a direct proof of the regularity statement, namely, $\overline{q}_{\nu}=\underline{s}_{\nu}=\overline{s}_{\nu}$.
\end{rem}

As a corollary to Proposition \ref{prop:loverbound_by_diff_in_1}
and Theorem \ref{Thm:MainChain_of_Inequalities+Regularity} we improve
the known general upper bound of the spectral dimension of $1/2$
as obtained in \citep{MR0209733} in terms of the upper Minkowski
dimension. Furthermore, we obtain a general lower bound of $\underline{s}_{\nu}$
in terms of the left and right-hand derivative of $\beta_{\nu}$.
\begin{cor}
\label{cor:spectralDimGeneralUpperBound}For the upper and lower spectral
dimension we have the following general upper and lower bounds depending
on the topological support of $\nu$, namely $\delta_{\nu}^{*}=\overline{\dim}_{M}\left(\nu\right)$,
and right and left derivative of $\beta_{\nu}$ in 1:

\[
\frac{\underline{\delta}_{\nu}}{1+\overline{\delta}_{\nu}}\leq\underline{s}_{\nu}\leq\overline{s}_{\nu}\leq\frac{\delta_{\nu}^{*}}{1+\delta_{\nu}^{*}}\leq\frac{1}{2}
\]
and
\[
\overline{s}_{\nu}=\frac{\delta_{\nu}^{*}}{1+\delta_{\nu}^{*}}\iff\overline{\delta}_{\nu}=\delta_{\nu}^{*}.
\]
\end{cor}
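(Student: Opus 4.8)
The plan is to deduce everything from Theorem~\ref{Thm:MainChain_of_Inequalities+Regularity}, which already identifies $\overline{s}_{\nu}=\overline{q}_{\nu}=\limsup_{n}q_{n}$, together with Proposition~\ref{prop:loverbound_by_diff_in_1} and the convexity of the approximating functions $\beta_{n}^{\nu}$. For the upper bound, convexity of $\beta_{n}^{\nu}$ on $[0,1]$ with $\beta_{n}^{\nu}(1)=0$ gives the chord estimate $\beta_{n}^{\nu}(q)\le(1-q)\beta_{n}^{\nu}(0)$; evaluating at the fixed point $q_{n}=\beta_{n}^{\nu}(q_{n})$ yields $q_{n}\le\beta_{n}^{\nu}(0)/(1+\beta_{n}^{\nu}(0))$, and since $t\mapsto t/(1+t)$ is continuous and increasing while $\limsup_{n}\beta_{n}^{\nu}(0)=\beta_{\nu}(0)=\delta_{\nu}^{*}$, taking limits superior gives $\overline{s}_{\nu}=\overline{q}_{\nu}\le\delta_{\nu}^{*}/(1+\delta_{\nu}^{*})\le1/2$, the last step because $\delta_{\nu}^{*}=\overline{\dim}_{M}(\nu)\le1$. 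For the lower bound, apply Proposition~\ref{prop:loverbound_by_diff_in_1} at $q=1$: here $\beta_{\nu}(1)=\lim_{n}\beta_{n}^{\nu}(1)=0$, and since $\beta_{\nu}$ is convex (a pointwise limit superior of convex functions) with $\beta_{\nu}(1)=0$, its one-sided derivatives at $1$ are $-\overline{\delta}_{\nu}$ (from the left) and $-\underline{\delta}_{\nu}$ (from the right) by the very definitions of $\overline{\delta}_{\nu}$ and $\underline{\delta}_{\nu}$, so $-\partial\beta_{\nu}(1)=[\underline{\delta}_{\nu},\overline{\delta}_{\nu}]$. The Proposition then gives $\underline{\delta}_{\nu}/(1+\overline{\delta}_{\nu})\le\underline{s}_{\nu}$, while $\underline{s}_{\nu}\le\overline{s}_{\nu}$ is immediate.

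It remains to prove the equivalence, for which we may assume $\delta_{\nu}^{*}>0$ (if $\delta_{\nu}^{*}=0$ then both $\overline{s}_{\nu}$ and $\overline{\delta}_{\nu}$ equal $0$, so both sides hold). Assume first $\overline{\delta}_{\nu}=\delta_{\nu}^{*}$. Monotonicity of the difference quotients $t\mapsto\beta_{\nu}(t)/(1-t)$ of the convex function $\beta_{\nu}$ on $[0,1)$, combined with $\beta_{\nu}(0)=\delta_{\nu}^{*}$ and $\lim_{t\nearrow1}\beta_{\nu}(t)/(1-t)=\overline{\delta}_{\nu}=\delta_{\nu}^{*}$, forces $\beta_{\nu}(q)=(1-q)\delta_{\nu}^{*}$ for all $q\in[0,1]$; its unique fixed point is $q_{\infty}\coloneqq\delta_{\nu}^{*}/(1+\delta_{\nu}^{*})$, which satisfies $(1-q_{\infty})\delta_{\nu}^{*}=q_{\infty}$. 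To obtain $\limsup_{n}q_{n}\ge q_{\infty}$ (and hence equality with the upper bound above), suppose not and pick $\delta\in(0,q_{\infty}-\limsup_{n}q_{n})$ (this interval is nonempty, and since $0\le q_{n}<1$ we have $q_{\infty}-\delta\in(0,1)$); then $q_{n}<q_{\infty}-\delta$ for all large $n$, and since $q\mapsto\beta_{n}^{\nu}(q)-q$ is convex, vanishes at $q_{n}$, and equals $-1$ at $1$, it is strictly negative on $(q_{n},1)\ni q_{\infty}-\delta$. Hence $\beta_{n}^{\nu}(q_{\infty}-\delta)<q_{\infty}-\delta$ eventually, so $\beta_{\nu}(q_{\infty}-\delta)\le q_{\infty}-\delta$; but $\beta_{\nu}(q_{\infty}-\delta)=(1-q_{\infty}+\delta)\delta_{\nu}^{*}=q_{\infty}+\delta\delta_{\nu}^{*}>q_{\infty}-\delta$, a contradiction. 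Thus $\overline{s}_{\nu}=\overline{q}_{\nu}=q_{\infty}=\delta_{\nu}^{*}/(1+\delta_{\nu}^{*})$.

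Conversely, assume $\overline{s}_{\nu}=\delta_{\nu}^{*}/(1+\delta_{\nu}^{*})$ and choose a subsequence $(n_{j})$ with $q_{n_{j}}\to\overline{q}_{\nu}\eqqcolon\overline{q}$; then $\overline{q}>0$ and $\overline{q}/(1-\overline{q})=\delta_{\nu}^{*}$. From $q_{n_{j}}/(1-q_{n_{j}})\le\beta_{n_{j}}^{\nu}(0)\le\sup_{m\ge n_{j}}\beta_{m}^{\nu}(0)\to\delta_{\nu}^{*}$ we get $\beta_{n_{j}}^{\nu}(0)\to\delta_{\nu}^{*}$. Now fix $q\in(\overline{q},1)$; for $j$ large $q_{n_{j}}<q$, and the chord estimate for $\beta_{n_{j}}^{\nu}$ on $[0,q]$ evaluated at the interior point $q_{n_{j}}$, solved for $\beta_{n_{j}}^{\nu}(q)$, reads $\beta_{n_{j}}^{\nu}(q)\ge q-\bigl((q-q_{n_{j}})/q_{n_{j}}\bigr)\beta_{n_{j}}^{\nu}(0)$. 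Letting $j\to\infty$ and using the identity $q-\bigl((q-\overline{q})/\overline{q}\bigr)\delta_{\nu}^{*}=(1-q)\delta_{\nu}^{*}$ (a consequence of $\overline{q}/(1-\overline{q})=\delta_{\nu}^{*}$), we obtain $\beta_{\nu}(q)\ge(1-q)\delta_{\nu}^{*}$; as the reverse inequality is automatic from convexity of $\beta_{\nu}$ and its chord between $(0,\delta_{\nu}^{*})$ and $(1,0)$, we conclude $\beta_{\nu}(q)=(1-q)\delta_{\nu}^{*}$ on $(\overline{q},1)$, hence on $[\overline{q},1]$ by continuity, and finally on all of $[0,1]$ (a point $q_{0}<\overline{q}$ with $\beta_{\nu}(q_{0})<(1-q_{0})\delta_{\nu}^{*}$ would, by convexity, propagate the strict inequality to $(q_{0},1)$, contradicting the above, and is anyway incompatible with $\beta_{\nu}(0)=\delta_{\nu}^{*}$). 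Therefore $\overline{\delta}_{\nu}=\lim_{t\nearrow1}\beta_{\nu}(t)/(1-t)=\delta_{\nu}^{*}$.

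The crux is this last implication: one must pass to a subsequence realizing $\overline{q}_{\nu}$, observe that equality in the crude bound $q_{n}\le\beta_{n}^{\nu}(0)/(1+\beta_{n}^{\nu}(0))$ forces $\beta_{n_{j}}^{\nu}(0)\to\delta_{\nu}^{*}$ along it, and then upgrade this to the full linearity of $\beta_{\nu}$ on $[0,1]$ by a second, more careful use of convexity — all while keeping track of the asymmetry between the $\limsup$ defining $\beta_{\nu}$ and $\overline{q}_{\nu}$ and the pointwise estimates available only along the chosen subsequence.
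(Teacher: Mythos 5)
Correct, and essentially the paper's proof: your lower bound comes from Proposition \ref{prop:loverbound_by_diff_in_1} applied at $q=1$ (where $\beta_{\nu}$ exists as a limit and $-\partial\beta_{\nu}(1)=[\underline{\delta}_{\nu},\overline{\delta}_{\nu}]$), your upper bound from $\overline{s}_{\nu}=\overline{q}_{\nu}$ together with the chord bound $\beta_{\nu}(q)\leq\delta_{\nu}^{*}(1-q)$, and the equivalence from the observation that $\overline{\delta}_{\nu}=\delta_{\nu}^{*}$ is the same as linearity of $\beta_{\nu}$ on $[0,1]$. The only deviation is that where the paper invokes Fact \ref{fact:CriticalExponentBETA} (identifying $\overline{q}_{\nu}$ with the fixed point of $\beta_{\nu}$ when positive), you re-derive the needed comparisons directly from the finite-level fixed points $q_{n}$ via subsequence and chord estimates, which is sound, just more laborious.
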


\begin{rem}
It is worth pointing out that these bounds have been first observed
in the self-similar case under the open set condition in \citep[p. 245]{MR1328700}
(in this case the Minkowski dimension and the Hausdorff of $\supp\nu$
coincide as well as $\beta_{\nu}$ is differentiable in $1$ and $\beta'_{\nu}(1)$
coincides with the Hausdorff dimension of $\nu).$ Furthermore, note
that in the case that $\nu$ has an atomic part we always have $\underline{\delta}_{\nu}=0$
(see Fact \ref{fact: atomicPart}). Hence, the lower bound $\underline{\delta}_{\nu}/(1+\overline{\delta}_{\nu})$
is only meaningful in the case of atomless measures.
\end{rem}

Regarding Kac's question, if $\beta_{\nu}$ is differentiable in $1$,
then the spectral dimension is determined by fractal-geometric quantities
as follows 
\[
\dim_{H}\left(\nu\right)=\dim_{P}\left(\nu\right)\leq\frac{\underline{s}_{\nu}}{1-\underline{s}_{\nu}}\leq\frac{\overline{s}_{\nu}}{1-\overline{s}_{\nu}}\leq\overline{\dim}_{M}\left(\nu\right).
\]

Under the additionally regularity condition imposed on $\nu$, we
have the following rigidity result:
\begin{cor}
\label{cor:Strongly regular max dim}If $\nu$ is $L^{q}$-regular,
then we have
\begin{enumerate}
\item ${\displaystyle \underline{\delta}_{\nu}=\delta_{\nu}^{*}\iff s_{\nu}=\frac{\delta_{\nu}^{*}}{1+\delta_{\nu}^{*}}}$,
\item ${\displaystyle \overline{\dim}_{M}\left(\nu\right)=\dim_{H}\left(\nu\right)\implies s_{\nu}=\frac{\delta_{\nu}^{*}}{1+\delta_{\nu}^{*}}}$,
\item $\beta_{\nu}$ is differentiable in $1$ and $s_{\nu}=\frac{\delta_{\nu}^{*}}{1+\delta_{\nu}^{*}}$
implies 
\[
\overline{\dim}_{M}\left(\nu\right)=\dim_{H}\left(\nu\right)=\underline{\dim}_{H}\left(\nu\right)\coloneqq\inf\left\{ \dim_{H}\left(E\right):\nu\left(E\right)>0\right\} =\dim_{P}\left(\nu\right),
\]
where $\dim_{H}\left(E\right)$, $\dim_{P}\left(E\right)$ denotes
the Hausdorff dimension, packing dimension of $E$, respectively.
\end{enumerate}
\end{cor}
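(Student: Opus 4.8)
The three parts all run off the collapse of the inequality chain~\eqref{eq:MainInequalities}. Since $\nu$ is $L^{q}$-regular it is MF-regular by Theorem~\ref{thm:LqRegularImpliesRegular}, so Theorem~\ref{Thm:MainChain_of_Inequalities+Regularity} gives that $s_{\nu}$ exists with $s_{\nu}=\underline{s}_{\nu}=\overline{s}_{\nu}=\overline{q}_{\nu}$. I would first record the elementary facts the whole argument rests on: $\beta_{\nu}$ is convex (a $\limsup$ of the convex $\beta_{n}^{\nu}$) with $\beta_{\nu}(0)=\delta_{\nu}^{*}=\overline{\dim}_{M}(\nu)$ and $\beta_{\nu}(1)=0$, whence by monotonicity of the difference quotients of a convex function $\overline{\delta}_{\nu}=-\beta_{\nu}'(1^{-})$, $\underline{\delta}_{\nu}=-\beta_{\nu}'(1^{+})$ and the sandwich $\underline{\delta}_{\nu}\le\overline{\delta}_{\nu}\le\delta_{\nu}^{*}$. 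I would also use the standard mass-distribution relations $\underline{\delta}_{\nu}\le\underline{\dim}_{H}(\nu)\le\dim_{H}(\nu)\le\dim_{P}(\nu)\le\overline{\delta}_{\nu}$, the outer two obtained by integrating $\nu([x-r,x+r])^{q-1}$ against $\nu$ and comparing with $\sum_{C}\nu(C)^{q}$ for $q<1$ and for $q>1$, together with the two-sided estimate and the equivalence of Corollary~\ref{cor:spectralDimGeneralUpperBound} and the lower bound of Proposition~\ref{prop:loverbound_by_diff_in_1}.

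\emph{Part (1).} The implication $\underline{\delta}_{\nu}=\delta_{\nu}^{*}\Rightarrow s_{\nu}=\delta_{\nu}^{*}/(1+\delta_{\nu}^{*})$ is immediate: the sandwich then forces $\overline{\delta}_{\nu}=\delta_{\nu}^{*}$, so the lower and upper bounds $\underline{\delta}_{\nu}/(1+\overline{\delta}_{\nu})\le s_{\nu}\le\delta_{\nu}^{*}/(1+\delta_{\nu}^{*})$ of Corollary~\ref{cor:spectralDimGeneralUpperBound} coincide. For the converse, $s_{\nu}=\overline{s}_{\nu}=\delta_{\nu}^{*}/(1+\delta_{\nu}^{*})$ forces $\overline{\delta}_{\nu}=\delta_{\nu}^{*}$ by the equivalence in Corollary~\ref{cor:spectralDimGeneralUpperBound}, and then monotonicity of the difference quotients together with $\beta_{\nu}(0)=\delta_{\nu}^{*}$, $\beta_{\nu}(1)=0$ forces $\beta_{\nu}$ to be affine with slope $-\delta_{\nu}^{*}$ on $[0,1]$. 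The only remaining, and genuinely delicate, point is to exclude a downward kink of $\beta_{\nu}$ at $q=1$, i.e.\ to upgrade this to $\underline{\delta}_{\nu}=\delta_{\nu}^{*}$; this is precisely where $L^{q}$-regularity must be used. The plan is to feed the $L^{q}$-regularity (which makes $\beta_{\nu}$ a genuine limit of $\beta_{n}^{\nu}$ on a one-sided neighbourhood of $\overline{q}_{\nu}$, or a limit together with differentiability at $\overline{q}_{\nu}$) into Proposition~\ref{prop:loverbound_by_diff_in_1} and, using the affineness just obtained together with the collapse $\underline{s}_{\nu}=\overline{s}_{\nu}$, to show that the multifractal formalism holds at $q=1$, e.g.\ $\underline{\delta}_{\nu}=\underline{\dim}_{H}(\nu)$, which with the sandwich pins $\underline{\delta}_{\nu}=\delta_{\nu}^{*}$. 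Establishing this at $q=1$ is the main obstacle; it is also where the hypothesis cannot be dropped, cf.\ the non-$L^{q}$-regular example of Section~\ref{NonExistenceSpectralDim}.

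\emph{Parts (2) and (3).} For Part (2), the hypothesis $\overline{\dim}_{M}(\nu)=\dim_{H}(\nu)$ forces, through $\dim_{H}(\nu)\le\dim_{P}(\nu)\le\overline{\delta}_{\nu}\le\delta_{\nu}^{*}=\overline{\dim}_{M}(\nu)$, first $\overline{\delta}_{\nu}=\delta_{\nu}^{*}$ (hence the affineness of $\beta_{\nu}$ on $[0,1]$) and then, by the same $L^{q}$-regularity step as in Part (1), also $\underline{\delta}_{\nu}=\delta_{\nu}^{*}$, so Part (1) yields $s_{\nu}=\delta_{\nu}^{*}/(1+\delta_{\nu}^{*})$. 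For Part (3), differentiability of $\beta_{\nu}$ at $1$ gives $\underline{\delta}_{\nu}=\overline{\delta}_{\nu}=-\beta_{\nu}'(1)$, and $s_{\nu}=\delta_{\nu}^{*}/(1+\delta_{\nu}^{*})$ forces this common value to equal $\delta_{\nu}^{*}$ by Corollary~\ref{cor:spectralDimGeneralUpperBound} (so that here no extra $L^{q}$-regularity input is needed); substituting $\underline{\delta}_{\nu}=\overline{\delta}_{\nu}=\delta_{\nu}^{*}$ into $\underline{\delta}_{\nu}\le\underline{\dim}_{H}(\nu)\le\dim_{H}(\nu)\le\dim_{P}(\nu)\le\overline{\delta}_{\nu}$ and recalling $\overline{\dim}_{M}(\nu)=\delta_{\nu}^{*}$ squeezes all the quantities together, giving $\overline{\dim}_{M}(\nu)=\dim_{H}(\nu)=\underline{\dim}_{H}(\nu)=\dim_{P}(\nu)$.
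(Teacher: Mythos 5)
Your overall architecture (reduce to the collapse of \eqref{eq:MainInequalities} via Theorems \ref{thm:LqRegularImpliesRegular} and \ref{Thm:MainChain_of_Inequalities+Regularity}, then exploit Corollary \ref{cor:spectralDimGeneralUpperBound} together with the dimension chain $\overline{\delta}_{\nu}\geq\dim_{P}(\nu)\geq\dim_{H}(\nu)\geq\underline{\dim}_{H}(\nu)\geq\underline{\delta}_{\nu}$) is exactly the paper's: the paper proves the corollary by combining the existence of $s_{\nu}=\overline{s}_{\nu}=\overline{q}_{\nu}$ with the equivalence $\overline{s}_{\nu}=\delta_{\nu}^{*}/(1+\delta_{\nu}^{*})\iff\overline{\delta}_{\nu}=\delta_{\nu}^{*}$ and with that chain (quoted from the literature; your mass-distribution sketch of it is fine). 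Your forward direction of (1), your part (3), and the first half of your part (2) (forcing $\overline{\delta}_{\nu}=\delta_{\nu}^{*}$ from $\dim_{H}(\nu)=\overline{\dim}_{M}(\nu)$) all match the paper's argument.

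The problem is the backward direction of (1), where you only announce a ``plan'' to upgrade $\overline{\delta}_{\nu}=\delta_{\nu}^{*}$ to $\underline{\delta}_{\nu}=\delta_{\nu}^{*}$ via a multifractal-formalism statement at $q=1$; this step is never carried out, and part (2) as you wrote it is routed through it, so both inherit a genuine gap. Moreover the plan cannot be realized in this generality: $L^{q}$-regularity and $s_{\nu}=\delta_{\nu}^{*}/(1+\delta_{\nu}^{*})$ give no control of $\beta_{\nu}$ to the right of $1$. For instance $\nu=\Lambda+\delta_{1/2}$ is $L^{q}$-regular with $s_{\nu}=1/2=\delta_{\nu}^{*}/(1+\delta_{\nu}^{*})$ (Corollary \ref{cor:spec_absolutely_cont}, Fact \ref{fact:MGF_for_sum}), yet $\underline{\delta}_{\nu}=0$ by Fact \ref{fact: atomicPart}; and even granting $\underline{\delta}_{\nu}=\underline{\dim}_{H}(\nu)$, the sandwich only yields $\underline{\delta}_{\nu}\leq\delta_{\nu}^{*}$, not equality. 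The paper's proof makes no such attempt: it reads the equivalence directly off Corollary \ref{cor:spectralDimGeneralUpperBound} together with the existence of $s_{\nu}$, i.e.\ the backward implication is obtained (and subsequently used in the proof of (2)) in the form $s_{\nu}=\delta_{\nu}^{*}/(1+\delta_{\nu}^{*})\Rightarrow\overline{\delta}_{\nu}=\delta_{\nu}^{*}$. So you should delete the attempted upgrade at $q=1$, state the backward step with $\overline{\delta}_{\nu}$ as Corollary \ref{cor:spectralDimGeneralUpperBound} provides it, and close part (2) directly from $\overline{\delta}_{\nu}=\delta_{\nu}^{*}$ and the existence of $s_{\nu}$, which is precisely what the paper does.
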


The second implication explains that in earlier publication (e.~g\@.
\citep{Fu87,MR118891}) the spectral dimension has typically been
connected to the Hausdorff dimension of $\nu$. The implication in
(3) concerning the Hausdorff and packing dimension goes back to the
work of Ngai and Heurteaux \citep{MR1402878,MR1625871}.

The last implication is optimal in the sense that we find a measure
$\nu$ (Example \ref{exmpl:RegularBehaviour_full_dimension=000023})
with $1=\delta_{\nu}^{*}=\overline{\delta}_{\nu}$, $\beta$ is not
differentiable in $1$ and $0=\dim_{H}\left(\nu\right)=\dim_{P}\left(\nu\right)<\overline{\delta}_{\nu}=1$.
We note that the equality $\delta_{\nu}^{*}=\dim_{P}\left(\nu\right)$
is implied by $\delta_{\nu}^{*}=\dim_{H}\left(\nu\right)$, $\delta_{\nu}^{*}=h^{*}\left(\nu\right)=\limsup_{n}\left(\log2^{n}\right)^{-1}\sum_{C\in\mathcal{D}_{n}}\nu\left(C\right)\log\nu\left(C\right)$,
or by the identities $\delta_{\nu}^{*}=h_{*}\left(\nu\right)=\liminf_{n}\left(\log2^{n}\right)^{-1}\sum_{C\in\mathcal{D}_{n}}\nu\left(C\right)\log\nu\left(C\right)$.

The following proposition complements the connection of the Minkowski
dimension by establishing an upper bound of the lower spectral dimension
in terms of the lower Minkowski dimension $\underline{\dim}_{M}(\nu)$
of the suppeort of $\nu$ .
\begin{prop}
\label{prop:upperBoundForUnderline_s<Mdim}We always have
\[
\underline{s}_{\nu}\leq\underline{h}_{\nu}\leq\underline{\dim}_{M}(\nu).
\]
\end{prop}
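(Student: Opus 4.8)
The first inequality $\underline{s}_{\nu}\leq\underline{h}_{\nu}$ is already part of the chain \eqref{eq:MainInequalities} in Theorem~\ref{Thm:MainChain_of_Inequalities+Regularity}, so the plan is to prove only $\underline{h}_{\nu}\leq\underline{\dim}_{M}(\nu)$. Write $\|\nu\|\coloneqq\nu([0,1])$ and, for $\delta>0$, let $N_{\delta}$ denote the least number of intervals of length $\delta$ needed to cover $\supp\nu$, so that $\underline{\dim}_{M}(\nu)=\underline{\dim}_{M}(\supp\nu)=\liminf_{\delta\downarrow 0}\log N_{\delta}/\log(1/\delta)$ (the same quantity as the one obtained from the dyadic mesh used to define $\delta_{\nu}^{*}$). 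The strategy is to exhibit, for every large $x$, a $\nu$-partition that is admissible in the definition of $\mathcal{N}^{R}(x)$ and whose cardinality is controlled by $N_{\delta}$ for some $\delta\asymp 1/x$, and then to unwind the definitions of $\mathcal{N}^{R}$ and $\underline{h}_{\nu}$.

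Concretely, for $x$ with $x\|\nu\|>1$ I would set $\delta=\delta(x)\coloneqq 1/(2x\|\nu\|)$ and start from a uniform grid on $[0,1]$ of mesh at most $\delta$; since $\nu$ is a finite measure on $(0,1)$ it has at most countably many atoms, so a generic small translation of the grid makes every grid point $\nu$-non-atomic and hence turns the grid into a genuine $\nu$-partition. On each grid interval $C$ one has $\Lambda(C)\leq\delta$ and $\nu(C)\leq\|\nu\|$, whence $\nu(C)\Lambda(C)\leq\|\nu\|\delta=1/(2x)<1/x$, which already makes this partition admissible for $\mathcal{N}^{R}(x)$. To cut down its cardinality I would collapse each maximal run of consecutive grid intervals disjoint from $\supp\nu$ into a single interval; such an interval is $\nu$-null, so admissibility is preserved, and the resulting $\nu$-partition $P_{x}$ satisfies $\card(P_{x})\leq 2k_{\delta}+1$, where $k_{\delta}$ is the number of grid intervals meeting $\supp\nu$. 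Since every member of an optimal $\delta$-cover of $\supp\nu$ meets only a bounded number of grid intervals, $k_{\delta}$ is bounded by a universal constant times $N_{\delta}$, and therefore $\mathcal{N}^{R}(x)\leq\card(P_{x})\leq c\,N_{\delta(x)}$ for a universal constant $c$ and all large $x$.

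Finally I would take logarithms and use $\log(1/\delta(x))=\log x+\log(2\|\nu\|)=\log x+O(1)$ to obtain $\log\mathcal{N}^{R}(x)/\log x\leq(\log N_{\delta(x)}+O(1))/(\log(1/\delta(x))+O(1))$; picking a sequence $\delta_{j}\downarrow 0$ along which $\log N_{\delta_{j}}/\log(1/\delta_{j})\to\underline{\dim}_{M}(\nu)$ and setting $x_{j}\coloneqq 1/(2\|\nu\|\delta_{j})\to\infty$ then yields $\underline{h}_{\nu}=\liminf_{x\to\infty}\log\mathcal{N}^{R}(x)/\log x\leq\underline{\dim}_{M}(\nu)$, as desired. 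The proof itself is short; the points where I would be most careful are the atom-avoidance step — ensuring the grid points carry no $\nu$-mass without destroying the mesh bound or the estimate for $k_{\delta}$ — and checking that passing from $N_{\delta}$ to the number of grid intervals meeting $\supp\nu$, together with the additive $O(1)$ corrections in the exponents, genuinely costs nothing in the $\liminf$. Both are routine, and I anticipate no serious obstacle.
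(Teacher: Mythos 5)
Your argument is correct and is essentially the paper's own proof: the paper takes the level-$n$ dyadic grid (admissible for $\mathcal{N}^{R}(2^{n-1})$ since each cell satisfies $\nu(C)\Lambda(C)\leq 2^{-n}$, and a genuine $\nu$-partition thanks to the standing reduction $\nu(\mathcal{D}^{*})=0$), merges the null cells into runs, and bounds $\mathcal{N}^{R}(2^{n-1})\leq 2^{n\beta_{n}(0)+1}+1$, where $2^{n\beta_{n}(0)}=\card\mathcal{D}_{\nu,n}$ plays exactly the role of your $k_{\delta}$, so that taking $\liminf$ gives $\underline{h}_{\nu}\leq\liminf_{n}\beta_{n}(0)=\underline{\dim}_{M}(\nu)$. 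Your use of a slightly translated uniform grid of mesh $\asymp 1/x$ and covering numbers $N_{\delta}$ instead of the dyadic grid and $\beta_{n}(0)$ is an inessential variation of the same construction.
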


Next we give a lower bound for the lower spectral dimension in terms
of the Minkowski dimension (if it exists) and the derivative of the
$L^{q}$-spectrum in $0$.
\begin{cor}
If the Minkowski dimension of $\supp\nu$ exists, i.~e\@. $\overline{\dim}_{M}\left(\nu\right)=\underline{\dim}_{M}\left(\nu\right)$,
and $-\partial\beta_{\nu}\left(0\right)=[a,b]$, then
\[
\frac{\delta_{\nu}^{*}}{1+b}\leq\underline{s}_{\nu}.
\]
\end{cor}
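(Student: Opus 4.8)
The plan is to obtain this inequality as the special case $q=0$ of Proposition~\ref{prop:loverbound_by_diff_in_1}. The standing hypothesis of that proposition that has to be checked is $\beta_\nu(q)=\lim_n\beta_n^\nu(q)$ at the point $q=0$, and I would verify that this is exactly what the assumption ``the Minkowski dimension of $\supp\nu$ exists'' delivers.

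To see this, first I would unwind the definition of $\beta_n^\nu$ at $q=0$. Since $\nu(C)^0=1$ for every $C\in\mathcal D_{\nu,n}$,
\[
\beta_n^\nu(0)=\frac{\log\card\mathcal D_{\nu,n}}{\log 2^n}=\frac{\log\card\{\,k\in\{1,\dots,2^n\}:\nu(A_n^k)>0\,\}}{n\log 2},
\]
so $2^{\,n\beta_n^\nu(0)}$ is, up to a bounded multiplicative factor, the number of intervals of length $2^{-n}$ needed to cover $\supp\nu$: a dyadic interval of generation $n$ carrying positive $\nu$-mass meets $\supp\nu$, and conversely every point of $\supp\nu$ lies in at most two dyadic intervals of generation $n$. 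Taking logarithms and dividing by $n\log 2$ annihilates this bounded factor, whence $\limsup_n\beta_n^\nu(0)=\overline{\dim}_M(\supp\nu)=\delta_\nu^*$ (the identity already recorded in the introduction) and, by the same reasoning with $\liminf$ in place of $\limsup$, $\liminf_n\beta_n^\nu(0)=\underline{\dim}_M(\supp\nu)$.

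Consequently, the hypothesis $\overline{\dim}_M(\nu)=\underline{\dim}_M(\nu)$ is equivalent to $\limsup_n\beta_n^\nu(0)=\liminf_n\beta_n^\nu(0)$, i.e.\ to the existence of the limit $\lim_n\beta_n^\nu(0)=\beta_\nu(0)=\delta_\nu^*$. Thus Proposition~\ref{prop:loverbound_by_diff_in_1} applies at $q=0$; together with the assumption $-\partial\beta_\nu(0)=[a,b]$ it yields
\[
\underline{s}_\nu\;\geq\;\frac{a\cdot 0+\beta_\nu(0)}{1+b}\;=\;\frac{\delta_\nu^*}{1+b},
\]
which is the claimed bound.

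Since the statement is a one-line specialization of Proposition~\ref{prop:loverbound_by_diff_in_1}, there is no substantial obstacle; the only step demanding (entirely routine) care is the box-counting identity $\liminf_n\beta_n^\nu(0)=\underline{\dim}_M(\supp\nu)$, which rests on the harmless comparison between dyadic $2^{-n}$-covers and arbitrary $2^{-n}$-covers. The genuine content — the optimal-partition lower bound underlying the cited proposition, together with the interpretation of the subdifferential $\partial\beta_\nu(0)=[-b,-a]$ — is carried entirely by that proposition.
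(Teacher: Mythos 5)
Your proposal is correct and is exactly the route the paper intends: the corollary is stated as an immediate specialization of Proposition \ref{prop:loverbound_by_diff_in_1} at $q=0$, where $\beta_n^{\nu}(0)=\log\card\mathcal{D}_{\nu,n}/\log 2^{n}$ identifies the hypothesis $\overline{\dim}_{M}(\nu)=\underline{\dim}_{M}(\nu)$ with the required convergence $\beta_{\nu}(0)=\lim_{n}\beta_{n}^{\nu}(0)=\delta_{\nu}^{*}$. Your box-counting comparison between dyadic cells of positive mass and $2^{-n}$-covers of $\supp\nu$ is the same harmless bookkeeping the paper relies on (cf. the stated facts $\beta_{\nu}(0)=\overline{\dim}_{M}(\nu)$ and the use of $\liminf_{n}\beta_{n}(0)=\underline{\dim}_{M}(\nu)$), so nothing further is needed.
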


The upper spectral dimension is stable in the following sense.
\begin{cor}
\label{cor:Sum_of_Measures} For two finite Borel measures $\nu_{1}$
and $\nu_{2}$ on $\left(0,1\right)$ with upper spectral dimension
$\overline{s}_{\nu_{1}}$ and $\overline{s}_{\nu_{2}}$, we have that
the upper spectral dimension of $\nu_{1}+\nu_{2}$ is given by $\max\left\{ \overline{s}_{\nu_{1}},\overline{s}_{\nu_{2}}\right\} $.
If additionally, $\underline{s}_{\nu_{1}}=\overline{s}_{\nu_{1}}\geq\overline{s}_{\nu_{2}}$,
then its spectral dimension exists and equals $\overline{s}_{\nu_{1}}$.
If $\underline{s}_{\nu_{1}}=\overline{s}_{\nu_{1}}\geq\overline{s}_{\nu_{2}}$
and $\nu_{1}$ is regular, then so is $\nu_{1}+\nu_{2}$.
\end{cor}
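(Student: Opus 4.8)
The plan is to reduce the statement to claims about the partition entropies via the identities $\overline s_\mu=\overline h_\mu$ and the chain~\eqref{eq:MainInequalities} of Theorem~\ref{Thm:MainChain_of_Inequalities+Regularity}, combining two monotonicity observations with a common-refinement estimate, and, for the existence claim, the min-max principle. Write $\nu\coloneqq\nu_1+\nu_2$ and note $\nu\ge\nu_i$ and $\supp\nu_i\subseteq\supp\nu$. A $\nu$-partition is automatically a $\nu_i$-partition (it avoids more atoms) and $\nu_i(C)\Lambda(C)\le\nu(C)\Lambda(C)$, so every partition admissible for $\mathcal N^R_\nu(x)$ is admissible for $\mathcal N^R_{\nu_i}(x)$, whence $\overline h_{\nu_i}\le\overline h_\nu$. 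Conversely, for fixed $x$ take partitions $P_i$ attaining $\mathcal N^R_{\nu_i}(2x)$; since the atoms of $\nu_1$ and $\nu_2$ together form a countable set, one may perturb their interior endpoints slightly so that they all avoid the atoms of $\nu$ while the strict bounds $\max_{C\in P_i}\nu_i(C)\Lambda(C)<1/(2x)$ persist. Then $P_1\vee P_2$ is a $\nu$-partition with at most $\card P_1+\card P_2$ cells, and on each cell $C=C^{(1)}\cap C^{(2)}$ with $C^{(i)}\in P_i$ one has $\nu(C)\Lambda(C)=\nu_1(C)\Lambda(C)+\nu_2(C)\Lambda(C)<1/x$; hence $\mathcal N^R_\nu(x)\le\mathcal N^R_{\nu_1}(2x)+\mathcal N^R_{\nu_2}(2x)$. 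Passing to logarithms and using $\log(a+b)\le\log2+\log\max\{a,b\}$, $\log(2x)/\log x\to1$ and the elementary identity $\limsup_x\max\{f,g\}=\max\{\limsup_x f,\limsup_x g\}$ yields $\overline h_\nu\le\max\{\overline h_{\nu_1},\overline h_{\nu_2}\}$, and therefore $\overline s_\nu=\max\{\overline s_{\nu_1},\overline s_{\nu_2}\}$.

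For the existence claim, suppose $\underline s_{\nu_1}=\overline s_{\nu_1}\ge\overline s_{\nu_2}$, so $\overline s_\nu=\overline s_{\nu_1}$ by the first part. Since $\E_\mu$ is the Dirichlet energy $f\mapsto\int(\nabla_\Lambda f)^2\,\d\Lambda$ acting on (canonical representatives, linear on the gaps of $\supp\mu$, of) absolutely continuous functions, the natural map $\dom\E_{\nu_1}\hookrightarrow\dom\E_\nu$ is well defined and injective---a continuous function is determined on $\supp\nu_1\subseteq\supp\nu$ by its $\nu_1$-a.e.\ class---and it cannot increase the Rayleigh quotient, as its numerator can only drop when one passes to the $\nu$-minimal representative while $\|\cdot\|_{L^2_\nu}\ge\|\cdot\|_{L^2_{\nu_1}}$. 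By the min-max principle, $N_\nu(x)\ge N_{\nu_1}(x)$ for every $x$, hence $\underline s_\nu\ge\underline s_{\nu_1}$; combined with the first part, $\underline s_{\nu_1}\le\underline s_\nu\le\overline s_\nu=\overline s_{\nu_1}=\underline s_{\nu_1}$, so $s_\nu$ exists and equals $\overline s_{\nu_1}$.

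For the last claim, assume in addition that $\nu_1$ is regular, i.e.\ $\sup_{m>1}\underline h^m_{\nu_1}=\overline h_{\nu_1}$. Fix $m>1$. Replacing the non-strict constraint in the definition of $\mathcal N^L_{m,\nu_1}$ by a strict one changes $\underline h^m_{\nu_1}$ only through a harmless rescaling of $x$; then, starting from a near-optimal collection in $\Pi_0$ for $\nu_1$ with strict constraint and perturbing its interval endpoints to avoid the countably many atoms of $\nu$ (keeping disjointness and the strict constraint), one obtains, since $\nu\ge\nu_1$, a collection admissible for $\mathcal N^L_{m,\nu}(x)$. Hence $\underline h^m_{\nu_1}\le\underline h^m_\nu$ for every $m>1$, so $\sup_m\underline h^m_\nu\ge\sup_m\underline h^m_{\nu_1}=\overline h_{\nu_1}=\overline h_\nu$ (the last equality by the first part); since $\underline h^m_\nu\le\overline h^m_\nu=\overline h_\nu$ always, equality holds throughout and $\nu$ is regular.

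The main difficulty here is not conceptual but one of bookkeeping: the ``no atom at an endpoint'' requirements in the definitions of $\mathcal N^R$, $\Pi_0$ and $\mathcal N^L_m$ are attached to the measure under consideration, so moving between $\nu_i$ and $\nu=\nu_1+\nu_2$ forces one to perturb the finitely many interior endpoints to dodge the countably many atoms of $\nu$ without spoiling the (open or closed) constraints---this is the source of the benign rescalings $x\mapsto(1\pm\varepsilon)x$ absorbed in the limit---while the estimate $N_\nu\ge N_{\nu_1}$ relies on the concrete realization of $\dom\E_\mu$ as a space of absolutely continuous functions provided by the earlier sections.
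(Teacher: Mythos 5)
Your first two claims are correct, though you take a different route from the paper: there, the identity $\overline{s}_{\mu}=\overline{q}_{\mu}$ from Theorem \ref{Thm:MainChain_of_Inequalities+Regularity} is combined with Fact \ref{fact:MGF_for_sum} ($\beta_{\nu_{1}+\nu_{2}}=\max(\beta_{\nu_{1}},\beta_{\nu_{2}})$ for $q\geq0$) to get the first claim, and the monotonicity $N_{\nu_{1}+\nu_{2}}\geq N_{\nu_{1}}$ is quoted from Lemma \ref{lem general}. Your common-refinement bound $\mathcal{N}^{R}_{\nu_{1}+\nu_{2}}(x)\leq\mathcal{N}^{R}_{\nu_{1}}(2x)+\mathcal{N}^{R}_{\nu_{2}}(2x)$ is a sound substitute (the endpoint perturbation is harmless there because the constraint is a strict \emph{upper} bound and $\nu_{i}$ of the half-open cells is continuous from above under a small rightward shift of the endpoints), and your inclusion $\dom(\E_{\nu_{1}})\subseteq\dom(\E_{\nu_{1}+\nu_{2}})$ with the Rayleigh-quotient comparison reproves Lemma \ref{lem general} correctly.

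The gap is in the last claim, in the step ``perturbing its interval endpoints \dots keeping disjointness and the strict constraint''. There the constraint is a \emph{lower} bound on $\nu_{1}(\langle I\rangle_{m})\Lambda(\langle I\rangle_{m})$, and lower bounds do not survive small perturbations: if an endpoint of $I$ is an atom of $\nu_{2}$ and you move it by $\varepsilon$, the middle interval $\langle I\rangle_{m}$ is translated and rescaled, and because $m>1$ the new middle interval never contains the old one when a single endpoint is moved -- it always sheds a one-sided neighbourhood of one boundary point of $\langle I\rangle_{m}$. If $\nu_{1}$ happens to carry atoms exactly at the boundary points of $\langle I\rangle_{m}$, then $\nu_{1}(\langle I\rangle_{m})$ drops by a fixed positive amount no matter how small $\varepsilon$ is, so neither the strictness of the inequality nor a rescaling $x\mapsto(1\pm\varepsilon)x$ (which only absorbs uniform multiplicative losses) preserves admissibility; enlarging $I$ on both sides would keep $\langle I\rangle_{m}$ but can collide with adjacent members of the collection. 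The argument is repairable: for $1<m'<m$ replace each $I$ by $\tilde{I}\subseteq I$ whose endpoints are chosen in tiny windows free of $\nu$-atoms so that $\langle\tilde{I}\rangle_{m'}\supseteq\langle I\rangle_{m}$ and $\Lambda(\langle\tilde{I}\rangle_{m'})\geq\Lambda(\langle I\rangle_{m})$; the change of threshold from $4/(x(m-1))$ to $4/(x(m'-1))$ is a constant rescaling of $x$, giving $\underline{h}_{\nu_{1}}^{m}\leq\underline{h}_{\nu_{1}+\nu_{2}}^{m'}$, and taking suprema over $m$ and $m'$ yields the regularity of $\nu_{1}+\nu_{2}$. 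Alternatively, note that the paper's own proof avoids all endpoint bookkeeping by working with the coarse multifractal quantities (its proof-section reading of ``regular'' is the MF-condition): since $\nu_{1}+\nu_{2}\geq\nu_{1}$, the counts $\mathcal{N}_{\alpha}(n)$ on the fixed dyadic grid are monotone in the measure, so $\overline{F}_{\nu_{1}}=\underline{F}_{\nu_{1}}\leq\underline{F}_{\nu_{1}+\nu_{2}}\leq\overline{F}_{\nu_{1}+\nu_{2}}=\overline{s}_{\nu_{1}+\nu_{2}}=\overline{F}_{\nu_{1}}$.
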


Moreover, the spectral dimension (and in fact also finer spectral
asymptotic properties) is stable under bi-Lipschitz mappings.
\begin{prop}
\label{Prop:bi_Lipschitz_trafo}Let $\nu$ be a Borel measure on $(0,1)$.
Further let $g:\left[0,1\right]\to\left[a,b\right]$ be a bi-Lipschitz
mapping. Then uniformly for $i\in\N$, we have
\[
\lambda_{\nu,[0,1]}^{i}\asymp\lambda_{\nu\circ g^{-1},[a,b]}^{i}.
\]
\end{prop}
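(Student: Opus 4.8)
The plan is to transfer the bi-Lipschitz comparison from the Dirichlet forms to the eigenvalues via the min-max principle. Write $\mu\coloneqq\nu\circ g^{-1}$, a Borel measure on $[a,b]$. The key is to construct a linear isomorphism $T$ between the form domain $\dom(\E_{\nu,[0,1]})\subset L^2_\nu([0,1])$ and the form domain $\dom(\E_{\mu,[a,b]})\subset L^2_\mu([a,b])$ that distorts both the $L^2$-norms and the form values by only bounded multiplicative factors. The natural candidate is the pullback $T f \coloneqq f\circ g$, so that $Tf\in L^2_\nu$ for $f\in L^2_\mu$. Since $g$ is bi-Lipschitz, there are constants $0<c_1\le c_2<\infty$ with $c_1|x-y|\le|g(x)-g(y)|\le c_2|x-y|$, hence $g$ is absolutely continuous with $g'\in[c_1,c_2]$ Lebesgue-a.e., and the same holds for $g^{-1}$.

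First I would check the $L^2$-comparison: by the change-of-variables / image-measure formula, $\int_{[a,b]} |f|^2\,\d\mu = \int_{[0,1]} |f\circ g|^2\,\d\nu$, so $T$ is in fact an \emph{isometry} $L^2_\mu\to L^2_\nu$ (this is the cleanest point and needs no Lipschitz hypothesis, only that $\mu=\nu\circ g^{-1}$). Next I would treat the form values. For $f$ in the form domain, the chain rule for the $\Lambda$-derivative along $g$ gives $\nabla_\Lambda(f\circ g) = (\nabla_\Lambda f)\circ g \cdot g'$ a.e., and therefore
\[
\E_{\nu,[0,1]}(Tf,Tf)=\int_0^1 \bigl((\nabla_\Lambda f)\circ g\bigr)^2 (g')^2\,\d\Lambda
=\int_a^b (\nabla_\Lambda f)^2\, (g'\circ g^{-1})\,\d\Lambda,
\]
using the substitution $y=g(x)$, $\d\Lambda(y)=g'(x)\,\d\Lambda(x)$. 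Since $c_1\le g'\circ g^{-1}\le c_2$ a.e., this yields $c_1\,\E_{\mu,[a,b]}(f,f)\le \E_{\nu,[0,1]}(Tf,Tf)\le c_2\,\E_{\mu,[a,b]}(f,f)$. One must also argue that $T$ maps the form domain onto the form domain (with the correct boundary behaviour), which follows because $g$ and $g^{-1}$ are bi-Lipschitz homeomorphisms preserving absolute continuity and endpoints.

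With these two comparisons in hand, the conclusion is immediate from the variational (min-max) characterization of the eigenvalues already used throughout the paper:
\[
\lambda_{\mu,[a,b]}^i=\inf_{\substack{V\subset\dom(\E_{\mu,[a,b]})\\ \dim V=i}}\ \sup_{0\ne f\in V}\frac{\E_{\mu,[a,b]}(f,f)}{\|f\|_{L^2_\mu}^2}.
\]
Pushing each test space $V$ through the isomorphism $T$ gives an $i$-dimensional subspace $TV\subset\dom(\E_{\nu,[0,1]})$ with Rayleigh quotients distorted by at most the factor $c_2$ (respectively at least $c_1$), since the denominators are preserved exactly. Taking infima over $i$-dimensional subspaces on both sides yields $c_1\,\lambda_{\mu,[a,b]}^i\le\lambda_{\nu,[0,1]}^i\le c_2\,\lambda_{\mu,[a,b]}^i$ for all $i\in\N$, with constants independent of $i$; this is exactly the claimed uniform comparison $\lambda_{\nu,[0,1]}^i\asymp\lambda_{\nu\circ g^{-1},[a,b]}^i$.

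**The main obstacle** I anticipate is the careful identification of the form domains under pullback and the validity of the chain rule for $\nabla_\Lambda$: one needs that $f\in\dom(\E_{\mu,[a,b]})$ (an $L^2_\mu$-class with an absolutely continuous representative whose weak derivative lies in $L^2(\Lambda)$, satisfying the relevant boundary conditions) is sent by $f\mapsto f\circ g$ to exactly an element of $\dom(\E_{\nu,[0,1]})$, and vice versa. This is a routine but slightly technical verification: bi-Lipschitz maps preserve the class of absolutely continuous functions and, by the Leibniz/chain rule for Sobolev functions composed with Lipschitz maps, $(f\circ g)' = (f'\circ g)\,g'$ a.e., which together with $g'\in[c_1,c_2]$ gives the $L^2(\Lambda)$-membership and the norm bounds. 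Once this is settled, everything else is a direct application of min-max.
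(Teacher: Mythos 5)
Your overall strategy is the one the paper uses: pull back by $g$, apply the chain rule $\nabla_{\Lambda}(f\circ g)=(\nabla_{\Lambda}f)\circ g\cdot\nabla_{\Lambda}g$ together with the exact identity $\langle f,f\rangle_{\nu\circ g^{-1}}=\langle f\circ g,f\circ g\rangle_{\nu}$, and compare Rayleigh quotients through the min-max principle, with constants coming from $|\nabla_{\Lambda}g|\in[c_{1},c_{2}]$. These computations are correct.

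The step that fails as stated is the claim that $T\colon f\mapsto f\circ g$ maps the form domain onto the form domain and that this is a routine verification. In this paper the Dirichlet form domain is not $H_{0}^{1}$ but $\dom(\E_{\nu,[0,1]})=H_{0}^{1}(0,1)\cap C_{\nu}([0,1])$, where $C_{\nu}$ requires the functions to be \emph{affine linear} on the components of the complement of $\supp\nu$; this constraint is what makes $\E_{\nu}$ well defined on $L_{\nu}^{2}$-classes at all (an $L_{\nu}^{2}$-class has many $H^{1}$ representatives with different energies, so your description of the domain as ``an $L^{2}_{\mu}$-class with an absolutely continuous representative whose weak derivative lies in $L^{2}(\Lambda)$'' does not by itself pin down the form). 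A general bi-Lipschitz $g$ does not carry functions that are affine on the gaps of $\supp(\nu\circ g^{-1})$ to functions that are affine on the gaps of $\supp\nu$, so $T$ does \emph{not} map $\dom(\E_{\nu\circ g^{-1},[a,b]})$ into $\dom(\E_{\nu,[0,1]})$, and the same obstruction occurs in the reverse direction. The paper resolves exactly this point with the extended min-max principle, Proposition \ref{prop:dom_vs_H01 Minmax}, which shows the eigenvalues may equivalently be computed by letting the test subspaces range over all of $H_{0}^{1}$ (where the pullback $\Phi(f)=f\circ g$ genuinely is a bijection $H_{0}^{1}(a,b)\to H_{0}^{1}(0,1)$); alternatively one could repair your argument by composing $T$ with the energy-decreasing, $L_{\nu}^{2}$-preserving linearization map $\tau_{\nu,[0,1]}$ of Lemma \ref{lem:RepraesentantL2NU} and checking dimensions. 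Without one of these devices the transfer of $i$-dimensional test spaces between the two form domains, and hence the min-max comparison, does not go through.
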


In the last section we give a sequence of examples showing how our
approach allows to analyze in detail particular classes of measures:
First, we determine the spectral dimension of measures that stem from
\emph{non-linear iterated function systems} (with or without overlap),
see Section \ref{sec:Applicaiton-to-self-conformal}.
\begin{thm}
\label{thm:self-conforma-casel}For any $C^{1}$-self-conformal measure
$\nu$, the spectral dimension of $\Delta_{\nu}$ exists and equals
$\overline{q}_{\nu}.$
\end{thm}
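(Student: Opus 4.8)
The plan is to derive the statement from the general theory, specifically Theorem~\ref{Thm:MainChain_of_Inequalities+Regularity} and Theorem~\ref{thm:LqRegularImpliesRegular}. Since $\overline{s}_{\nu}=\overline{q}_{\nu}$ holds for every finite Borel measure, and since $L^{q}$-regularity implies MF-regularity, hence regularity, hence the existence of $s_{\nu}$ with $s_{\nu}=\overline{s}_{\nu}=\overline{q}_{\nu}$, it is enough to prove that every $C^{1}$-self-conformal measure $\nu$ is $L^{q}$-regular; the asserted value $\overline{q}_{\nu}$ is then automatic. So write $\nu=\sum_{i=1}^{N}p_{i}\,\nu\circ\phi_{i}^{-1}$ for a $C^{1}$ conformal contractive system $\{\phi_{i}\}$ on a compact invariant interval with attractor $K=\supp\nu$ and a probability vector $(p_{i})_{i}$. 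For a finite word $\omega=\omega_{1}\cdots\omega_{k}$ set $\phi_{\omega}=\phi_{\omega_{1}}\circ\cdots\circ\phi_{\omega_{k}}$, $p_{\omega}=p_{\omega_{1}}\cdots p_{\omega_{k}}$ and $r_{\omega}=\|\phi_{\omega}'\|_{\infty}$. Being $C^{1}$ contractions, the maps $\phi_{i}$ have uniform bounded distortion, so $r_{\omega}\asymp\inf|\phi_{\omega}'|\asymp\mathrm{diam}\,\phi_{\omega}(K)$ and $r_{\omega\tau}\asymp r_{\omega}r_{\tau}$ with constants independent of the words; for $t>0$ the stopping antichain $\Omega_{t}\coloneqq\{\omega:r_{\omega}\le t<r_{\omega^{-}}\}$ (with $\omega^{-}$ the word $\omega$ with its last letter deleted) satisfies $\sum_{\omega\in\Omega_{t}}p_{\omega}=1$ and $r_{\omega}\asymp t$ for $\omega\in\Omega_{t}$. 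Let $\tau$ be the associated pressure function, the real-analytic, convex, decreasing function characterised by $\tau(1)=0$ and $\lim_{k}k^{-1}\log\sum_{|\omega|=k}p_{\omega}^{q}r_{\omega}^{\tau(q)}=0$.

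Next I would show that $\beta_{\nu}(q)=\lim_{n}\beta_{n}^{\nu}(q)$ on a left neighbourhood of $\overline{q}_{\nu}$. The upper estimate $\limsup_{n}\beta_{n}^{\nu}(q)\le\tau(q)$ is robust and holds with or without overlap: for $q\in(0,1]$ subadditivity of $t\mapsto t^{q}$ in $\nu(C)=\sum_{i}p_{i}\nu(\phi_{i}^{-1}C)$, iterated down to the scale-$2^{-n}$ antichain and combined with bounded distortion to pass between dyadic and non-dyadic interval partitions, yields $\sum_{C\in\mathcal{D}_{\nu,n}}\nu(C)^{q}\ll\sum_{\omega\in\Omega_{2^{-n}}}p_{\omega}^{q}\asymp2^{n\tau(q)}$. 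For the matching lower estimate one uses $\nu\ge p_{i}\,\nu\circ\phi_{i}^{-1}$, hence $\nu(\phi_{\omega}(K))\ge p_{\omega}$, together with $\mathrm{diam}\,\phi_{\omega}(K)\asymp2^{-n}$ for $\omega\in\Omega_{2^{-n}}$. If the system satisfies the open set condition, the cylinders $\phi_{\omega}(K)$ with $\omega\in\Omega_{2^{-n}}$ meet only boundedly many level-$n$ dyadic cells and are essentially disjoint, so $\sum_{C}\nu(C)^{q}\gg\sum_{\omega}p_{\omega}^{q}\asymp2^{n\tau(q)}$; thus $\beta_{\nu}=\tau$ as a genuine limit, $\nu$ is $L^{q}$-regular, and $\overline{q}_{\nu}$ is the unique fixed point of $\tau$. (In the separated case one can also bypass $L^{q}$-spectra altogether: the invariance, Proposition~\ref{Prop:bi_Lipschitz_trafo} and the sub-/superadditivity of the eigenvalue counting function over interval partitions from Proposition~\ref{prop:SubandSupperAdditivity} give a renewal relation $N_{\nu}(x)\asymp\sum_{i}N_{\nu}(c_{i}x)$ with $c_{i}\asymp r_{i}p_{i}$, from which renewal theory returns $s_{\nu}=s$ where $\sum_{i}(r_{i}p_{i})^{s}=1$.)

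The genuinely overlapping case is the main obstacle. Now disjointness fails, a single level-$n$ dyadic cell may be hit by many --- a priori exponentially many in $n$ --- cylinders $\phi_{\omega}(K)$ with $\omega\in\Omega_{2^{-n}}$, so $\nu(C)$ records only a concentrated version of $\sum_{\omega\to C}p_{\omega}$; since $t\mapsto t^{q}$ with $q<1$ is strictly concave, this loses, and a priori $\beta_{n}^{\nu}(q)$ can sit strictly below $\tau(q)$ and even oscillate (consistently with $\overline{\dim}_{M}(\nu)$ and the local dimensions possibly dropping below their separated values). My plan here is to still prove $L^{q}$-regularity near $\overline{q}_{\nu}$ by a scale-by-scale extraction: at scale $2^{-n}$, choose by a greedy/Vitali argument a maximal $2^{-n}$-separated subfamily of $\{\phi_{\omega}(K):\omega\in\Omega_{2^{-n}}\}$, which injects into the level-$n$ dyadic cells, and show --- bounding the multiplicity loss by a power-mean inequality and propagating the bound across scales through $\nu=\sum_{i}p_{i}\,\nu\circ\phi_{i}^{-1}$ and bounded distortion --- that the resulting $q$-sum still grows at a well-defined exponential rate, so that $\liminf_{n}\beta_{n}^{\nu}(q)=\limsup_{n}\beta_{n}^{\nu}(q)$ on a left neighbourhood of $\overline{q}_{\nu}$; where this remains delicate one feeds the extracted disjoint families directly into the reduced partition entropies $\underline{h}_{\nu}^{m}$ of Theorem~\ref{Thm:MainChain_of_Inequalities+Regularity} to bound $\underline{s}_{\nu}$ from below. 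Degenerate configurations (e.g., exact overlaps forcing $\nu$ to be a point mass) are excluded by the standing assumptions or trivial, both sides of the claimed identity then vanishing. Once $L^{q}$-regularity is established, Theorems~\ref{thm:LqRegularImpliesRegular} and~\ref{Thm:MainChain_of_Inequalities+Regularity} finish the proof: $\nu$ is regular and $s_{\nu}=\overline{s}_{\nu}=\overline{q}_{\nu}$.
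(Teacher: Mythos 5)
Your reduction is exactly the paper's: by Theorem \ref{Thm:MainChain_of_Inequalities+Regularity} one always has $\overline{s}_{\nu}=\overline{q}_{\nu}$, so the whole content of the theorem is that a $C^{1}$-self-conformal measure is $L^{q}$-regular, after which Theorem \ref{thm:LqRegularImpliesRegular} finishes. The paper obtains $L^{q}$-regularity at one stroke by citing the known result (\citep[Corollary 4.5]{MR2322179}, building on \citep{MR1838304}) that for every $C^{1}$-self-conformal measure --- with or without overlaps --- the $L^{q}$-spectrum exists as a limit on $\R_{>0}$; your OSC computation and the renewal-theory aside are fine but concern only the separated case, which is not where the difficulty lies.

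The genuine gap is your treatment of the overlapping case, which you yourself flag as ``the main obstacle'': what you offer there is a plan, not an argument. The step that needs proof is precisely that $\liminf_{n}\beta_{n}^{\nu}(q)=\limsup_{n}\beta_{n}^{\nu}(q)$ near $\overline{q}_{\nu}$ despite unbounded multiplicity of cylinders per dyadic cell, and a greedy/Vitali extraction of a $2^{-n}$-separated subfamily plus a ``power-mean inequality'' does not yield this: the extraction controls a lower bound at each fixed scale, but it gives no mechanism forcing the exponential rates at different scales to agree, which is exactly the oscillation you admit cannot a priori be excluded. The known proofs (Peres--Solomyak and its $C^{1}$-conformal extension) work differently, establishing an almost sub-/super-multiplicativity of the moment sums $\sum_{C}\nu(C)^{q}$ across scales directly from the self-conformality relation and bounded distortion, so that convergence follows from a Fekete-type argument; nothing in your sketch reproduces this. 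Your fallback --- feeding the extracted disjoint families into $\underline{h}_{\nu}^{m}$ --- also does not close the gap: with overlaps the cylinder weights $p_{\omega}$ overestimate the actual measure concentration ($\beta_{\nu}$ can lie strictly below the pressure function $\tau$), so such a bound is not shown to reach $\overline{q}_{\nu}$. As written, the proof is complete only under the open set condition; for the general statement you must either import the cited existence-as-a-limit theorem, as the paper does, or supply the sub-multiplicativity argument in full.
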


Second, we study\emph{ homogeneous Cantor measure}, for which we provide
examples such that the spectral dimension does not exist with $\overline{s}_{\nu}=\overline{h}_{\nu}>\underline{h}_{\nu}=\underline{s}_{\nu}$
and an other example with spectral dimension $0$. Finally, we consider
\emph{pure point measure} such that its spectral dimension exists
and attains any given value in $[0,1/2]$.

We end this introduction with some concluding remarks. Our newly established
connection to the $L^{q}$-spectrum puts the results of \citep{MR118891},
\citep{Fu87}, \citep{MR1328700} and \citep{MR2828537,MR3809018,MR3897401,MR4176086}
from an abstract point of view in the right context; for the self-similar
measure $\nu$ under the OSC, the $L^{q}$-spectrum of $\nu$ can
be expressed in terms of the associated pressure function. In the
case of self-conformal measures with overlaps the situation becomes
much more involved; nevertheless, by a result in \citep{Barral2020},
the $L^{q}$-spectrum will also exist in this situation and some explicit
formulae can be provided in the self-similar setting (see \citep{KN21}).

We remark that the operator $\Delta_{\nu,[a,b]}$ has been generalized
by considering the form $(f,g)\mapsto\int_{[a,b]}\nabla_{\mu}f\nabla_{\mu}g\d\mu$
where $\mu$ is an atomless Borel measure on $[a,b]$ with and $\nabla_{\mu}f$
denotes the weak derivative with respect to $\mu$, i.~e\@. $f(x)=f(a)+\int_{[a,x]}\nabla_{\mu}f\d\mu,\:x\in[a,b].$
The associated generalized Kre\u{\i}n-Feller operator $\Delta_{\nu,\mu,\left[a,b\right]}$
and its spectral properties have been studied in \citep{kuechler1986},
\citep{Volkmer05}, \citep{MR2787628,MR2407555,MR2110540,FZ02}. In
\citep{KSW2019} it has been shown that by a simple transformation
of measure spaces by virtue of the distribution function $F_{\mu}$
of $\mu$ the generalized Kre\u{\i}n-Feller operator can be reduced
to the classical one, namely $\Delta_{\nu\circ F_{\mu}^{-1},\Lambda,\left[F_{\mu}(a),F_{\mu}(b)\right]}$,
which has the same spectral properties. For this reason we have restricted
our attention to the case $\mu=\Lambda$.

Finally, we would like to note that the operator $\Delta_{\nu}$ has
also been generalized to higher dimensions in \citep{MR1298682,MR1338787,MR1484417,MR1721826,MR1839473,MR2261337,MR3318648,MR4241300,Ngai_2021}.
In the forthcoming paper \citep{KN2022} we elaborate the ideas of
the present  paper also for higher dimensions. 

Furthermore, the operator $\Delta_{\nu}$ can be generalized to polyharmonic
operators in higher dimensions which has been considered in \citep{MR0482138,MR0278126}.
Polyharmonic operators with respect to self-similar measures under
the OSC have been studied in \citep{MR2092208} using the same approach
as in \citep{MR1328700}. We would like to point out that our variational
approach can also be used to solve the spectral problem in these higher
dimensions as well as for polyharmonic operators; this will be the
content of the forthcoming paper \citep{KN21b}, in which we improve
known bounds on the spectral asymptotics for polyharmonic operators
with respect to any Borel measures on $(0,1)^{d}$, $d\in\N$.

\section{Spectral theory for Kre\u{\i}n-Feller operators}

In this chapter, we develop a form approach for the Kre\u{\i}n-Feller
operator that allows us to rigorously prove the sub- and superadditivity
of the eigenvalue counting function. In doing so, we follow various
ideas as found, for example, in \citep{MR2563669,MR2892328,MR2261337}.
Crucially for our purposes, and in contrast to previous work, we consider
ambient intervals for which the boundary points are not necessarily
in the support of the underlying measure, which moreover may have
a non-vanishing pure point part. As this setup is not covered in the
literature, we decided to provide the details necessary for our purpose.
In particular, we give as a key observation Proposition \ref{prop:dom_vs_H01 Minmax},
which indeed seems to be used implicitly throughout the existing literature.

\subsection{Form approach}

For a fixed finite Borel measure $\nu$ on $(a,b)$ let us define
\[
C_{\nu}([a,b])\coloneqq\left\{ f\in C([a,b])\mid f\:\text{\text{is}\:affine\:linear\:on\:the\:components\:of\:}[a,b]\setminus\supp\nu\right\} ,
\]
where $C([a,b])$ denotes the space of continuous real-valued functions
on $[a,b]$. Note that $\left(C_{\nu}([a,b]),\left\Vert \cdot\right\Vert _{\infty}\right)$
is a Banach space. Further, we define the \emph{Sobolev space }of
weakly differentiable functions
\[
H^{1}(a,b)\coloneqq\left\{ f:[a,b]\rightarrow\R\mid\exists g\in L_{\Lambda}^{2}\left(\left[a,b\right]\right):f(x)=f(a)+\int_{[a,x]}g\d\Lambda,x\in[a,b]\right\} 
\]
together with its closed subspace 
\[
H_{0}^{1}(a,b)\coloneqq\left\{ u\in H^{1}(a,b)\mid u(a)=u(b)=0\right\} .
\]
The function $g$ in the definition of $H^{1}\left(a,b\right)$ is
uniquely determined by $f$ and we write $\nabla_{\Lambda}f\coloneqq g$.
The space $H_{0}^{1}(a,b)$ with the bilinear form 
\[
(u,v)_{H_{0}^{1}}\coloneqq\int_{(a,b)}uv\d\Lambda+\int_{(a,b)}\nabla_{\Lambda}f\nabla_{\Lambda}g\d\Lambda
\]
 defines a Hilbert space. We will see that 
\[
\E_{\nu,[a,b]}(f,g)\coloneqq\int_{(a,b)}\nabla_{\Lambda}f\nabla_{\Lambda}g\d\Lambda
\]
with (Dirichlet) domain $\dom(\E_{\nu})\coloneqq H_{0}^{1}(a,b)\cap C_{\nu}([a,b])$
defines a form for $L_{\nu}^{2}$. First note that $\E_{\nu,[a,b]}$
is well defined. Indeed, for $f\in\dom(\E_{\nu,[a,b]})$ with $f=0$
$\nu$-a.~e\@. we have that $f$ vanishes on $\supp\nu$ by continuity.
Moreover, by definition of $\dom(\E_{\nu,[a,b]})$, we have that $f$
also vanishes on the complement of $\supp\nu$. Therefore, for all
$g\in\dom(\E_{\nu,[a,b]})$, it follows that $\E_{\nu,[a,b]}(f,g)=0.$
In the following we need the decomposition
\[
[a,b]\setminus\supp\nu=A_{1}\cup A_{2}\cup\bigcup_{i\in I}(a_{i},b_{i}),
\]
where $I\subset\N$ and $A_{1}\coloneqq[a,d_{1})$ if $a\notin\supp\nu$
otherwise $A_{1}=\emptyset$, $A_{2}\coloneqq(d_{2},b]$ if $b\notin\supp\nu$
and the intervals $[a,c_{1})$, $(c_{2},b]$, $(a_{i},b_{i})$, $i\in I$,
are mutually disjoint.
\begin{lem}
\label{lem:RepraesentantL2NU}The map $\tau_{\nu,\left[a,b\right]}:H_{0}^{1}(a,b)\rightarrow\dom\left(\E_{\nu}\right)$
\[
\tau_{\nu,\left[a,b\right]}(f)(x)\coloneqq\begin{cases}
f(a_{i})+\frac{f(b_{i})-f(a_{i})}{b_{i}-a_{i}}\left(x-a_{i}\right), & x\in(a_{i},b_{i}),\:i\in I,\\
\frac{f(d_{1})}{d_{1}-a}(x-a), & x\in[a,d_{1}),\:a\notin\supp\nu,\\
\frac{f(d_{2})}{b-d_{2}}\left(b-x\right), & x\in(d_{2},b],\:b\notin\supp\nu.\\
f(x), & x\in\supp\nu
\end{cases}
\]
is surjective, $\tau_{\nu,\left[a,b\right]}(f)=f$ as elements of
$L_{\nu}^{2}$, and we have
\[
\nabla_{\Lambda|_{[a,b]}}\tau_{\nu,\left[a,b\right]}(f)(x)=\begin{cases}
\frac{f(b_{i})-f(a_{i})}{b_{i}-a_{i}}, & x\in(a_{i},b_{i}),\:i\in I,\\
\frac{f(d_{1})}{d_{1}-a}, & x\in[a,d_{1}),\:a\notin\supp\nu,\\
-\frac{f(d_{2})}{b-d_{2}}, & x\in(d_{2},b],\:b\notin\supp\nu,\\
\nabla_{\Lambda}f(x), & x\in\supp\nu.
\end{cases}
\]
\end{lem}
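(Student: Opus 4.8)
The plan is to fix $f\in H_0^1(a,b)$, to write $h\coloneqq\tau_{\nu,[a,b]}(f)$ and let $g$ denote the function given by the second display, and then to establish successively that $h\in\dom(\E_{\nu,[a,b]})$ with $\nabla_\Lambda h=g$, that $h=f$ in $L^2_\nu$, and that $\tau_{\nu,[a,b]}$ is surjective. The first step is to check the regularity of $h$. Continuity of $h$ is immediate: on $\supp\nu$ it coincides with the absolutely continuous function $f$, on each component of $[a,b]\setminus\supp\nu$ it is affine, and the affine pieces take the correct values at the endpoints of those components (which lie in $\supp\nu\cup\{a,b\}$), so the pieces glue. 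By construction $h$ is affine on the components of $[a,b]\setminus\supp\nu$, so $h\in C_\nu([a,b])$, and $h(a)=h(b)=0$ since $f(a)=f(b)=0$. To obtain $h\in H^1(a,b)$ I would first verify $g\in L^2_\Lambda([a,b])$: on $\supp\nu$ one has $g=\nabla_\Lambda f$, and on a gap $(a_i,b_i)$ the identity $f(b_i)-f(a_i)=\int_{(a_i,b_i)}\nabla_\Lambda f\d\Lambda$ together with Cauchy--Schwarz gives
\[
\Big(\tfrac{f(b_i)-f(a_i)}{b_i-a_i}\Big)^2(b_i-a_i)=\frac{\big(\int_{(a_i,b_i)}\nabla_\Lambda f\d\Lambda\big)^2}{b_i-a_i}\le\int_{(a_i,b_i)}(\nabla_\Lambda f)^2\d\Lambda ,
\]
with the same estimate on the (at most two) half-open end intervals after using $f(a)=0$ resp.\ $f(b)=0$; summing over all components and over $\supp\nu$ yields $\int_{[a,b]}g^2\d\Lambda\le\E_{\nu,[a,b]}(f,f)<\infty$, which in passing also shows that $\tau_{\nu,[a,b]}$ contracts the form.

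The heart of the argument is the primitive identity $h(x)=\int_{[a,x]}g\d\Lambda$ for all $x\in[a,b]$; by uniqueness of the weak derivative this simultaneously gives $h\in H^1(a,b)$ and $\nabla_\Lambda h=g$, i.e.\ the displayed derivative formula. The key observation is that on every component $J$ of $[a,b]\setminus\supp\nu$ one has the uniform identity $g|_J\cdot\Lambda(J)=\int_J\nabla_\Lambda f\d\Lambda$ --- for an interior gap this is absolute continuity of $f$, for an end interval it uses $f(a)=0$ resp.\ $f(b)=0$. For $x\in\supp\nu$ every component meeting $[a,x]$ is in fact contained in $[a,x]$, so countable additivity together with $h(a)=0$ and $h=f$ on $\supp\nu$ gives
\[
\int_{[a,x]}g\d\Lambda=\int_{[a,x]\cap\supp\nu}\nabla_\Lambda f\d\Lambda+\sum_{J\subset[a,x]}\int_J\nabla_\Lambda f\d\Lambda=\int_{[a,x]}\nabla_\Lambda f\d\Lambda=f(x)=h(x) ,
\]
and for $x$ lying in a component $J_0$ with left endpoint $c\in\supp\nu\cup\{a\}$ one splits $[a,x]=[a,c]\cup(c,x]$, applies the previous case on $[a,c]$, and adds $\int_{(c,x]}g\d\Lambda=g|_{J_0}(x-c)=h(x)-h(c)$ read off from the affine formula for $h$. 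This yields the primitive identity on all of $[a,b]$.

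It remains to record the last two claims. Since $h=f$ pointwise on $\supp\nu$ and $\nu$ is carried by $\supp\nu$, we get $\tau_{\nu,[a,b]}(f)=f$ as elements of $L^2_\nu$. For surjectivity, given $\phi\in\dom(\E_{\nu,[a,b]})=H_0^1(a,b)\cap C_\nu([a,b])$ it suffices to observe $\tau_{\nu,[a,b]}(\phi)=\phi$: on $\supp\nu$ this is the definition, and on each component of $[a,b]\setminus\supp\nu$ the function $\phi$ is already affine with exactly the boundary values that the formula reads off (using $\phi(a)=\phi(b)=0$ for the end intervals), so the affine interpolant defining $\tau_{\nu,[a,b]}$ reproduces $\phi$; hence $\tau_{\nu,[a,b]}$ is onto $\dom(\E_{\nu,[a,b]})$, and is in fact the linear projection onto this subspace. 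I expect the only genuinely delicate point to be the bookkeeping in the primitive identity: one has to handle the countably many gaps of $\supp\nu$ together with the one or two half-open end intervals simultaneously, and since $\supp\nu$ may be a fat Cantor-type set the argument should proceed through countable additivity of $\int\nabla_\Lambda f\d\Lambda$ rather than through any pointwise notion of derivative on $\supp\nu$; everything else is routine.
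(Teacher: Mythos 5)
Your proof is correct, and it reaches the conclusion by a somewhat different route than the paper. The paper argues by approximation: it first modifies $f$ only on the finitely many gaps $(a_i,b_i)$ with $i\leq n$, obtaining functions $\tau_n(f)\in H_0^1(a,b)$ with explicit derivatives $g_n$, and then passes to the limit — uniform convergence $\tau_n(f)\to\tau_{\nu,[a,b]}(f)$ from the uniform continuity of $f$, and $g_n\to g$ in $L_\Lambda^2$ by dominated convergence — so that the primitive identity for $\tau_{\nu,[a,b]}(f)$ is inherited in the limit. You instead verify the primitive identity $\tau_{\nu,[a,b]}(f)(x)=\int_{[a,x]}g\,\mathrm d\Lambda$ directly, by decomposing $[a,x]$ into $[a,x]\cap\supp\nu$ and the gaps it contains and using countable additivity together with $\int_J\nabla_\Lambda f\,\mathrm d\Lambda=g|_J\Lambda(J)$ on each gap (with $f(a)=f(b)=0$ on the end intervals); the crucial square-integrability of $g$ is obtained in both arguments by the same Cauchy--Schwarz estimate over the gaps. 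Your route avoids the approximation machinery entirely, at the price of slightly more careful bookkeeping in the additivity step (which you handle correctly, including the observation that every gap meeting $[a,x]$ with $x\in\supp\nu$ is contained in $[a,x]$), and it has the small bonus that continuity of $\tau_{\nu,[a,b]}(f)$ need not be argued separately: your opening claim that continuity is ``immediate'' is the only slightly glib point (gaps can accumulate, so gluing affine pieces needs the two-sided endpoint convergence argument or uniform continuity of $f$ as in the paper), but it is in fact redundant, since absolute continuity follows from the primitive identity you establish without using it. You also make the surjectivity explicit by checking that $\tau_{\nu,[a,b]}$ restricts to the identity on $\dom(\E_{\nu,[a,b]})$, a point the paper's proof leaves implicit.
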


\begin{proof}
We follow the arguments as outlined in \citep{MR2563669}. For $f\in H_{0}^{1}(a,b)$,
we set\textbf{ }
\[
\tau_{n}(f)(x)\coloneqq\begin{cases}
f(a_{i})+\frac{f(b_{i})-f(a_{i})}{b_{i}-a_{i}}\left(x-a_{i}\right), & x\in(a_{i},b_{i}),\:i\in I,i\leq n\\
\frac{f(d_{1})}{d_{1}-a}(x-a), & x\in[a,d_{1}),\:a\notin\supp\nu,\\
\frac{f(d_{2})}{b-d_{2}}\left(b-x\right), & x\in(d_{2},b],\:b\notin\supp\nu,\\
f(x), & \text{otherwise},
\end{cases}
\]
and choose a $L_{\Lambda}^{2}$ representative of $\nabla_{\Lambda}f$
to define
\[
g_{n}(x)\coloneqq\begin{cases}
\frac{f(b_{i})-f(a_{i})}{b_{i}-a_{i}}, & x\in(a_{i},b_{i}),\:i\in I,i\leq n\\
\frac{f(d_{1})}{d_{1}-a}, & x\in[a,d_{1}),\:a\notin\supp\nu,\\
-\frac{f(d_{2})}{b-d_{2}}, & x\in(d_{2},b],\:b\notin\supp\nu,\\
\nabla_{\Lambda}f(x), & \text{otherwise}.
\end{cases}
\]
Then we have $\tau_{n}\left(f\right)\in H_{0}^{1}(a,b)$ with $\nabla_{\Lambda}\tau_{n}(f)=g_{n}$
a.~e\@. and clearly we have $\tau_{n}(f)=f$ in $L_{\nu}^{2}$.
Since $f$ is uniformly continuous on the compact set $[a,b]$, for
$n$ tending to infinity, we have $\tau_{n}(f)\rightarrow\tau_{\nu,\left[a,b\right]}(f)\in C_{\nu}([a,b])$
uniformly on $[a,b]$. Now, for each $x\in[a,b]$, set $g(x)\coloneqq\lim_{n\rightarrow\infty}g_{n}(x).$
Then by Cauchy-Schwarz inequality, we have 
\begin{align*}
\int g^{2}\d\Lambda & =\sum_{i\in I}\int_{(a_{i},b_{i})}g^{2}\d\Lambda+\int_{\supp\nu}g^{2}\d\Lambda=\frac{\left(f(b_{i})-f(a_{i})\right)^{2}}{b_{i}-a_{i}}+\int_{\supp\nu}\left(\nabla_{\Lambda}f\right)^{2}\d\Lambda\\
 & =\sum_{i\in I}\frac{\left(\int_{(a_{i},b_{i})}\nabla_{\Lambda}f\d\Lambda\right)^{2}}{\left(b_{i}-a_{i}\right)}+\int_{\supp\nu}\left(\nabla_{\Lambda}f\right)^{2}\d\Lambda\\
 & \leq\sum_{i\in I}\int_{(a_{i},b_{i})}\left(\nabla_{\Lambda}f\right)^{2}\d\Lambda+\int_{\supp\nu}\left(\nabla_{\Lambda}f\right)^{2}\d\Lambda=\int\left(\nabla_{\Lambda}f\right)^{2}\d\Lambda<\infty
\end{align*}
implying $g\in L_{\Lambda|_{(a,b)}}^{2}.$ Furthermore, for all $n\in\N$,
we have
\[
\left|g_{n}(x)\right|^{2}\leq\left(\left|g_{n}(x)\right|+\left|\nabla_{\Lambda}f(x)\right|\right)^{2}\leq4\left(\left|g_{n}(x)\right|^{2}+\left|\nabla_{\Lambda}f(x)\right|^{2}\right)
\]
and by the dominated convergence theorem we conclude $g_{n}\rightarrow g$
in $L_{\Lambda|_{(a,b)}}^{2}.$ This implies $\tau_{\nu,\left[a,b\right]}(f)\in\dom\left(\E_{\nu}\right)$
and $\nabla_{\Lambda}\tau_{\nu,\left[a,b\right]}(f)=g.$
\end{proof}
\begin{prop}
\label{prop:DenseRichtig}The set $\dom(\E_{\nu,[a,b]})$ is dense
in $L_{\nu}^{2}$.
\end{prop}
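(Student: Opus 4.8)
The plan is to deduce the claim from Lemma~\ref{lem:RepraesentantL2NU} by first reducing the density of $\dom(\E_{\nu,[a,b]})$ to the density of the larger, more familiar space $H_0^1(a,b)$ in $L_\nu^2$. That lemma provides a map $\tau_{\nu,[a,b]}\colon H_0^1(a,b)\to\dom(\E_{\nu,[a,b]})$ with $\tau_{\nu,[a,b]}(f)=f$ as elements of $L_\nu^2$. Hence every class in $L_\nu^2$ represented by some $f\in H_0^1(a,b)$ is also represented by $\tau_{\nu,[a,b]}(f)\in\dom(\E_{\nu,[a,b]})$, so the image of $\dom(\E_{\nu,[a,b]})$ in $L_\nu^2$ contains the image of $H_0^1(a,b)$, and it suffices to prove that the latter is dense.

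For this I would first invoke the standard fact that, $\nu$ being a finite Borel measure on the locally compact, second countable space $(a,b)$, the space $C_c((a,b))$ of continuous functions with compact support in $(a,b)$ is dense in $L_\nu^2$; this follows from inner regularity of $\nu$ together with Urysohn's lemma, and holds irrespective of whether $\nu$ has atoms or whether $\supp\nu$ reaches the endpoints. It then remains to approximate each $\phi\in C_c((a,b))$ by elements of $H_0^1(a,b)$. Since $\nu$ is finite, uniform approximation suffices, and a continuous, piecewise affine interpolant $\phi_n$ of $\phi$ on a fine partition of a compact subinterval of $(a,b)$, extended by $0$, does the job: $\phi_n\to\phi$ uniformly, $\phi_n$ vanishes at $a$ and $b$, and $\phi_n(x)=\int_{[a,x]}\phi_n'\d\Lambda$ with $\phi_n'$ a bounded step function, so $\phi_n\in H_0^1(a,b)$. (Alternatively one may mollify $\phi$ to land in $C_c^\infty((a,b))\subset H_0^1(a,b)$.)

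I do not expect a serious obstacle: the conceptual content is entirely contained in Lemma~\ref{lem:RepraesentantL2NU}, which already absorbs the unusual features of the present setup (boundary points possibly outside $\supp\nu$, and $\nu$ possibly carrying a pure point part), while the remaining steps are routine real analysis. The only points meriting a line of care are the density of $C_c((a,b))$ in $L_\nu^2$ for a general finite Borel measure and the verification that the piecewise affine (or mollified) approximants genuinely satisfy the integral representation defining $H_0^1(a,b)$ — both straightforward.
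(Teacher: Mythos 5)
Your proposal is correct and follows essentially the same route as the paper: the paper likewise uses Lemma \ref{lem:RepraesentantL2NU} to transfer compactly supported continuous functions (approximated there via $C_{c}^{1}(a,b)$, in your version via piecewise affine interpolants or mollification) into $\dom(\E_{\nu,[a,b]})$, combined with the density of $C_{c}(a,b)$ in $L_{\nu}^{2}$ and the fact that uniform convergence suffices since $\nu$ is finite. The only cosmetic difference is that you first reduce to density of $H_{0}^{1}(a,b)$ before approximating, whereas the paper applies $\tau_{\nu,[a,b]}$ directly to the smooth approximants.
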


\begin{proof}
Here we follow the arguments similar to \citep{MR2563669}. Let $C_{c}(a,b)$
denote the space of continuous functions from $(a,b)$ to $\R$ with
compact support and let $C_{c}^{1}(a,b)$ be the space of differentiable
functions from $(a,b)$ to $\R$ with compact support. By Lemma \ref{lem:RepraesentantL2NU}
every $f\in C_{c}^{1}\left(a,b\right)$ possesses a representative
$\tau_{\nu,\left[a,b\right]}(f)\in\dom(\E_{\nu,[a,b]})$, using $f(a)=f(b)=0.$
It is well known that $C_{c}^{1}\left(a,b\right)$ lies dense in $C_{c}\left(a,b\right)$
with respect to the uniform norm and $C_{c}\left(a,b\right)$ in $L_{\nu}^{2}$,
hence we obtain that $\dom(\E_{\nu,[a,b]})$ is dense in $L_{\nu}^{2}.$
\end{proof}
\begin{prop}
\label{Prop:HilbertspaceRichtig}The set $\dom(\E_{\nu})$ equipped
with the inner product 
\[
(f,g)_{\E_{\nu}}\coloneqq\langle f,g\rangle_{\nu}+\E_{\nu}(f,g),
\]
 defines a Hilbert space, i.~e\@. $\E_{\nu}$ is closed with respect
to $L_{\nu}^{2}$.
\end{prop}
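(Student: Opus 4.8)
We must show that $\dom(\E_\nu)$, equipped with the inner product $(f,g)_{\E_\nu} = \langle f,g\rangle_\nu + \E_\nu(f,g)$, is complete — equivalently, that the quadratic form $\E_\nu$ is closed on $L^2_\nu$.

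**Plan.** The natural strategy is to take a Cauchy sequence $(f_k)$ in $(\dom(\E_\nu), (\cdot,\cdot)_{\E_\nu})$ and exhibit a limit inside $\dom(\E_\nu)$. First I would unpack what Cauchyness gives: $(f_k)$ is Cauchy in $L^2_\nu$, and $(\nabla_\Lambda f_k)$ is Cauchy in $L^2_{\Lambda|_{(0,1)}}$ (here one uses $\dom(\E_\nu)\subset H^1_0(a,b)$, so each $f_k$ genuinely has an $L^2_\Lambda$-weak derivative). Call the $L^2_\Lambda$-limit of the derivatives $g$. The key point is that, because each $f_k$ vanishes at the endpoint $a$, the fundamental-theorem-of-calculus representation $f_k(x) = \int_{[a,x]} \nabla_\Lambda f_k \, \d\Lambda$ together with Cauchy–Schwarz gives $|f_k(x) - f_\ell(x)| \le \sqrt{x-a}\,\|\nabla_\Lambda f_k - \nabla_\Lambda f_\ell\|_{L^2_\Lambda} \le \|\nabla_\Lambda f_k - \nabla_\Lambda f_\ell\|_{L^2_\Lambda}$, so in fact $(f_k)$ converges \emph{uniformly} on $[a,b]$ to some continuous $f$ with $f(x) = \int_{[a,x]} g\,\d\Lambda$, i.e.\ $f \in H^1_0(a,b)$ and $\nabla_\Lambda f = g$. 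Uniform convergence also passes the affine-on-complement-components property through: each $f_k \in C_\nu([a,b])$, and a uniform limit of functions that are affine linear on each component of $[a,b]\setminus\supp\nu$ is again affine linear there, so $f \in C_\nu([a,b])$ and hence $f\in \dom(\E_\nu)$.

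**Finishing and identifying the limit.** It then remains to check convergence in the $\E_\nu$-norm. Convergence of the form part $\E_\nu(f_k - f, f_k - f) = \|\nabla_\Lambda f_k - g\|^2_{L^2_\Lambda} \to 0$ is immediate from the choice of $g$. For the $L^2_\nu$ part, the uniform convergence $f_k \to f$ on the compact set $[a,b]$ gives $\|f_k - f\|_\nu^2 = \int |f_k-f|^2\,\d\nu \le \nu([a,b]) \cdot \|f_k - f\|_\infty^2 \to 0$ since $\nu$ is finite. (One should note the limit $f$ obtained this way is automatically consistent with the $L^2_\nu$-Cauchy limit, again by uniform convergence dominating $L^2_\nu$-convergence, so there is no ambiguity.) Hence $(f,g)_{\E_\nu}$-convergence holds and $\dom(\E_\nu)$ is complete.

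**Main obstacle.** The only genuinely delicate point is verifying that the candidate limit $f$ actually lies in $\dom(\E_\nu)$ — specifically the $C_\nu([a,b])$ membership, i.e.\ that affineness on each complementary component survives the limit; this is where the upgrade from $L^2$-convergence to \emph{uniform} convergence (afforded by the Poincaré-type pointwise bound above, which relies crucially on the Dirichlet boundary condition $f_k(a)=0$) does the real work. A secondary technical care is that the complementary intervals may accumulate (the index set $I$ can be infinite) and that the ambient endpoints $a,b$ need not be in $\supp\nu$, but since affineness is a pointwise-local condition on each interval and uniform convergence is uniform over all of them simultaneously, this causes no difficulty. One can either argue this directly, or observe that $\dom(\E_\nu) = H^1_0(a,b)\cap C_\nu([a,b])$ is the intersection of two sets each closed under uniform limits, with the $H^1_0$ part handled by the Cauchy-derivative argument above.
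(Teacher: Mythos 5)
Your proof is correct and follows essentially the same route as the paper's: take an $(\cdot,\cdot)_{\E_{\nu}}$-Cauchy sequence, extract the $L_{\Lambda}^{2}$-limit of the derivatives, upgrade to uniform convergence on $[a,b]$ via the fundamental theorem of calculus and Cauchy--Schwarz, and verify that the uniform limit remains in $H_{0}^{1}(a,b)\cap C_{\nu}([a,b])$ with the correct weak derivative. The only (harmless) difference is that you use the Dirichlet condition $f_{k}(a)=0$ to obtain uniform convergence of the $f_{k}$ directly, whereas the paper first shows $u_{n}-u_{n}(a)$ converges uniformly and then recovers the boundary constants $u_{n}(a)$ from the $L_{\nu}^{2}$-limit.
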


\begin{proof}
Here, we follow ideas of \citep[Lemma 6.4.]{MR2892328}. Let $(u_{n})_{n\in\N}$
be a Cauchy sequence in $\dom(\E_{\nu,[a,b]})$ with respect to the
norm induced by $(\cdot,\cdot)_{\E_{\nu,[a,b]}}$, i.~e\@. 
\[
\left\Vert u_{n}-u_{m}\right\Vert _{L_{\nu}^{2}}^{2}+\left\Vert \nabla_{\Lambda}u_{n}-\nabla_{\Lambda}u_{m}\right\Vert _{L_{\Lambda}^{2}}^{2}\rightarrow0
\]
for $n,m\rightarrow\infty.$ Since $L_{\nu}^{2}$ and $L_{\Lambda}^{2}$
are Hilbert spaces, there exist $u\in L_{\nu}^{2}$ and $f\in L_{\Lambda}^{2}$
such that
\[
\lim_{n\rightarrow\infty}\left\Vert u_{n}-u\right\Vert _{L_{\nu}^{2}}=0\text{\,and\,}\lim_{n\rightarrow\infty}\left\Vert f-\nabla_{\Lambda}u_{n}\right\Vert _{L_{\Lambda}^{2}}=0.
\]
Using Cauchy-Schwarz inequality, we find uniformly for all $x\in[a,b]$,
\begin{align*}
\left|u_{n}(x)-u_{n}(a)-\underbrace{\int_{[a,x]}f(z)\d\Lambda(z)}_{\eqqcolon g(x)}\right| & =\left|\int_{[a,x]}\left(f(z)-\nabla_{\Lambda}u_{n}(z)\right)\d\Lambda(z)\right|\\
 & \leq\sqrt{\Lambda([a,b])}\left\Vert \nabla_{\Lambda}u_{n}-f\right\Vert _{L_{\Lambda}^{2}}\rightarrow0,
\end{align*}
for $n\rightarrow\infty.$ In particular, $u_{n}-u_{n}(a)\rightarrow g$
in $L_{\nu}^{1}$ and therefore $u_{n}(a)=\left(u_{n}(a)-u_{n}\right)+u_{n}\rightarrow u-g$
in $L_{\nu}^{1}$. But this shows that the sequence $(u_{n}(a))_{n\in\N}$
converge to $u_{0}\coloneqq\nu([a,b])^{-1}\int u-g\d\nu.$ Hence,
we conclude that $u_{n}$ convergence uniformly to $h\coloneqq u_{0}+g\in H^{1}(a,b)$
with $\nabla_{\Lambda}h=f$, in particular, uniform convergence guarantees
that $h(a)=h(b)=0$ and $h\in C_{\nu}([a,b])$.
\end{proof}
\begin{lem}
\label{lem:ContinuousEmbedding}There exists a constant $C$ such
that for every $f\in H^{1}(a,b)$
\[
\left\Vert f\right\Vert _{\infty}\leq C\left(\left\Vert \nabla_{\Lambda}f\right\Vert _{L_{\Lambda}^{2}(a,b)}^{2}+\left\Vert f\right\Vert _{L_{\nu}^{2}}^{2}\right)^{1/2}
\]
 and, for all $f\in H_{0}^{1}(a,b)$, 
\[
\int_{(a,b)}f^{2}\d\nu\leq\nu\left(\left[a,b\right]\right)(b-a)\int_{(a,b)}\left(\nabla_{\Lambda}f\right)^{2}\d\Lambda.
\]
\end{lem}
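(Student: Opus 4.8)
The two assertions are standard one-dimensional Sobolev estimates, and I would treat them separately, dealing with the more delicate sup-norm bound first. For $f \in H^{1}(a,b)$ write, for any $x,y \in [a,b]$,
\[
f(x) = f(y) + \int_{[y,x]} \nabla_{\Lambda} f \, \d\Lambda,
\]
so that by Cauchy--Schwarz $|f(x) - f(y)| \le \sqrt{b-a}\,\|\nabla_{\Lambda} f\|_{L_{\Lambda}^{2}(a,b)}$. The plan is to choose $y$ to be a point where $|f(y)|$ is controlled by the $L_{\nu}^{2}$-norm: since $\nu$ is a finite nonzero measure, there exists (by the mean-value property of the integral, or simply because not all values of $f^{2}$ on $\supp\nu$ can exceed the average) a point $y_{0} \in \supp\nu \subset [a,b]$ with $f(y_{0})^{2} \le \nu([a,b])^{-1} \int f^{2}\,\d\nu = \nu([a,b])^{-1}\|f\|_{L_{\nu}^{2}}^{2}$. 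Combining the two displays and using $(p+q) \le \sqrt{2}\sqrt{p^{2}+q^{2}}$ gives
\[
\|f\|_{\infty} \le \sqrt{b-a}\,\|\nabla_{\Lambda} f\|_{L_{\Lambda}^{2}(a,b)} + \nu([a,b])^{-1/2}\|f\|_{L_{\nu}^{2}} \le C\bigl(\|\nabla_{\Lambda} f\|_{L_{\Lambda}^{2}(a,b)}^{2} + \|f\|_{L_{\nu}^{2}}^{2}\bigr)^{1/2},
\]
with $C$ depending only on $b-a$ and $\nu([a,b])$, which are fixed. Here the one subtlety worth stating carefully is the existence of the good point $y_{0}$: it is immediate if one allows $y_{0}$ anywhere in $[a,b]$ provided $f$ is $\nu$-integrable there, but since $f$ is continuous and genuinely defined pointwise on all of $[a,b]$ this causes no trouble.

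For the second inequality, let $f \in H_{0}^{1}(a,b)$, so $f(a) = 0$. Then for every $x \in [a,b]$,
\[
f(x) = \int_{[a,x]} \nabla_{\Lambda} f \, \d\Lambda, \qquad f(x)^{2} \le (x-a)\int_{(a,b)} (\nabla_{\Lambda} f)^{2}\,\d\Lambda \le (b-a)\int_{(a,b)} (\nabla_{\Lambda} f)^{2}\,\d\Lambda
\]
by Cauchy--Schwarz. Integrating this pointwise bound against $\nu$ over $(a,b)$ yields
\[
\int_{(a,b)} f^{2}\,\d\nu \le \nu([a,b])(b-a)\int_{(a,b)} (\nabla_{\Lambda} f)^{2}\,\d\Lambda,
\]
which is exactly the claimed Poincaré-type estimate.

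The argument is entirely elementary; there is no real obstacle. The only place demanding a word of care is the selection of the base point $y_{0} \in \supp\nu$ in the first part (one must invoke that $\nu$ is finite and nontrivial so the average $\nu([a,b])^{-1}\|f\|_{L_{\nu}^{2}}^{2}$ is a genuine finite number and is attained-or-undershot at some point), and noting that the constants produced are independent of $f$ but do depend on the ambient interval — which is harmless here since the interval is fixed, and in later applications the interval-dependence will be tracked explicitly.
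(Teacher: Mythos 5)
Your proof is correct. For the second inequality you argue exactly as the paper does: write $f(x)=\int_{[a,x]}\nabla_{\Lambda}f\,\d\Lambda$, apply Cauchy--Schwarz pointwise, and integrate against $\nu$; there is nothing to add. For the first inequality the paper gives no argument at all but simply cites \citep[Lemma 1.4]{MR2563669}, whereas you supply a self-contained proof: control the oscillation $|f(x)-f(y)|\leq\sqrt{b-a}\,\Vert\nabla_{\Lambda}f\Vert_{L_{\Lambda}^{2}}$ by Cauchy--Schwarz and anchor $f$ at a point $y_{0}\in\supp\nu$ where $f(y_{0})^{2}\leq\nu([a,b])^{-1}\Vert f\Vert_{L_{\nu}^{2}}^{2}$. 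Your justification of the existence of $y_{0}$ is sound: if $f^{2}$ exceeded its $\nu$-average at every point of $\supp\nu$, then $f^{2}-\nu([a,b])^{-1}\int f^{2}\d\nu$ would be strictly positive $\nu$-a.e.\ with vanishing $\nu$-integral, forcing $\nu=0$; so the argument implicitly uses, and should state, that $\nu$ is a nonzero finite measure (a standing assumption in the paper, since otherwise $L_{\nu}^{2}$ is trivial). Your constant $C$ depends on $b-a$ and $\nu([a,b])$, which is all the lemma requires (uniformity in $f$ only). In short, your route buys a complete elementary proof of the embedding that the paper outsources to the literature, at no extra cost; the Poincaré-type estimate coincides with the paper's computation verbatim.
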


\begin{proof}
The first inequality has been proved in \citep[Lemma 1.4.]{MR2563669}.
Further, for fixed $f\in H_{0}^{1}(a,b)$, we have
\begin{align*}
\int_{(a,b)}f(x)^{2}\d\nu(x) & =\int\left(\left(\int_{(a,x)}\left(\nabla_{\Lambda}f\right)\d\Lambda\right)^{2}\right)\d\nu(x)\\
 & \leq\int\left((x-a)\int_{(a,x)}\left(\nabla_{\Lambda}f\right)^{2}\d\Lambda\right)\d\nu(x)\\
 & \leq(b-a)\nu\left(\left[a,b\right]\right)\int_{(a,b)}\left(\nabla_{\Lambda}f\right)^{2}\d\Lambda.
\end{align*}
\end{proof}
\begin{rem}
Due to the above lemma, we can equivalently consider $\dom\left(\mathcal{E}_{\nu}\right)$
as a Hilbert space whose inner product is given by $\mathcal{E}_{\nu}$.
\end{rem}

\begin{prop}
\label{Prop:CompactinclusionRichtig} The inclusion from the Hilbert
space $\left(\dom\left(\E_{\nu}\right),\mathcal{E}_{\nu}\right)$
into $L_{\nu}^{2}$ is compact.
\end{prop}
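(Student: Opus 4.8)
The plan is to establish compactness of the embedding $\iota:(\dom(\E_\nu),\E_\nu)\hookrightarrow L^2_\nu$ via a two-step reduction: first I would pass through the intermediate space $H^1_0(a,b)$ (or rather the full $H^1(a,b)$), and then use the classical Rellich--Kondrachov compactness of $H^1(a,b)\hookrightarrow C([a,b])$ in one dimension. Concretely, let $(u_n)_{n\in\N}$ be a sequence in $\dom(\E_\nu)$ bounded in the $\E_\nu$-norm, i.e. $\E_\nu(u_n,u_n)+\langle u_n,u_n\rangle_\nu\le M$ for all $n$. Since $\dom(\E_\nu)=H^1_0(a,b)\cap C_\nu([a,b])$, each $u_n$ lies in $H^1_0(a,b)$, and the first inequality of Lemma \ref{lem:ContinuousEmbedding} gives $\|u_n\|_\infty\le C(\|\nabla_\Lambda u_n\|_{L^2_\Lambda}^2+\|u_n\|_{L^2_\nu}^2)^{1/2}\le C\sqrt M$, so the sequence is uniformly bounded; moreover $u_n(x)-u_n(y)=\int_{[y,x]}\nabla_\Lambda u_n\,\d\Lambda$ together with Cauchy--Schwarz yields $|u_n(x)-u_n(y)|\le\|\nabla_\Lambda u_n\|_{L^2_\Lambda}\,|x-y|^{1/2}\le\sqrt M\,|x-y|^{1/2}$, so the sequence is uniformly equicontinuous. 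Hence by the Arzelà--Ascoli theorem there is a subsequence $(u_{n_k})$ converging uniformly on $[a,b]$ to some $h\in C([a,b])$.

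Next I would upgrade this uniform limit to convergence in $L^2_\nu$. Uniform convergence on $[a,b]$ together with finiteness of $\nu$ immediately gives $\|u_{n_k}-h\|_{L^2_\nu}^2\le\nu([a,b])\,\|u_{n_k}-h\|_\infty^2\to0$, so $u_{n_k}\to h$ in $L^2_\nu$. This shows that $\iota$ maps bounded sequences to sequences with $L^2_\nu$-convergent subsequences, which is precisely the statement that $\iota$ is a compact operator. (Should one wish to record that $h$ again lies in $\dom(\E_\nu)$, this follows exactly as in the proof of Proposition \ref{Prop:HilbertspaceRichtig}: the uniform limit of functions in $H^1_0(a,b)\cap C_\nu([a,b])$ that are additionally bounded in $H^1$-norm is again in that space — but for compactness of the embedding into $L^2_\nu$ this is not needed.)

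The only mild subtlety — and the step I expect to require the most care — is the passage from $H^1$-boundedness in the $\E_\nu$-norm to an honest $H^1(a,b)$-bound, since the $\E_\nu$-norm only controls $\|\nabla_\Lambda u\|_{L^2_\Lambda}$ and $\|u\|_{L^2_\nu}$ rather than $\|u\|_{L^2_\Lambda}$. This is exactly what Lemma \ref{lem:ContinuousEmbedding} is designed to circumvent: its first inequality bounds $\|u\|_\infty$ — and hence, trivially, $\|u\|_{L^2_\Lambda}$ — directly in terms of $\|\nabla_\Lambda u\|_{L^2_\Lambda}$ and $\|u\|_{L^2_\nu}$. So in fact the $\E_\nu$-norm dominates the full $H^1(a,b)$-norm, and no further argument is required; the compactness is then a routine consequence of Arzelà--Ascoli as above. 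An alternative, equally short route is to invoke directly that the inclusion $H^1_0(a,b)\hookrightarrow C([a,b])$ is compact (standard one-dimensional Rellich--Kondrachov), observe via Lemma \ref{lem:ContinuousEmbedding} that the $\E_\nu$-unit ball is bounded in $H^1_0(a,b)$, hence relatively compact in $C([a,b])$, and finally that $C([a,b])\hookrightarrow L^2_\nu$ is continuous because $\nu$ is finite — composing these gives the claim.
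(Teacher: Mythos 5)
Your proposal is correct and follows essentially the same route as the paper: uniform boundedness via the first inequality of Lemma \ref{lem:ContinuousEmbedding}, equicontinuity via Cauchy--Schwarz applied to $\nabla_{\Lambda}u$, Arzelà--Ascoli, and then the continuous embedding of $C([a,b])$ into $L_{\nu}^{2}$ coming from finiteness of $\nu$. The only difference is cosmetic (sequences versus bounded sets), so nothing further is needed.
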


\begin{proof}
Let $U\subset\dom\left(\E_{\nu,[a,b]}\right)$ be a bounded set with
respect to $(\,,\,)_{\E_{\nu,[a,b]}}$ which means there exists $c>0$
such that
\[
\sup_{f\in U}\left(\int_{(a,b)}\left(\nabla_{\Lambda}f\right)^{2}\d\Lambda\right)^{1/2}\leq c.
\]
We have to show $U$ is pre-compact in $L_{\nu}^{2}.$ This will be
done by showing that $U$ is bounded in $C([a,b])$ with respect to
$\left\Vert \cdot\right\Vert _{\infty}$ and equicontinuous. The Arzelà--Ascoli
Theorem then guarantees that $U$ is pre-compact in $C([a,b])$. Hence
$C([a,b])$ is continuously embedded in $L_{\nu}^{2}$ noting that
$\left\Vert f\right\Vert _{L_{\nu}^{2}}\leq\nu([a,b])\left\Vert f\right\Vert _{\infty}$,
for all $f\in C([a,b])$. The set $U$ is also pre-compact in $L_{\nu}^{2}$.
Using the Cauchy-Schwarz inequality, for all $x,y\in[a,b]$, we have
\begin{align*}
\left|f(x)-f(y)\right| & =\left|\int_{(x,y)}\nabla_{\Lambda}f\d\Lambda\right|\leq\left|x-y\right|^{1/2}\left(\,\int_{(a,b)}\left(\nabla_{\Lambda}f\right)^{2}\d\Lambda\right)^{1/2}\leq c\left|x-y\right|^{1/2}.
\end{align*}
Hence, $U$ is equicontinuous. Lemma \ref{lem:ContinuousEmbedding}
implies that $U$ is also bounded in $\left(C([a,b]),\left\Vert \cdot\right\Vert _{\infty}\right).$
\end{proof}
If $\nu\left(\left\{ a,b\right\} \right)=0$ then Proposition \ref{Prop:HilbertspaceRichtig}
and Proposition \ref{prop:DenseRichtig} show that the densely defined
form $\E_{\nu,[a,b]}$ is closed. Hence, by \citep[Theorem 4.4.2]{davies_1995}
or \citep[Theorem 2.1., p. 322]{MR0407617} there exists a non-negative
self-adjoint operator $\Delta_{\nu,[a,b]}$ which is characterized
as follows; $f\in\dom(\E_{\nu,[a,b]})$ lies in the domain $\dom\left(\Delta_{\nu,[a,b]}\right)$
of the Laplacian $\Delta_{\nu,[a,b]}$, if and only if there exists
$h\in L_{\nu}^{2}$ such that
\[
\E_{\nu,[a,b]}(g,f)=\langle g,h\rangle_{\nu},\;\;\mbox{ for all }\;\;g\in\dom(\E_{\nu,[a,b]})
\]
with $h\eqqcolon\Delta_{\nu,\left[a,b\right]}f\eqqcolon\Delta_{\nu}f.$

Observe that $H_{0}^{1}(a,b)$ is compactly embedded in $L_{\nu}^{2}$,
consequently the closed subspace $\dom(\E_{\nu})$ is also compactly
embedded in $L_{\nu}^{2}.$ Consequently, we conclude (see \citep[Theorem 4.5.1. and p. 258]{MR1193032})
that there exists an orthonormal system of eigenfunctions of $\Delta_{\nu}$
of $L_{\nu}^{2}$ with non-negative eigenvalues $\left(\lambda_{\nu}^{n}\right)_{n\in\N}$
tending to $\infty$ given $\supp\nu$ is not finite.

\subsection{Spectral problem, min-max principle, and sub- and superadditivity}
\begin{defn}
\label{def:WeakEV and EF} An element $f\in\dom(\E_{\nu})\setminus\left\{ 0\right\} $
is called \emph{eigenfunction }of $\E_{\nu}$ with \emph{eigenvalue}
$\lambda$, if for all $g\in\dom(\E_{\nu})$, we have
\[
\E_{\nu}(f,g)=\lambda\cdot\langle f,g\rangle_{\nu}.
\]
We denote the number of eigenvalues of $\E_{\nu}$ not exceeding $x$
by $N_{\nu}\left(x\right)$ and refer to $N_{\nu}=N_{\nu,[a,b]}$
as the \emph{eigenvalue counting function}.
\end{defn}

\begin{rem}
Note that for any fixed $c>0$, $\lambda$ is eigenvalue of $\E_{\nu}$
if and only if $c\lambda$ is an eigenvalue of $\E_{c\nu}$. Therefore,
the spectral dimension does not change after rescaling the measure
and we may assume that $\nu$ is Borel probability measure.
\end{rem}

\begin{rem}
Let \textbf{$\E$} be a closed form with domain \textbf{$\dom\left(\E\right)$}
densely defined on $L_{\nu}^{2}$, in particular, \textbf{$\dom\left(\E\right)$
}defines a Hilbert space with respect to $(f,g)_{\E}\coloneqq\left\langle f,g\right\rangle {}_{\nu}+\E(f,g)$,
and assume that the inclusion from $\left(\dom\left(\E\right),\left(\cdot,\cdot\right)_{\E}\right)$
into $L_{\nu}^{2}$ is compact. Then the \emph{Poincaré--Courant--Fischer--Weyl
min-max principle} is applicable, that is for the $i$-th eigenvalue
$\lambda^{i}\left(\E\right)$ of \textbf{$\E$}, $i\in\N$, we have
(see also \citep{davies_1995,MR1243717}) 
\begin{align*}
\lambda^{i}\left(\E\right) & =\inf\left\{ \sup\left\{ \frac{\E(\psi,\psi)}{\langle\psi,\psi\rangle_{\nu}}:\psi\in G^{\star}\right\} \colon G<_{i}\dom\left(\E\right)\right\} ,
\end{align*}
where $J<_{i}H$ means that $J$ is a linear subspace of the Hilbert
space $H$ and the vector space dimension of $J$ is equal to $i\in\N$
and we set $J^{\star}\coloneqq J\setminus\left\{ 0\right\} $. 
\end{rem}

Let $P\coloneqq\left\{ \left(c_{k},c_{k+1}\right]\colon k=1,\ldots,n-1\right\} $
a $\nu$-partition of $\left(a,b\right]$, $n\in\N_{\geq2}$, $a=c_{1}<c_{2}\cdots<c_{n}=b$.
For a subinterval $\left(c_{k},c_{k+1}\right)$ we may define the
restricted form $\E_{\nu,\left[c_{k},c_{k+1}\right]}$ on $L_{\nu|_{\left[c_{k},c_{k+1}\right]}}^{2}$
with domain $\dom\left(\E_{\nu,\left[c_{k},c_{k+1}\right]}\right)\coloneqq H_{0}^{1}\left(c_{k},c_{k+1}\right)\cap C_{\nu}\left(c_{k},c_{k+1}\right)$
by 
\[
\E_{\nu,\left[c_{k},c_{k+1}\right]}\left(f,g\right)=\int_{c_{k}}^{c_{k+1}}\nabla_{\Lambda}f\nabla_{\Lambda}g\d\Lambda.
\]
Consider the natural direct sum Hilbert space
\[
L_{\nu,P}^{2}\coloneqq\bigoplus_{k=1}^{n}L_{\nu|_{\left[c_{k},c_{k+1}\right]}}^{2}
\]
with inner product
\[
\left\langle \left(f_{k}\right),\left(g_{k}\right)\right\rangle _{\nu,P}\coloneqq\sum_{k=1}^{n}\left\langle f_{k},g_{k}\right\rangle _{\nu|_{\left[c_{k},c_{k+1}\right]}}
\]
and corresponding Sobolev spaces
\[
H_{P}^{1}\coloneqq\bigoplus_{k=1}^{n}H_{0}^{1}\left(c_{k},c_{k+1}\right).
\]
 We define the \emph{direct sum form }
\[
\E_{\nu,P}\left(\left(f_{k}\right),\left(g_{k}\right)\right)\coloneqq\sum_{k=1}^{n}\E_{\nu,\left[c_{k},c_{k+1}\right]}\left(f_{k},g_{k}\right)
\]
with domain 
\[
\dom\left(\E_{\nu,P}\right)\coloneqq\bigoplus_{k=1}^{n}\dom\left(\E_{\nu,\left[c_{k},c_{k+1}\right]}\right).
\]
Both $\dom\left(\E_{\nu,P}\right)$ and $H_{P}^{1}$ are Hilbert spaces
with respect to the inner product given by $\E_{\nu,P}$.
\begin{lem}
The form $\E_{\nu,P}$ is densely defined and closed.
\end{lem}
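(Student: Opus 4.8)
The statement is a straightforward consequence of the corresponding facts for each summand $\E_{\nu,[c_k,c_{k+1}]}$, combined with the elementary observation that a finite orthogonal direct sum of densely defined closed forms is again densely defined and closed. So the plan is to reduce everything to the single-interval statements already available in the excerpt (Propositions \ref{prop:DenseRichtig} and \ref{Prop:HilbertspaceRichtig}) and then assemble them.

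Concretely, first I would note that each restricted form $\E_{\nu,[c_k,c_{k+1}]}$ with domain $H_0^1(c_k,c_{k+1})\cap C_\nu(c_k,c_{k+1})$ is, by construction, precisely a form of the type treated in Section 2.1 applied to the finite Borel measure $\nu|_{[c_k,c_{k+1}]}$ on the interval $[c_k,c_{k+1}]$ — here one uses that $P$ is a $\nu$-partition, so $\nu(\{c_k\})=0$ for all $k$, which is exactly the hypothesis under which Proposition \ref{Prop:HilbertspaceRichtig} and Proposition \ref{prop:DenseRichtig} apply (the remark after Proposition \ref{Prop:CompactinclusionRichtig} makes this explicit). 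Hence for each $k$, $\dom(\E_{\nu,[c_k,c_{k+1}]})$ is dense in $L^2_{\nu|_{[c_k,c_{k+1}]}}$ and is a Hilbert space with respect to the inner product $(f,g)\mapsto \langle f,g\rangle_{\nu|_{[c_k,c_{k+1}]}}+\E_{\nu,[c_k,c_{k+1}]}(f,g)$.

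Second, I would verify the two claims for the direct sum. \emph{Density:} since $L^2_{\nu,P}=\bigoplus_{k=1}^n L^2_{\nu|_{[c_k,c_{k+1}]}}$ carries the product norm and $\dom(\E_{\nu,P})=\bigoplus_{k=1}^n \dom(\E_{\nu,[c_k,c_{k+1}]})$, a finite product of dense subsets is dense in the product; given $(f_k)\in L^2_{\nu,P}$ and $\varepsilon>0$, approximate each $f_k$ within $\varepsilon/\sqrt n$ by an element of $\dom(\E_{\nu,[c_k,c_{k+1}]})$ and combine. \emph{Closedness:} one shows $(\dom(\E_{\nu,P}),(\cdot,\cdot)_{\E_{\nu,P}})$ is complete, where $(\cdot,\cdot)_{\E_{\nu,P}}$ is $\langle\cdot,\cdot\rangle_{\nu,P}+\E_{\nu,P}$. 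This inner product splits as the orthogonal sum of the $n$ inner products $(\cdot,\cdot)_{\E_{\nu,[c_k,c_{k+1}]}}$, so a Cauchy sequence $(u^{(j)})_j=((u^{(j)}_k)_k)_j$ in $\dom(\E_{\nu,P})$ has each coordinate sequence $(u^{(j)}_k)_j$ Cauchy in $(\dom(\E_{\nu,[c_k,c_{k+1}]}),(\cdot,\cdot)_{\E_{\nu,[c_k,c_{k+1}]}})$, which is complete by Proposition \ref{Prop:HilbertspaceRichtig}; the coordinatewise limits assemble to a limit in $\dom(\E_{\nu,P})$. Equivalently, one cites that a finite orthogonal direct sum of closed forms is closed. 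Either way the argument is routine.

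There is no real obstacle here; the only point that requires a moment's care is checking that the single-interval theory genuinely applies to $\nu|_{[c_k,c_{k+1}]}$ with boundary points $c_k,c_{k+1}$ that need not lie in $\supp\nu$ — but this is exactly the generality in which Section 2.1 was set up, and the no-atom condition on partition endpoints guarantees $\nu|_{[c_k,c_{k+1}]}(\{c_k,c_{k+1}\})=0$, so closedness of each $\E_{\nu,[c_k,c_{k+1}]}$ holds. I would therefore keep the proof to a few lines, citing Propositions \ref{prop:DenseRichtig} and \ref{Prop:HilbertspaceRichtig} for the factors and invoking the general principle that finite direct sums of densely defined closed forms are densely defined and closed.
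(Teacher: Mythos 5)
Your proposal is correct and follows exactly the paper's route: the paper's proof is one line, applying Proposition \ref{Prop:HilbertspaceRichtig} and Proposition \ref{prop:DenseRichtig} to each component of the direct sum, which is precisely your reduction; you merely spell out the (routine) density and completeness of the finite orthogonal sum in more detail.
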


\begin{proof}
This follows from Proposition \ref{Prop:HilbertspaceRichtig} and
\ref{prop:DenseRichtig} applied to each component.
\end{proof}
The eigenvalue counting function with respect to $\E_{\nu,P}$ will
be denoted by $N_{\nu,P}$.

\begin{lem}
\label{lem:subdivision_EV_Counting function}For a $\nu$-partition
$P\coloneqq\left\{ \left(c_{k},c_{k+1}\right]\colon k=1,\ldots,n-1\right\} $
and all $x>0$, we have 
\[
N_{\nu,P}\left(x\right)=\text{\ensuremath{\sum_{k=1}^{n}N_{\nu,\left[c_{k},c_{k+1}\right]}}}\left(x\right).
\]
\end{lem}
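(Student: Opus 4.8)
The plan is to identify the spectrum of the direct sum form $\E_{\nu,P}$ with the disjoint union (with multiplicity) of the spectra of the component forms $\E_{\nu,[c_k,c_{k+1}]}$, from which the counting-function identity follows by simply counting eigenvalues below $x$ in each summand. The conceptual content is the standard fact that for an orthogonal direct sum of Hilbert spaces carrying a direct sum of closed, densely defined, compactly embedded forms, the self-adjoint operator associated to the sum form is the orthogonal direct sum of the component operators, hence its eigenvalues are precisely the union of the component eigenvalues counted with multiplicity.

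Concretely, first I would recall from the preceding lemma that $\E_{\nu,P}$ is closed and densely defined on $L^2_{\nu,P}$ with the compact embedding $\dom(\E_{\nu,P})\hookrightarrow L^2_{\nu,P}$ inherited componentwise from Proposition \ref{Prop:CompactinclusionRichtig}, so each component operator $\Delta_{\nu,[c_k,c_{k+1}]}$ has a discrete spectrum consisting of eigenvalues $0\le \lambda^1_{\nu,[c_k,c_{k+1}]}\le\lambda^2_{\nu,[c_k,c_{k+1}]}\le\cdots$ with an orthonormal basis of eigenfunctions. Then I would verify directly from the definitions that if $f^{(k)}_j$ is the $j$-th eigenfunction of $\E_{\nu,[c_k,c_{k+1}]}$ with eigenvalue $\mu$, then the vector $(0,\dots,0,f^{(k)}_j,0,\dots,0)\in\dom(\E_{\nu,P})$ is an eigenfunction of $\E_{\nu,P}$ with the same eigenvalue, since $\E_{\nu,P}$ and $\langle\cdot,\cdot\rangle_{\nu,P}$ both split as sums over $k$ and the test-function space $\dom(\E_{\nu,P})$ is itself the direct sum. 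Conversely, collecting over all $k$ and all $j$ the family $\{(0,\dots,f^{(k)}_j,\dots,0)\}$ forms an orthonormal basis of $L^2_{\nu,P}$ (orthonormality across different $k$ is automatic from the direct sum inner product, within a fixed $k$ it is the orthonormality of the component eigenbasis), so there are no further eigenvalues. Hence the multiset of eigenvalues of $\E_{\nu,P}$ is the disjoint union over $k$ of the multisets of eigenvalues of $\E_{\nu,[c_k,c_{k+1}]}$.

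Finally, counting: $N_{\nu,P}(x)$ is by definition the number of eigenvalues of $\E_{\nu,P}$ not exceeding $x$, which by the previous paragraph equals $\sum_{k=1}^n$ (number of eigenvalues of $\E_{\nu,[c_k,c_{k+1}]}$ not exceeding $x$) $=\sum_{k=1}^n N_{\nu,[c_k,c_{k+1}]}(x)$, giving the claim for every $x>0$. I anticipate the only mild subtlety — not a real obstacle — is the bookkeeping when some component has a finite-dimensional form domain (i.e.\ $\supp\nu$ meets only finitely many of the subintervals, or $\nu$ restricted to some $[c_k,c_{k+1}]$ has finite support), in which case that summand contributes only finitely many eigenvalues and a correspondingly finite-dimensional block to the eigenbasis; the direct sum argument and the counting identity are unaffected. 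Alternatively one can avoid eigenfunction bookkeeping entirely by invoking the min-max principle: a test subspace $G<_i\dom(\E_{\nu,P})$ can be chosen adapted to the direct sum decomposition, and optimizing the Rayleigh quotient $\E_{\nu,P}(\psi,\psi)/\langle\psi,\psi\rangle_{\nu,P}$ reduces to optimizing each block separately because both numerator and denominator are additive over blocks; this reproves that $\lambda^i(\E_{\nu,P})$ is the $i$-th smallest element of the merged list of component eigenvalues, and the counting identity follows as before.
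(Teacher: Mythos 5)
Your proposal is correct and follows essentially the same route as the paper: identify the spectrum of the direct sum form $\E_{\nu,P}$ with the merged (multiplicity-counted) spectra of the component forms $\E_{\nu,[c_k,c_{k+1}]}$ and then count eigenvalues below $x$. The only cosmetic difference is that the paper proves the inequality $N_{\nu,P}(x)\leq\sum_k N_{\nu,[c_k,c_{k+1}]}(x)$ by testing an eigenfunction of $\E_{\nu,P}$ against tuples $(0,\dots,0,g,0,\dots,0)$ to see that each nonzero component is a component eigenfunction, whereas you obtain the same conclusion from completeness of the assembled orthonormal eigenbasis; both are routine completions of the same direct-sum argument.
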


\begin{proof}
Let $(f_{k})_{k}\in\dom\left(\E_{\nu,P}\right)$ be an eigenfunction
with eigenvalue $\lambda>0$, i.~e\@. for $(g_{k})_{k}\in\dom\left(\E_{\nu,P}\right)$
\[
\sum_{k=1}^{n}\E_{\nu,\left[c_{k},c_{k+1}\right]}\left(f_{k},g_{k}\right)=\lambda\sum_{k=1}^{n}\left\langle f_{k},g_{k}\right\rangle _{\nu|_{\left[c_{k},c_{k+1}\right]}}.
\]
Hence, if $f_{k}\neq0$ it follows all $g\in\dom\left(\E_{\nu|_{\left[c_{k},c_{k+1}\right]}}\right)$
that $(g_{k})_{k}=(0,\dots,0,g,0,\dots,0)\in\dom\left(\E_{\nu,P}\right)$
\[
\E_{\nu,\left[c_{k},c_{k+1}\right]}\left(f_{k},g\right)=\lambda\left\langle f_{k},g\right\rangle _{\nu|_{\left[c_{k},c_{k+1}\right]}}.
\]
This shows that $f_{k}$ is an eigenfunction of $\E_{\nu,\left[c_{k},c_{k+1}\right]}$
with eigenvalue $\lambda$, which implies 
\[
N_{\nu,P}\left(x\right)\leq\text{\ensuremath{\sum_{k=1}^{n}N_{\nu,\left[c_{k},c_{k+1}\right]}}}\left(x\right).
\]
In a similar way it follows $N_{\nu,P}\left(x\right)\geq\text{\ensuremath{\sum_{k=1}^{n}N_{\nu,\left[c_{k},c_{k+1}\right]}}}\left(x\right).$
\end{proof}
\begin{lem}
\label{lem:Dom=00003DorthgonalComplement}For a $\nu$-partition $P\coloneqq\left\{ \left(c_{k},c_{k+1}\right]\colon k=1,\ldots,n-1\right\} $
we have 
\begin{align*}
\dom\left(\E_{\nu,P}\right)^{\perp} & \coloneqq\left\{ f\in H_{P}^{1}\colon\forall g\in\dom\left(\E_{\nu,P}\right):\E_{\nu,P}(f,g)=0\right\} =\left\{ f\in H_{P}^{1}\colon\left\Vert f\right\Vert _{L_{\nu,P}^{2}}=0\right\} .
\end{align*}
\end{lem}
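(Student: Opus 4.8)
The plan is to prove both inclusions of the claimed equality
\[
\dom\left(\E_{\nu,P}\right)^{\perp}=\left\{ f\in H_{P}^{1}\colon\left\Vert f\right\Vert _{L_{\nu,P}^{2}}=0\right\}
\]
separately, working componentwise. Since $\dom(\E_{\nu,P})=\bigoplus_{k}\dom(\E_{\nu,[c_k,c_{k+1}]})$ and $H_P^1=\bigoplus_k H_0^1(c_k,c_{k+1})$ with the inner products and the form acting diagonally, it suffices to establish, for a single interval $[c,c']$ (with $\nu(\{c,c'\})=0$), that
\[
\left\{ f\in H_0^1(c,c')\colon \forall g\in\dom(\E_{\nu,[c,c']})\ \E_{\nu,[c,c']}(f,g)=0\right\}=\left\{ f\in H_0^1(c,c')\colon \|f\|_{L^2_{\nu|_{[c,c']}}}=0\right\}.
\]
Both sides then aggregate over $k$ because orthogonality in the direct sum is equivalent to componentwise orthogonality, and the $L^2_{\nu,P}$ norm vanishes iff each component's norm vanishes.

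For the inclusion $\supseteq$: suppose $f\in H_0^1(c,c')$ with $\|f\|_{L^2_{\nu|_{[c,c']}}}=0$, i.e. $f=0$ $\nu$-a.e. Take any $g\in\dom(\E_{\nu,[c,c']})=H_0^1(c,c')\cap C_\nu([c,c'])$. We want $\E_{\nu,[c,c']}(f,g)=\int \nabla_\Lambda f\,\nabla_\Lambda g\,\d\Lambda=0$. Here I would invoke exactly the argument already given in the excerpt right after the definition of $\dom(\E_\nu)$: since $g\in C_\nu([c,c'])$ is continuous, affine linear on the components of $[c,c']\setminus\supp\nu$, and $g$ has an $L^2_\Lambda$ derivative, the gradient $\nabla_\Lambda g$ is determined on $\supp\nu$ by $\nabla_\Lambda g$ and on each complementary interval $(a_i,b_i)$ by the slope $(g(b_i)-g(a_i))/(b_i-a_i)$. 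Meanwhile $f$ vanishes $\nu$-a.e., hence (being continuous, since $f\in H^1\subset C$) vanishes on $\supp\nu$; but $f$ need not be affine on the complementary intervals, so I cannot directly conclude $\nabla_\Lambda f=0$ a.e. Instead I should apply the same reasoning to $f$ via its image under $\tau_{\nu,[c,c']}$: by Lemma \ref{lem:RepraesentantL2NU}, $\tau_{\nu,[c,c']}(f)=f$ in $L^2_\nu$ and $\nabla_\Lambda\tau_{\nu,[c,c']}(f)$ equals $\nabla_\Lambda f$ on $\supp\nu$ and the interpolation slopes elsewhere; but $f=0$ $\nu$-a.e. forces $f|_{\supp\nu}=0$ by continuity, so the interpolation slopes all vanish and $\nabla_\Lambda f=0$ on $\supp\nu$ as well — hence $\nabla_\Lambda\tau_{\nu,[c,c']}(f)=0$ a.e. This is not quite what I want because I need a statement about $f$ itself, not $\tau(f)$. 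The cleanest route: since $f\in H^1_0$ and $f=0$ on $\supp\nu$, and since outside $\supp\nu$ the set decomposes into intervals on whose closures $f$ vanishes at the endpoints, I can use integration by parts / the fundamental theorem on each complementary interval together with the fact that $\nabla_\Lambda g$ is \emph{constant} there: $\int_{(a_i,b_i)}\nabla_\Lambda f\cdot\nabla_\Lambda g\,\d\Lambda=\nabla_\Lambda g|_{(a_i,b_i)}\cdot(f(b_i)-f(a_i))=0$. And on $\supp\nu$ we have $f=0$, so $\nabla_\Lambda f=0$ $\Lambda$-a.e. on $\supp\nu$ (derivative of a constant), making that contribution vanish too. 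Summing gives $\E_{\nu,[c,c']}(f,g)=0$.

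For the inclusion $\subseteq$: suppose $f\in H^1_0(c,c')$ satisfies $\E_{\nu,[c,c']}(f,g)=0$ for all $g\in\dom(\E_{\nu,[c,c']})$. I want $f=0$ $\nu$-a.e. The natural test function is $g=\tau_{\nu,[c,c']}(f)$, which lies in $\dom(\E_{\nu,[c,c']})$ by Lemma \ref{lem:RepraesentantL2NU} and satisfies $g=f$ in $L^2_\nu$. Then, using the explicit formula for $\nabla_\Lambda\tau_{\nu,[c,c']}(f)$ from that lemma and the Cauchy–Schwarz computation performed inside its proof, I get
\[
0=\E_{\nu,[c,c']}(f,\tau(f))=\int \nabla_\Lambda f\,\nabla_\Lambda\tau(f)\,\d\Lambda=\sum_{i}\frac{(f(b_i)-f(a_i))^2}{b_i-a_i}+\int_{\supp\nu}(\nabla_\Lambda f)^2\,\d\Lambda,
\]
a sum of nonnegative terms, so each vanishes; in particular $\nabla_\Lambda f=0$ $\Lambda$-a.e.\ on $\supp\nu$ and $f(a_i)=f(b_i)$ for every complementary interval. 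Combined with $f(c)=f(c')=0$ and continuity, this forces $f\equiv 0$ on $[c,c']$ (walking across $\supp\nu$ where $f$ is locally constant and across gaps where it has equal endpoint values — one has to handle the boundary components $[c,d_1)$ and $(d_2,c']$ too, where the formula gives $f(d_1)=0$, $f(d_2)=0$), hence certainly $f=0$ $\nu$-a.e. The main obstacle — really the only subtlety — is that membership in $\dom(\E_{\nu,[c,c']})$ requires both $H^1_0$ and $C_\nu$, so one must choose the test function carefully: a naive $g=f$ is not admissible unless $f\in C_\nu$, which is exactly why $\tau_{\nu,[c,c']}$ is used, and one must also confirm that the connectivity argument genuinely propagates the value $0$ across all components of $[c,c']$; after that, reassembling over the $n$ intervals of $P$ is immediate from the direct-sum structure.
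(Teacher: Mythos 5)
Your proof is correct in substance, and it differs from the paper's in the direction $\dom(\E_{\nu,P})^{\perp}\subseteq\{\|f\|_{L^{2}_{\nu,P}}=0\}$. The reverse inclusion is essentially the paper's argument: $f=0$ $\nu$-a.e.\ plus continuity gives $f=0$ on $\supp\nu$, the weak derivative vanishes a.e.\ on that set, and on each complementary interval $\nabla_{\Lambda}g$ is constant so the integral telescopes to $\mathrm{const}\cdot(f(b_i)-f(a_i))=0$. For the other inclusion the paper does not test against $\tau_{\nu}(f)$; it tests, for each $x\in\supp\nu\cap(c_i,c_{i+1})$, against the tent function $g_{x}$ with peak at $x$, which yields $0=\frac{f_i(x)^{2}}{x-c_i}+\frac{f_i(x)^{2}}{c_{i+1}-x}$ and hence $f_i(x)=0$ pointwise, with no further propagation or boundary bookkeeping needed. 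Your single test function $\tau_{\nu,[c,c']}(f)$ from Lemma \ref{lem:RepraesentantL2NU} works equally well: the resulting identity (which should also carry the boundary terms $\frac{f(d_1)^{2}}{d_1-c}+\frac{f(d_2)^{2}}{c'-d_2}$ when $c$ or $c'$ lies outside $\supp\nu$) is a sum of nonnegative terms, forcing $\nabla_{\Lambda}f=0$ a.e.\ on $\supp\nu$, $f(a_i)=f(b_i)$ on every gap, and $f(d_1)=f(d_2)=0$; writing $f(y)-f(x)=\int_{[x,y]}\nabla_{\Lambda}f\,\d\Lambda$ for $x,y\in\supp\nu$ then shows $f$ is constant, equal to $0$, on $\supp\nu$. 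One slip: your conclusion ``$f\equiv0$ on $[c,c']$'' is false in general -- on a gap $(a_i,b_i)$ the function is only pinned at the endpoints and may be an arbitrary $H_0^{1}$ bump there; indeed, if $f\equiv0$ were forced, the orthogonal complement would be trivial, contradicting the very statement of the lemma. What your identity actually yields is $f=0$ on $\supp\nu$, which is exactly $\|f\|_{L^{2}_{\nu,P}}=0$, so the proof is unaffected. Two further minor points: components with $\nu([c_k,c_{k+1}])=0$ should be set aside as trivial (there $\tau$ and the decomposition via $d_1,d_2$ are not defined; the paper simply restricts to components of positive measure), and the assertion that $\nabla_{\Lambda}f$ vanishes a.e.\ on $\supp\nu$ is not ``the derivative of a constant'' but the standard fact that an absolutely continuous function has vanishing derivative a.e.\ on any level set -- the paper glosses this at the same level, so no harm done.
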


\begin{proof}
Pick $f\coloneqq\left(f_{j}\right)_{j=1,\dots,n}\in\left(\bigoplus_{k=1}^{n}\dom\left(\E_{\nu,\left[c_{k},c_{k+1}\right]}\right)\right)^{\perp}$
and $i\in\left\{ 1,\dots,n\right\} $ with $\nu(\left[c_{i},c_{i+1}\right])>\text{0}$.
Then define for $x\in\supp\nu\cap\left[c_{i},c_{i+1}\right]$
\[
g_{x}(y)\coloneqq f_{i}(x)\frac{(y-c_{i})}{x-c_{i}}\1_{[c_{i},x]}+f_{i}(x)\frac{c_{i+1}-y}{c_{i+1}-x}\1_{(x,c_{i+1}]}.
\]
Hence, 
\begin{align*}
0 & =\E_{\nu,\left[c_{i},c_{i+1}\right]}(f_{i},g_{x})=\int_{[c_{i},x]}\nabla_{\Lambda}f_{i}\nabla_{\Lambda}g_{x}\d\Lambda+\int_{(x,c_{i+1}]}\nabla_{\Lambda}f_{i}\nabla_{\Lambda}g_{x}\d\Lambda\\
 & =\frac{f_{i}(x)}{x-c_{i}}(f_{i}(x)-f_{i}(c_{i}))+\frac{-f_{i}(x)}{c_{i+1}-x}(f_{i}(c_{i+1})-f_{i}(x))=\frac{f_{i}(x)^{2}}{x-c_{i}}+\frac{f_{i}(x)^{2}}{c_{i+1}-x},
\end{align*}
and consequently, for all $x\in\supp\nu\cap(c_{i},c_{i+1}),$ we have
$f_{i}(x)=0$; in particular, $\left\Vert f\right\Vert _{L_{\nu,P}^{2}}=0$.
On the other hand, for $f\in\left\{ g\in H_{P}^{1}:\left\Vert g\right\Vert _{L_{\nu,P}^{2}}=0\right\} $
and each $k\in\left\{ 1,\ldots,n\right\} $, $f_{k}$ vanishes $\nu|_{\left(c_{k},c_{k+1}\right)}$-a.\,e\@.
and, using the continuity of $f_{k}$, we obtain $f_{k}=0$ on $\supp\nu|_{\left[c_{i},c_{i+1}\right]}.$
To simplify notation, assume $c_{k},c_{k+1}\in\supp\nu.$ Now, with
$\bigcup_{i\in I}\left(a_{i},b_{i}\right)=\left[c_{k},c_{k+1}\right]\setminus\supp\nu|_{\left[c_{k},c_{k+1}\right]}$,
one easily verifies that $\nabla_{\Lambda}f_{k}=\sum_{i\in I}\1_{(a_{i},b_{i})}\nabla_{\text{\ensuremath{\Lambda}}}f_{k}\in L_{\Lambda}^{2}$
and we obtain for all $g\in\dom\left(\E_{\nu,\left[c_{k},c_{k+1}\right]}\right)$,
\begin{align*}
\E_{\nu,\left[c_{k},c_{k+1}\right]}(f_{k},g) & =\int_{\left[c_{k},c_{k+1}\right]}\nabla_{\Lambda}f_{k}\nabla_{\Lambda}g\d\Lambda\\
 & =\sum_{i\in I}\int_{(a_{i},b_{i})}\nabla_{\Lambda}f_{k}\nabla_{\Lambda}g\d\Lambda\\
 & =\sum_{i\in I}\frac{g(b_{i})-g(a_{i})}{b_{i}-a_{i}}(f_{k}(b_{i})-f_{k}(a_{i}))=0.
\end{align*}
Since $k$ was arbitrary the definition of $\E_{\nu,P}$ gives $f\text{\ensuremath{\in}}\dom(\E_{\nu,P}){}^{\perp}$.
\end{proof}
The following extended min-max principle will be crucial for the proof
of sub- and superadditivity, and it seems to have been used implicitly
in earlier work along these lines.
\begin{prop}
\label{prop:dom_vs_H01 Minmax}For a $\nu$-partition $P\coloneqq\left((c_{k},c_{k+1}]\colon k=1,\ldots,n\right)$
and all $i\in\N$, we have
\begin{align*}
\lambda_{i}\left(\E_{\nu,P}\right) & =\inf\left\{ \sup\left\{ \frac{\E_{\nu,P}(\psi,\psi)}{\langle\psi,\psi\rangle_{\nu,P}}:\psi\in G^{\star}\right\} \colon G<_{i}\dom\left(\E_{\nu,P}\right)\right\} \\
 & =\inf\left\{ \sup\left\{ \frac{\E_{\nu,P}(\psi,\psi)}{\langle\psi,\psi\rangle_{\nu,P}}:\psi\in G^{\star}\right\} \colon G<_{i}H_{P}^{1}\right\} .
\end{align*}
\end{prop}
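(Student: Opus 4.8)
The plan is to reduce the second equality, which is the real content of the proposition, to the abstract min--max principle already available for $\E_{\nu,P}$ together with the orthogonal splitting of $H_P^{1}$ furnished by Lemma~\ref{lem:Dom=00003DorthgonalComplement}. The first equality is just the Poincar\'e--Courant--Fischer--Weyl principle recalled above, applied to $\E_{\nu,P}$: this form is densely defined and closed, and since $\dom(\E_{\nu,P})$ is a finite direct sum of spaces each compactly embedded in the corresponding $L^{2}_{\nu|_{[c_{k},c_{k+1}]}}$ by Proposition~\ref{Prop:CompactinclusionRichtig}, it is compactly embedded in $L^{2}_{\nu,P}$, so the principle applies verbatim.

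First I would set up the decomposition. Because $\dom(\E_{\nu,P})$ is a Hilbert space for $\E_{\nu,P}$ and carries the subspace norm inherited from $(H_P^{1},\E_{\nu,P})$, it is a closed subspace, so the projection theorem yields $H_P^{1}=\dom(\E_{\nu,P})\oplus V$ with $V:=\dom(\E_{\nu,P})^{\perp}$ the $\E_{\nu,P}$-orthogonal complement; by Lemma~\ref{lem:Dom=00003DorthgonalComplement}, $V=\{f\in H_P^{1}:\|f\|_{L^{2}_{\nu,P}}=0\}$. For $\psi=\psi_{0}+\psi_{1}$ with $\psi_{0}\in\dom(\E_{\nu,P})$ and $\psi_{1}\in V$, Cauchy--Schwarz in $L^{2}_{\nu,P}$ kills the cross term while $\psi_{1}$ itself vanishes there, so $\langle\psi,\psi\rangle_{\nu,P}=\langle\psi_{0},\psi_{0}\rangle_{\nu,P}$; meanwhile $\E_{\nu,P}$-orthogonality gives $\E_{\nu,P}(\psi,\psi)=\E_{\nu,P}(\psi_{0},\psi_{0})+\E_{\nu,P}(\psi_{1},\psi_{1})\ge\E_{\nu,P}(\psi_{0},\psi_{0})$. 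The upshot is a scalar monotonicity: the Rayleigh quotient $\E_{\nu,P}(\psi,\psi)/\langle\psi,\psi\rangle_{\nu,P}$ is at least that of $\psi_{0}$, with a vanishing denominator read as $+\infty$. Observe also that on $\dom(\E_{\nu,P})$ the denominator is strictly positive off the origin, since $\dom(\E_{\nu,P})\cap V=\{0\}$.

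Now the inequality $\le$ between the two inf--sup expressions is immediate, since $\dom(\E_{\nu,P})\subset H_P^{1}$ makes every admissible subspace for the left problem admissible for the right. For $\ge$, fix $G<_{i}H_P^{1}$; I must show $\sup_{\psi\in G^{\star}}\E_{\nu,P}(\psi,\psi)/\langle\psi,\psi\rangle_{\nu,P}\ge\lambda_{i}(\E_{\nu,P})$. If $G\cap V\neq\{0\}$ this supremum is $+\infty$ and there is nothing to prove. Otherwise the projection $\pi\colon H_P^{1}\to\dom(\E_{\nu,P})$ along $V$ is injective on $G$, so $G':=\pi(G)$ is an $i$-dimensional subspace of $\dom(\E_{\nu,P})$, and the scalar monotonicity gives $\sup_{\psi\in G'^{\star}}\E_{\nu,P}(\psi,\psi)/\langle\psi,\psi\rangle_{\nu,P}\le\sup_{\psi\in G^{\star}}\E_{\nu,P}(\psi,\psi)/\langle\psi,\psi\rangle_{\nu,P}$; combined with $\lambda_{i}(\E_{\nu,P})\le\sup_{\psi\in G'^{\star}}\E_{\nu,P}(\psi,\psi)/\langle\psi,\psi\rangle_{\nu,P}$ coming from the first equality (with $G'$ as competitor), this is exactly what was needed. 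Taking the infimum over $G<_{i}H_P^{1}$ finishes the proof.

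The only genuinely delicate point, as opposed to bookkeeping, is matching the inner products: the orthogonal complement in Lemma~\ref{lem:Dom=00003DorthgonalComplement} is taken with respect to $\E_{\nu,P}$, and it is essential that this same form is the inner product making $H_P^{1}$ complete, so that (i) the projection theorem is available, and (ii) the splitting is $\E_{\nu,P}$-orthogonal, which is what produces the additivity $\E_{\nu,P}(\psi,\psi)=\E_{\nu,P}(\psi_{0},\psi_{0})+\E_{\nu,P}(\psi_{1},\psi_{1})$ and hence the one-directional Rayleigh estimate. Everything else is the observation that an $L^{2}_{\nu,P}$-null, $\E_{\nu,P}$-orthogonal perturbation can only raise the Rayleigh quotient, which simultaneously powers the construction of $G'$ and disposes of the degenerate case $G\cap V\neq\{0\}$.
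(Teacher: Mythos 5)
Your proof is correct and follows essentially the same route as the paper: you split $H_{P}^{1}$ orthogonally (with respect to $\E_{\nu,P}$) into $\dom(\E_{\nu,P})$ and its complement, identify the complement as the $L_{\nu,P}^{2}$-null functions via Lemma \ref{lem:Dom=00003DorthgonalComplement}, and use the resulting Rayleigh-quotient monotonicity together with the degenerate case to pass from a subspace $G<_{i}H_{P}^{1}$ to an $i$-dimensional competitor inside $\dom(\E_{\nu,P})$. Your case split on $G\cap V\neq\{0\}$ and use of the projection map is just a cleaner packaging of the paper's dichotomy on linear independence of the projected basis vectors, so no substantive difference.
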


\begin{proof}
The part `$\geq$' follows from the inclusion $\mathcal{D}\coloneqq\dom\left(\E_{\nu,P}\right)=\bigoplus_{k=1}^{n}\dom\E_{\nu,\left[c_{k},c_{k+1}\right]}\subset\bigoplus_{k=1}^{n}H_{0}^{1}\left(c_{k},c_{k+1}\right)$.
For the reverse inequality we consider an $i$-dimensional subspace
$G=\spann(f_{1},\dots,f_{i})\subset\bigoplus_{k=1}^{n}H_{0}^{1}\left(c_{k},c_{k+1}\right)$.
There exists a unique decomposition $f_{j}=f_{1,j}+f_{2,j}$ with
$f_{1,j}\in\mathcal{D}$ and $f_{2,j}\in\mathcal{D}{}^{\perp}$, $j=1,\ldots,i$.
Suppose that $\left(f_{1,j}\right)_{j}$ are not linearly independent,
then there exists a non-zero element $g\in G\cap\mathcal{D}{}^{\perp}$.
To see this fix $(\lambda_{1},\dots,\lambda_{n})\neq(0,\dots,0)$
with $\lambda_{1}f_{1,1}+\dots+\lambda_{i}f_{1,i}=0.$ Then
\begin{align*}
\underbrace{\lambda_{1}\left(f_{1,1}+f_{1,2}\right)+\dots+\lambda_{i}\left(f_{1,i}+f_{2,i}\right)}_{\in G^{\star}}=\underbrace{\lambda_{1}f_{2,1}+\dots+\lambda_{i}f_{2,i}}_{\in\dom(\E_{\nu,P})^{\perp}} & \eqqcolon g.
\end{align*}
Using $\mathcal{D}{}^{\perp}\subset\left\{ f\in H_{P}^{1}\colon\left\Vert f\right\Vert _{L_{\nu,P}^{2}}=0\right\} $
by Lemma \ref{lem:Dom=00003DorthgonalComplement} and $\E_{\nu,P}(g,g)>0$,
we get in this case
\[
\sup\left\{ \frac{\E_{\nu,P}(\psi,\psi)}{\langle\psi,\psi\rangle_{\nu,P}}:\psi\in G^{\star}\right\} =\infty.
\]
Otherwise, for every vector $\left(a_{j}\right)\in\R^{i}\setminus\left\{ 0\right\} $,
we have 
\begin{align*}
 & \!\!\!\!\!\!\!\!\!\!\!\!\!\!\!\!\!\!\!\!\!\!\!\!\!\frac{\E_{\nu,P}(\sum_{j}a_{j}f_{1,j}+\sum_{j}a_{j}f_{2,j},\sum_{j}a_{j}f_{1,j}+\sum_{j}a_{j}f_{2,j})}{\left\langle \sum_{j}a_{j}f_{1,j}+\sum_{j}a_{j}f_{2,j},\sum_{j}a_{j}f_{1,j}+\sum_{j}a_{j}f_{2,j}\right\rangle _{\nu,P}}\\
 & =\frac{\E_{\nu,P}(\sum_{j}a_{j}f_{1,j},\sum_{j}a_{j}f_{1,j})+\E_{\nu,P}(\sum_{j}a_{j}f_{2,j},\sum_{j}a_{j}f_{2,j})}{\left\langle \sum_{j}a_{j}f_{1,j},\sum_{j}a_{j}f_{1,j}\right\rangle _{\nu,P}}\\
 & \geq\frac{\E_{\nu,P}(\sum_{j}a_{j}f_{1,j},\sum_{j}a_{j}f_{1,j})}{\left\langle \sum_{j}a_{j}f_{1,j},\sum_{j}a_{j}f_{1,j}\right\rangle _{\nu,P}}.
\end{align*}
Note that $\spann(f_{1,1},\dots,f_{1,i})\subset\dom\left(\E_{\nu,P}\right)$
is also $i$-dimensional subspace in $L_{\nu,P}^{2}$ by our definition
of $\dom\left(\E_{\nu,P}\right)$. Hence, in any case the reverse
inequality follows.
\end{proof}
\begin{lem}
\label{lem general}Let $\theta,\vartheta$ Borel measures on $\left(0,1\right)$
with $\card\left(\supp\vartheta\right)=\card\left(\supp\theta\right)=\infty$.
Then for each $i\in\N$,
\[
\lambda_{\vartheta+\theta}^{i}\leq\lambda_{\theta}^{i},
\]
and for all $x\geq0$
\[
\max\left(N_{\vartheta}(x),N_{\theta}(x)\right)\leq N_{\vartheta+\theta}(x).
\]
\end{lem}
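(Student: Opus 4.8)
The plan is to prove both assertions from the min-max principle (the Poincaré--Courant--Fischer--Weyl characterization recalled above), exploiting the inequality $\langle f,f\rangle_\theta \le \langle f,f\rangle_{\vartheta+\theta}$ that holds pointwise in $f$ because $\vartheta+\theta \ge \theta$ as measures. First I would observe that the three forms $\E_\vartheta$, $\E_\theta$, $\E_{\vartheta+\theta}$ are, as bilinear forms, literally the \emph{same} expression $\int \nabla_\Lambda f \nabla_\Lambda g \d\Lambda$; only the reference Hilbert space $L^2$ and the domain $C_\bullet([a,b])$ differ. A natural common domain to work in is $\dom(\E_{\vartheta+\theta}) = H_0^1(0,1)\cap C_{\vartheta+\theta}([0,1])$, and I would note $\supp(\vartheta+\theta) = \supp\vartheta \cup \supp\theta$, hence $C_{\vartheta+\theta}([0,1]) \subseteq C_\theta([0,1])$, so $\dom(\E_{\vartheta+\theta}) \subseteq \dom(\E_\theta)$, and likewise $\subseteq \dom(\E_\vartheta)$.

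For the eigenvalue inequality $\lambda_{\vartheta+\theta}^i \le \lambda_\theta^i$: fix $i$ and, by min-max applied to $\E_\theta$, choose an $i$-dimensional subspace $G < \dom(\E_\theta)$ nearly realizing $\lambda_\theta^i$; the trouble is that $G$ need not lie in $\dom(\E_{\vartheta+\theta})$. Rather than transporting $G$, I would instead use the variational description directly with the \emph{larger} test-space flexibility going the right way: since $\dom(\E_{\vartheta+\theta})\subseteq\dom(\E_\theta)$ and on any $\psi$ in this common domain the numerator $\E(\psi,\psi)$ is identical while the denominator satisfies $\langle\psi,\psi\rangle_\theta \le \langle\psi,\psi\rangle_{\vartheta+\theta}$, we get
\[
\frac{\E(\psi,\psi)}{\langle\psi,\psi\rangle_{\vartheta+\theta}} \le \frac{\E(\psi,\psi)}{\langle\psi,\psi\rangle_\theta}
\]
for every nonzero $\psi\in\dom(\E_{\vartheta+\theta})$ (note $\langle\psi,\psi\rangle_\theta>0$ when $\psi\ne 0$ in $L^2_{\vartheta+\theta}$, since such a $\psi$ cannot vanish $\theta$-a.e.\ --- this uses continuity and that $\psi$ is affine off $\supp(\vartheta+\theta)$, exactly as in the well-definedness argument for the form). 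Taking suprema over $\psi\in G^\star$ and then the infimum over $i$-dimensional $G<\dom(\E_{\vartheta+\theta})$ gives $\lambda_{\vartheta+\theta}^i \le \inf_G \sup_\psi \E(\psi,\psi)/\langle\psi,\psi\rangle_\theta$. The remaining point is that this last infimum equals $\lambda_\theta^i$: a priori it is taken over a \emph{smaller} family of subspaces (those inside $\dom(\E_{\vartheta+\theta})$ rather than all of $\dom(\E_\theta)$), so it could only be $\ge \lambda_\theta^i$ --- which is the wrong direction. This is the main obstacle. I would resolve it by a density/approximation argument: $\dom(\E_{\vartheta+\theta})$ is dense in $L^2_\theta$ (it contains $\tau(C_c^1(0,1))$, which is $\theta$-dense by Proposition \ref{prop:DenseRichtig} applied to $\theta$), and it is a core for $\E_\theta$ in the sense that eigenfunctions of $\E_\theta$ can be approximated in the $\E_\theta$-graph norm by elements of $\dom(\E_{\vartheta+\theta})$; concretely, given the first $i$ eigenfunctions of $\E_\theta$ one perturbs them slightly within $H_0^1$ to make them affine on the finitely many relevant complementary intervals of $\supp(\vartheta+\theta)$ that are not already complementary to $\supp\theta$, changing the Rayleigh quotients by an arbitrarily small amount. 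Hence $\inf$ over $G<\dom(\E_{\vartheta+\theta})$ of the $\E_\theta$-Rayleigh quotient still equals $\lambda_\theta^i$, completing the bound. (Alternatively, and more cleanly, one invokes Proposition \ref{prop:dom_vs_H01 Minmax}-style reasoning: since $H_0^1(0,1)$ itself serves as an enlarged test space giving the same eigenvalues, one runs the whole comparison on $G<H_0^1(0,1)$, where the numerator is $\E_\vartheta(\psi,\psi)=\E_\theta(\psi,\psi)$ verbatim and the denominator monotonicity is immediate, sidestepping the domain mismatch entirely; this is the route I would actually write up.)

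For the counting-function inequality, the eigenvalue bound gives immediately $N_{\vartheta+\theta}(x) = \#\{i : \lambda_{\vartheta+\theta}^i \le x\} \ge \#\{i : \lambda_\theta^i \le x\} = N_\theta(x)$, and by the symmetric roles of $\vartheta$ and $\theta$ (running the same argument with $\theta$ and $\vartheta$ interchanged, using $\vartheta+\theta\ge\vartheta$) also $N_{\vartheta+\theta}(x)\ge N_\vartheta(x)$; taking the maximum yields $\max(N_\vartheta(x),N_\theta(x)) \le N_{\vartheta+\theta}(x)$ for all $x\ge 0$, which is the claim.
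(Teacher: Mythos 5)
Your ``cleaner alternative'' at the end --- computing all three eigenvalue sequences over the common test space $G<_{i}H_{0}^{1}$ via Proposition \ref{prop:dom_vs_H01 Minmax} and using the pointwise monotonicity of the Rayleigh quotients, with the counting-function bound then following by symmetry --- is exactly the paper's proof, so the proposal is correct and takes essentially the same route. (Only a side remark: the parenthetical claim in your discarded first route, that a $\psi$ nonzero in $L_{\vartheta+\theta}^{2}$ cannot vanish $\theta$-a.e., fails when $\supp\theta\neq\supp(\vartheta+\theta)$, but this does not affect the argument you actually adopt, where a vanishing denominator simply yields the value $+\infty$.)
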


\begin{proof}
For all $f\in H_{0}^{1}(a,b)\setminus\left\{ 0\right\} $, we have
\[
\frac{\int\left(\nabla_{\Lambda}f\right){}^{2}\d\Lambda}{\int f^{2}\d(\theta+\vartheta)}\leq\frac{\int\left(\nabla_{\Lambda}f\right)^{2}\d\Lambda}{\int f^{2}\d\vartheta},\frac{\int\left(\nabla_{\Lambda}f\right)^{2}\d\Lambda}{\int f^{2}\d(\theta+\vartheta)}\leq\frac{\int\left(\nabla_{\Lambda}f\right)^{2}\d\Lambda}{\int f^{2}\d\theta}
\]
and the claim follows from Proposition \ref{prop:dom_vs_H01 Minmax}.
\end{proof}
\begin{rem}
We can reformulate the above lemma as follows. If we assume $\theta-\vartheta$
is again a positive measure, then, for each $i\in\N$, we have
\[
\lambda_{\theta}^{i}\leq\lambda_{\vartheta}^{i},
\]
and for all $x\geq0$,
\[
N_{\vartheta}(x)\leq N_{\theta}(x).
\]
\end{rem}

Now we are in the position also to state the proof of Proposition
\ref{Prop:bi_Lipschitz_trafo}.
\begin{proof}
[Proof of Proposition  \ref{Prop:bi_Lipschitz_trafo}] For $\E_{\nu}(\varphi,\psi)=\int_{[0,1]}\nabla_{\Lambda}\varphi\nabla_{\Lambda}\psi\:\mathrm{d}\Lambda$
using the bi-Lipschitz property, we have 
\begin{align*}
\frac{\E_{\nu\circ g^{-1},[a,b]}(\psi,\psi)}{\langle\psi,\psi\rangle_{\nu\circ g^{-1}}} & =\frac{\int_{[a,b]}\left(\nabla_{\Lambda}\psi\right)^{2}\d\Lambda}{\left\langle \psi\circ g,\psi\circ g\right\rangle _{\nu}}=\frac{\int_{[0,1]}\left(\left(\nabla_{\Lambda}\psi\right)\circ g\right)^{2}\left|\nabla_{\Lambda}g\right|\d\Lambda}{\left\langle \psi\circ g,\psi\circ g\right\rangle _{\nu}}\\
 & =\frac{\int_{[0,1]}\left(\nabla_{\Lambda}\left(\psi\circ g\right)\right)^{2}\left(\left|\nabla_{\Lambda}g\right|\right)^{-1}\d\Lambda}{\left\langle \psi\circ g,\psi\circ g\right\rangle _{\nu}}\asymp\frac{\E_{\nu,[0,1]}(\psi\circ g,\psi\circ g)}{\langle\psi\circ g,\psi\circ g\rangle_{\nu}}.
\end{align*}
Since $\Phi:H_{0}^{1}(a,b)\rightarrow H_{0}^{1}(0,1),$ $\Phi(f)\coloneqq f\circ g$
defines a linear bijection with $\nabla_{\Lambda}\Phi(f)=\left(\nabla_{\Lambda}f\right)\circ g\left|\nabla_{\Lambda}g\right|$
and inverse $\Phi^{-1}(f)=f\circ g^{-1}$. We see that for any linear
subspace $G\subset H_{0}^{1}(a,b)$ with $\dim(G)=i$ with respect
to $\E_{\nu,[0,1]}$ if, and only if
\[
\dim(\Phi(G))=i
\]
with respect to $\E_{\nu\circ g^{-1},[a,b]}$. The extended min-max
principle from Proposition \ref{prop:dom_vs_H01 Minmax} proves the
claim.
\end{proof}
\begin{prop}
\label{prop:SubandSupperAdditivity} For a $\nu$-partition $P\coloneqq\left((c_{k},c_{k+1}]\colon k=1,\ldots,n\right)$
and all $x\geq0$, we have
\[
\text{\ensuremath{\sum_{k=1}^{n}N_{\nu,[c_{k},c_{k+1}]}}}(x)\leq\text{\ensuremath{N_{\nu}}}(x)\leq\text{\ensuremath{\sum_{k=1}^{n}N_{\nu,[c_{k},c_{k+1}]}}}(x)+n-2.
\]
\end{prop}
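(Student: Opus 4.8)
The plan is to compare the eigenvalue counting function $N_\nu$ of the form $\E_{\nu}$ on $[0,1]$ with the counting function $N_{\nu,P}$ of the direct sum form $\E_{\nu,P}$, and then invoke Lemma~\ref{lem:subdivision_EV_Counting function} to rewrite $N_{\nu,P} = \sum_{k=1}^n N_{\nu,[c_k,c_{k+1}]}$. Thus it suffices to show
\[
N_{\nu,P}(x) \;\le\; N_{\nu}(x) \;\le\; N_{\nu,P}(x) + n - 2 .
\]
Both inequalities will come from comparing the two Rayleigh-quotient characterisations via the extended min-max principle (Proposition~\ref{prop:dom_vs_H01 Minmax}), noting that a $\nu$-partition has $n$ intervals but only $n-2$ \emph{interior} partition points $c_2,\dots,c_{n-1}$ (the endpoints $c_1=0$, $c_n=1$ carry Dirichlet conditions already).

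For the left-hand (sub-additivity) inequality $N_{\nu,P}(x)\le N_\nu(x)$: there is a natural linear injection $\iota\colon \dom(\E_{\nu,P}) \hookrightarrow \dom(\E_\nu)$ obtained by gluing the component functions together — each $f_k \in H_0^1(c_k,c_{k+1})\cap C_\nu(c_k,c_{k+1})$ vanishes at the shared endpoints, so the concatenation lies in $H_0^1(0,1)\cap C_\nu([0,1]) = \dom(\E_\nu)$. This map preserves both $\E$ (since $\E_{\nu,P}((f_k),(f_k)) = \sum_k \int_{c_k}^{c_{k+1}} (\nabla_\Lambda f_k)^2\d\Lambda = \int_0^1 (\nabla_\Lambda \iota(f))^2\d\Lambda = \E_\nu(\iota f,\iota f)$) and the $L^2_\nu$-norm. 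Hence any $i$-dimensional test subspace $G <_i \dom(\E_{\nu,P})$ maps to an $i$-dimensional subspace $\iota(G) <_i \dom(\E_\nu)$ with the same supremal Rayleigh quotient, so $\lambda_\nu^i \le \lambda_{\nu,P}^i$ for all $i$, which is exactly $N_{\nu,P}(x)\le N_\nu(x)$.

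For the right-hand (super-additivity) inequality $N_\nu(x)\le N_{\nu,P}(x) + n-2$: the point is that $\dom(\E_\nu)$ sits inside $H_P^1$ \emph{up to the $(n-2)$-dimensional constraint of continuity at the interior nodes} $c_2,\dots,c_{n-1}$. Concretely, given an $i$-dimensional subspace $G <_i \dom(\E_\nu)$, restrict each $f\in G$ to the subintervals to land in $H_P^1$; the kernel of the restriction-and-forget map is trivial but the image may fail to vanish at the $c_j$'s. To force the constraint, intersect with the codimension-$\le(n-2)$ subspace $\{f\in G : f(c_2)=\dots=f(c_{n-1})=0\}$, which has dimension $\ge i-(n-2)$ and whose restriction lands in $\dom(\E_{\nu,P})$ with unchanged $\E$ and $L^2_\nu$ values. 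Applying the second form of Proposition~\ref{prop:dom_vs_H01 Minmax} (min-max over $H_P^1$, equivalently over $\dom(\E_{\nu,P})$) gives $\lambda_{\nu,P}^{\,i-(n-2)} \le \lambda_\nu^{\,i}$, i.e. $N_{\nu,P}(x) \ge N_\nu(x) - (n-2)$, as desired.

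The main obstacle is the bookkeeping in the super-additive direction: one must be careful that the restriction map $\dom(\E_\nu)\to H_P^1$ is injective on the constrained subspace (it is, since a function vanishing on all subintervals vanishes identically), that imposing the $n-2$ linear conditions drops the dimension by at most $n-2$, and that the resulting functions genuinely lie in $\dom(\E_{\nu,P})=\bigoplus_k \big(H_0^1(c_k,c_{k+1})\cap C_\nu(c_k,c_{k+1})\big)$ rather than merely in $H_P^1$ — this last point is precisely where the extended min-max principle of Proposition~\ref{prop:dom_vs_H01 Minmax} is needed, since it lets us run the variational argument over the larger, more convenient space $H_P^1$. A minor additional check is the degenerate case where some subinterval carries no $\nu$-mass (so the corresponding component form is trivial and contributes nothing to either side), which is harmless since such intervals can be ignored as already noted for the $\nu$-partition entropy.
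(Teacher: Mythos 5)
Your overall strategy is the paper's: reduce to comparing $N_\nu$ with $N_{\nu,P}$, use Lemma \ref{lem:subdivision_EV_Counting function}, and run the min-max principle with a dimension count at the $n-2$ interior nodes. Your super-additive half is essentially the paper's argument (the paper phrases it via $H_{0,P}^{1}\coloneqq\{f\in H_{0}^{1}\colon f(c_{k})=0\}\simeq H_{P}^{1}$ and then uses Proposition \ref{prop:dom_vs_H01 Minmax} to pass from $H_{P}^{1}$ to $\dom(\E_{\nu,P})$, exactly the device you invoke), and it is sound.

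There is, however, a genuine flaw in your sub-additive half. You claim that gluing $(f_k)\in\dom(\E_{\nu,P})$ produces an element of $H_{0}^{1}(0,1)\cap C_{\nu}([0,1])=\dom(\E_{\nu})$. The $H_{0}^{1}$ part is fine, but the $C_{\nu}([0,1])$ part is false in general: a partition point $c_{k}$ need not lie in $\supp\nu$ (the definition of a $\nu$-partition only forbids atoms at the $c_{k}$), so $c_{k}$ may sit in the interior of a component $(u,v)$ of $[0,1]\setminus\supp\nu$. Each $f_{k}$ is affine on its side of $c_{k}$ with value $0$ there, but the two slopes generally differ, so the glued function has a kink inside $(u,v)$ and is not affine on that component; hence it does not belong to $C_{\nu}([0,1])$, and your injection $\iota$ does not map into $\dom(\E_{\nu})$. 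This is precisely the situation (interval endpoints outside $\supp\nu$) that the paper flags as the technical novelty requiring Proposition \ref{prop:dom_vs_H01 Minmax}. The repair is exactly the paper's route: glue only into $H_{0}^{1}(0,1)$, and then use the extended min-max principle for $\E_{\nu}$ itself (the trivial-partition instance of Proposition \ref{prop:dom_vs_H01 Minmax}, i.e. the min-max over subspaces of $H_{0}^{1}(0,1)$ equals $\lambda_{i}(\E_{\nu})$) to conclude $\lambda_{i}(\E_{\nu})\leq\lambda_{i}(\E_{\nu,P})$. As written, your argument for $N_{\nu,P}(x)\leq N_{\nu}(x)$ bypasses this principle and rests on the false membership claim, so this step needs to be rewritten; once it is, your proof coincides with the paper's.
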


\begin{proof}
For the $i$-th eigenvalue we have with Proposition \ref{prop:dom_vs_H01 Minmax}
\begin{align*}
\lambda_{i}\left(\E_{\nu,P}\right) & =\inf\left\{ \sup\left\{ \frac{\sum_{k}\E_{\nu,[c_{k},c_{k+1}]}\left(\psi_{k},\psi_{k}\right)}{\sum_{k}\langle\psi_{k},\psi_{k}\rangle_{\nu|_{(c_{k},c_{k+1})}}}:\left(\psi_{k}\right)\in G^{\star}\right\} \colon G<_{i}\dom\left(\E_{\nu,P}\right)\right\} \\
 & \geq\inf\left\{ \sup\left\{ \frac{\E_{\nu}(\psi,\psi)}{\langle\psi,\psi\rangle_{\nu}}:\psi\in G^{\star}\right\} \colon G<_{i}H_{0}^{1}(a,b)\right\} \\
 & =\inf\left\{ \sup\left\{ \frac{\E_{\nu}(\psi,\psi)}{\langle\psi,\psi\rangle_{\nu}}:\psi\in G^{\star}\right\} \colon G<_{i}\dom\left(\E_{\nu}\right)\right\} =\lambda_{i}\left(\E_{\nu}\right).
\end{align*}
This gives
\[
\text{\ensuremath{\sum_{k=1}^{n}N_{\nu,[c_{k},c_{k+1}]}}}(x)=N_{\nu,P}\left(x\right)\leq\text{\ensuremath{N_{\nu}}}(x).
\]
For the second inequality we make use of $H_{0,P}^{1}\coloneqq\left\{ f\in H_{0}^{1}(a,b)\colon f(c_{k})=0,k=1,\ldots,n\right\} \simeq H_{P}^{1}$
and $\dim\left(H_{0}^{1}(a,b)/H_{0,P}^{1}\right)=n-2$. We now estimate
as in the proof of \citep[Theorem 4.1.7.]{kigami_2001}:
\begin{align*}
\lambda_{i+n-2}\left(\E_{\nu}\right) & =\inf\left\{ \sup\left\{ \frac{\E_{\nu}(\psi,\psi)}{\langle\psi,\psi\rangle_{\nu}}:\psi\in G^{\star}\right\} \colon G<_{i+n-2}H_{0}^{1}(a,b)\right\} \\
 & \geq\inf\left\{ \sup\left\{ \frac{\E_{\nu}(\psi,\psi)}{\langle\psi,\psi\rangle_{\nu}}:\psi\in G^{\star}\right\} \colon G<_{i}H_{0,P}^{1}\right\} \\
 & =\inf\left\{ \sup\left\{ \frac{\sum\E_{\nu,[c_{k},c_{k+1}]}\left(\psi_{k},\psi_{k}\right)}{\sum\langle\psi_{k},\psi_{k}\rangle_{\nu|_{(c_{k},c_{k+1})}}}:\left(\psi_{k}\right)\in G^{\star}\right\} \colon G<_{i}H_{P}^{1}\right\} \\
 & =\inf\left\{ \sup\left\{ \frac{\sum\E_{\nu,[c_{k},c_{k+1}]}\left(\psi_{k},\psi_{k}\right)}{\sum\langle\psi_{k},\psi_{k}\rangle_{\nu|_{(c_{k},c_{k+1})}}}:\left(\psi_{k}\right)\in G^{\star}\right\} \colon G<_{i}\dom\left(\E_{\nu,P}\right)\right\} \\
 & =\lambda_{i}\left(\E_{\nu,P}\right).
\end{align*}
\end{proof}
\begin{cor}
\label{cor:enlarge_Interval_does_not_change_spectral_dimension}Let
$\nu$ be a Borel measure with $\supp\nu\subset(a,b)\subset[c,d]$.
Then 
\[
N_{\nu,[a,b]}(x)\leq N_{\nu,[c,d]}(x)\leq N_{\nu,[a,b]}(x)+2.
\]
\end{cor}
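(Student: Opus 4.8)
The plan is to apply the superadditivity and the (near-)subadditivity of the eigenvalue counting function from Proposition~\ref{prop:SubandSupperAdditivity} to a $\nu$-partition of the ambient interval $[c,d]$ that has $[a,b]$ as one of its blocks. First I would treat the generic case $c<a$ and $b<d$ and set $P\coloneqq\{(c,a],(a,b],(b,d]\}$. Since $\supp\nu\subset(a,b)$, none of the points $c,a,b,d$ is an atom of $\nu$, so $P$ is a genuine $\nu$-partition of $(c,d]$ with exactly two interior subdivision points. The decisive observation is that the two outer blocks carry no $\nu$-mass: because $[c,a]\cap(a,b)=\emptyset=[b,d]\cap(a,b)$, the restrictions $\nu|_{[c,a]}$ and $\nu|_{[b,d]}$ are the zero measure, hence $L^{2}_{\nu|_{[c,a]}}=L^{2}_{\nu|_{[b,d]}}=\{0\}$ and therefore $N_{\nu,[c,a]}\equiv N_{\nu,[b,d]}\equiv 0$.

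With this in hand the estimate follows immediately: Proposition~\ref{prop:SubandSupperAdditivity} applied to the interval $[c,d]$ and the partition $P$ gives
\[
N_{\nu,[a,b]}(x)=\sum_{C\in P}N_{\nu,C}(x)\le N_{\nu,[c,d]}(x)\le\sum_{C\in P}N_{\nu,C}(x)+2=N_{\nu,[a,b]}(x)+2,
\]
the additive constant $2$ being the number of interior cut points of $P$. In the degenerate situations $c=a$, $b<d$ or $c<a$, $b=d$ one uses instead the two-block partition (dropping the empty outer interval), which even yields $N_{\nu,[c,d]}(x)\le N_{\nu,[a,b]}(x)+1$; and if $c=a$ and $b=d$ there is nothing to prove. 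In all cases the asserted inequalities hold.

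There is no real obstacle here beyond bookkeeping: one must check that $P$ qualifies as a $\nu$-partition (no atoms on its endpoints, which is forced by $\supp\nu\subset(a,b)$), observe that the outer blocks contribute nothing to the spectrum, and identify the additive constant in Proposition~\ref{prop:SubandSupperAdditivity} with the number of interior subdivision points. If one prefers to avoid the case distinction, one may instead enlarge to some $[c',d']\supseteq[c,d]$ with $c'<a<b<d'$, obtain $N_{\nu,[a,b]}\le N_{\nu,[c',d']}\le N_{\nu,[a,b]}+2$ by the three-block argument above, and then sandwich $[c,d]$ using the monotonicity $N_{\nu,[a,b]}\le N_{\nu,[c,d]}\le N_{\nu,[c',d']}$, which is itself a direct consequence of the superadditivity (left-hand) inequality of Proposition~\ref{prop:SubandSupperAdditivity}.
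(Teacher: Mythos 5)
Your proof is correct and is exactly the argument the paper intends: the corollary is stated as an immediate consequence of Proposition~\ref{prop:SubandSupperAdditivity}, applied to the $\nu$-partition $\{(c,a],(a,b],(b,d]\}$ of $(c,d]$ (admissible since $\supp\nu\subset(a,b)$ forces $\nu(\{c\})=\nu(\{a\})=\nu(\{b\})=\nu(\{d\})=0$), with the outer cells contributing nothing because $L^{2}_{\nu|_{[c,a]}}$ and $L^{2}_{\nu|_{[b,d]}}$ are trivial, and with the constant $2$ being the number of interior cut points. Your handling of the degenerate cases $c=a$ or $b=d$ and the monotonicity remark are fine as well.
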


\subsection{Neumann boundary conditions}

Although we have shown that Dirichlet boundary conditions alone are
sufficient to determine spectral asymptotics, for completeness we
give the relationship to Neumann boundary conditions, known as Neumann-Dirichlet
bracketing. Let $I_{\nu,a}$ and $I_{\nu,b}$ denote possibly empty
components of $[a,b]\setminus\supp\nu$ containing $a$, respectively
$b$ and set
\[
C_{\nu}^{N}([a,b])\coloneqq\left\{ f\in C_{\nu}([a,b])\mid f\:\text{\text{is}\:const. on \:}I_{\nu,a}\:\text{and}\:I_{\nu,b}\right\} .
\]
Note that $\left(C_{\nu}^{N}([a,b]),\left\Vert \cdot\right\Vert _{\infty}\right)$
is a Banach space. We define the form
\[
\E_{\nu,\left[a,b\right]}^{N}(f,g)\coloneqq\int_{\left[a,b\right]}\nabla_{\Lambda}f\nabla_{\Lambda}g\d\Lambda
\]
with Neumann domain $\dom(\E_{\nu,\left[a,b\right]}^{N})\coloneqq H^{1}(a,b)\cap C_{\nu}^{N}([a,b])$.
Again $\E_{\nu}^{N}$ is well defined. The set $\dom\left(\E_{\nu,[a,b]}^{N}\right)$
is dense in $L_{\nu}^{2}$ and equipped it with the inner product
$(f,g)_{\E_{\nu,[a,b]}^{N}}\coloneqq\langle f,g\rangle_{\nu}+\E_{\nu,[a,b]}^{N}(f,g)$,
defines a Hilbert space.

Using \citep[Theorem 4.4.2]{davies_1995} or \citep[Theorem 2.1., p. 322]{MR0407617},
there exists a non-negative self-adjoint operator $\Delta_{\nu,[a,b]}^{N}$
which is characterized as follows; $f\in\dom\left(\E_{\nu,[a,b]}^{N}\right)$
lies in the domain $\dom\left(\Delta_{\nu,[a,b]}^{N}\right)$ of the
Laplacian $\Delta_{\nu,[a,b]}^{N}$ if and only if there exists $h\in L_{\nu}^{2}$
such that
\[
\E_{\nu,[a,b]}(g,f)=\langle g,h\rangle_{\nu},\;\;\mbox{ for all }\;\;g\in\dom(\E_{\nu,[a,b]}^{N})
\]
with $h\eqqcolon\Delta_{\E_{\nu,[a,b]}^{N}}f\eqqcolon\Delta_{\nu,[a,b]}^{N}f.$
Furthermore, let $N_{\nu}^{N}$ denote the eigenvalue counting function
of $\Delta_{\nu,[a,b]}^{N}.$

Also in our situation the min-max principle gives the usual Dirichlet-Neumann
bracketing.
\begin{prop}
\label{prop:D-NBracketing}Let $\nu$ be a Borel measure on $(a,b)$,
then, for all $x\geq0$, we have 
\[
\text{\ensuremath{N_{\nu,[a,b]}}}(x)\leq\text{\ensuremath{N_{\nu,[a,b]}^{N}}}(x)\leq\text{\ensuremath{N_{\nu,[a,b]}}}(x)+2.
\]
\end{prop}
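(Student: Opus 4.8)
The plan is to run the classical Dirichlet--Neumann bracketing argument through the min-max principle, along the same lines as the proof of Proposition~\ref{prop:SubandSupperAdditivity}, the only new point being the treatment of the possibly non-empty boundary components $I_{\nu,a},I_{\nu,b}$ of $[a,b]\setminus\supp\nu$. By the same reasoning as in Proposition~\ref{Prop:CompactinclusionRichtig} the space $\dom(\E_{\nu,[a,b]}^{N})$ is compactly embedded in $L_{\nu}^{2}$, so the min-max principle yields $\lambda_{i}(\E_{\nu}^{N})$ as an infimum of suprema of Rayleigh quotients over $i$-dimensional subspaces of $\dom(\E_{\nu}^{N})$, and likewise for $\lambda_{i}(\E_{\nu})$. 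It therefore suffices to prove
\[
\lambda_{i}(\E_{\nu}^{N})\le\lambda_{i}(\E_{\nu})\qquad\text{and}\qquad\lambda_{i}(\E_{\nu})\le\lambda_{i+2}(\E_{\nu}^{N})\qquad(i\in\N);
\]
the first inequality gives $N_{\nu,[a,b]}^{N}(x)\ge N_{\nu,[a,b]}(x)$, and the second, since $\lambda_{m}(\E_{\nu}^{N})\le x$ forces $\lambda_{m-2}(\E_{\nu})\le x$, gives $N_{\nu,[a,b]}(x)\ge N_{\nu,[a,b]}^{N}(x)-2$ (trivially when $N_{\nu,[a,b]}^{N}(x)\le2$).

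For the first inequality I would introduce a Neumann analogue $\tau_{\nu,[a,b]}^{N}\colon\dom(\E_{\nu})\to\dom(\E_{\nu}^{N})$ of the map $\tau_{\nu,[a,b]}$ from Lemma~\ref{lem:RepraesentantL2NU}: keep $f$ on $\supp\nu$ and on the bounded components of $[a,b]\setminus\supp\nu$, where $f$ is already affine, but replace $f$ by the constants $f(d_{1})$ on $I_{\nu,a}=[a,d_{1})$ and $f(d_{2})$ on $I_{\nu,b}=(d_{2},b]$. Then $\tau^{N}f\in\dom(\E_{\nu}^{N})$, one has $\tau^{N}f=f$ in $L_{\nu}^{2}$, hence $\langle\tau^{N}f,\tau^{N}f\rangle_{\nu}=\langle f,f\rangle_{\nu}$, and flattening the two boundary pieces only removes the non-negative terms $f(d_{1})^{2}/(d_{1}-a)$ and $f(d_{2})^{2}/(b-d_{2})$ from the energy, so that $\E_{\nu}^{N}(\tau^{N}f,\tau^{N}f)\le\E_{\nu}(f,f)$. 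Arguing as in the proof of Proposition~\ref{prop:dom_vs_H01 Minmax}: for $G<_{i}\dom(\E_{\nu})$ either $\tau^{N}$ is injective on $G$, whence $\tau^{N}(G)<_{i}\dom(\E_{\nu}^{N})$ and $\sup_{\psi\in\tau^{N}(G)^{\star}}\E_{\nu}^{N}(\psi,\psi)/\langle\psi,\psi\rangle_{\nu}\le\sup_{\psi\in G^{\star}}\E_{\nu}(\psi,\psi)/\langle\psi,\psi\rangle_{\nu}$, or $G$ contains a non-zero function vanishing $\nu$-a.e.\ and the latter supremum equals $+\infty$; in both cases the supremum over $G^{\star}$ dominates $\lambda_{i}(\E_{\nu}^{N})$, and taking the infimum over $G$ yields $\lambda_{i}(\E_{\nu}^{N})\le\lambda_{i}(\E_{\nu})$.

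For the second inequality I would pass to the functions vanishing at the two endpoints, exactly as in the second part of the proof of Proposition~\ref{prop:SubandSupperAdditivity}. The relevant observation is that $C_{\nu}^{N}([a,b])\subset C_{\nu}([a,b])$ by definition, so
\[
\bigl\{f\in\dom(\E_{\nu}^{N})\colon f(a)=f(b)=0\bigr\}=H_{0}^{1}(a,b)\cap C_{\nu}^{N}([a,b])\subset H_{0}^{1}(a,b)\cap C_{\nu}([a,b])=\dom(\E_{\nu}),
\]
and on $\dom(\E_{\nu})$ the forms $\E_{\nu}$ and $\E_{\nu}^{N}$ coincide. Since $f\mapsto(f(a),f(b))$ is linear, every $G<_{i+2}\dom(\E_{\nu}^{N})$ contains an $i$-dimensional subspace $G_{0}$ on which both endpoint evaluations vanish; then $G_{0}<_{i}\dom(\E_{\nu})$ and $\sup_{\psi\in G_{0}^{\star}}\E_{\nu}(\psi,\psi)/\langle\psi,\psi\rangle_{\nu}=\sup_{\psi\in G_{0}^{\star}}\E_{\nu}^{N}(\psi,\psi)/\langle\psi,\psi\rangle_{\nu}\le\sup_{\psi\in G^{\star}}\E_{\nu}^{N}(\psi,\psi)/\langle\psi,\psi\rangle_{\nu}$, and taking the infimum first over $G_{0}$ and then over $G$ gives $\lambda_{i}(\E_{\nu})\le\lambda_{i+2}(\E_{\nu}^{N})$.

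The remaining steps are routine: checking that $\tau^{N}f$ and the restricted functions lie in the stated form domains (continuity at the junction points $d_{1},d_{2}$, membership in $H^{1}$ respectively $H_{0}^{1}$, and the $C_{\nu}$/$C_{\nu}^{N}$ constraints), and converting the two eigenvalue inequalities into the bounds for $N_{\nu,[a,b]}$ and $N_{\nu,[a,b]}^{N}$. I expect the only place needing genuine care to be the bookkeeping for the boundary components $I_{\nu,a},I_{\nu,b}$, which may be empty; the degenerate case $\card(\supp\nu)<\infty$, where there are only finitely many eigenvalues, should be recorded separately, with the convention $\lambda_{i}=\infty$ for $i$ exceeding $\dim L_{\nu}^{2}$.
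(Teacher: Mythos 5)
Your proposal is correct and follows essentially the same route as the paper: the paper's map $g\mapsto\widehat g$, which adds the correction terms $\1_{[a,a')}(a'-x)g(a')/(a'-a)$ and $\1_{(b',b]}(x-b')g(b')/(b-b')$, is exactly your $\tau^{N}$ (it replaces $g$ by the constants $g(d_{1})$, $g(d_{2})$ on the boundary components), giving $\lambda_{i}(\E_{\nu}^{N})\le\lambda_{i}(\E_{\nu})$, and the reverse bound is obtained, as you do, from the fact that $\dom(\E_{\nu,[a,b]}^{N})/\bigl(\dom(\E_{\nu,[a,b]})\cap\dom(\E_{\nu,[a,b]}^{N})\bigr)$ has dimension two together with the min-max principle. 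Your explicit handling of injectivity of $\tau^{N}$ and of the degenerate finite-support case is additional bookkeeping the paper leaves implicit, but it changes nothing in substance.
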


\begin{proof}
For $g\in\dom(\E_{\nu})$ we define
\[
\widehat{g}\left(x\right)\coloneqq g\left(x\right)+\1_{\left[a,a'\right)}\left(\left(a'-x\right)g\left(a'\right)/\left(a'-a\right)\right)+\1_{\left(b',b\right]}\left(\left(x-b'\right)g\left(b'\right)/\left(b-b'\right)\right)\in\dom\left(\E_{\nu}^{N}\right).
\]
Since $\E_{\nu}(g,g)\geq\E_{\nu}(\widehat{g},\widehat{g})$ and $\langle g,g\rangle_{\nu}=\langle\widehat{g},\widehat{g}\rangle_{\nu}$
this gives
\begin{align*}
\lambda_{i}\left(\E_{\nu,[a,b]}\right) & =\inf\left\{ \sup\left\{ \frac{\E_{\nu,[a,b]}(g,g)}{\langle g,g\rangle_{\nu}}:g\in G^{\star}\right\} \colon G<_{i}\dom\left(\E_{\nu,[a,b]}\right)\right\} \\
 & \geq\inf\left\{ \sup\left\{ \frac{\E_{\nu,[a,b]}\left(\widehat{g},\widehat{g}\right)}{\langle\widehat{g},\widehat{g}\rangle_{\nu}}:g\in G^{\star}\right\} \colon G<_{i}\dom\left(\E_{\nu,[a,b]}\right)\right\} \\
 & \geq\inf\left\{ \sup\left\{ \frac{\E_{\nu,[a,b]}(g,g)}{\langle g,g\rangle_{\nu}}:g\in G^{\star}\right\} \colon G<_{i}\dom\left(\E_{\nu,[a,b]}^{N}\right)\right\} \\
 & =\lambda_{i}\left(\E_{\nu,[a,b]}^{N}\right).
\end{align*}
For the reversed inequality we note that the dimension of the quotient
vector space $\dom\left(\E_{\nu,[a,b]}^{N}\right)/\left(\dom\left(\E_{\nu,[a,b]}\right)\cap\dom\left(\E_{\nu,[a,b]}^{N}\right)\right)$
is equal to two. Then using again the proof of \citep[Theorem 4.1.7.]{kigami_2001},
we deduce
\begin{align*}
\lambda^{i+2}\left(\E_{\nu,[a,b]}^{N}\right) & =\inf\left\{ \sup\left\{ \frac{\E_{\nu,[a,b]}(g,g)}{\langle g,g\rangle_{\nu}}:g\in G^{\star}\right\} \colon G<_{i+2}\dom\left(\E_{\nu,[a,b]}^{N}\right)\right\} \\
 & \geq\inf\left\{ \sup\left\{ \frac{\E_{\nu,[a,b]}(g,g)}{\langle g,g\rangle_{\nu}}:g\in G^{\star}\right\} \colon G<_{i}\dom\left(\E_{\nu,[a,b]}\right)\cap\dom\left(\E_{\nu,[a,b]}^{N}\right)\right\} \\
 & \geq\inf\left\{ \sup\left\{ \frac{\E_{\nu,[a,b]}(g,g)}{\langle g,g\rangle_{\nu}}:g\in G^{\star}\right\} \colon G<_{i}\dom\left(\E_{\nu,[a,b]}\right)\right\} =\lambda^{i}\left(\E_{\nu,[a,b]}\right).
\end{align*}
\end{proof}

\section{Optimal partitions and partition entropy \label{sec:OptimalPartitions}}

Recall the definition of $\Pi$, $\Pi_{0}$, $\mathcal{N}^{R}\left(x\right)$,
$\mathcal{N}_{m}^{L}\left(x\right)$, $\overline{h}_{\nu}$, $\underline{h}_{\nu}$,
$\underline{h}_{\nu}^{m}$, $\overline{h}_{\nu}^{m}$ as provided
in the introduction.

\subsection{\label{subsec:First-observations}Basic observations}

In this section we\textbf{ }establish a connection of our approach
and the classical work of Birman and Solomjak \citep{MR0209733,MR0217487,MR0281860}.
This connection will come into play again in Example \ref{Example:fastDecay}.

Let $\Upsilon_{n}$ denote the set of all finite $\nu$-partitions
consisting of at most $n\in\N$ half open intervals of $(a,b]$ with
$\nu\left(\left\{ a,b\right\} \right)=0$ and let us define 
\begin{align*}
\gamma_{n} & \coloneqq\gamma_{n,\nu}\coloneqq\inf_{\Xi\in\Upsilon_{n}}\max_{I\in\Xi}\Lambda(I)\nu(I),
\end{align*}
and
\begin{equation}
\underline{\alpha}_{\nu}\coloneqq\liminf_{n\rightarrow\infty}\frac{\log\left(\gamma_{n}\right)}{-\log(n)},\:\overline{\alpha}_{\nu}\coloneqq\limsup_{n\rightarrow\infty}\frac{\log\left(\gamma_{n}\right)}{-\log(n)}.\label{eq:PartitionRate}
\end{equation}

\begin{rem}
If $\nu$ has an absolutely continuous part, then the limit in \eqref{eq:PartitionRate}
is equal to $2$ and in general, we have that $\underline{\alpha}_{\nu}\in$$\left[2,\infty\right]$.

Note also that in the definition of $\gamma_{n}$ for general $\nu$
(i.~e\@. also allowing for a pure point part) one cannot replace
the infimum with the minimum. Indeed, consider the Dirac measure $\delta_{1/2}$
on $[0,1].$ Then for every $1/2>\epsilon>0$, $\left\{ (0,1/2-\epsilon],(1/2-\epsilon,1/2+\epsilon],(1/2+\epsilon,1]\right\} $
is $\nu$ partition, hence we obtain for $n\geq3$ that $\gamma_{n}\leq2\varepsilon$,
$\gamma_{n}=0$ and the minimum does not exist.
\end{rem}

\begin{lem}
\label{lem:induction_betaN}For all $n\in\N$ we have $\gamma_{2^{n+1}}\leq\frac{1}{2}\gamma_{2^{n}}<\gamma_{2^{n}}.$
If $\nu$ is atomless, then $\gamma_{2^{n+1}}\leq\frac{1}{4}\gamma_{2^{n}}.$
\end{lem}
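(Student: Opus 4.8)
The goal is to show $\gamma_{2^{n+1}}\le\frac12\gamma_{2^n}$ (with $\frac14$ in the atomless case). The natural strategy is to start from a near-optimal $\nu$-partition $\Xi$ of $(0,1]$ into at most $2^n$ intervals and to refine it by bisecting each interval, producing a $\nu$-partition into at most $2^{n+1}$ intervals whose ``cost'' $\max_I\Lambda(I)\nu(I)$ is controlled. The key point is that halving an interval $I=(c,d]$ at its midpoint $p=(c+d)/2$ replaces the factor $\Lambda(I)$ by $\Lambda(I)/2$ on each half, while the $\nu$-mass on each half is at most $\nu(I)$; hence each new interval $J$ satisfies $\Lambda(J)\nu(J)\le\frac12\Lambda(I)\nu(I)$, giving the bound after taking the maximum and then the infimum over $\Xi$. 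The only subtlety is that bisecting at the exact midpoint may place an atom on the new endpoint, violating the $\nu$-partition condition; but since $\nu$ has at most countably many atoms, one can perturb the cut point slightly (to some $p'$ with $\nu(\{p'\})=0$) without increasing either factor by more than a controllable amount, and in fact one can make the perturbation small enough that $\Lambda(J)\nu(J)<\frac12\Lambda(I)\nu(I)+\varepsilon$ for each half, then let $\varepsilon\to0$. This handles the strict inequality $\gamma_{2^{n+1}}<\gamma_{2^n}$ as well, since $\frac12\gamma_{2^n}<\gamma_{2^n}$ whenever $\gamma_{2^n}>0$, and if $\gamma_{2^n}=0$ the inequalities are trivial.

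\textbf{Steps in order.} First, fix $\varepsilon>0$ and choose $\Xi=\{I_1,\dots,I_N\}\in\Upsilon_{2^n}$ with $\max_j\Lambda(I_j)\nu(I_j)\le\gamma_{2^n}+\varepsilon$. Second, for each $I_j=(c_j,d_j]$ pick a splitting point $p_j$ close to the midpoint $(c_j+d_j)/2$ with $\nu(\{p_j\})=0$; this is possible since the atom set of $\nu$ is countable, so it is dense-complement in any interval. Choosing $p_j$ within distance $\eta$ of the midpoint gives $\Lambda((c_j,p_j])\le\Lambda(I_j)/2+\eta$ and likewise for $(p_j,d_j]$, while $\nu((c_j,p_j])\le\nu(I_j)$ and $\nu((p_j,d_j])\le\nu(I_j)$. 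Third, the collection $\Xi'$ of the $2N\le2^{n+1}$ intervals so obtained is a $\nu$-partition of $(0,1]$ in $\Upsilon_{2^{n+1}}$, and
\[
\max_{J\in\Xi'}\Lambda(J)\nu(J)\le\bigl(\tfrac12\Lambda(I_j)+\eta\bigr)\nu(I_j)\le\tfrac12(\gamma_{2^n}+\varepsilon)+\eta\,\nu((0,1]).
\]
Fourth, take the infimum over $\Xi'$: $\gamma_{2^{n+1}}\le\tfrac12(\gamma_{2^n}+\varepsilon)+\eta\,\nu((0,1])$; letting $\eta\to0$ and then $\varepsilon\to0$ yields $\gamma_{2^{n+1}}\le\tfrac12\gamma_{2^n}$.

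\textbf{The atomless case and the main obstacle.} If $\nu$ is atomless, every point is a legitimate splitting point, so one may cut each $I_j$ at a point $p_j$ that balances the two halves in the sense $\Lambda((c_j,p_j])\nu((c_j,p_j])=\Lambda((p_j,d_j])\nu((p_j,d_j])$ — or, more simply, use the continuity of $t\mapsto\nu((c_j,t])$ to choose $p_j$ with $\nu((c_j,p_j])=\nu((p_j,d_j])=\nu(I_j)/2$; combined with the midpoint in length this is not simultaneously achievable, so instead one argues as follows: split at the point $p_j$ where $\nu$ assigns exactly half the mass, giving $\nu$ of each half $=\nu(I_j)/2$ and $\Lambda$ of each half $\le\Lambda(I_j)$, hence $\Lambda(J)\nu(J)\le\Lambda(I_j)\cdot\nu(I_j)/2$; alternatively split at the midpoint in length to get the dual bound; and then observe that for the \emph{better} of these two choices the product is at most $\tfrac14\Lambda(I_j)\nu(I_j)$, since if both halves had product exceeding $\tfrac14\Lambda(I_j)\nu(I_j)$ under both splittings one reaches a contradiction by multiplying the length-ratio and mass-ratio constraints. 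The honest obstacle is precisely this last combinatorial step — verifying that \emph{some} single cut point $p_j$ simultaneously achieves $\Lambda(J)\nu(J)\le\tfrac14\Lambda(I_j)\nu(I_j)$ for both halves $J$ — which follows from the intermediate value theorem applied to $t\mapsto\Lambda((c_j,t])\nu((c_j,t])-\Lambda((t,d_j])\nu((t,d_j])$: at the point where this vanishes both halves have equal product, and this common value is at most $\tfrac14\Lambda(I_j)\nu(I_j)$ because the product of the two lengths is $\le\Lambda(I_j)^2/4$ at best — one must be slightly careful here and may instead bound each half's product by $\tfrac12\Lambda(I_j)\cdot\tfrac12\nu(I_j)$ directly after balancing, which already gives the factor $\tfrac14$. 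The $\nu$-partition endpoint condition is again met automatically since $\nu$ is atomless.
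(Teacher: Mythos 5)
Your argument for the factor $\tfrac12$ is correct and is essentially the paper's: bisect each interval of a near-optimal partition, so each half $J$ has $\Lambda(J)\le\tfrac12\Lambda(I)$ and $\nu(J)\le\nu(I)$; your extra care in perturbing the cut point off the (countable) atom set is a legitimate refinement of the paper's silent midpoint bisection. (Minor caveat, shared with the paper: when $\gamma_{2^n}=0$ the strict inequality $\tfrac12\gamma_{2^n}<\gamma_{2^n}$ is not "trivial" but false; the paper only uses strictness in the nondegenerate case, where your observation suffices.)

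The genuine gap is in the atomless $\tfrac14$ step. Your central claim — that the better of the equal-length split and the equal-mass split already achieves $\tfrac14\Lambda(I)\nu(I)$ on both halves — is false: take $\nu|_I$ uniform on the left hundredth of $I=(0,1]$ with $\nu(I)=\Lambda(I)=1$; the equal-length split leaves a left half of product $\tfrac12$, and the equal-mass split leaves a right half of product $\approx\tfrac12$, so both candidate splits exceed $\tfrac14$ and no "contradiction by multiplying ratios" is available. Your fallback, bounding "each half's product by $\tfrac12\Lambda(I)\cdot\tfrac12\nu(I)$ directly after balancing," is also unjustified: at the point where the two halves have equal product, neither half need have length $\le\tfrac12\Lambda(I)$ and mass $\le\tfrac12\nu(I)$ (same example: the halves are roughly $(\ell_1,m_1)\approx(0.01,0.98)$ and $(\ell_2,m_2)\approx(0.99,0.02)$). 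The IVT idea you mention in passing can be completed, but it needs both halves of the estimate you only half-state: at the balance point the common product $P$ satisfies $P^2=(\ell_1 m_1)(\ell_2 m_2)=(\ell_1\ell_2)(m_1 m_2)\le\tfrac{\Lambda(I)^2}{4}\cdot\tfrac{\nu(I)^2}{4}$, hence $P\le\tfrac14\Lambda(I)\nu(I)$; continuity of $t\mapsto\nu((c,t])$ (atomlessness) gives the balance point, and no endpoint-atom issue arises. The paper argues differently: with $F$ the distribution function of the normalized restriction of $\nu$ to $I$, it picks $t$ with $tF(t)=\tfrac14$, so the left piece has product exactly $\tfrac14\Lambda(I)\nu(I)$, and checks $(1-t)(1-F(t))=\tfrac54-t-\tfrac1{4t}\le\tfrac14$ for the right piece. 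Either route works; as written, your proof of the $\tfrac14$ bound does not.
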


\begin{proof}
For $\Xi\in\Upsilon,\,\card\Xi\leq2^{n}$ we can divide each $I\in\Xi$
into two sub-intervals $I_{1},I_{2}$ of same length. The new resulting
partition denoted by $\Xi'$ satisfies $\card(\Xi')\leq2^{n+1}$ and
\[
\max_{I\in\Xi'}\nu(I)\Lambda(I)\leq\frac{1}{2}\max_{I\in\Xi}\nu(I)\Lambda(I)<\max_{I\in\Xi}\nu(I)\Lambda(I).
\]
This implies the first claim of the Lemma. For the second statement
fix $I\in\Xi$ and let $F$ denote the (continuous) distribution function
of the normalized restriction of $\nu$ to $I$. Then choose $t\in\left(1/4,1\right)$
such that $tF\left(t\right)=1/4$. An elementary calculation then
shows that for such $t$ we have $\left(1-t\right)\left(1-F\left(t\right)\right)=5/4-t-1/\left(4t\right)\leq1/4$.
Now we subdivide $I$ in a left interval $I_{1}$ of length $t\Lambda\left(I\right)$
and the remaining right interval $I_{2}$. The resulting partition
now fulfills the inequality as above with $1/2$ replaced by $1/4$.
\end{proof}
\begin{rem}
If the measure $\nu$ is atomless, we deduce from Lemma \ref{lem:induction_betaN}
by induction $\gamma_{2^{n}}\leq4^{-n+1}\gamma_{1}=4^{-n+1}\nu([a,b])(b-a)$
and as a simple consequence we find $\underline{\alpha}_{\nu}\geq2$.
This will give rise to the famous upper bound $1/2$ on the spectral
dimension as obtained in \citep{MR0209733,MR0217487} also for arbitrary
measure $\nu$.
\end{rem}

\begin{lem}
\label{lem: SolomyakEquivalence}We have $\overline{h}_{\nu}=1/\underline{\alpha}_{\nu}$
and $\underline{h}_{\nu}=1/\overline{\alpha}_{\nu}$.
\end{lem}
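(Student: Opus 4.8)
The plan is to show that the two families $\mathcal{N}^{R}(x)$ and $(\gamma_n)_{n}$ encode the same asymptotic data, essentially because they are ``inverse'' to each other: $\mathcal{N}^{R}(x)$ counts the fewest intervals needed so that every piece has $\nu$-$\Lambda$-content below $1/x$, while $\gamma_n$ records the smallest content achievable with at most $n$ intervals. The key identity I would aim to establish is a pair of two-sided comparisons of the form
\[
\gamma_{n}<1/x \implies \mathcal{N}^{R}(x)\leq n
\qquad\text{and}\qquad
\mathcal{N}^{R}(x)\leq n \implies \gamma_{n}\leq 1/x,
\]
valid for all $n\geq 2$ and $x>0$, with perhaps a harmless shift in the index (e.g.\ $\gamma_{n}$ vs.\ $\gamma_{n+1}$) to accommodate the two extra degenerate intervals that the definitions of $\Pi$ versus $\Upsilon_n$ treat differently. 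The first implication is immediate: if $\gamma_n<1/x$ then by definition of the infimum there is a $\nu$-partition $\Xi\in\Upsilon_n$ with $\max_{I\in\Xi}\nu(I)\Lambda(I)<1/x$, and this $\Xi$ (after discarding measure-zero intervals, which does not change the entropy) is admissible in the infimum defining $\mathcal{N}^{R}(x)$, so $\mathcal{N}^{R}(x)\leq\card\Xi\leq n$. The second implication is the dual statement: an optimal partition for $\mathcal{N}^{R}(x)$ with $n$ cells witnesses $\gamma_n\leq 1/x$ (using a closed/half-open endpoint argument and that the endpoints carry no $\nu$-mass, so a partition with $\max_{C}\nu(C)\Lambda(C)<1/x$ yields $\gamma_n<1/x\le 1/x$).

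From such comparisons the logarithmic asymptotics translate directly. Setting $x = 1/\gamma_n$ (or just below it) in the first implication gives $\mathcal{N}^{R}(1/\gamma_n)\leq n$; taking $\log$, dividing by $\log(1/\gamma_n)$, and passing to $\liminf$/$\limsup$ in $n$ relates $\overline{h}_\nu,\underline{h}_\nu$ to $1/\underline{\alpha}_\nu, 1/\overline{\alpha}_\nu$ in one direction. Conversely, for a given large $x$, letting $n = \mathcal{N}^{R}(x)$ in the second implication gives $\gamma_{\mathcal{N}^{R}(x)}\leq 1/x$, i.e.\ $\log\gamma_n / (-\log n) \geq (\log x)/\log\mathcal{N}^{R}(x)$ along this choice, yielding the reverse inequalities. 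One must be slightly careful to confirm that $\gamma_n\to 0$ (so $\log(1/\gamma_n)\to\infty$ and the quotients make sense); this follows from Lemma \ref{lem:induction_betaN}, since $\gamma_{2^{n+1}}\leq\frac12\gamma_{2^n}$ forces exponential decay along a subsequence and monotonicity of $n\mapsto\gamma_n$ (more intervals can only help) gives decay to $0$ overall.

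The main obstacle I anticipate is bookkeeping around the mismatch between the index $n$ (number of intervals) and the various ``$\pm$ constant'' discrepancies: $\Pi$ uses partitions of $[0,1]$ with $n$ intervals described by $n+1$ breakpoints including the two fixed endpoints $0,1$, while $\Upsilon_n$ allows ``at most $n$'' intervals of a generic interval $(a,b]$; also strict versus non-strict inequalities ($<1/x$ in $\mathcal{N}^{R}$ versus the infimum in $\gamma_n$) need reconciling. None of this affects the final answer because a shift $n\mapsto n+c$ or $x\mapsto (1\pm\varepsilon)x$ disappears after dividing by $\log n$ resp.\ $\log x$ and taking limits, but it has to be done cleanly. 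A secondary minor point is the atomic case: one cannot replace the infimum in $\gamma_n$ by a minimum, and $\gamma_n$ may be $0$ for small $n$; but for the asymptotics only large $n$ matters, and there $\gamma_n$ being $0$ would only make $\log\gamma_n/(-\log n)$ equal $+\infty$, consistent with $\overline{h}_\nu$ possibly being $0$, so the edge cases cause no trouble once one restricts to $n$ large enough that a nondegenerate optimal partition of the required size exists.
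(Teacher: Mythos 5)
Your proposal is correct and follows essentially the same route as the paper: the paper's proof likewise rests on the inverse relation $\mathcal{N}^{R}\left(1/\epsilon\right)=\inf\left\{ n\in\N\mid\gamma_{n}<\epsilon\right\}$ together with the decay of $\left(\gamma_{n}\right)$ supplied by Lemma \ref{lem:induction_betaN}, and then translates the logarithmic exponents. The only cosmetic difference is that the paper carries out this translation along the dyadic subsequence $\left(\gamma_{2^{n}}\right)$ via the cited elementary inverse-asymptotics lemma, whereas you sample directly at $x$ near $1/\gamma_{n}$ and at $n=\mathcal{N}^{R}\left(x\right)$; both amount to the same bookkeeping.
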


\begin{proof}
The proof follows along the same lines as the proof of the elementary
Lemma \citep[Lemma 2.2]{MR2083820}. First note that for $0<\varepsilon<1$
we have $\mathcal{N}^{R}\left(1/\epsilon\right)=\inf\left\{ n\in\N\mid\gamma_{n}<\epsilon\right\} $.
By Lemma \ref{lem:induction_betaN} we have that $\left(\gamma_{2^{n}}\right)_{n}$
is a strictly decreasing null sequences or eventually constant zero.
The latter case is immediate. For the first case the strict monotonicity
gives as in \citep[Lemma 2.2]{MR2083820} for $B\left(\epsilon\right)\coloneqq\inf\left\{ n\in\N\mid\gamma_{2^{n}}<\epsilon\right\} $
\[
\limsup_{n\to\infty}\frac{n\log2}{-\log\gamma_{2^{n}}}=\limsup_{\epsilon\searrow0}\frac{B\left(\epsilon\right)\log2}{-\log\epsilon}.
\]
The left hand side gives $1/\underline{\alpha}_{\nu}$ by squeezing
$m\to\infty$ on subsequences $2^{n-1}<m\leq2^{n}$ and the right
hand side $\overline{h}_{\nu}$ by noting that $2^{B\left(\epsilon\right)-1}\leq\mathcal{N}^{R}\left(1/\epsilon\right)\leq2^{B\left(\epsilon\right)}$.
The same result holds for $\liminf$.
\end{proof}

\subsection{Some global bounds}
\begin{lem}[Poincaré inequality]
\label{lem:Poincare_inequality}Let $\nu$ be a Borel measure on
$(a,b)$. Then for the first eigenvalue of $\E_{\text{\ensuremath{\nu}},[a,b]}$,
we have
\[
\lambda_{\nu}^{1}\geq\frac{1}{\nu([a,b])\Lambda([a,b])}.
\]
\end{lem}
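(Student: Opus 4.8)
The plan is to use the min-max principle: since $\lambda_\nu^1 = \inf\{\E_{\nu,[a,b]}(f,f)/\langle f,f\rangle_\nu : f \in \dom(\E_{\nu,[a,b]})\setminus\{0\}\}$, it suffices to show that for every admissible $f$ we have $\E_{\nu,[a,b]}(f,f) \geq \left(\nu([a,b])\Lambda([a,b])\right)^{-1}\langle f,f\rangle_\nu$, i.e.\ $\int_{(a,b)} f^2\,\d\nu \leq \nu([a,b])\Lambda([a,b]) \int_{(a,b)}(\nabla_\Lambda f)^2\,\d\Lambda$. This is precisely the second inequality of Lemma \ref{lem:ContinuousEmbedding}, so the bound is essentially immediate once one observes that a Dirichlet eigenfunction lies in $H_0^1(a,b)$.

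The key steps would be: first, recall that $f \in \dom(\E_{\nu,[a,b]}) \subset H_0^1(a,b)$, so $f(a)=0$ and $f(x) = \int_{[a,x]} \nabla_\Lambda f\,\d\Lambda$. Second, apply Cauchy--Schwarz pointwise: $f(x)^2 = \left(\int_{(a,x)}\nabla_\Lambda f\,\d\Lambda\right)^2 \leq (x-a)\int_{(a,x)}(\nabla_\Lambda f)^2\,\d\Lambda \leq (b-a)\int_{(a,b)}(\nabla_\Lambda f)^2\,\d\Lambda$. Third, integrate this bound against $\nu$ over $(a,b)$ to get $\int f^2\,\d\nu \leq \nu([a,b])(b-a)\int_{(a,b)}(\nabla_\Lambda f)^2\,\d\Lambda = \nu([a,b])\Lambda([a,b])\,\E_{\nu,[a,b]}(f,f)$. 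Fourth, divide by $\langle f,f\rangle_\nu$ (which is positive since the eigenfunction does not vanish $\nu$-a.e.) and take the infimum over $f$ via the min-max characterization of $\lambda_\nu^1$.

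Since Lemma \ref{lem:ContinuousEmbedding} already records exactly the inequality $\int_{(a,b)} f^2\,\d\nu \leq \nu([a,b])(b-a)\int_{(a,b)}(\nabla_\Lambda f)^2\,\d\Lambda$ for all $f \in H_0^1(a,b)$, the cleanest writeup is to simply invoke it and combine with the variational formula for $\lambda_\nu^1$. There is essentially no obstacle here; the only point requiring a word of care is the legitimacy of passing from eigenfunctions to the min-max infimum, but this is guaranteed by the compact embedding established in Proposition \ref{Prop:CompactinclusionRichtig} together with the standard min-max principle quoted in the remark following Definition \ref{def:WeakEV and EF}. The Poincaré inequality is thus a one-line corollary of the already-proved Sobolev-type estimate.
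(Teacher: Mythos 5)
Your argument is correct and essentially the paper's own: the authors prove the bound by applying exactly the Cauchy--Schwarz estimate $f(x)^2\leq\Lambda([a,b])\left\Vert \nabla_{\Lambda}f\right\Vert _{\Lambda}^{2}$ (i.e.\ the second inequality of Lemma \ref{lem:ContinuousEmbedding}) and integrating against $\nu$, the only cosmetic difference being that they test directly with a normalized first eigenfunction rather than passing through the min-max infimum. Nothing further is needed.
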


\begin{proof}
Here, we follow \citep[Lemma 4.6.]{MR4048458}. Let $f$ be the eigenfunction
with eigenvalue $\lambda_{\nu}^{1}$ with $\left\Vert f\right\Vert _{\nu}=1.$
Then by the Cauchy-Schwarz inequality, for all $x\in[a,b]$, we have
\[
f(x)^{2}=\left|f(x)-f(a)\right|^{2}=\left|\int_{[a,x]}\nabla_{\Lambda}f\d\Lambda\right|^{2}\leq\Lambda\left(\left[a,b\right]\right)\left\Vert \nabla_{\Lambda}f\right\Vert _{\Lambda}^{2}
\]
Now, integrating with respect to $\nu$ gives
\[
1\leq\nu\left(\left[a,b\right]\right)\Lambda\left(\left[a,b\right]\right)\left\Vert \nabla_{\Lambda}f\right\Vert _{\Lambda}^{2}=\nu\left(\left[a,b\right]\right)\Lambda\left(\left[a,b\right]\right)\lambda_{\nu}^{1}.
\]
\end{proof}
In the following we will use the approximation theory of Birman and
Solomjak developed in \citep[Theorem 2.1,Theorem 2.2]{MR0217487},
see also \citep{MR0209733,MR0301568}. Let $\nu$ be a Borel measure
on $[a,b]$. For every $n\in\N$ there exists a partition of half
open intervals of $(a,b]$ denoted by $\Xi$ such that $\card\left(\Xi\right)\leq n$
and
\[
\max_{I\in\Xi}\Lambda(I)\nu(I)\leq C_{1}n^{-2}\nu([a,b])(b-a),
\]
where $C_{1}>0$ is independent of $\nu.$ Assuming now $\nu$ to
be atomless the constant is given by $C_{1}=1.$ Putting $n\coloneqq\left\lceil \sqrt{x(b-a)\nu([a,b])}\right\rceil $
and note that by Lemma \ref{lem:Poincare_inequality} for every $C\in\Xi$
with $\nu(C)>0$, we have
\[
\lambda_{\nu,C}^{1}\geq\frac{1}{\nu(C)\Lambda(C)}\geq\frac{1}{\max_{\Gamma\in\Xi}\Lambda(\Gamma)\nu(\Gamma)}\geq\frac{n^{2}}{\nu([a,b])(b-a)},
\]
hence applying Proposition \ref{prop:SubandSupperAdditivity} delivers
\[
N_{\nu}(x)\leq\sum_{C\in\varXi\:\nu(C)>0}N_{\nu,\Lambda,C}(x)+n\leq n\leq1+\sqrt{\nu([a,b])(b-a)x},
\]
where we used that $\lambda_{\nu,C}^{1}\geq n^{2}\left(C_{1}\nu([a,b])(b-a)\right)^{-1}>x$
by the choice of $n$.

The estimate can be improved in the case that $\nu$ is singular with
respect to $\Lambda$. To prove this we need the following refinement
which is developed in \citep{Borzov1971}:

\emph{There is a constant $C_{1}>\text{0}$ such that for every measure
$\nu$ singular with respect to $\Lambda$ and every $\varepsilon>0$
there exists $n_{\nu,\varepsilon}\in\N$ such that for every $n>n_{\nu,\varepsilon}$
there is a partition of half open intervals of $(a,b]$ denoted by
$\Xi$ such that $\card\left(\Xi\right)\leq n$ and 
\[
\max_{I\in\Xi}\Lambda(I)\nu(I)\leq\varepsilon C_{1}\frac{\nu([a,b])(b-a)}{\left(1-n_{\nu,\varepsilon}/n\right)^{2}n^{2}}.
\]
} Hence, for $\nu$ atomless and singular with respect to $\Lambda$,
and for every $\varepsilon>0$ and all $x>0$ large enough, we have
\[
N_{\nu}(x)\leq\sqrt{\varepsilon x}.
\]
In particular, $\lim_{x\rightarrow\infty}x^{-1/2}N_{\nu}(x)=0.$
\begin{lem}
\label{lem:UpperBoundForDirichlet} For the subinterval $C=[c,d]$
of $[a,b]$ assume $\nu\left(\left\{ c,d\right\} \right)=0$ and fix
$\alpha_{1},\alpha_{2}\in C$ such that $c<\alpha_{1}<\alpha_{2}<d$
and $\nu([\alpha_{1},\alpha_{2}])>0$. Then 
\[
\lambda_{\nu,C}^{1}\leq\frac{\left(\frac{\alpha_{2}-\alpha_{1}}{\alpha_{1}-c}+\frac{\alpha_{2}-\alpha_{1}}{d-\alpha_{2}}\right)}{\nu\left([\alpha_{1},\alpha_{2}]\right)\Lambda\left([\alpha_{1},\alpha_{2}]\right)}.
\]
In particular, if $\nu\left(\left\langle C\right\rangle _{m}\right)>0$
for some $m>1$, then
\[
\lambda_{\nu,C}^{1}\leq\frac{4/(m-1)}{\nu\left(\left\langle C\right\rangle _{m}\right)\Lambda\left(\left\langle C\right\rangle _{m}\right)}.
\]
\end{lem}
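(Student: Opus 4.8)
The plan is to bound the first eigenvalue $\lambda_{\nu,C}^1$ of the Dirichlet form $\E_{\nu,[c,d]}$ from above by exhibiting a single explicit test function $f\in\dom(\E_{\nu,[c,d]})$ and applying the min-max principle in the form of the Rayleigh quotient: $\lambda_{\nu,C}^1\le \E_{\nu,[c,d]}(f,f)/\langle f,f\rangle_\nu$. The natural candidate is the piecewise-linear "tent" (trapezoid) function that rises linearly from $0$ at $c$ to $1$ at $\alpha_1$, stays equal to $1$ on $[\alpha_1,\alpha_2]$, and descends linearly back to $0$ at $d$. This $f$ lies in $H_0^1(c,d)$ (it vanishes at both endpoints), is continuous, and is affine on $[c,\alpha_1]$ and $[\alpha_2,d]$; since $\nu(\{c,d\})=0$ and the function is of the required piecewise-linear shape away from $[\alpha_1,\alpha_2]$, it belongs to $C_\nu([c,d])$ as well, hence to $\dom(\E_{\nu,[c,d]})$.

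The numerator is a direct computation: $\nabla_\Lambda f$ equals $1/(\alpha_1-c)$ on $(c,\alpha_1)$, $0$ on $(\alpha_1,\alpha_2)$, and $-1/(d-\alpha_2)$ on $(\alpha_2,d)$, so
\[
\E_{\nu,[c,d]}(f,f)=\int_c^d(\nabla_\Lambda f)^2\d\Lambda=\frac{1}{\alpha_1-c}+\frac{1}{d-\alpha_2}.
\]
For the denominator, since $f\equiv 1$ on $[\alpha_1,\alpha_2]$ and $f\ge 0$ everywhere, we have $\langle f,f\rangle_\nu=\int f^2\d\nu\ge \nu([\alpha_1,\alpha_2])$. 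Dividing and then factoring out $\alpha_2-\alpha_1=\Lambda([\alpha_1,\alpha_2])$ from the numerator gives exactly
\[
\lambda_{\nu,C}^1\le\frac{\frac{1}{\alpha_1-c}+\frac{1}{d-\alpha_2}}{\nu([\alpha_1,\alpha_2])}
=\frac{\frac{\alpha_2-\alpha_1}{\alpha_1-c}+\frac{\alpha_2-\alpha_1}{d-\alpha_2}}{\nu([\alpha_1,\alpha_2])\,\Lambda([\alpha_1,\alpha_2])},
\]
which is the first claimed inequality.

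For the "in particular" part, specialize to $[\alpha_1,\alpha_2]=\langle C\rangle_m$, the centered subinterval of length $\Lambda(C)/m$. Writing $\ell\coloneqq\Lambda(C)=d-c$, the two side intervals $[c,\alpha_1]$ and $[\alpha_2,d]$ each have length $\tfrac12(\ell-\ell/m)=\tfrac{\ell(m-1)}{2m}$, while $\alpha_2-\alpha_1=\ell/m$. Hence each ratio $\tfrac{\alpha_2-\alpha_1}{\alpha_1-c}=\tfrac{\alpha_2-\alpha_1}{d-\alpha_2}=\tfrac{\ell/m}{\ell(m-1)/(2m)}=\tfrac{2}{m-1}$, so the numerator factor equals $\tfrac{4}{m-1}$, yielding $\lambda_{\nu,C}^1\le \dfrac{4/(m-1)}{\nu(\langle C\rangle_m)\Lambda(\langle C\rangle_m)}$. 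There is no real obstacle here; the only points requiring a word of care are verifying the membership $f\in\dom(\E_{\nu,[c,d]})$ (where the hypothesis $\nu(\{c,d\})=0$ and the definition of $C_\nu$ are used) and making sure the denominator estimate uses $\nu(\langle C\rangle_m)>0$ so that the Rayleigh quotient is well defined.
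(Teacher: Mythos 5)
Your test function and computation are exactly the paper's: the same trapezoidal function (linear from $0$ at $c$ to $1$ at $\alpha_1$, constant $1$ on $[\alpha_1,\alpha_2]$, linear back to $0$ at $d$), the same Rayleigh-quotient bound, the same lower bound $\int f^2\d\nu\geq\nu([\alpha_1,\alpha_2])$, and the same specialization $\alpha_1=\frac12\left(c+d-\frac{d-c}{m}\right)$, $\alpha_2=\frac12\left(c+d+\frac{d-c}{m}\right)$ giving $4/(m-1)$.

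The one weak point is your justification that $f$ is admissible: you assert $f\in C_{\nu}([c,d])$, but this can fail, since $C_{\nu}$ requires $f$ to be affine on each component of $[c,d]\setminus\supp\nu$, and such a component may straddle the kink points $\alpha_1$ or $\alpha_2$ (the hypothesis $\nu(\{c,d\})=0$ does not prevent this). The paper avoids the issue by only claiming $f\in H_{0}^{1}(c,d)$ and invoking the extended min-max principle (Proposition \ref{prop:dom_vs_H01 Minmax}), which allows Rayleigh quotients over $H_{0}^{1}$ rather than over $\dom(\E_{\nu,C})$. Your argument is repaired verbatim by citing that proposition (or, alternatively, by replacing $f$ with $\tau_{\nu,[c,d]}(f)$ from Lemma \ref{lem:RepraesentantL2NU}, which lies in $\dom(\E_{\nu,C})$, agrees with $f$ in $L_{\nu}^{2}$, and has no larger energy); with that substitution the proof is complete and coincides with the paper's.
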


\begin{proof}
Define 
\[
f(x)\coloneqq\frac{x-c}{\alpha_{1}-c}\1_{\left[c,\alpha_{1}\right)}+\1_{\left[\alpha_{1},\alpha_{2}\right]}+\frac{d-x}{d-\alpha_{2}}\1_{\left(\alpha_{2},d\right]},
\]
then $f\in H_{0}^{1}(c,d)$ and the min-max principle gives
\[
\lambda_{\nu,C}^{1}\leq\frac{\int\left(f'\right)^{2}\d\Lambda}{\int f^{2}\d\nu}=\frac{\left(\frac{1}{\alpha_{1}-c}+\frac{1}{d-\alpha_{2}}\right)}{\int f^{2}\d\nu}\leq\frac{\left(\alpha_{2}-\alpha_{1}\right)\left(\frac{1}{\alpha_{1}-c}+\frac{1}{d-\alpha_{2}}\right)}{\nu\left([\alpha_{1},\alpha_{2}]\right)\Lambda\left([\alpha_{1},\alpha_{2}]\right)}.
\]
For $\alpha_{1}=\frac{1}{2}\left(c+d-\frac{d-c}{m}\right)$, $\alpha_{2}=\frac{1}{2}\left(c+d+\frac{d-c}{m}\right)$
and provided $\nu\left(\left\langle C\right\rangle _{m}\right)>0$,
one obtains $\lambda_{\nu,\Lambda,C}^{1}\leq4/(m-1)\left(\nu\left(\left\langle C\right\rangle _{m}\right)\Lambda\left(\left\langle C\right\rangle _{m}\right)\right)^{-1}$.
\end{proof}
\begin{prop}
\label{prop:estimateOfSpectralDim_N^L_m_N^R} Let $\nu$ be a Borel
measure on $(a,b)$. Then for each $m>1$ and all $x>0$, we have
\[
\mathcal{N}_{m}^{L}\left(x\right)\leq N_{\nu}(x)\leq\mathcal{N}^{R}\left(x\right).
\]
In particular,
\[
\underline{h}_{\nu}^{m}\leq\underline{s}_{\nu}\leq\overline{s}_{\nu}\leq\overline{h}_{\nu}\;\text{and \, }\underline{s}_{\nu}\leq\underline{h}_{\nu}\;\text{and \, \ensuremath{\overline{h}_{\nu}^{m}\leq\overline{s}_{\nu}}}
\]
and if
\[
\sup_{m>1}\underline{h}_{\nu}^{m}=\overline{h}_{\nu},
\]
then the spectral dimension exists and equals $\overline{h}_{\nu}$.
\end{prop}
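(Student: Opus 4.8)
The plan is to prove the two pointwise estimates $\mathcal{N}_m^L(x)\le N_\nu(x)\le\mathcal{N}^R(x)$ for every $m>1$ and every $x>0$, each as a short combination of the sub-/superadditivity of the counting function (Proposition~\ref{prop:SubandSupperAdditivity}) with a one-interval Rayleigh-quotient bound, and then to deduce the displayed chain of inequalities and the concluding implication by applying $\liminf_{x\to\infty}$ and $\limsup_{x\to\infty}$ to $\log(\,\cdot\,)/\log x$ and using monotonicity.

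For the upper bound, fix $x>0$ and let $P$ be any $\nu$-partition of $[0,1]$ with $\max_{C\in P}\nu(C)\Lambda(C)<1/x$; such a $P$ exists once $x$ is large by Lemma~\ref{lem:induction_betaN}, and if none exists then $\mathcal{N}^R(x)=\infty$ and there is nothing to show. By the Poincaré inequality (Lemma~\ref{lem:Poincare_inequality}) the bottom eigenvalue on each piece satisfies $\lambda_{\nu,C}^{1}\ge(\nu(C)\Lambda(C))^{-1}>x$, hence $N_{\nu,C}(x)=0$; pieces with $\nu(C)=0$ contribute nothing either. The superadditive half of Proposition~\ref{prop:SubandSupperAdditivity} then gives $N_\nu(x)\le\sum_{C\in P}N_{\nu,C}(x)+\card(P)-2=\card(P)-2<\card(P)$, and taking the infimum over admissible $P$ yields $N_\nu(x)\le\mathcal{N}^R(x)$.

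For the lower bound, fix $m>1$ and $x>0$ and let $P=\{I_1,\dots,I_k\}\in\Pi_0$ be a disjoint family with $\nu(\langle I_j\rangle_m)\Lambda(\langle I_j\rangle_m)\ge 4/(x(m-1))$ for all $j$. In particular $\nu(\langle I_j\rangle_m)>0$, so Lemma~\ref{lem:UpperBoundForDirichlet} applies on $I_j$ and gives $\lambda_{\nu,I_j}^{1}\le(4/(m-1))(\nu(\langle I_j\rangle_m)\Lambda(\langle I_j\rangle_m))^{-1}\le x$, whence $N_{\nu,I_j}(x)\ge1$. I then enlarge the disjoint family $\{I_j\}$ to a genuine $\nu$-partition $\widetilde P$ of $[0,1]$ by inserting the finitely many complementary subintervals; this is legitimate because each endpoint of each $I_j$ is $\nu$-null (definition of $\Pi_0$) and $\nu(\{0\})=\nu(\{1\})=0$ since $\supp\nu\subset(0,1)$, so at worst one replaces each $I_j$ by the half-open interval with the same endpoints, which does not affect any counting function. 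The subadditive half of Proposition~\ref{prop:SubandSupperAdditivity} now gives $N_\nu(x)\ge\sum_{C\in\widetilde P}N_{\nu,C}(x)\ge\sum_{j=1}^{k}N_{\nu,I_j}(x)\ge k=\card(P)$, and taking the supremum over admissible $P$ yields $\mathcal{N}_m^L(x)\le N_\nu(x)$.

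From $\mathcal{N}_m^L(x)\le N_\nu(x)$ one gets $\underline{h}_\nu^m\le\underline{s}_\nu$ and $\overline{h}_\nu^m\le\overline{s}_\nu$, and from $N_\nu(x)\le\mathcal{N}^R(x)$ one gets $\underline{s}_\nu\le\underline{h}_\nu$ and $\overline{s}_\nu\le\overline{h}_\nu$; together with the trivial $\underline{s}_\nu\le\overline{s}_\nu$ this is the asserted chain. For the final statement, $\underline{h}_\nu^m\le\underline{s}_\nu$ holds for every $m>1$, so $\sup_{m>1}\underline{h}_\nu^m\le\underline{s}_\nu$; combining the hypothesis $\sup_{m>1}\underline{h}_\nu^m=\overline{h}_\nu$ with $\overline{s}_\nu\le\overline{h}_\nu$ gives $\overline{h}_\nu\le\underline{s}_\nu\le\overline{s}_\nu\le\overline{h}_\nu$, so all four agree and $s_\nu=\overline{h}_\nu$. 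The only step that needs genuine care is the passage in the lower bound from a loose disjoint family in $\Pi_0$ to an honest $\nu$-partition admissible for Proposition~\ref{prop:SubandSupperAdditivity}, in particular the bookkeeping at the endpoints of the $I_j$ and at $0$ and $1$; everything else is a direct application of the Poincaré inequality, Lemma~\ref{lem:UpperBoundForDirichlet}, and monotonicity of $\liminf/\limsup$.
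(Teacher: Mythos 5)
Your proof is correct and takes essentially the same route as the paper, whose own proof simply cites the sub-/superadditivity of Proposition~\ref{prop:SubandSupperAdditivity} together with Lemma~\ref{lem:Poincare_inequality} and Lemma~\ref{lem:UpperBoundForDirichlet}; you merely spell out the details the paper leaves implicit, including the endpoint bookkeeping when completing a family from $\Pi_0$ to a genuine $\nu$-partition. The only cosmetic discrepancy is the additive constant $\card(P)-2$ versus $\card(P)-1$ coming from the paper's inconsistent indexing of partitions, which is immaterial since either way $N_\nu(x)\le\card(P)$.
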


\begin{proof}
This follows immediately from the sub- and superadditivity of the
eigenvalue counting functions (Proposition \ref{prop:SubandSupperAdditivity})
in tandem with Lemma \ref{lem:Poincare_inequality} and Lemma \ref{lem:UpperBoundForDirichlet}.
\end{proof}

\section{$L^{q}$-spectrum and coarse multifractal formalism\label{sec:-spectrum-and-coarse}}

In this section we establish a connection between the spectral dimension,
$L^{q}$-spectrum and the coarse multifractal formalism.

\subsection{The $L^{q}$-spectrum}

In this section we collect some important facts about the $L^{q}$-spectrum
providing proofs only for those which are not straight forward or
have not been covered in the literature.

We let $\nu$ be an arbitrary finite Borel measure with bounded support.\textbf{
}For $k\in\mathbb{Z}$ and $n\in\N$ we let $A_{k}^{n}\coloneqq\left(k2^{-n},\left(k+1\right)2^{-n}\right]$
and set $\mathcal{D}_{\nu,n}\coloneqq\left\{ A_{k}^{n}:k\in\mathbb{Z},\nu\left(A_{k}^{n}\right)>0\right\} $,
$\mathcal{D}_{\nu}\coloneqq\bigcup_{n}\mathcal{D}_{\nu,n}$. For $q\in\R$,
the $L^{q}$\emph{-spectrum} of $\nu$ is defined by
\[
\beta_{\nu}\left(q\right)\coloneqq\limsup_{n\to\infty}\beta_{n}(q)\;\text{with \, \,}\quad\beta_{n}^{\nu}\left(q\right)\coloneqq\beta_{n}\left(q\right)\coloneqq\frac{1}{n\log2}\log\sum_{C\in\mathcal{D}_{\nu,n}}\nu\left(C\right)^{q}.
\]
Let us begin with some basic facts.
\begin{fact}
\label{Fact IndependenceOFChoiceCordinates_and_factor} The function
$\beta_{\nu}$ will not alter when we take $d$-adic intervals instead
of dyadic ones \textbf{(}see e.~g\@. \citep[Proposition 2 and Remarks, p. 466]{MR1312056}
or \citep[Proposition 1.6.]{Riedi_diss}\textbf{ }and note that the
definition in \citep[Proposition 1.6.]{Riedi_diss}\textbf{ }coincides
with our definition for $q\geq0$\textbf{). }More precisely, for fixed
$\delta>\text{0}$, set
\[
G_{\nu,\delta}\coloneqq G_{\delta}\coloneqq\left\{ \left(l\delta,(l+1)\delta\right]\mid l\in\N,\,\nu\left(\left(l\delta,(l+1)\delta\right]\right)>0\right\} 
\]
and let $(\delta_{n})$ be an \emph{admissible} sequence, i.~e\@.
$\delta_{n}\in(0,1)^{\N}$, $\delta_{n}\rightarrow0$ and there exists
a constant $C>0$ such that $C\delta_{n}\leq\delta_{n+1}\leq\delta_{n}$,
\textup{for all} $n\in\N$. Then for $q\geq0$, 
\[
\limsup_{\delta\downarrow0}\frac{1}{-\log(\delta)}\log\sum_{C\in G_{\delta}}\nu\left(C\right)^{q}=\limsup_{m\rightarrow\infty}\frac{1}{-\log(\delta_{m})}\log\sum_{C\in G_{\delta_{m}}}\nu\left(C\right)^{q}.
\]
In particular, for $\delta_{m}\coloneqq2^{-m}$, the above expression
equals $\beta_{\nu}(q).$
\end{fact}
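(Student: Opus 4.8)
The plan is to show that changing the mesh from dyadic to general $\delta$-grids, or to an admissible sequence, only perturbs the counting sums $\sum_{C}\nu(C)^q$ by bounded multiplicative factors in the grid resolution, which disappear after dividing by $-\log\delta$ and passing to the $\limsup$. The crucial elementary observation is a \emph{doubling-type comparison}: if $\delta<\delta'\le 2\delta$, then every $\delta'$-interval meets at most three $\delta$-intervals, and conversely every $\delta$-interval is contained in the union of at most two consecutive $\delta'$-intervals. Since $q\ge 0$, the function $t\mapsto t^q$ is nondecreasing on $[0,\infty)$, so $\nu(C)\le\nu(C')$ whenever $C\subset C'$ gives $\nu(C)^q\le\nu(C')^q$; combined with the finite-overlap bookkeeping this yields two-sided bounds of the form
\[
c^{-1}\sum_{C\in G_{\delta'}}\nu(C)^q \;\le\; \sum_{C\in G_\delta}\nu(C)^q \;\le\; c\sum_{C\in G_{\delta'}}\nu(C)^q
\]
for a constant $c$ depending only on the ratio $\delta'/\delta$ (so $c=3$ suffices when $\delta'/\delta\in(1,2]$). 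This is the only nontrivial content; everything else is bookkeeping.

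First I would fix $q\ge 0$ and prove the mesh-comparison lemma just described, being a little careful about the endpoint conventions (half-open intervals $(l\delta,(l+1)\delta]$) so that the overlap counts are genuinely bounded; the measure-zero intervals are harmless since they contribute nothing to the sums and are excluded from $G_\delta$ by definition. Next I would deduce that for any two null sequences $(\delta_n),(\delta'_n)\to 0$ with $\delta_n\asymp\delta'_n$ (bounded ratio both ways) the two associated $\limsup$ expressions coincide: writing $S(\delta):=\log\sum_{C\in G_\delta}\nu(C)^q$, the comparison gives $|S(\delta)-S(\delta')|\le\log c$ whenever $\delta\asymp\delta'$ with constant governed by the ratio, and dividing by $-\log\delta_n$ (which tends to $+\infty$) kills this bounded difference.

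Then I would handle the two specific claims. For a general admissible sequence $(\delta_m)$ with $C\delta_m\le\delta_{m+1}\le\delta_m$: given an arbitrary $\delta\downarrow 0$, choose $m=m(\delta)$ with $\delta_{m+1}<\delta\le\delta_m$, so $\delta_m/\delta\in[1,1/C]$ is bounded; the comparison lemma then sandwiches $S(\delta)$ between $S(\delta_m)$ and $S(\delta_{m+1})$ up to an additive constant, and passing to $\limsup_{\delta\downarrow 0}$ versus $\limsup_{m\to\infty}$ over the subsequence gives equality of the two limits superior, since each term of the continuous family is comparable to a nearby term of the sequence and vice versa. Specializing to $\delta_m=2^{-m}$ (which is admissible with $C=1/2$), the right-hand limit is exactly $\limsup_m\frac{1}{m\log 2}\log\sum_{C\in\mathcal D_{\nu,m}}\nu(C)^q=\beta_\nu(q)$ by the definition of $\beta_\nu$, noting $-\log(2^{-m})=m\log 2$. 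Finally, the coordinate-translation independence follows from the same overlap argument applied to a shifted dyadic grid (each shifted interval meets at most two unshifted ones of the same length), so the reference to \citep[Proposition 2]{MR1312056} can be invoked or the one-line estimate repeated.

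The main obstacle is purely notational rather than conceptual: keeping the half-open endpoint conventions consistent so that the finite-overlap constants are honest (a sloppy treatment could let a point-mass on a grid endpoint be double-counted), and making sure the comparison constant is tracked as a function of the mesh ratio so that it genuinely drops out after division by $-\log\delta$. No deep input is needed beyond monotonicity of $t\mapsto t^q$ for $q\ge 0$ and the boundedness of the grid ratios built into the definition of admissibility.
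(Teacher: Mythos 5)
The paper does not actually prove this Fact: it simply cites \citep[Proposition 2 and Remarks, p. 466]{MR1312056} and \citep[Proposition 1.6]{Riedi_diss}, so your self-contained mesh-comparison argument is a genuine (and correct) substitute, and it is essentially the standard argument underlying those references. Your key lemma is right: with the half-open grid convention, a cell of the coarser grid meets boundedly many cells of the finer grid (the bound controlled by the mesh ratio) and conversely, and monotonicity of $t\mapsto t^{q}$ for $q\geq0$ plus finite overlap gives two-sided comparability of $\sum_{C}\nu(C)^{q}$ with a constant independent of the scale; admissibility ($C\delta_{m}\leq\delta_{m+1}\leq\delta_{m}$) guarantees every $\delta$ sits within bounded ratio of some $\delta_{m}$, so the additive $O(1)$ discrepancies in both $\log\sum_{C}\nu(C)^{q}$ and $-\log\delta$ disappear upon division and passage to the limit superior, and $\delta_{m}=2^{-m}$ recovers $\beta_{\nu}(q)$. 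Two small points to tighten: first, your comparison constant is not purely a function of the mesh ratio once $q>1$, since splitting $\nu(C)\leq\nu(C_{1}')+\nu(C_{2}')$ costs a factor like $2^{q-1}$ via $(a+b)^{q}\leq2^{q-1}(a^{q}+b^{q})$; this is harmless because any constant independent of $\delta$ suffices, but the parenthetical ``$c=3$'' is only accurate for $q\leq1$. Second, when you argue that the bounded additive errors are ``killed'' by dividing by $-\log\delta$, note that the denominator also shifts by $O(1)$ (from $-\log\delta$ to $-\log\delta_{m}$), so you should record the elementary bound $\bigl|\log\sum_{C\in G_{\delta}}\nu(C)^{q}\bigr|\leq\max(1,q)\bigl(-\log\delta\bigr)+O(1)$ (for $\nu$ normalized), which makes the ratio stable under $O(1)$ perturbations of numerator and denominator; with these routine additions the proof is complete.
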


\begin{fact}
\label{Fact: ObdaProbMeasure}For $c>0$ we have $\beta_{c\nu}=\beta_{\nu}$.
Therefore, we can assume without loss of generality that $\nu$ is
a probability measure.
\end{fact}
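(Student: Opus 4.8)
The plan is to reduce the statement to the elementary observation that multiplying a measure by a positive constant only shifts each finite-level approximant $\beta_n^{\nu}$ by a term that vanishes as $n\to\infty$.

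First I would note that for $c>0$ the dyadic cells carrying positive mass are unaffected, i.e.\ $\mathcal{D}_{c\nu,n}=\mathcal{D}_{\nu,n}$ for every $n\in\N$, since $(c\nu)(C)>0$ if and only if $\nu(C)>0$. Hence for every $q\in\R$ and every $n\in\N$,
\[
\beta_n^{c\nu}(q)=\frac{1}{n\log2}\log\sum_{C\in\mathcal{D}_{\nu,n}}\bigl(c\,\nu(C)\bigr)^{q}=\frac{q\log c}{n\log2}+\beta_n^{\nu}(q).
\]
Since $q\log c/(n\log2)\to0$ as $n\to\infty$, taking $\limsup_{n\to\infty}$ on both sides yields $\beta_{c\nu}(q)=\beta_{\nu}(q)$ for all $q\in\R$, which is the first assertion.

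For the second assertion I would apply this with $c\coloneqq\nu(\R)^{-1}$ (we may assume $\nu\neq0$, since otherwise there is nothing to prove); then $c\nu$ is a Borel probability measure with the same $L^{q}$-spectrum as $\nu$, so in every statement that only involves $\beta_{\nu}$ we lose no generality by assuming $\nu$ to be a probability measure. There is no real obstacle here; the only thing to keep in mind is that the shift term depends on $q$, but its $n^{-1}$ decay makes it irrelevant for the $\limsup$, and the identical computation works verbatim with $\liminf$ in place of $\limsup$.
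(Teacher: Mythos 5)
Your argument is correct and is exactly the straightforward computation the paper has in mind: the paper states this as a Fact without proof (it is among those deemed immediate), and your observation that $\mathcal{D}_{c\nu,n}=\mathcal{D}_{\nu,n}$ together with the shift $\beta_n^{c\nu}(q)=\beta_n^{\nu}(q)+q\log c/(n\log 2)$ vanishing as $n\to\infty$ is the intended verification. Nothing is missing.
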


\begin{fact}
The function $\beta_{\nu}$ is ---~as a pointwise limit superior
of convex function~--- again convex and we have (see also \citep[Corollary ]{MR1312056})
\[
\beta_{\nu}\left(0\right)=\overline{\dim}_{M}\left(\nu\right)\eqqcolon\delta_{\nu}^{*}\quad\text{and }\quad\beta_{\nu}\left(1\right)=0.
\]
Hence, the left, respectively right, derivative in $1$ exists and
its negative value will be denoted by $\overline{\delta}_{\nu}\coloneqq\lim_{q\nearrow1}-\beta_{\nu}\left(q\right)/\left(1-q\right)$
and $\underline{\delta}_{\nu}\coloneqq\lim_{q\searrow1}-\beta_{\nu}\left(q\right)/\left(1-q\right)$.
For later use we also define the right derivative of $\beta$ in $0$
denoted by $\varrho_{\nu}\coloneqq\lim_{t\searrow0}\left(\beta_{\nu}\left(t\right)-\delta_{\nu}^{*}\right)/t.$
Note that in many situations we have $\overline{\delta}_{\nu}=\dim_{H}\left(\nu\right)$.
The function $\beta_{\nu}$ is non-increasing and non-negative on
$\R_{<1}$ and $\liminf_{n}\beta_{n}(q)\geq-q$ for all $q\geq0.$
\end{fact}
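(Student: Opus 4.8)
The plan is to reduce each assertion to a statement about the finite‑level functions $\beta_{n}^{\nu}$ and then pass to the limit superior; by Fact~\ref{Fact: ObdaProbMeasure} we may assume $\nu$ is a probability measure, so that $0<\nu(C)\le1$ for every $C\in\mathcal{D}_{\nu,n}$ and $\sum_{C\in\mathcal{D}_{\nu,n}}\nu(C)=1$. For convexity I would first note that each $\beta_{n}^{\nu}$ is convex: writing $q=\lambda q_{0}+(1-\lambda)q_{1}$ with $\lambda\in[0,1]$, Hölder's inequality gives $\sum_{C}\nu(C)^{q}=\sum_{C}\bigl(\nu(C)^{q_{0}}\bigr)^{\lambda}\bigl(\nu(C)^{q_{1}}\bigr)^{1-\lambda}\le\bigl(\sum_{C}\nu(C)^{q_{0}}\bigr)^{\lambda}\bigl(\sum_{C}\nu(C)^{q_{1}}\bigr)^{1-\lambda}$, so taking logarithms yields the convexity of $\beta_{n}^{\nu}$. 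Since $\limsup_{n}\bigl(\lambda a_{n}+(1-\lambda)b_{n}\bigr)\le\lambda\limsup_{n}a_{n}+(1-\lambda)\limsup_{n}b_{n}$ for $\lambda\in[0,1]$, the pointwise limit superior $\beta_{\nu}$ inherits convexity; the estimates obtained below show that $\beta_{\nu}$ is real‑valued on $[0,\infty)$, so it is a convex function there in the ordinary sense.

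For the two boundary values: at $q=1$ we have $\sum_{C}\nu(C)^{1}=\sum_{C}\nu(C)=1$, hence $\beta_{n}^{\nu}(1)=0$ for every $n$ and so $\beta_{\nu}(1)=0$. At $q=0$ we have $\sum_{C}\nu(C)^{0}=\card\mathcal{D}_{\nu,n}$, the number of generation‑$n$ dyadic intervals carrying positive $\nu$‑mass; every such interval meets $\supp\nu$ and every point of $\supp\nu$ lies within distance $2^{-n}$ of one of them, so $\card\mathcal{D}_{\nu,n}$ coincides, up to a bounded multiplicative factor, with the $2^{-n}$‑mesh covering number of $\supp\nu$. Therefore $\beta_{\nu}(0)=\limsup_{n}(\log2^{n})^{-1}\log\card\mathcal{D}_{\nu,n}$ is the dyadic upper box‑counting dimension of $\supp\nu$, which equals $\overline{\dim}_{M}(\supp\nu)=\delta_{\nu}^{*}$; the mesh‑independence making this precise is exactly Fact~\ref{Fact IndependenceOFChoiceCordinates_and_factor} (see also \citep[Prop.~2]{MR1312056}).

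For monotonicity, non‑negativity, the lower bound and the one‑sided derivatives: since $0<\nu(C)\le1$, the map $q\mapsto\nu(C)^{q}$ is non‑increasing, hence (summing over $C$ and taking logarithms) each $\beta_{n}^{\nu}$ and so $\beta_{\nu}$ is non‑increasing on $\R$, and in particular $\beta_{\nu}(q)\ge\beta_{\nu}(1)=0$ for $q<1$. Because $\supp\nu$ is bounded there is a constant $C_{0}$ with $\card\mathcal{D}_{\nu,n}\le C_{0}2^{n}$, so $\max_{C\in\mathcal{D}_{\nu,n}}\nu(C)\ge C_{0}^{-1}2^{-n}$, and for $q\ge0$ this gives $\sum_{C}\nu(C)^{q}\ge\bigl(\max_{C}\nu(C)\bigr)^{q}\ge C_{0}^{-q}2^{-nq}$, whence $\beta_{n}^{\nu}(q)\ge-q-q\log C_{0}/(n\log2)\to-q$ and thus $\liminf_{n}\beta_{n}^{\nu}(q)\ge-q$ (when $\supp\nu\subset(0,1)$ one may take $C_{0}=1$ and even $\beta_{n}^{\nu}(q)\ge-q$ for every $n$). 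The same bounds yield $-1\le\beta_{n}^{\nu}(q)\le\beta_{n}^{\nu}(0)\le1+\log_{2}C_{0}/n$ for $q\in[0,1]$ and $\beta_{n}^{\nu}(q)\le0$ for $q>1$, so $\beta_{\nu}$ is finite on a neighbourhood of $1$; a convex function finite on an open interval has finite one‑sided derivatives at every interior point, so the left and right derivatives of $\beta_{\nu}$ at $1$ exist, and since $\beta_{\nu}(1)=0$ they are precisely the monotone limits defining $\overline{\delta}_{\nu}$ and $\underline{\delta}_{\nu}$ (similarly $\varrho_{\nu}$ is the right derivative of $\beta_{\nu}$ at $0$). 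None of this is hard; the one step deserving a little care is the identification $\beta_{\nu}(0)=\overline{\dim}_{M}(\supp\nu)$ — that counting positive‑mass dyadic cells agrees, up to bounded factors, with the covering count in the definition of the upper Minkowski dimension — and this is already supplied by the cited Fact, so no new argument is needed.
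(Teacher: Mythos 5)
Your proposal is correct and follows exactly the route the paper intends: the paper states this as a Fact without proof, indicating convexity via "pointwise limit superior of convex functions" and citing \citep{MR1312056} for $\beta_{\nu}(0)=\overline{\dim}_{M}(\nu)$, and your Hölder argument for convexity of each $\beta_{n}^{\nu}$, the normalization $\beta_{n}^{\nu}(1)=0$, the dyadic-cell/covering-number comparison, and the bound $\max_{C}\nu(C)\geq C_{0}^{-1}2^{-n}$ for $\liminf_{n}\beta_{n}(q)\geq-q$ are precisely the standard details being invoked. The only cosmetic remark is the sign convention for $\overline{\delta}_{\nu}$, $\underline{\delta}_{\nu}$ (the introduction writes $\lim_{t\nearrow1}\beta_{\nu}(t)/(1-t)$, the Fact writes it with an extra minus sign); your identification of these as the one-sided derivatives at $1$, using $\beta_{\nu}(1)=0$, is the intended content.
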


\begin{fact}
Since $\beta_{\nu}$ is a proper convex function, its \emph{Legendre
transform} exists, given by 
\[
\widehat{\beta}_{\nu}\left(\alpha\right)\coloneqq\inf_{q\in\R}\beta_{\nu}\left(q\right)+\alpha q.
\]
\end{fact}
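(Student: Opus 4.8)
The plan is simply to check that $\beta_\nu$ satisfies the two hypotheses of the statement — convexity and properness — and then quote the classical theory of the Legendre–Fenchel transform. Convexity has already been recorded: $\beta_\nu=\inf_{N}\sup_{n\ge N}\beta_n^\nu$ is the pointwise limit of a \emph{decreasing} sequence of (finite) convex functions, hence convex. So the only thing left to verify is that $\beta_\nu$ is \emph{proper}, i.e.\ that it is finite somewhere and never equals $-\infty$.

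First I would record finiteness on $[0,\infty)$. After normalising $\nu$ to a probability measure (Fact \ref{Fact: ObdaProbMeasure}), for $q\ge 0$ the sum $\sum_{C\in\mathcal{D}_{\nu,n}}\nu(C)^q$ consists of at most $2^n$ terms, each in $(0,1]$, and is at least $\bigl(\max_C\nu(C)\bigr)^q\ge 2^{-nq}$; applying $\tfrac{1}{n\log 2}\log(\,\cdot\,)$ and taking the $\limsup$ gives $-q\le\beta_\nu(q)\le 1$, which is exactly the bound already quoted in the previous Fact. In particular $\beta_\nu$ is real-valued on $[0,\infty)$. That $\beta_\nu>-\infty$ everywhere now follows from convexity together with $\beta_\nu(0)=\delta_\nu^*\ge 0$: if $\beta_\nu(q_0)=-\infty$ for some $q_0$, pick $q_1\in[0,\infty)$ lying on the opposite side of $0$, write $0=\lambda q_0+(1-\lambda)q_1$ with $\lambda\in(0,1)$, and deduce $\delta_\nu^*\le\lambda\beta_\nu(q_0)+(1-\lambda)\beta_\nu(q_1)=-\infty$, a contradiction. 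Hence $\beta_\nu$ is proper. (Note that $\beta_\nu(q)=+\infty$ remains possible for $q<0$, but this affects neither properness nor the existence of the transform.)

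With $\beta_\nu$ shown to be proper convex, the statement is immediate from classical convex analysis: the conjugate $q\mapsto\sup_{t\in\R}\bigl(qt-\beta_\nu(t)\bigr)$ of a proper convex function is a well-defined proper lower semicontinuous convex function, so
\[
\widehat\beta_\nu(\alpha)=\inf_{q\in\R}\bigl(\beta_\nu(q)+\alpha q\bigr)=-\sup_{q\in\R}\bigl(-\alpha q-\beta_\nu(q)\bigr)
\]
is a well-defined proper upper semicontinuous concave function; it is bounded above by $\delta_\nu^*$ (take $q=0$) and finite on the set $\bigcup_{q\in\R}-\partial\beta_\nu(q)$ of slope values of $\beta_\nu$, which in particular contains $-\partial\beta_\nu(1)=[\underline\delta_\nu,\overline\delta_\nu]$. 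There is no real obstacle in this argument: the whole content is the elementary properness check above, everything else being standard.
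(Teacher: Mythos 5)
Your argument is correct: the paper states this Fact without proof, treating it as standard convex analysis resting on the properties recorded in the preceding Fact, and your verification of properness (finiteness on $\R_{\geq0}$ via $-q\leq\beta_{\nu}(q)\leq1$ after normalisation, plus convexity and $\beta_{\nu}(0)=\delta_{\nu}^{*}$ to rule out the value $-\infty$) supplies exactly the routine details that are being taken for granted. As a minor simplification, for $q<0$ one can avoid the convexity detour altogether: since $\nu(C)\leq1$ each term $\nu(C)^{q}\geq1$, so $\beta_{n}^{\nu}(q)\geq0$ and hence $\beta_{\nu}(q)\geq0>-\infty$ directly.
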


\begin{fact}
\label{fact:MGF_for_sum} For any two finite Borel measures $\mu_{1}$,
$\mu_{2}$ with bounded support and for $q\geq0$, we have
\[
\beta_{\mu_{1}+\mu_{2}}(q)=\max\left(\beta_{\mu_{1}},\beta_{\mu_{2}}\right)(q).
\]
If $\beta_{\mu_{1}}\left(q\right)\geq\beta_{\mu_{2}}\left(q\right)$
and $\beta_{\mu_{1}}\left(q\right)$ exists as a limit, then so does
$\beta_{\mu_{1}+\mu_{2}}\left(q\right)=\beta_{\mu_{1}}\left(q\right)$.
\end{fact}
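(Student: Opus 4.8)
The plan is to reduce everything to two elementary observations: for $a,b\ge 0$ and $q\ge 0$ one has $\max(a,b)^{q}\le(a+b)^{q}\le 2^{q}\max(a,b)^{q}$, and for any real sequences $\limsup_{n}\max(x_{n},y_{n})=\max(\limsup_{n}x_{n},\limsup_{n}y_{n})$. First I fix $q\ge 0$, put $\nu\coloneqq\mu_{1}+\mu_{2}$, and note that a dyadic interval $C$ of level $n$ satisfies $\nu(C)>0$ if and only if $\mu_{1}(C)>0$ or $\mu_{2}(C)>0$, so $\mathcal{D}_{\nu,n}=\mathcal{D}_{\mu_{1},n}\cup\mathcal{D}_{\mu_{2},n}$. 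Writing $S_{i}(n)\coloneqq\sum_{C\in\mathcal{D}_{\mu_{i},n}}\mu_{i}(C)^{q}$ and $S(n)\coloneqq\sum_{C\in\mathcal{D}_{\nu,n}}\nu(C)^{q}$, and agreeing that a term with $\mu_{i}(C)=0$ contributes $0$ (for $q=0$ every sum is read as a cardinality $\card\mathcal{D}_{\cdot,n}$), the termwise inequality over $C\in\mathcal{D}_{\nu,n}$ reads
\[
\max\bigl(\mu_{1}(C)^{q},\mu_{2}(C)^{q}\bigr)\le\nu(C)^{q}\le 2^{q}\bigl(\mu_{1}(C)^{q}+\mu_{2}(C)^{q}\bigr).
\]

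Summing this over $C\in\mathcal{D}_{\nu,n}$ and using that $\sum_{C\in\mathcal{D}_{\nu,n}}\mu_{i}(C)^{q}=S_{i}(n)$ (the extra intervals in $\mathcal{D}_{\nu,n}\setminus\mathcal{D}_{\mu_{i},n}$ add nothing), while the left side dominates $\max(S_{1}(n),S_{2}(n))$, I obtain
\[
\max\bigl(S_{1}(n),S_{2}(n)\bigr)\le S(n)\le 2^{q}\bigl(S_{1}(n)+S_{2}(n)\bigr)\le 2^{q+1}\max\bigl(S_{1}(n),S_{2}(n)\bigr).
\]
Applying $\tfrac{1}{n\log 2}\log(\cdot)$, the bounded multiplicative error $2^{q+1}$ turns into an additive $o(1)$, so $\beta_{n}^{\nu}(q)=\tfrac{1}{n\log 2}\log\max(S_{1}(n),S_{2}(n))+o(1)$. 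Taking $\limsup_{n}$ and invoking the second elementary observation gives $\beta_{\nu}(q)=\max\bigl(\beta_{\mu_{1}}(q),\beta_{\mu_{2}}(q)\bigr)$, which is the first assertion.

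For the refinement, assume $\beta_{\mu_{1}}(q)\ge\beta_{\mu_{2}}(q)$ and that $\beta_{\mu_{1}}(q)=\lim_{n}\beta_{n}^{\mu_{1}}(q)$ exists. The left inequality above gives $S(n)\ge S_{1}(n)$, hence $\beta_{n}^{\nu}(q)\ge\beta_{n}^{\mu_{1}}(q)$ for every $n$, and therefore $\liminf_{n}\beta_{n}^{\nu}(q)\ge\lim_{n}\beta_{n}^{\mu_{1}}(q)=\beta_{\mu_{1}}(q)$. Combined with $\limsup_{n}\beta_{n}^{\nu}(q)=\beta_{\nu}(q)=\max(\beta_{\mu_{1}}(q),\beta_{\mu_{2}}(q))=\beta_{\mu_{1}}(q)$ from the first part, this forces $\liminf$ and $\limsup$ to coincide, so $\beta_{\nu}(q)$ exists as a limit and equals $\beta_{\mu_{1}}(q)$.

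There is no genuine obstacle here; the only care needed is the bookkeeping that $\mathcal{D}_{\nu,n}=\mathcal{D}_{\mu_{1},n}\cup\mathcal{D}_{\mu_{2},n}$ (so that zero terms may be inserted freely and the splitting of the right-hand sum is legitimate) and the degenerate case $q=0$, where every sum is a count and the same chain of inequalities holds verbatim with $2^{q+1}$ replaced by $2$.
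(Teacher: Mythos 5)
Your proof is correct and follows essentially the same route as the paper: the paper's entire argument is the two-sided estimate $\sum_{C\in\mathcal{D}_{\mu_{i},n}}\mu_{i}(C)^{q}\leq\sum_{C\in\mathcal{D}_{\mu_{1}+\mu_{2},n}}(\mu_{1}(C)+\mu_{2}(C))^{q}\leq2^{q}\bigl(\sum_{C\in\mathcal{D}_{\mu_{1},n}}\mu_{1}(C)^{q}+\sum_{C\in\mathcal{D}_{\mu_{2},n}}\mu_{2}(C)^{q}\bigr)$, which is exactly your chain $\max(S_{1},S_{2})\leq S\leq2^{q+1}\max(S_{1},S_{2})$ followed by taking logarithms and $\limsup$. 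You merely spell out the bookkeeping (the identity $\mathcal{D}_{\nu,n}=\mathcal{D}_{\mu_{1},n}\cup\mathcal{D}_{\mu_{2},n}$, the $q=0$ convention, and the $\liminf$/$\limsup$ comparison for the second assertion) that the paper leaves implicit in "implying our claim."
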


\begin{proof}
We readily check, for $q\geq0$,
\[
\sum_{C\in\mathcal{D}_{\mu_{i},n}}\left(\mu_{i}\left(C\right)\right)^{q}\leq\sum_{C\in\mathcal{D}_{\mu_{1}+\mu_{2},n}}\left(\mu_{1}\left(C\right)+\mu_{2}\left(C\right)\right)^{q}\leq2^{q}\left(\sum_{C\in\mathcal{D}_{\mu_{1},n}}\mu_{1}\left(C\right)^{q}+\sum_{C\in\mathcal{D}_{\mu_{2},n}}\mu_{2}\left(C\right)^{q}\right)
\]
implying our claim.
\end{proof}
\begin{fact}
\label{fact: atomicPart} If $\nu$ has an atomic part, then $\beta_{\nu}(q)=0$,
for all $q\geq1$.
\end{fact}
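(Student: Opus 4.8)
The plan is to show that if $\nu$ has an atom, then for every $q \ge 1$ the sum $\sum_{C \in \mathcal{D}_{\nu,n}} \nu(C)^q$ is bounded above and below by positive constants independent of $n$, so that $\beta_n^\nu(q) \to 0$. Write $\nu = \nu_p + \nu_c$ where $\nu_p$ is the (nonzero) pure-atomic part and $\nu_c$ the rest. By Fact \ref{fact:MGF_for_sum} it suffices to treat $\nu_p$, since $\beta_{\nu_p}(q) \le \beta_\nu(q)$ and we will show $\beta_{\nu_p}(q) = 0$ while the general upper bound $\beta_\nu(q) \le 0$ for $q \ge 1$ follows from monotonicity of $q \mapsto \nu(C)^q$ on $[0,1]$-valued arguments once $\nu$ is normalised to a probability measure (Fact \ref{Fact: ObdaProbMeasure}): indeed for $q \ge 1$ we have $\sum_C \nu(C)^q \le \sum_C \nu(C) \le 1$, giving $\beta_n^\nu(q) \le 0$ and hence $\beta_\nu(q) \le 0$.

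For the matching lower bound, fix an atom $x_0$ with $\nu(\{x_0\}) = a > 0$. For every $n$ there is a unique dyadic interval $A_{k_n}^n \in \mathcal{D}_{\nu,n}$ containing $x_0$, and $\nu(A_{k_n}^n) \ge a$. Hence
\[
\sum_{C \in \mathcal{D}_{\nu,n}} \nu(C)^q \ge \nu(A_{k_n}^n)^q \ge a^q,
\]
so $\beta_n^\nu(q) \ge \frac{q \log a}{n \log 2} \to 0$ as $n \to \infty$. Combining with the upper bound $\beta_n^\nu(q) \le 0$ gives $\lim_n \beta_n^\nu(q) = 0$, and in particular $\beta_\nu(q) = \limsup_n \beta_n^\nu(q) = 0$ for all $q \ge 1$.

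There is essentially no obstacle here; the only mild subtlety is the normalisation step, for which one invokes Fact \ref{Fact: ObdaProbMeasure} to ensure $\nu(C) \le 1$ so that raising to the power $q \ge 1$ decreases each term. One should also note that the argument does not even require $x_0$ itself to avoid dyadic endpoints, since $A_{k_n}^n$ is a half-open interval and one simply takes the (unique) interval of $\mathcal{D}_{\nu,n}$ whose closure contains $x_0$; in any case $\nu$ assigns mass at least $a$ to at least one cell at each scale, which is all that is used.
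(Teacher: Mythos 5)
Your argument is correct and is essentially the paper's proof: the paper likewise bounds $\sum_{C\in\mathcal{D}_{\nu,n}}\nu(C)^{q}\geq\nu(\{x_{0}\})^{q}>0$ to get $\beta_{\nu}(q)\geq0$, and obtains the upper bound from $\beta_{\nu}(q)\leq\beta_{\nu}(1)=0$, which is the same normalisation-plus-monotonicity observation you spell out directly. The preliminary decomposition $\nu=\nu_{p}+\nu_{c}$ via Fact \ref{fact:MGF_for_sum} is harmless but unnecessary, since your atom argument already applies to $\nu$ itself.
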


\begin{proof}
Assume that $\nu$ has an atom in $x_{0}\in(0,1)$ and let $q>1$.
Then for every $n\in\N$, we have that $0<\nu\left(\left\{ x_{0}\right\} \right)^{q}\leq\sum_{C\in\mathcal{D}_{\nu,n}}\nu(C)^{q}$
and hence $0\leq\beta_{\nu}(q)\leq\beta_{\nu}(1)=0$.
\end{proof}
\begin{fact}
\label{fact:SmallSupport}If $\delta_{\nu}^{*}=0$, then $\beta_{\nu}\left(q\right)=0,$
for all $q\geq0$ and $\dim_{H}\left(\nu\right)=\dim_{P}\left(\nu\right)=\underline{\delta}_{\nu}=\overline{\delta}_{\nu}=0$.
\end{fact}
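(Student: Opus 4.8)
The plan is to reduce everything to the observation that $\delta_\nu^*=0$ forces $\card\mathcal D_{\nu,n}$ to grow subexponentially, and then to sandwich the $L^q$-sums between powers of $\card\mathcal D_{\nu,n}$. First I would normalise $\nu$ to a probability measure, which is harmless by Fact~\ref{Fact: ObdaProbMeasure}. Since $\beta_n(0)=\frac{\log\card\mathcal D_{\nu,n}}{n\log 2}\ge 0$ for every $n$ while $\limsup_n\beta_n(0)=\beta_\nu(0)=\delta_\nu^*=0$, the sequence $\beta_n(0)$ actually converges to $0$; equivalently $\card\mathcal D_{\nu,n}=2^{o(n)}$.

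For any fixed $q\ge 0$ I would then bound $\sum_{C\in\mathcal D_{\nu,n}}\nu(C)^q\le\card\mathcal D_{\nu,n}$ from above using $\nu(C)\le 1$, and $\sum_{C\in\mathcal D_{\nu,n}}\nu(C)^q\ge\max_{C\in\mathcal D_{\nu,n}}\nu(C)^q\ge\card(\mathcal D_{\nu,n})^{-q}$ from below using the pigeonhole estimate $\max_C\nu(C)\ge\card(\mathcal D_{\nu,n})^{-1}$. Applying $\frac{1}{n\log 2}\log(\cdot)$ turns this into $-q\,\beta_n(0)\le\beta_n(q)\le\beta_n(0)$, so $\beta_n(q)\to 0$ and hence $\beta_\nu(q)=\limsup_n\beta_n(q)=0$ for every $q\ge 0$. (Alternatively, $\beta_\nu\equiv 0$ on $[0,1]$ already follows from convexity together with $\beta_\nu(0)=\beta_\nu(1)=0$ and the non-negativity of $\beta_\nu$ on $\R_{<1}$, and then convexity and the trivial bound $\sum_C\nu(C)^q\le 1$ for $q\ge 1$ extend this to $[1,\infty)$.)

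Granting $\beta_\nu\equiv 0$ on $[0,\infty)$, the remaining assertions are immediate. The quantities $\overline\delta_\nu$ and $\underline\delta_\nu$ are one-sided difference quotients of $\beta_\nu$ at $1$, so they vanish since $\beta_\nu$ vanishes identically. For the fractal dimensions of $\nu$ I would bound them above by the corresponding dimensions of $\supp\nu$ (take $\supp\nu$ as the set in the respective definitions) and then invoke the standard chain $\dim_H(\supp\nu)\le\dim_P(\supp\nu)\le\overline{\dim}_M(\supp\nu)=\delta_\nu^*=0$, the last inequality being the one-set cover in the definition of packing dimension. As Hausdorff and packing dimensions are non-negative, all four quantities equal $0$.

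I do not expect a real obstacle here; the only points that need a little care are the passage from $\limsup_n\beta_n(0)=0$ to the genuine convergence $\beta_n(0)\to 0$ (which uses $\beta_n(0)\ge 0$), and recalling that packing dimension is dominated by upper Minkowski dimension, so that $\overline{\dim}_M(\supp\nu)=0$ forces $\dim_P(\supp\nu)$, and hence $\dim_P(\nu)$, to be $0$.
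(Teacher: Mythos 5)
Your argument is correct. For the dimension statements you argue exactly as the paper does: take $A=\supp\nu$ in the definitions of $\dim_{H}(\nu)$ and $\dim_{P}(\nu)$ and use the chain $\dim_{H}(\supp\nu)\leq\dim_{P}(\supp\nu)\leq\overline{\dim}_{M}(\supp\nu)=\delta_{\nu}^{*}=0$. Where you differ is in proving $\beta_{\nu}\equiv 0$ on $[0,\infty)$: the paper dispenses with this in one line by the soft argument you relegate to your parenthesis, namely that $\beta_{\nu}$ is a proper convex, non-increasing function with $\beta_{\nu}(0)=\beta_{\nu}(1)=0$ (and non-negative on $\R_{<1}$), which pins it to zero on the whole positive half-line and immediately yields $\underline{\delta}_{\nu}=\overline{\delta}_{\nu}=0$. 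Your main route instead uses the quantitative sandwich $-q\,\beta_{n}(0)\leq\beta_{n}(q)\leq\beta_{n}(0)$ obtained from $\nu(C)\leq 1$ and the pigeonhole bound $\max_{C}\nu(C)\geq\card(\mathcal{D}_{\nu,n})^{-1}$, together with the observation that $\beta_{n}(0)\geq 0$ and $\limsup_{n}\beta_{n}(0)=0$ force $\beta_{n}(0)\to 0$. This is slightly longer but strictly stronger: it shows that $\beta_{n}(q)\to 0$ for every $q\geq 0$, i.e.\ the $L^{q}$-spectrum exists as a genuine limit and vanishes identically, which in particular makes such measures $L^{q}$-regular in the sense of the paper — information the convexity argument does not provide. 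Both routes are sound; the paper's buys brevity by leaning on facts about $\beta_{\nu}$ it has already recorded, yours buys the existence of the limit.
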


\begin{proof}
Note that $\beta$ is a proper convex non-increasing function with
$\beta_{\nu}\left(0\right)=\beta_{\nu}\left(1\right)=0$. This implies
the first claim as well as $\underline{\delta}_{\nu}=\overline{\delta}_{\nu}=0$.
Further, recall that
\[
\dim_{H}\left(\nu\right)=\inf\left\{ \dim_{H}\left(A\right):\nu\left(A^{\complement}\right)=0\right\} \;\text{and }\dim_{P}\left(\nu\right)=\inf\left\{ \dim_{P}\left(A\right):\nu\left(A^{\complement}\right)=0\right\} .
\]
Since $\nu\left(\left(\supp\nu\right)^{\complement}\right)=0$ and
$\dim_{H}\left(\supp\nu\right)\leq\dim_{P}\left(\supp\nu\right)\leq\overline{\dim}_{M}\left(\supp\nu\right)=0$
the second claim follows.
\end{proof}
Recall from the introduction the definition of $\overline{q}_{\nu}\coloneqq\limsup_{n\to\infty}q_{n}^{\nu},$
where $q_{n}^{\nu}$ denotes the unique fixed point of $\beta_{n}^{\nu}$.
\begin{fact}
\label{fact:CriticalExponentBETA} We have 
\begin{align*}
\overline{q}_{\nu} & =\inf\left\{ q>0:\beta_{\nu}\left(q\right)-q\leq0\right\} \\
 & =\inf\left\{ q>0\colon\limsup_{n\to\infty}\frac{1}{n}\log\sum_{C\in\mathcal{D}_{\nu,n}}\left(\nu\left(C\right)\Lambda\left(C\right)\right)^{q}\leq0\right\} \\
 & =\inf\left\{ q>0:\sum_{C\in\mathcal{\mathcal{D}_{\nu}}}\left(\nu\left(C\right)\Lambda\left(C\right)\right)^{q}<\infty\right\} \\
 & =\sup_{\alpha\geq0}\frac{\widehat{\beta}_{\nu}\left(\alpha\right)}{1+\alpha}.
\end{align*}
If $\overline{q}_{\nu}>0$, then $\overline{q}_{\nu}$ is the unique
fixed point of $\beta_{\nu}$ in $\left(0,1\right)$, i.~e\@. $\beta_{\nu}\left(\overline{q}_{\nu}\right)=\overline{q}_{\nu}\in\left(0,1\right)$.
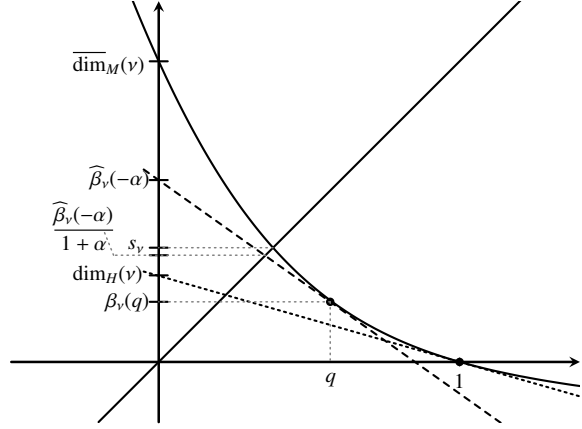
\begin{figure}
\center{\begin{tikzpicture}[scale=0.8, every node/.style={transform shape},line cap=round,line join=round,>=triangle 45,x=1cm,y=1cm] \begin{axis}[ x=5cm,y=5cm, axis lines=middle, axis line style={very thick},ymajorgrids=false, xmajorgrids=false, grid style={thick,densely dotted,black!20}, xmin=-0.49 , xmax=1.4 , ymin=-0.2, ymax=1.2, xtick={0,1}, ytick={0},] \clip(-0.49,-0.2) rectangle (1.4,1.4); 
\draw[line width=1pt,smooth,samples=180,domain=-0.78:1.4] plot(\x,{log10(0.05^((\x))+0.95^((\x)))/log10(2)}); 
\draw[line width=1pt, smooth,samples=100,domain=-0.2:1.4] plot(\x,{(\x)}); 
\draw [line width=1pt,dashed, domain=-0.05 :1.4] plot(\x,{0.71*(0.57-\x)+0.2}); 
\draw [line width=01pt,dotted, domain=-0.05 :1.4] plot(\x,{-((0.05*log10(0.05)+0.95*log10(0.95))/(log10(2))*(1-\x))});
 
\fill (0.57 ,0.2) circle[radius=2pt];
\fill (1 ,0 ) circle[radius=2pt];

\draw [line width=1pt ] (0.03 ,0.2)--(-0.03,0.2);
\draw [line width=1pt ] (0.03 ,0.355)--(-0.03,0.355);
\draw [line width=1pt ] (0.03 ,1)--(-0.03,1);
\draw [line width=1pt ] (0.03 ,.38)--(-0.03,0.38);
\draw [line width=1pt ] (0.03 ,.288)--(-0.03,.288);
\draw [line width=1pt ] (0.03 ,0.605)--(-0.03,0.605 );

\draw [line width=.7pt,dotted, gray] (0.57 ,0.2)--(0.57,0); 
\draw [line width=.7pt,dotted, gray] (0.570 ,0.20 )-- (0,0.2);
\draw [line width=.7pt,dotted, gray] (0.38  ,0.38 )--(0,0.38 );
\draw [line width=.7pt,dotted, gray] (0.3550 ,0.3550)--(-0.15,0.355 );
\draw [line width=.7pt,dotted, gray] (-0.15,0.355 )--(-0.18,0.43 );
\draw (0.53 ,-0.01) node[anchor=north west] {$\Large q$}; 
\draw (-0.2,0.255) node[anchor=north west] {$\Large \beta_\nu(q)$}; 
\draw (-0.25 ,0.68 ) node[anchor=north west] {$\Large \widehat\beta_\nu(-\alpha)$}; 
\draw (-0.13 ,0.43 ) node[anchor=north west] {$\Large \displaystyle{s_{\nu}}$};
\draw (-0.38 ,0.56 ) node[anchor=north west] {$\Large \displaystyle{\frac{\widehat\beta_\nu(-\alpha)}{1+\alpha}}$};
\draw (-0.31 ,0.345 ) node[anchor=north west] {$\Large \displaystyle{\dim_H(\nu) }$};
\draw (-0.31 ,1.06 ) node[anchor=north west] {$\Large \displaystyle{\overline{\dim}_M(\nu)}$}; 
\end{axis} 
\end{tikzpicture}}

\caption{\label{fig:Moment-generating-function}Moment generating function
$\beta_{\nu}$ for $\nu$ with (dashed) tangent to $\beta_{\nu}$
of slope $-\alpha$ in $\left(q,\beta_{\nu}\left(q\right)\right)$
which intersects the $y$-axis in its Legendre transform $\widehat{\beta}_{\nu}\left(\alpha\right)$
and the bisector of the first quadrant in $\widehat{\beta}_{\nu}\left(\alpha\right)/\left(1+\alpha\right)$.
Here $\nu$ is chosen to be the $\left(0.05,0.95\right)$-Salem measure
with full support $\supp\nu=[0,1]$. The intersection of $\beta_{\nu}$
with the $y$-axis gives the Minkowski dimension of $\supp\nu$, namely
$1$, and the intersection with the (dotted) tangent to $\beta_{\nu}$
in $\left(0,1\right)$ gives the Hausdorff dimension $\dim_{H}\left(\nu\right)$
of the measure which equals $\left(0.05\log\left(0.05\right)+0.95\log\left(0.95\right)\right)/\log\left(2\right)$.
The spectral dimension $s_{\nu}$ is equal to the maximum over all
$\widehat{\beta}_{\nu}\left(\alpha\right)/\left(1+\alpha\right)$
given by $\overline{q}_{\nu}$.}
\end{figure}
\end{fact}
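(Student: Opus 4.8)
The plan rests on one elementary observation: since $\Lambda(C)=2^{-n}$ for every $C\in\mathcal D_{\nu,n}$, we have
\[
\sum_{C\in\mathcal D_{\nu,n}}\bigl(\nu(C)\Lambda(C)\bigr)^{q}=2^{-nq}\sum_{C\in\mathcal D_{\nu,n}}\nu(C)^{q}=2^{\,n\left(\beta_{n}^{\nu}(q)-q\right)},
\]
so all four quantities in the statement are governed by the single function $g_{n}(q):=\beta_{n}^{\nu}(q)-q$. Normalising $\nu$ to a probability measure (Fact \ref{Fact: ObdaProbMeasure}, which changes none of the quantities involved), we have $\nu(C)\le1$, so each $\beta_{n}^{\nu}$ is convex and non-increasing on $\R$ with $\beta_{n}^{\nu}(1)=0$; hence $g_{n}$ is continuous and \emph{strictly} decreasing with $g_{n}(0)\ge0>g_{n}(1)=-1$, which recovers the existence of the unique fixed point $q_{n}\in[0,1)$ of $\beta_{n}^{\nu}$. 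Passing to the limit superior, $\beta_{\nu}$ is convex and non-increasing on $\R$, finite on $[0,\infty)$ (indeed $-q\le\beta_{\nu}(q)\le\delta_{\nu}^{*}$ there), with $\beta_{\nu}(1)=0$ and $\beta_{\nu}(0)=\delta_{\nu}^{*}$, so $g(q):=\beta_{\nu}(q)-q$ is continuous and strictly decreasing. Set $q^{*}:=\inf\{q>0:\beta_{\nu}(q)\le q\}$; then $\beta_{\nu}(q^{*})\le q^{*}$ always, $q^{*}$ is the unique zero of $g$ with $\beta_{\nu}(q^{*})=q^{*}$ whenever $\delta_{\nu}^{*}>0$ (and then $q^{*}\in(0,1)$), while $q^{*}=0$ precisely when $\delta_{\nu}^{*}=0$, in which case $\beta_{\nu}\equiv0$ on $[0,\infty)$ by Fact \ref{fact:SmallSupport}.

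The first step is $\overline q_{\nu}=q^{*}$, which also identifies the first listed expression (the second being literally the same by the display). If $q>q^{*}$ then $g(q)<g(q^{*})\le0$, so $\limsup_{n}g_{n}(q)=\beta_{\nu}(q)-q<0$, hence $g_{n}(q)<0$ for all large $n$, and strict monotonicity of $g_{n}$ forces $q_{n}<q$ for all large $n$; thus $\limsup_{n}q_{n}\le q$, and letting $q\downarrow q^{*}$ gives $\overline q_{\nu}\le q^{*}$. If $q^{*}>0$ and $0<q<q^{*}$, then $g(q)>g(q^{*})=0$, so $\limsup_{n}g_{n}(q)>0$ and therefore $g_{n}(q)>0$ along a subsequence; along that subsequence $q_{n}>q$, so $\limsup_{n}q_{n}\ge q$, and letting $q\uparrow q^{*}$ gives $\overline q_{\nu}\ge q^{*}$ (trivial when $q^{*}=0$).

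For the series expression, the display yields $\sum_{C\in\mathcal D_{\nu}}(\nu(C)\Lambda(C))^{q}=\sum_{n}2^{\,n g_{n}(q)}$ up to finitely many low-level terms, so by the root test, using $\limsup_{n}\bigl(2^{ng_{n}(q)}\bigr)^{1/n}=2^{g(q)}$, the series converges when $g(q)<0$, i.e.\ for $q>q^{*}$, and diverges when $g(q)>0$, i.e.\ for $0<q<q^{*}$; hence the infimum of the $q>0$ making it converge is exactly $q^{*}$. For the Legendre expression: since $\beta_{\nu}(q^{*})\le q^{*}$, for every $\alpha\ge0$ we get $\widehat\beta_{\nu}(\alpha)\le\beta_{\nu}(q^{*})+\alpha q^{*}\le q^{*}(1+\alpha)$, so $\sup_{\alpha\ge0}\widehat\beta_{\nu}(\alpha)/(1+\alpha)\le q^{*}$. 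Conversely, choose a subgradient $m\in\partial\beta_{\nu}(q^{*})$ (non-empty, as $\beta_{\nu}$ is finite convex near $q^{*}$, and $m\le0$ since $\beta_{\nu}$ is non-increasing) and put $\alpha:=-m\ge0$; the subgradient inequality shows the infimum defining $\widehat\beta_{\nu}(\alpha)$ is attained at $q^{*}$, so $\widehat\beta_{\nu}(\alpha)=\beta_{\nu}(q^{*})+\alpha q^{*}=q^{*}(1+\alpha)$, where $\beta_{\nu}(q^{*})=q^{*}$ holds whether or not $q^{*}>0$ (if $q^{*}=0$ then $\beta_{\nu}(0)=\delta_{\nu}^{*}=0$). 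Hence $\sup_{\alpha\ge0}\widehat\beta_{\nu}(\alpha)/(1+\alpha)=q^{*}$, and the final claim is contained in the above: for $\overline q_{\nu}=q^{*}>0$ the point $q^{*}\in(0,1)$ is a fixed point of $\beta_{\nu}$, unique by the strict monotonicity of $g$.

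The genuinely delicate point is the first step — commuting ``take the fixed point'' with the $\limsup$ over $n$. It works only because each $g_{n}$ is strictly monotone (so $q_{n}$ is unambiguous) and because the two inequalities must be argued asymmetrically: $\limsup_{n}g_{n}(q)<0$ forces $g_{n}(q)<0$ for all large $n$, whereas $\limsup_{n}g_{n}(q)>0$ only forces $g_{n}(q)>0$ along a subsequence. Everything else (the convexity/monotonicity bookkeeping, the root test, and the subgradient duality) is routine.
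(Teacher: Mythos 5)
For the main case $\overline{q}_{\nu}>0$ your argument is correct and follows essentially the paper's route: continuity and strict monotonicity of $q\mapsto\beta_{\nu}(q)-q$ on $(0,\infty)$ for the fixed-point identification, the same trivial rewriting for the second expression, a root test where the paper uses an $\varepsilon$-shift of the exponent for the series criterion, and the subgradient inequality at the fixed point for the Legendre supremum.

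There is, however, a genuine gap in your treatment of the degenerate case $q^{*}=0$, caused by false structural claims about $\beta_{\nu}$: the limit superior $\beta_{\nu}$ need \emph{not} be continuous at $0$, so the assertions ``$\beta_{\nu}(q^{*})\leq q^{*}$ always'', ``$q^{*}=0$ precisely when $\delta_{\nu}^{*}=0$'', ``$g$ is continuous'', and the nonemptiness of $\partial\beta_{\nu}(q^{*})$ at $q^{*}=0$ can all fail. Concretely, take $\nu=\sum_{k}2^{-k}\delta_{x_{k}}$ with $(x_{k})$ dense in $(0,1)$ — exactly the situation of the paper's Examples \ref{Example:fastDecay} and \ref{exmpl:ExponentialDecay}: then $\card\mathcal{D}_{\nu,n}=2^{n}$ gives $\beta_{\nu}(0)=\delta_{\nu}^{*}=1$, while $\sum_{C\in\mathcal{D}_{\nu,n}}\nu(C)^{q}\leq\sum_{k}2^{-kq}<\infty$ gives $\beta_{\nu}(q)=0$ for every $q>0$; hence $q^{*}=\overline{q}_{\nu}=0$ although $\delta_{\nu}^{*}=1$, $\beta_{\nu}(q^{*})=1>q^{*}$, and $\partial\beta_{\nu}(0)=\emptyset$. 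In this case your upper bound for the Legendre expression only yields $\sup_{\alpha\geq0}\widehat{\beta}_{\nu}(\alpha)/(1+\alpha)\leq\beta_{\nu}(0)=\delta_{\nu}^{*}$, not $\leq q^{*}$, and your lower bound invokes a subgradient at $0$ that does not exist together with the false identity $\beta_{\nu}(0)=0$ (your Fact \ref{fact:SmallSupport} citation does not apply, since $\delta_{\nu}^{*}>0$ here); also the justification ``$g(q)<g(q^{*})\leq0$ for $q>q^{*}$'' breaks down, although its conclusion survives by choosing $q'\in(q^{*},q)$ with $g(q')\leq0$ and using strict monotonicity. The repair is what the paper does for the upper bound: for any $q'>\overline{q}_{\nu}$ one has $\beta_{\nu}(q')\leq q'$, hence $\widehat{\beta}_{\nu}(\alpha)\leq q'(1+\alpha)$, and one lets $q'\downarrow\overline{q}_{\nu}$; for the reverse inequality when $\overline{q}_{\nu}=0$ one can note that then $\beta_{\nu}\geq0$ on all of $\R$ (if $\beta_{\nu}(q_{0})<0$ for some $q_{0}>1$, then $\max_{C\in\mathcal{D}_{\nu,n}}\nu(C)$ decays exponentially and consequently $\beta_{\nu}(q)\geq c(1-q)>0$ for $q\in(0,1)$, contradicting $\overline{q}_{\nu}=0$), so already $\widehat{\beta}_{\nu}(0)\geq0$ gives the required $\sup\geq0$. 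With these adjustments, and keeping your subgradient argument only for $q^{*}>0$, the proof is complete and otherwise parallels the paper's.
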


\begin{proof}
In view of Fact \ref{Fact: ObdaProbMeasure} we assume without loss
of generality that $\nu$ is a probability measure. Let us begin with
the first equality assuming $\inf\left\{ q>0:\beta_{\nu}\left(q\right)-q\leq0\right\} >0$.
Recall that $\beta_{\nu}$ defines a convex and real valued function
on $(0,1]$ and hence \textbf{$\beta_{\nu}$} is also continuous on
$(0,\infty)$ (see e.~g\@. \citep[Lemma 5 ]{MR1312056}). Since
$\beta_{\nu}$ is also decreasing with $\lim_{t\searrow0}\beta_{\nu}\left(t\right)>0$
and $\beta_{\nu}\left(1\right)=0$ there exists a unique fixed point
of $\beta_{\nu}$ which coincides with $\inf\left\{ q>0:\beta_{\nu}\left(q\right)-q\leq0\right\} $.
This fixed point also coincides with $\overline{q}_{\nu}$, which
can be checked easily by contradiction. This proves also the last
claim. If, on the other hand, $\inf\left\{ q>0:\beta_{\nu}\left(q\right)-q\leq0\right\} =0$,
then $\beta_{\nu}\left(q\right)=0$ for all $q>0$ and again by contradiction
$\overline{q}_{\nu}=0$. This proves the first equality.

The fact that $\limsup_{n\to\infty}\left(n\log(2)\right)^{-1}\log\sum_{C\in\mathcal{D}_{\nu,n}}\left(\nu\left(C\right)\Lambda\left(C\right)\right)^{q}=\beta_{\nu}\left(q\right)-q$
for all $q\in\R$ gives the second equality.

The third equality follows similar to \citep{MR3190211}. Let us assume
$\overline{q}_{\nu}>0$ and fix $0<q<\overline{q}_{\nu}$. Then by
the second characterization, we have $\limsup_{n\to\infty}n^{-1}\log\sum_{C\in\mathcal{D}_{\nu,n}}\left(\nu\left(C\right)\Lambda\left(C\right)\right)^{q}=\epsilon>0$
and in particular, for infinitely many $n$, for $\left(n_{j}\right)$
say, we have $\sum_{C\in\mathcal{D}_{\nu,n_{j}}}\left(\nu\left(C\right)\Lambda\left(C\right)\right)^{q}>\e^{n_{j}\epsilon/2}$.
Consequently, 
\begin{align*}
\sum_{C\in\mathcal{D}_{\nu}}\left(\nu\left(C\right)\Lambda\left(C\right)\right)^{q} & \geq\sum_{j=0}^{\infty}\sum_{C\in\mathcal{D}_{\nu,n_{j}}}\left(\nu\left(C\right)\Lambda\left(C\right)\right)^{q}\geq\sum_{j=0}^{\infty}\e^{n_{j}\epsilon/2}=\infty.
\end{align*}
Hence, $\overline{q}_{\nu}\leq\inf\left\{ q>0:\sum_{C\in\mathcal{\mathcal{D}_{\nu}}}\left(\nu\left(C\right)\Lambda\left(C\right)\right)^{q}<\infty\right\} $,
which is trivially fulfilled if $\overline{q}_{\nu}=0$. For the reverse
inequality and $\overline{q}_{\nu}\geq0$ arbitrary, fix $\epsilon>0$
and $q>\overline{q}_{\nu}$ such that for all $n>N$, we have $\sum_{C\in\mathcal{D}_{\nu,n}}\left(\nu\left(C\right)\Lambda\left(C\right)\right)^{q}<2^{n\epsilon/2}$.
Then 
\begin{align*}
\sum_{C\in\mathcal{\mathcal{D}_{\nu}}}\left(\nu\left(C\right)\Lambda\left(C\right)\right)^{q+\epsilon} & \leq\sum_{C\in\mathcal{D}}\left(\nu\left(C\right)\Lambda\left(C\right)\right)^{q}\Lambda\left(C\right)^{\epsilon}=\sum_{n=0}^{\infty}\sum_{C\in\mathcal{D}_{\nu,n}}\left(\nu\left(C\right)\Lambda\left(C\right)\right)^{q}2^{-n\epsilon}\\
 & \leq\left(\sum_{n=0}^{N}\sum_{C\in\mathcal{D}_{\nu,n}}\left(\nu\left(C\right)\Lambda\left(C\right)\right)^{q}2^{-n\epsilon}+\sum_{n=N}^{\infty}\sum_{C\in\mathcal{D}_{\nu,n}}\left(\nu\left(C\right)\Lambda\left(C\right)\right)^{q}2^{-n\epsilon}\right)\\
 & \leq\left(\sum_{n=0}^{N}\sum_{C\in\mathcal{D}_{\nu,n}}\left(\nu\left(C\right)\Lambda\left(C\right)\right)^{q}2^{-n\epsilon}+\sum_{n=N}^{\infty}2^{n\epsilon/2}2^{-n\epsilon}\right)<\infty.
\end{align*}
Hence $\inf\left\{ q>0:\sum_{C\in\mathcal{\mathcal{D}_{\nu}}}\left(\nu\left(C\right)\Lambda\left(C\right)\right)^{q}<\infty\right\} \leq\overline{q}_{\nu}$.

Now we prove the last equality. Note that, for every $q'>\overline{q}_{\nu}$,
we have $\beta_{\nu}(q')\leq q'$ and therefore,
\[
\sup_{\alpha\geq0}\frac{\widehat{\beta}_{\nu}\left(\alpha\right)}{1+\alpha}\leq\sup_{\alpha\geq0}\frac{q'+q'\alpha}{1+\alpha}=q'.
\]
This shows $\sup_{\alpha\geq0}\widehat{\beta}_{\nu}\left(\alpha\right)/\left(1+\alpha\right)\leq\overline{q}_{\nu}$
and equality holds if $\overline{q}_{\nu}=0$. Assuming $\overline{q}_{\nu}>0$,
we already know that $\overline{q}_{\nu}$ is the unique fixed point
of $\beta_{\nu}.$ If $-a$ is in the subdifferential $\partial\beta_{\nu}\left(\overline{q}_{\nu}\right)$
of $\beta$ in $\overline{q}_{\nu}=\beta_{\nu}\left(\overline{q}_{\nu}\right)$
with $a\geq0$, then (see Fig. \ref{fig:Moment-generating-function})
\begin{align*}
\sup_{\alpha\geq0}\frac{\widehat{\beta}_{\nu}\left(\alpha\right)}{1+\alpha} & \geq\frac{\inf_{q\in\R}\beta_{\nu}(q)+aq}{1+a}\geq\frac{\beta_{\nu}(\overline{q}_{\nu})-a\left(q-\overline{q}_{\nu}\right)+aq}{1+a}=\overline{q}_{\nu}.
\end{align*}
\end{proof}
\begin{prop}[{\citep{Riedi_diss},\citep[Prop.  2]{MR1312056}}]
\label{Prop:bi_Lipschitz_Lq-Spectrum}Let $\nu$ be a Borel measure
on $[0,1]$ and $g:\left[0,1\right]\to\left[a,b\right]$ be a bi-Lipschitz
mapping. Then 
\[
\beta_{\nu}=\beta_{\nu\circ g^{-1}}.
\]
\end{prop}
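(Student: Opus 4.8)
The plan is to deduce the identity from the grid‑independence of the $L^{q}$‑spectrum recorded in Fact~\ref{Fact IndependenceOFChoiceCordinates_and_factor}, combined with an elementary bounded‑overlap comparison between the dyadic sums of $\nu$ on $[0,1]$ and near‑dyadic sums of the pushforward $\mu\coloneqq\nu\circ g^{-1}$ on $[a,b]$. By Fact~\ref{Fact: ObdaProbMeasure} we may assume $\nu$, and hence $\mu$, is a probability measure. Being injective and continuous on an interval, $g$ is strictly monotone, so it maps subintervals of $[0,1]$ to subintervals of $[a,b]$, and $g^{-1}\colon[a,b]\to[0,1]$ is bi‑Lipschitz with the same constant; fix $L\ge 1$ with $L^{-1}|x-y|\le|g(x)-g(y)|\le L|x-y|$ for all $x,y\in[0,1]$. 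It suffices to prove the identity for $q\ge 0$, which is the only range used in this paper; the case $q<0$ is entirely analogous and is also contained in the cited references.

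Fix $q\ge 0$ and $n\in\N$, put $\delta\coloneqq 2^{-n}$, and consider the uniform $\delta$‑grid $G_{\mu,\delta}$ of $[a,b]$ from Fact~\ref{Fact IndependenceOFChoiceCordinates_and_factor}. For every $Q\in G_{\mu,\delta}$ the interval $g^{-1}(Q)$ has length in $[\delta/L,L\delta]$, hence meets at most $r\coloneqq\lfloor L\rfloor+2$ of the dyadic cells $A_k^n$; conversely, for each $k$ the interval $g(A_k^n)$ has length at most $L\delta$ and meets at most $r$ cells of $G_{\mu,\delta}$. Since $g$ is a bijection, $A_k^n\cap g^{-1}(Q)\neq\emptyset$ is equivalent to $g(A_k^n)\cap Q\neq\emptyset$, and $\mu(Q)=\nu(g^{-1}(Q))\le\sum_{k\,:\,g(A_k^n)\cap Q\neq\emptyset}\nu(A_k^n)$ while $\nu(A_k^n)=\mu(g(A_k^n))\le\sum_{Q\,:\,g(A_k^n)\cap Q\neq\emptyset}\mu(Q)$. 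Applying the power‑mean inequality $(t_1+\dots+t_m)^q\le m^{\max(q-1,0)}(t_1^q+\dots+t_m^q)$ for $t_i\ge 0$ and summing, the bounded overlap yields
\[
r^{-\max(q,1)}\sum_{C\in\mathcal{D}_{\nu,n}}\nu(C)^q\le\sum_{Q\in G_{\mu,2^{-n}}}\mu(Q)^q\le r^{\max(q,1)}\sum_{C\in\mathcal{D}_{\nu,n}}\nu(C)^q .
\]
Taking logarithms, dividing by $n\log 2=-\log\delta$, and letting $n\to\infty$, the constants $r^{\pm\max(q,1)}$ disappear, so the $\limsup$ of the middle term equals $\limsup_n(n\log2)^{-1}\log\sum_{C\in\mathcal{D}_{\nu,n}}\nu(C)^q=\beta_\nu(q)$.

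Finally, $(2^{-n})_n$ is an admissible sequence, so Fact~\ref{Fact IndependenceOFChoiceCordinates_and_factor} applied to $\mu$ identifies $\limsup_n(n\log2)^{-1}\log\sum_{Q\in G_{\mu,2^{-n}}}\mu(Q)^q$ with $\beta_\mu(q)=\beta_{\nu\circ g^{-1}}(q)$. Hence $\beta_{\nu\circ g^{-1}}(q)=\beta_\nu(q)$ for all $q\ge 0$, which is the assertion. The step I expect to need the most care is the bounded‑overlap bookkeeping: one must check, cell by cell, that the transported grid $\{g^{-1}(Q):Q\in G_{\mu,2^{-n}}\}$ is comparable to the dyadic grid $\mathcal{D}_{\nu,n}$ with a multiplicity bounded uniformly in $n$, and that the scales involved stay of order $2^{-n}$ so that Fact~\ref{Fact IndependenceOFChoiceCordinates_and_factor} is applicable; once this is in place, only the elementary inequality above is required.
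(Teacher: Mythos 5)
Your proof is correct and follows essentially the same route as the paper: a bounded-overlap comparison of grids transported by the bi-Lipschitz map, combined with the elementary inequality $\left(\sum_{i}t_{i}\right)^{q}\leq m^{\max(q-1,0)}\sum_{i}t_{i}^{q}$ and the grid-independence recorded in Fact \ref{Fact IndependenceOFChoiceCordinates_and_factor}. The only cosmetic difference is that you compare both grids at the common scale $2^{-n}$ and do a two-sided multiplicity count, whereas the paper compares the dyadic grid of $\nu\circ g^{-1}$ with the rescaled grid $G_{\nu,\left(c_{1}2^{n}\right)^{-1}}$ (covering each preimage by at most two cells) and obtains the reverse inequality by applying the same argument to $g^{-1}$.
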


\begin{proof}
For completeness we give a short proof. First, by our assumption we
have that there exist $c_{1},c_{2}>0$ such that for all $x,y\in[0,1]$
\[
c_{1}\left|x-y\right|\leq\left|g(x)-g(y)\right|\leq c_{2}\left|x-y\right|.
\]
For $C\in G_{\nu\circ g^{-1},2^{-n}}$, 
\[
\frac{1}{c_{2}}\text{diam}\left(C\right)\leq\text{diam}\left(g^{-1}\left(C\right)\right)\leq\frac{1}{c_{1}}\text{diam}\left(C\right)=\frac{1}{c_{1}2^{n}}.
\]
This shows that there are at most two elements $C_{1},C_{2}$ of $G_{\nu,\left(2^{n}c_{1}\right)^{-1}}$
intersecting $g^{-1}(C)$. Furthermore, each element of $G_{\nu,\left(2^{n}c_{1}\right)^{-1}}$
that intersects $g^{-1}(C)$ can intersect at most $\left\lceil c_{2}/c_{1}\right\rceil $
other element $g^{-1}(C')$ with $C'\in G_{\nu\circ g^{-1},2^{-n}}$.
This implies
\[
\sum_{C\in G_{\nu\circ g^{-1},2^{-n}}}\nu\left(g^{-1}\left(C\right)\right)^{q}\leq\left\lceil \frac{c_{2}}{c_{1}}\right\rceil 2^{q}\sum_{C\in G_{\nu,\left(2^{n}c_{1}\right)^{-1}}}\nu\left(C\right)^{q}.
\]
By Fact \ref{Fact IndependenceOFChoiceCordinates_and_factor}, we
therefore have
\[
\beta_{\nu\circ g^{-1}}(q)\leq\limsup_{n\rightarrow\infty}\frac{\log\left(\sum_{C\in G_{\nu,\left(2^{n}c_{1}\right)^{-1}}}\nu\left(C\right)^{q}\right)}{\log\left(c_{1}2^{n}\right)}=\beta_{\nu}(q).
\]
The same argument applied to $\nu\circ g^{-1}$ instead of $\nu$
and $g^{-1}$ instead of $g$ gives the reverse inequality.
\end{proof}
Next, we have to ensure that with out loss of generality we can assume
that $\nu$ has no atoms in the set of the set $\mathcal{D}^{*}\coloneqq\left\{ 2^{-n}k\mid k\in\mathbb{Z},n\in\N\right\} $
of dyadic numbers.
\begin{lem}
\label{Lemma:OBdA_keine_atome_diadiyische_Zahlen}Let $\nu$ be a
Borel measure on $\left(0,1\right)$. Then there exists an arbitrary
small constant $\alpha>0$ such that for $g_{\alpha}:x\mapsto x+\alpha$,
we have $\nu\circ g_{\alpha}^{-1}$ has no atoms in $\mathcal{D}^{*}$.
\end{lem}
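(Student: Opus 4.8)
The plan is to use a simple counting argument: a finite Borel measure has only countably many atoms, the dyadic rationals form a countable set, so only countably many translation parameters $\alpha$ can be ``bad'', and we may always pick a good one as small as we wish.

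First I would unwind the definition of the pushforward. For any $y\in\R$ one has $(\nu\circ g_{\alpha}^{-1})(\{y\})=\nu(g_{\alpha}^{-1}(\{y\}))=\nu(\{y-\alpha\})$, so $y$ is an atom of $\nu\circ g_{\alpha}^{-1}$ precisely when $y-\alpha$ is an atom of $\nu$. Writing $A_{\nu}\coloneqq\{x\in(0,1):\nu(\{x\})>0\}$ for the set of atoms of $\nu$, the assertion thus reduces to the following: for every $\varepsilon>0$ there is some $\alpha\in(0,\varepsilon)$ with $d-\alpha\notin A_{\nu}$ for all $d\in\mathcal{D}^{*}$, equivalently $\alpha\notin\{d-a:d\in\mathcal{D}^{*},\,a\in A_{\nu}\}$.

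Next I would record that $A_{\nu}$ is at most countable: since $\nu$ is finite, for each $k\in\N$ the set $\{x:\nu(\{x\})\geq 1/k\}$ is finite, and $A_{\nu}$ is the countable union of these sets over $k$. As $\mathcal{D}^{*}=\{2^{-n}k:k\in\mathbb{Z},\,n\in\N\}$ is also countable, the set $\{d-a:d\in\mathcal{D}^{*},\,a\in A_{\nu}\}$ is a countable subset of $\R$. Hence for any prescribed $\varepsilon>0$ the uncountable interval $(0,\varepsilon)$ cannot be contained in it, and any $\alpha\in(0,\varepsilon)\setminus\{d-a:d\in\mathcal{D}^{*},\,a\in A_{\nu}\}$ works: for every $d\in\mathcal{D}^{*}$ we then have $d-\alpha\notin A_{\nu}$, so $(\nu\circ g_{\alpha}^{-1})(\{d\})=\nu(\{d-\alpha\})=0$. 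Since $\alpha$ may be taken below any given $\varepsilon$, the constant can be made arbitrarily small.

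There is essentially no obstacle here; the only points to be careful about are the translation bookkeeping (the direction of the shift in $\nu\circ g_{\alpha}^{-1}$) and the elementary fact that the atoms of a finite measure form a countable set. One may additionally remark that $g_{\alpha}$ is an isometry, hence bi-Lipschitz, and that $\supp(\nu\circ g_{\alpha}^{-1})=\alpha+\supp\nu$ remains bounded, so this normalization is compatible with the bi-Lipschitz invariance of $\beta_{\nu}$ (Proposition~\ref{Prop:bi_Lipschitz_Lq-Spectrum}) and of the spectral asymptotics (Proposition~\ref{Prop:bi_Lipschitz_trafo}) used elsewhere.
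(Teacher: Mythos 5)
Your argument is correct, and it takes a genuinely different (and slightly more elementary) route than the paper. The paper proves the lemma with a Fubini argument: it integrates $\int_{0}^{\epsilon}\int_{(0,1)}\1_{\mathcal{D}^{*}}(y+\alpha)\,\mathrm{d}\nu(y)\,\mathrm{d}\Lambda(\alpha)$, swaps the order of integration, and uses that $\Lambda|_{[0,\epsilon]}\left(\mathcal{D}^{*}-y\right)=0$ because $\mathcal{D}^{*}$ is countable; this yields $\nu\circ g_{\alpha}^{-1}\left(\mathcal{D}^{*}\right)=0$ for Lebesgue-almost every $\alpha\in[0,\epsilon]$, and in particular for some arbitrarily small $\alpha$. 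You instead observe that the atoms of the finite measure $\nu$ form a countable set $A_{\nu}$, so the set of ``bad'' shifts $\left\{ d-a:d\in\mathcal{D}^{*},\,a\in A_{\nu}\right\}$ is countable and cannot cover $(0,\varepsilon)$. Both arguments are correct and short; yours avoids Fubini entirely and in fact gives a marginally stronger conclusion (all but countably many shifts work, rather than Lebesgue-almost all), while the paper's integration argument is the one that generalizes more readily when one wants to exclude a null, but uncountable, exceptional set. Note also that since $\mathcal{D}^{*}$ is countable, ``no atoms in $\mathcal{D}^{*}$'' is equivalent to $\nu\circ g_{\alpha}^{-1}\left(\mathcal{D}^{*}\right)=0$, so the two formulations of the conclusion agree. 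Your closing remark on compatibility with the bi-Lipschitz invariance results is exactly how the lemma is used in the paper.
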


\begin{proof}
Fix $\epsilon>0$. By Fubini's Theorem and the fact that $\mathcal{D}^{*}$
is countable, we have 
\begin{align*}
\int_{0}^{\epsilon}\int_{(0,1)}\1_{\mathcal{D}^{*}}(y+\alpha)\d\nu(y)\d\Lambda(\alpha) & =\int_{(0,1)}\int_{0}^{\epsilon}\1_{\mathcal{D}^{*}}(y+\alpha)\d\Lambda(\alpha)\d\nu(y)\\
 & =\int_{(0,1)}\Lambda|_{\left[0,\epsilon\right]}\left(\mathcal{D}^{*}-y\right)\d\nu(y)=0.
\end{align*}
Hence, $\int_{(0,1)}\1_{\mathcal{D}^{*}}(y+\alpha)\d\nu(y)=0$, for
$\Lambda$-a.~e\@. $\alpha\in\left[0,\epsilon\right]$. In particular,
there exists $\alpha\in\left[0,\epsilon\right]$ such that $\int_{(0,1)}\1_{\mathcal{D}^{*}}(y+\alpha)\d\nu(y)=\nu\circ g_{\alpha}^{-1}\left(\mathcal{D}^{*}\right)=\sum_{y\in\mathcal{D}^{*}}\nu\circ g_{\alpha}^{-1}\left(\left\{ y\right\} \right)=0$.
\end{proof}
Let $\nu$ be a bounded Borel probability measure with bounded support
contained in $\left(0,1\right)$. Then we find a bi-Lipschitz map
$f$ mapping the support of $\nu$ into $\left(\epsilon,1-\epsilon\right)$
for some $0<\epsilon<1$ and after a small affine linear transformation
via $x\mapsto x+\alpha$ the support of $\nu\circ f^{-1}\circ g_{\alpha}^{-1}$
is still subset of $\left(0,1\right)$ and $\nu\circ f^{-1}\circ g_{\alpha}^{-1}$
has no atoms in the dyadic numbers $\mathcal{D}^{*}$. In virtue of
Proposition \ref{Prop:bi_Lipschitz_trafo}, Lemma \ref{Lemma:OBdA_keine_atome_diadiyische_Zahlen}
and Corollary \ref{cor:enlarge_Interval_does_not_change_spectral_dimension}
these transformations and enlarging the domain also does not change
the spectral properties and the $L^{q}$-spectrum Hence, we will from
now on assume without loss of generality that $\nu$ is a probability
measure on $\left(0,1\right)$ such that $\nu\left(\mathcal{D}^{*}\right)=0$.

\subsection{\label{subsec:Upper-bounds-in}Upper bounds in terms of the $L^{q}$-spectrum}

In this section we let $\nu$ be an arbitrary Borel probability measure
on $\left(0,1\right)$ with $\card\left(\supp\nu\right)=\infty$ and
$\nu\left(\mathcal{D}^{*}\right)=0$. The idea is now to approximate
the quantity $\mathcal{N}^{R}$ by constructing $\nu$-partitions
of dyadic intervals. Let us define $\mathcal{D}\coloneqq\bigcup_{n}\mathcal{D}_{n}$
with $\mathcal{D}_{n}\coloneqq\left\{ A_{k}^{n}:k=0,...,2^{n}-1\right\} $.
The following lemma is obvious.
\begin{lem}
\label{lem:Partitionq0}For every $1>t>0$ 
\[
P_{t}\coloneqq\left\{ C\in\mathcal{D}:\nu\left(C\right)\Lambda\left(C\right)<t\,\&\,\exists C'\in\mathcal{D}_{\left|\log_{2}\left(\Lambda(C)\right)\right|-1}:C'\supset C\,\&\,\nu\left(C'\right)\Lambda\left(C'\right)\geq t\right\} 
\]
 is a $\nu$-partition.
\end{lem}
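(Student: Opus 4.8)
Proof plan (first‑passage along the dyadic tree).

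The plan is to realise $P_{t}$ as the family of dyadic intervals obtained by a first‑passage (stopping‑time) rule along the dyadic tree: starting from the root $(0,1]$, descend through nested dyadic intervals and select the first one whose product $\nu(C)\Lambda(C)$ drops below $t$. I shall check that this rule assigns to every point of $(0,1]$ exactly one selected interval, and that only finitely many intervals are selected. Throughout I use the standing assumptions of this section, namely that $\nu$ is a probability measure on $(0,1)$ with $\nu(\mathcal{D}^{*})=0$.

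First, fix $x\in(0,1]$ and let $\bigl(C_{n}(x)\bigr)_{n\geq 0}$ be the unique decreasing chain with $C_{n}(x)\in\mathcal{D}_{n}$ and $x\in C_{n}(x)$. Set $a_{n}(x)\coloneqq\nu\bigl(C_{n}(x)\bigr)\Lambda\bigl(C_{n}(x)\bigr)$. Since $\nu\bigl(C_{n+1}(x)\bigr)\leq\nu\bigl(C_{n}(x)\bigr)$ and $\Lambda\bigl(C_{n+1}(x)\bigr)=\tfrac12\Lambda\bigl(C_{n}(x)\bigr)$, the sequence $a_{n}(x)$ is non‑increasing with $a_{n+1}(x)\leq\tfrac12 a_{n}(x)$; moreover $a_{0}(x)=\nu\bigl((0,1]\bigr)\cdot 1=1>t$ and $a_{n}(x)\to 0$. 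Hence $n(x)\coloneqq\min\{n\geq 0\colon a_{n}(x)<t\}$ is well defined and $n(x)\geq 1$, so the dyadic parent $C_{n(x)-1}(x)\in\mathcal{D}_{n(x)-1}$ exists and satisfies $a_{n(x)-1}(x)\geq t$; by the very definition of $P_{t}$ this gives $C_{n(x)}(x)\in P_{t}$.

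Next I would verify that $C_{n(x)}(x)$ is the \emph{only} member of $P_{t}$ containing $x$: any dyadic interval containing $x$ equals some $C_{n}(x)$; for $n<n(x)$ it has $a_{n}(x)\geq t$ and thus violates the first condition defining $P_{t}$, while for $n>n(x)$ its dyadic parent is $C_{n-1}(x)$ with $n-1\geq n(x)$, so $a_{n-1}(x)\leq a_{n(x)}(x)<t$ and the second condition fails. Consequently the elements of $P_{t}$ are pairwise disjoint dyadic intervals whose union is $(0,1]$, i.e. $P_{t}$ is a partition of $(0,1]$ into half‑open dyadic intervals of the required form $\{(c_{k},c_{k+1}]\}$ with $0=c_{1}<\dots<c_{n}=1$; since the $c_{k}$ are dyadic rationals and $\nu(\mathcal{D}^{*})=0$, none of them is a $\nu$‑atom. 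Finiteness is immediate once one notes that a selected interval $C$ at level $n$ has a selected parent $C'$ at level $n-1$ with $t\leq\nu(C')\Lambda(C')=\nu(C')2^{-(n-1)}\leq 2^{-(n-1)}$, whence $\Lambda(C)=2^{-n}\geq t/2$; disjointness inside $(0,1]$ then forces $\card(P_{t})\leq 2/t<\infty$, and since no member of $P_{t}$ can be $(0,1]$ itself, every member has level $\geq 1$ and $P_{t}$ consists of at least two intervals. I expect the only point needing a word of care to be the two elementary facts that pin down $n(x)$ — that $a_{0}(x)>t$ and $a_{n}(x)\to 0$, which together guarantee the first‑passage level $n(x)$ exists and is $\geq 1$ so that the parent interval in the definition of $P_{t}$ actually occurs; the rest is bookkeeping on the dyadic tree.
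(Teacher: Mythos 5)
Your first-passage argument is correct and complete: the paper itself offers no proof (the lemma is declared ``obvious''), and your stopping-time construction along the dyadic tree --- using $a_{0}(x)=1>t$, monotonicity $a_{n+1}(x)\leq\tfrac12 a_{n}(x)\to 0$, uniqueness of the selected level, the bound $\Lambda(C)\geq t/2$ for finiteness, and $\nu(\mathcal{D}^{*})=0$ for the no-atom condition --- is exactly the routine verification the authors leave implicit. Nothing further is needed.
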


\begin{prop}
\label{prop:UpperBoundFixpoint}We always have
\[
\overline{s}_{\nu}\leq\overline{h}_{\nu}\leq\overline{q}_{\nu}.
\]
\end{prop}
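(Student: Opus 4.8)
The statement to prove is $\overline{s}_{\nu}\leq\overline{h}_{\nu}\leq\overline{q}_{\nu}$. The first inequality $\overline{s}_{\nu}\leq\overline{h}_{\nu}$ is already contained in Proposition \ref{prop:estimateOfSpectralDim_N^L_m_N^R} (it follows from $N_{\nu}(x)\leq\mathcal{N}^{R}(x)$, which is the superadditivity argument via the Poincaré inequality), so the real work is to show $\overline{h}_{\nu}\leq\overline{q}_{\nu}$, i.e. to bound the $\nu$-partition entropy $\overline{h}_{\nu}=\limsup_{x\to\infty}\log\mathcal{N}^{R}(x)/\log x$ by the critical exponent $\overline{q}_{\nu}=\limsup q_{n}^{\nu}$ of the $L^{q}$-spectrum. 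The natural vehicle is the explicit dyadic $\nu$-partition $P_{t}$ from Lemma \ref{lem:Partitionq0}: since $\mathcal{N}^{R}(1/t)\leq\card(P_{t})$ for every $t>0$, it suffices to control $\card(P_{t})$.

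To estimate $\card(P_t)$ I would fix $q>\overline{q}_{\nu}$; by the third characterization in Fact \ref{fact:CriticalExponentBETA} this gives $\sum_{C\in\mathcal{D}_{\nu}}(\nu(C)\Lambda(C))^{q}<\infty$. Each $C\in P_{t}$ satisfies $\nu(C)\Lambda(C)<t$, hence $(\nu(C)\Lambda(C))^{q}\cdot t^{-q}>1$ (using $q>0$), so
\[
\card(P_{t})=\sum_{C\in P_{t}}1\leq t^{-q}\sum_{C\in P_{t}}\left(\nu(C)\Lambda(C)\right)^{q}\leq t^{-q}\sum_{C\in\mathcal{D}_{\nu}}\left(\nu(C)\Lambda(C)\right)^{q}=:t^{-q}M_{q},
\]
where $M_{q}<\infty$ is independent of $t$. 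Writing $x=1/t$, we get $\mathcal{N}^{R}(x)\leq M_{q}x^{q}$, hence $\log\mathcal{N}^{R}(x)/\log x\leq q+\log M_{q}/\log x\to q$ as $x\to\infty$, so $\overline{h}_{\nu}\leq q$. Letting $q\downarrow\overline{q}_{\nu}$ yields $\overline{h}_{\nu}\leq\overline{q}_{\nu}$.

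The only point that needs a little care is the degenerate case $\overline{q}_{\nu}=0$: then for every $q>0$ the summability $\sum_{C\in\mathcal{D}_{\nu}}(\nu(C)\Lambda(C))^{q}<\infty$ still holds (again by Fact \ref{fact:CriticalExponentBETA}), the bound above goes through verbatim, and letting $q\downarrow 0$ gives $\overline{h}_{\nu}=0=\overline{q}_{\nu}$. One should also note that $P_t$ is genuinely a finite partition — this is exactly the content of Lemma \ref{lem:Partitionq0} together with the fact that $\card(P_t)$ is finite, which is now a byproduct of the estimate $\card(P_t)\le t^{-q}M_q$. I do not anticipate a serious obstacle here; the essential input — the equivalence of $\overline{q}_{\nu}$ with an infimum over summable exponents — has already been established in Fact \ref{fact:CriticalExponentBETA}, and the combinatorial comparison $\mathcal{N}^{R}(1/t)\leq\card(P_t)$ is immediate from the definition of $\mathcal{N}^{R}$ and Lemma \ref{lem:Partitionq0}. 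The mild subtlety is simply making sure $q>0$ so that $t^{-q}$ is the correct (large) direction in the inequality $(\nu(C)\Lambda(C))^q < t^q$.
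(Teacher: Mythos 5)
There is a genuine gap, and it sits exactly at the step you flagged as a ``mild subtlety'': the defining property of $P_{t}$ gives the inequality in the \emph{wrong} direction. For $C\in P_{t}$ you have $\nu(C)\Lambda(C)<t$, hence for $q>0$ it follows that $\left(\nu(C)\Lambda(C)\right)^{q}t^{-q}<1$, not $>1$. Consequently the central estimate
\[
\card\left(P_{t}\right)=\sum_{C\in P_{t}}1\leq t^{-q}\sum_{C\in P_{t}}\left(\nu(C)\Lambda(C)\right)^{q}
\]
is false as written: the right-hand side is a sum of terms each strictly smaller than $1$, so it cannot dominate $\card(P_{t})$. Summing the $q$-th powers over $P_{t}$ itself can never work, precisely because membership in $P_{t}$ forces $\nu(C)\Lambda(C)$ to be small.

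The repair is the one the paper uses: exploit the \emph{second} defining condition of $P_{t}$, namely that each $C\in P_{t}$ has a dyadic parent $C'$ (one level up) with $\nu(C')\Lambda(C')\geq t$. Introduce $Q_{t}\coloneqq\left\{ C\in\mathcal{D}:\nu(C)\Lambda(C)\geq t\right\}$; every $C\in P_{t}$ determines exactly one such parent in $Q_{t}$, and each parent has at most two dyadic children, so $\card P_{t}\leq2\card Q_{t}$. For elements of $Q_{t}$ the inequality now runs the right way, $t^{q}\leq\left(\nu(C)\Lambda(C)\right)^{q}$, which yields
\[
t^{q}\card P_{t}\leq2\,t^{q}\card Q_{t}\leq2\sum_{C\in Q_{t}}\left(\nu(C)\Lambda(C)\right)^{q}\leq2\sum_{C\in\mathcal{D}}\left(\nu(C)\Lambda(C)\right)^{q}=2M_{q}<\infty
\]
for any $q>\overline{q}_{\nu}$, by the summability characterization in Fact \ref{fact:CriticalExponentBETA}. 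From here the remainder of your argument is correct and identical to the paper's: $\mathcal{N}^{R}(1/t)\leq\card P_{t}\leq2M_{q}t^{-q}$ gives $\overline{h}_{\nu}\leq q$, letting $q\downarrow\overline{q}_{\nu}$ gives $\overline{h}_{\nu}\leq\overline{q}_{\nu}$ (including the degenerate case $\overline{q}_{\nu}=0$), and $\overline{s}_{\nu}\leq\overline{h}_{\nu}$ comes from Proposition \ref{prop:estimateOfSpectralDim_N^L_m_N^R}. So your overall strategy and all the auxiliary inputs are the right ones; what is missing is the parent-interval comparison $\card P_{t}\leq2\card Q_{t}$, without which the counting step collapses.
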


\begin{proof}
We will show $\overline{q}_{\nu}\geq\overline{h}_{\nu}$ which together
with Proposition \ref{prop:estimateOfSpectralDim_N^L_m_N^R} proves
the claim. Let us choose $q_{1}>\overline{q}_{\nu}$. By Lemma \ref{lem:Partitionq0}
for $t\in\left(0,1\right)$, we have
\[
P_{t}=\left\{ C\in\mathcal{D}:\nu\left(C\right)\Lambda\left(C\right)<t\,\&\,\exists C'\in\mathcal{D}_{\left|\log_{2}\left(\Lambda(C)\right)\right|-1}:C'\supset C\,\&\,\nu\left(C'\right)\Lambda\left(C'\right)\geq t\right\} 
\]
is a $\nu$-partition of $\left[0,1\right]$ by dyadic intervals.
We also need the set 
\[
Q_{t}\coloneqq\left\{ C\in\mathcal{D}:\nu\left(C\right)\Lambda\left(C\right)\geq t\right\} .
\]
Note that for $C\in P_{t}$ there is exactly one $C'\in Q_{t}\cap\mathcal{D}_{\left|\log_{2}\left(\Lambda(C)\right)\right|-1}$
with $C\subset C'$ and for each $C'\in Q_{t}\cap\mathcal{D}_{\left|\log_{2}\left(\Lambda(C)\right)\right|-1}$
there at most two elements of $P_{t}\cap\mathcal{D}_{\left|\log_{2}\left(\Lambda(C)\right)\right|}$
such that they are subsets of $C'.$ This shows the following crucial
inequality
\[
\card P_{t}\leq2\card Q_{t}.
\]
This allows us to estimate
\begin{align*}
t^{q_{1}}\card P_{t} & =\sum_{C\in P_{t}}t^{q_{1}}\leq2\sum_{C\in Q_{t}}t^{q_{1}}\leq2\sum_{C\in Q_{t}}\left(\nu\left(C\right)\Lambda\left(C\right)\right)^{q_{1}}\leq2\sum_{C\in\mathcal{D}}\left(\nu\left(C\right)\Lambda\left(C\right)\right)^{q_{1}}.
\end{align*}
With this at hand and with the help of Fact \ref{fact:CriticalExponentBETA}
we find that the right hand side is finite and therefore we conclude
$\limsup_{t\searrow0}-\log\left(\card P_{t}\right)/\log\left(t\right)\leq q_{1}$.
By Proposition \ref{prop:estimateOfSpectralDim_N^L_m_N^R} we finally
conclude
\[
\overline{s}_{\nu}\leq\overline{h}_{\nu}\leq\limsup_{t\searrow0}\frac{\log\card P_{t}}{-\log t}\leq\overline{q}_{\nu}.
\]
\end{proof}
\begin{proof}
[Proof of Proposition  \ref{prop:upperBoundForUnderline_s<Mdim}]Since
\[
\frac{\log\left(\mathcal{N}^{R}\left(2^{n-1}\right)\right)}{\log\left(2^{n-1}\right)}\leq\frac{\log\left(2^{n\beta_{n}(0)+1}+1\right)}{\log\left(2^{n-1}\right)}
\]
it follows that $\underline{s}\leq\underline{h}_{\nu}\leq\underline{\dim}_{M}(\nu).$
\end{proof}
\begin{prop}
\label{prop:UppperboundForLinear_s_<liminfq_n} Under the assumption
that there exists a subsequence $(n_{k})_{k\in\N}\in\N^{\N}$ and
a constant $K>0$ such that for all $k\in\N$ 
\[
\max_{C\in\mathcal{D}_{\nu,n_{k}}}\nu(C)^{q_{n_{k}}}\leq\frac{K}{2^{\beta_{n_{k}}(0)n_{k}}}\sum_{C\in\mathcal{D}_{\nu,n_{k}}}\nu(C)^{q_{n_{k}}}
\]
and $\lim_{k\rightarrow\infty}q_{n_{k}}=\liminf_{n\rightarrow\infty}q_{n}$,
where $q_{n}\geq0$ is the unique solution to $\beta_{n}(q_{n})=q_{n}$,
we have 
\[
\underline{s}_{\nu}\leq\underline{h}_{\nu}\leq\liminf_{n\rightarrow\infty}q_{n}\leq\liminf_{n\rightarrow\infty}\frac{\beta_{n}(0)}{1+\beta_{n}(0)}=\frac{\underline{\dim}_{M}(\nu)}{1+\underline{\dim}_{M}(\nu)}.
\]
\end{prop}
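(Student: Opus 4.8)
The plan is to read off the chain in \eqref{eq:MainInequalities}-style order. The first inequality $\underline{s}_\nu\le\underline{h}_\nu$ is already contained in Proposition~\ref{prop:estimateOfSpectralDim_N^L_m_N^R}. For the last piece, note that $t\mapsto t/(1+t)$ is continuous and strictly increasing, so $\liminf_n\beta_n(0)/(1+\beta_n(0))=\bigl(\liminf_n\beta_n(0)\bigr)/\bigl(1+\liminf_n\beta_n(0)\bigr)$, and $\liminf_n\beta_n(0)=\underline{\dim}_M(\nu)$ is the $\liminf$-counterpart of $\beta_\nu(0)=\delta_\nu^*$ (geometric scales suffice for the lower Minkowski dimension, and $\card\mathcal D_{\nu,n}$ agrees with the covering number of $\supp\nu$ at scale $2^{-n}$ up to a bounded factor). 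Finally, the inequality $\liminf_n q_n\le\liminf_n\beta_n(0)/(1+\beta_n(0))$ is immediate from convexity: since $\beta_n$ is convex with $\beta_n(1)=0$ we have $\beta_n(q)\le(1-q)\beta_n(0)$ for $q\in[0,1]$, and evaluating at the fixed point gives $q_n=\beta_n(q_n)\le(1-q_n)\beta_n(0)$, i.e.\ $q_n(1+\beta_n(0))\le\beta_n(0)$; take $\liminf$.

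The heart of the matter is the remaining inequality $\underline{h}_\nu\le\liminf_n q_n=\lim_k q_{n_k}$. For each large $n$ I would build a specific $\nu$-partition $P_n$ of $[0,1]$ as follows: retain every dyadic interval $C\in\mathcal D_{\nu,n}$ of generation $n$ with $\nu(C)>0$, and collapse each maximal run of consecutive $\nu$-null generation-$n$ dyadic intervals into a single interval. Since $\nu(\mathcal D^{*})=0$, all the resulting endpoints are dyadic hence $\nu$-null, so $P_n$ is a genuine $\nu$-partition; it has at most $2\card\mathcal D_{\nu,n}+1$ members, and the collapsed intervals carry no $\nu$-mass, so $\max_{C\in P_n}\nu(C)\Lambda(C)=2^{-n}\max_{C\in\mathcal D_{\nu,n}}\nu(C)$. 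Consequently, writing $x_n:=2^{n}/\max_{C\in\mathcal D_{\nu,n}}\nu(C)$, the partition $P_n$ is admissible for every $x<x_n$, whence $\mathcal N^{R}(x)\le 2\card\mathcal D_{\nu,n}+1$ for all such $x$.

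It remains to turn the hypothesis into a lower bound on $x_{n_k}$. Using $\beta_{n_k}(q_{n_k})=q_{n_k}$ one has $\sum_{C\in\mathcal D_{\nu,n_k}}\nu(C)^{q_{n_k}}=2^{n_k q_{n_k}}$, so the assumed flatness bound (recall $2^{\beta_{n_k}(0)n_k}=\card\mathcal D_{\nu,n_k}$) reads $\max_{C}\nu(C)^{q_{n_k}}\le K\,2^{n_k(q_{n_k}-\beta_{n_k}(0))}$. Taking $\log_2$ and rearranging yields $\log_2\bigl(1/\max_C\nu(C)\bigr)\ge\frac{n_k\beta_{n_k}(0)-\log_2 K}{q_{n_k}}-n_k$, hence $\log_2 x_{n_k}\ge\frac{n_k\beta_{n_k}(0)-\log_2 K}{q_{n_k}}$. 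Evaluating at $x=x_{n_k}/2$ (which tends to $\infty$ since $x_{n_k}\ge 2^{n_k}$) and using $\log\card\mathcal D_{\nu,n_k}=n_k\beta_{n_k}(0)\log 2$ gives
\[
\frac{\log\mathcal N^{R}(x_{n_k}/2)}{\log(x_{n_k}/2)}\le\frac{\log\bigl(3\card\mathcal D_{\nu,n_k}\bigr)}{\log x_{n_k}-\log 2}\le q_{n_k}\cdot\frac{\log_2 3+n_k\beta_{n_k}(0)}{\,n_k\beta_{n_k}(0)-\log_2 K-q_{n_k}\,}.
\]
Since $\card(\supp\nu)=\infty$ forces $n_k\beta_{n_k}(0)=\log_2\card\mathcal D_{\nu,n_k}\to\infty$, the last fraction converges to $1$, and as $q_{n_k}\to\liminf_n q_n$ we conclude $\underline h_\nu=\liminf_{x\to\infty}\log\mathcal N^{R}(x)/\log x\le\liminf_n q_n$ (passing, if needed, to an increasing subsequence of $(x_{n_k}/2)$).

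The main obstacle is the construction in the third paragraph together with its optimal use: one must observe that the $\nu$-null stretches may be collapsed so that $P_n$ has size $O(\card\mathcal D_{\nu,n})$ rather than $2^{n}$ — the naive uniform dyadic partition would only give the weaker bound $\beta_n(0)/(1+\beta_n(0))$ in place of $q_n$ — and then the flatness hypothesis, combined with the fixed-point identity $\sum_{C}\nu(C)^{q_{n_k}}=2^{n_k q_{n_k}}$, is precisely what converts ``the $\nu$-mass is spread evenly at scale $2^{-n_k}$'' into the needed growth of the threshold $x_{n_k}$. A minor point to keep track of is that $q_{n_k}>0$ for all large $k$, which holds because $\beta_{n_k}(0)>0$ as soon as $\card\mathcal D_{\nu,n_k}\ge 2$.
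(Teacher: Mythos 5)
Your proof is correct and follows essentially the same route as the paper's: the flatness hypothesis combined with the fixed-point identity $\sum_{C}\nu(C)^{q_{n_k}}=2^{n_k q_{n_k}}$ bounds $\max_{C\in\mathcal{D}_{\nu,n_k}}\nu(C)\Lambda(C)$, and the level-$n_k$ dyadic partition with $\nu$-null intervals merged (at most $2\card\mathcal{D}_{\nu,n_k}+1=2\cdot 2^{n_k\beta_{n_k}(0)}+1$ pieces, exactly the count used in the paper) controls $\mathcal{N}^{R}$ at the threshold $\asymp 2^{n_k\beta_{n_k}(0)/q_{n_k}}$, after which the log-ratio tends to $\lim_k q_{n_k}=\liminf_n q_n$. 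You merely spell out details the paper leaves implicit: the explicit merged $\nu$-partition, the tail inequalities $\liminf_n q_n\le\liminf_n\beta_n(0)/(1+\beta_n(0))=\underline{\dim}_M(\nu)/(1+\underline{\dim}_M(\nu))$, and the observation that $\card\mathcal{D}_{\nu,n}\to\infty$ replaces the paper's restriction to the case $\underline{\dim}_M(\nu)>0$.
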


\begin{proof}
We only consider the case $\underline{\dim}_{M}(\nu)>0.$ Since for
all $k\in\N$,
\[
2^{q_{n_{k}}n_{k}}=\sum_{C\in\mathcal{D}_{\nu,n_{k}}}\nu(C)^{q_{n_{k}}},
\]
we have
\[
\max_{C\in\mathcal{D}_{\nu,n_{k}}}\nu(C)^{q_{n_{k}}}\leq\frac{K}{2^{\beta_{n_{k}}(0)n_{k}}}\sum_{C\in\mathcal{D}_{\nu,n_{k}}}\nu(C)^{q_{n_{k}}}=K\frac{2^{n_{k}q_{n_{k}}}}{2^{n_{k}\beta_{n_{k}}(0)}}.
\]
Hence, 
\[
\max_{C\in\mathcal{D}_{\nu,n_{k}}}\nu(C)\Lambda(C)\leq\frac{K^{1/q_{n_{k}}}}{2^{n_{k}\beta_{n_{k}}(0)/q_{n_{k}}}}.
\]
Using $\underline{\dim}_{M}(\nu)>0$ we find $n_{k}\beta_{n_{k}}\left(0\right)\to\infty$,
$k\to\infty$. Thus we obtain 
\begin{align*}
\frac{\log\left(\mathcal{N}^{R}\left(\frac{2^{\beta_{n_{k}}(0)n_{k}/q_{n_{k}}}}{2K^{1/q_{n_{k}}}}\right)\right)}{\log\left(\frac{2^{\beta_{n_{k}}(0)n_{k}/q_{n_{k}}}}{2K^{1/q_{n_{k}}}}\right)} & \leq\frac{\log\left(22^{n_{k}\beta_{n_{k}}(0)}+1\right)}{\log\left(2^{n_{k}\beta_{n_{k}}(0)}\right)/q_{n_{k}}-\left(\log(K^{1/q_{n_{k}}})-\log(2)\right)}.\\
 & \leq\frac{\log\left(4\right)}{\log\left(2^{n_{k}\beta_{n_{k}}(0)}\right)/q_{n_{k}}-\left(\log(K^{1/q_{n_{k}}})-\log(2)\right)}\\
 & \hspace*{1em}\hspace*{1em}\hspace*{1em}+\frac{1}{1/q_{n_{k}}-\left(\log(K^{1/q_{n_{k}}})-\log(2)\right)/\log\left(2^{\beta_{n_{k}}(0)n_{k}/q_{n_{k}}}\right)}.
\end{align*}
Therefore, $\underline{s}_{\nu}\leq\underline{h}_{\nu}\leq\liminf_{n\rightarrow\infty}q_{n}.$
\end{proof}

\subsection{\label{subsec:Lower-bounds-in}Lower bounds in terms of the optimize
coarse multifractal dimension}

Here we again assume that $\nu$ is a finite Borel measure on $\left(0,1\right)$
with $\card\left(\supp\nu\right)=\infty$ and $\nu\left(\mathcal{D}^{*}\right)=0$.
To find a good lower bound, we use the concept of the \emph{upper
}and \emph{lower optimize coarse multifractal dimension $\underline{F}_{\nu}$},
$\overline{F}_{\nu}$ as defined in the introduction.
\begin{lem}
\label{lem:CoarseEstimate} For fixed $\alpha>0$ and with $n_{x}^{\alpha}\coloneqq\left\lfloor \log\left(\frac{x}{2}\right)/\left(\log(2)(\alpha+1)\right)\right\rfloor $
we have for every $x>2^{3+\alpha}$ 
\[
N_{\nu}(x)\geq\mathcal{N}_{3}^{L}\left(x\right)\geq\frac{\mathcal{N_{\alpha}}\left(n_{x}^{\alpha}\right)}{3}-3,
\]
or with $x_{n}^{\alpha}\coloneqq2^{n\left(\alpha+1\right)+1}$ we
have for every $n\in\N$
\[
N_{\nu}(x_{n}^{\alpha})\geq\frac{\mathcal{N_{\alpha}}\left(n\right)}{3}-3.
\]
\end{lem}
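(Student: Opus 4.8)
The plan is to establish the first inequality as a short chain and then read off the second one by specializing $x$. The step $N_\nu(x)\ge\mathcal{N}_3^L(x)$ will be nothing but the left-hand bound of Proposition \ref{prop:estimateOfSpectralDim_N^L_m_N^R} with $m=3$ (for which $\tfrac{4}{x(m-1)}=\tfrac{2}{x}$), so all the real work goes into producing, for $x>2^{3+\alpha}$, a family $P\in\Pi_0$ admissible in the supremum defining $\mathcal{N}_3^L(x)$ with $\card P\ge\mathcal{N}_\alpha(n_x^\alpha)/3-3$. I would write $n\coloneqq n_x^\alpha=\lfloor\log(x/2)/(\log2\,(\alpha+1))\rfloor$; the hypothesis $x>2^{3+\alpha}$ forces $n\ge1$, and when $n=1$ one has $\mathcal{N}_\alpha(1)\le2$, so the asserted bound is negative and hence vacuous, which lets me assume $n\ge2$.

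To build $P$, I would start from $M_n(\alpha)=\{0\le k<2^n:\nu(A_k^n)\ge2^{-\alpha n}\}$, which has $\mathcal{N}_\alpha(n)$ elements, discard its at most two boundary indices so that the enlarged blocks stay inside $[0,1]$, and attach to each remaining $k$ the triadic block
\[
C_k\coloneqq A_{k-1}^n\cup A_k^n\cup A_{k+1}^n=\bigl((k-1)2^{-n},(k+2)2^{-n}\bigr],
\]
which has length $3\cdot2^{-n}$, carries no $\nu$-atom at its dyadic endpoints because we have reduced to $\nu(\mathcal{D}^{*})=0$, and whose centred subinterval of relative length $1/3$ is precisely $\langle C_k\rangle_3=A_k^n$. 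Consequently
\[
\nu(\langle C_k\rangle_3)\,\Lambda(\langle C_k\rangle_3)=\nu(A_k^n)\,2^{-n}\ge2^{-(\alpha+1)n}\ge\frac{2}{x}=\frac{4}{x(3-1)},
\]
where the middle inequality is exactly the inequality $(\alpha+1)n\log2\le\log(x/2)$ encoded in the floor defining $n_x^\alpha$.

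The one genuine obstacle is that the blocks $C_k$ overlap for consecutive $k$, so $\{C_k\}$ is not a disjoint family; the fix is a pigeonhole on residues modulo $3$. Splitting the reduced index set into its three residue classes mod $3$, one of them, say $M'$, has at least $\lceil(\mathcal{N}_\alpha(n)-2)/3\rceil\ge\mathcal{N}_\alpha(n)/3-3$ elements, and any two distinct $k,k'\in M'$ satisfy $|k-k'|\ge3$, whence $C_k\cap C_{k'}=\emptyset$. Thus $P\coloneqq\{C_k:k\in M'\}$ lies in $\Pi_0$, has $\card P\ge\mathcal{N}_\alpha(n)/3-3$, and by the displayed estimate satisfies $\min_{C\in P}\nu(\langle C\rangle_3)\Lambda(\langle C\rangle_3)\ge\tfrac{4}{x(3-1)}$, so it is admissible; hence $\mathcal{N}_3^L(x)\ge\card P\ge\mathcal{N}_\alpha(n_x^\alpha)/3-3$, and combining with Proposition \ref{prop:estimateOfSpectralDim_N^L_m_N^R} gives the first claim. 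The second claim then follows by substituting $x=x_n^\alpha=2^{n(\alpha+1)+1}$: then $\log(x_n^\alpha/2)/(\log2\,(\alpha+1))=n$, so $n_{x_n^\alpha}^\alpha=n$, and the first claim reads $N_\nu(x_n^\alpha)\ge\mathcal{N}_\alpha(n)/3-3$ (for $n=1$ the right-hand side is again negative, so there is nothing to prove, and $x_n^\alpha>2^{3+\alpha}$ for $n\ge2$).
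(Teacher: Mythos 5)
Your argument is correct and in essence identical to the paper's proof: both surround each heavy dyadic interval $A_{k}^{n}$ (those with $\nu\left(A_{k}^{n}\right)\geq2^{-\alpha n}$) by its two neighbouring dyadic intervals of level $n$, so that the resulting block of length $3\cdot2^{-n}$ has the heavy interval as its middle third, keep a disjoint subfamily of density about one third, and then conclude via the $m=3$ case of Proposition \ref{prop:estimateOfSpectralDim_N^L_m_N^R} together with the choice of $n_{x}^{\alpha}$ giving $2^{-n(\alpha+1)}\geq2/x$. The only cosmetic difference is how the disjoint subfamily is selected --- the paper takes every third interval of $M_{n}\left(\alpha\right)$ in its natural ordering, while you pigeonhole on the residue of $k$ modulo $3$ --- and both yield the same count $\mathcal{N}_{\alpha}\left(n\right)/3-O(1)$.
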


\begin{proof}
For fixed $n\in\N$, $\alpha>0$ let $\left(c_{1},d_{1}\right],\dots,\left(c_{\mathcal{N}_{\alpha}\left(n\right)},d_{\mathcal{N}_{\alpha}\left(n\right)}\right]$
denote the intervals of $M_{n}\left(\alpha\right)$ ordered in the
natural way, i.~e\@. $d_{i}<c_{i+1}$. For $i=0,\dots,\left\lfloor \mathcal{N}_{\alpha}\left(n\right)/3-2\right\rfloor -1$
define
\[
D_{n,i}\coloneqq\left(c_{2+3i}-\frac{1}{2^{n}},c_{2+3i}\right]\cup\left(c_{2+3i},d_{2+3i}\right]\cup\left(d_{2+3i},d_{2+3i}+\frac{1}{2^{n}}\right].
\]
Note that we have
\[
\nu\left(\left\langle D_{n,i}\right\rangle _{3}\right)\Lambda\left(\left\langle D_{n,i}\right\rangle _{3}\right)=\nu\left(\left(c_{2+3i},d_{2+3i}\right]\right)\Lambda\left(\left(c_{2+3i},d_{2+3i}\right]\right)\geq2^{-n(\alpha+1)},
\]
hence for $n_{x}^{\alpha}\coloneqq\left\lfloor \log\left(\frac{x}{2}\right)/\left(\log(2)(\alpha+1)\right)\right\rfloor $
and $x>2^{3+\alpha}$, we have
\[
\nu\left(\left\langle D_{n_{x}^{\alpha},i}\right\rangle _{3}\right)\Lambda\left(\left\langle D_{n_{x}^{\alpha},i}\right\rangle _{3}\right)\geq\frac{2}{x}
\]
which shows 
\begin{align*}
\mathcal{N}_{3,\nu}^{L}\left(x\right) & =\sup\left\{ \card\left(P\right):P\in\Pi_{0}:\min_{C\in P}\nu\left(\left\langle I\right\rangle _{3}\right)\Lambda\left(\left\langle I\right\rangle _{3}\right)\geq\frac{2}{x}\right\} \\
 & \geq\left\lfloor \frac{\mathcal{N}_{\alpha}\left(n_{x}^{\alpha}\right)}{3}-2\right\rfloor \geq\frac{\mathcal{N}_{\alpha}\left(n_{x}^{\alpha}\right)}{3}-3.
\end{align*}
\end{proof}
Now we can give a lower bound on $\underline{s}_{\nu}$ and $\overline{s}_{\nu}$
in terms of the lower and upper optimize coarse multifractal dimension.
\begin{prop}
\label{prop:lower_Bound_Multifractal_F_=00005Cnu} As a general lower
(upper) bound for the lower (upper) spectral dimension for all $m\in\left(1,3\right]$,
we have
\[
\underline{F}_{\nu}\leq\underline{h}_{\nu}^{m}\leq\underline{s}_{\nu}\;\text{and }\;\overline{F}_{\nu}\leq\overline{h}_{\nu}^{m}\leq\overline{s}_{\nu}.
\]
\end{prop}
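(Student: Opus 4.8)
The two estimates $\underline{h}_{\nu}^{m}\le\underline{s}_{\nu}$ and $\overline{h}_{\nu}^{m}\le\overline{s}_{\nu}$ are already contained in Proposition~\ref{prop:estimateOfSpectralDim_N^L_m_N^R}, so the plan is to prove the two remaining inequalities $\underline{F}_{\nu}\le\underline{h}_{\nu}^{m}$ and $\overline{F}_{\nu}\le\overline{h}_{\nu}^{m}$ for every $m\in(1,3]$. The strategy has two steps: first reduce to $m=3$ by a monotonicity argument, then feed the counting bound of Lemma~\ref{lem:CoarseEstimate} into the definitions of the $m$-reduced partition entropies and of $\underline{F}_{\nu}$, $\overline{F}_{\nu}$.

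\emph{Reduction to $m=3$.} For every interval $I$ and every $m\in(1,3]$ one has $\langle I\rangle_{3}\subset\langle I\rangle_{m}\subset I$, since $\langle I\rangle_{m}$ is the interval of length $\Lambda(I)/m\ge\Lambda(I)/3$ centered in $I$; hence $\nu(\langle I\rangle_{m})\Lambda(\langle I\rangle_{m})\ge\nu(\langle I\rangle_{3})\Lambda(\langle I\rangle_{3})$. Therefore, if $P\in\Pi_{0}$ satisfies $\min_{C\in P}\nu(\langle C\rangle_{3})\Lambda(\langle C\rangle_{3})\ge2/x$, then with $y\coloneqq2x/(m-1)$ it also satisfies $\min_{C\in P}\nu(\langle C\rangle_{m})\Lambda(\langle C\rangle_{m})\ge2/x=4/\bigl(y(m-1)\bigr)$, so that $\mathcal{N}_{m}^{L}\bigl(2x/(m-1)\bigr)\ge\mathcal{N}_{3}^{L}(x)$. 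Since $x\mapsto\mathcal{N}_{m}^{L}(x)$ is non-decreasing and $\log(cx)/\log x\to1$ as $x\to\infty$ for every constant $c>0$, taking $\liminf$ and $\limsup$ gives $\underline{h}_{\nu}^{m}\ge\underline{h}_{\nu}^{3}$ and $\overline{h}_{\nu}^{m}\ge\overline{h}_{\nu}^{3}$. It thus suffices to show $\underline{F}_{\nu}\le\underline{h}_{\nu}^{3}$ and $\overline{F}_{\nu}\le\overline{h}_{\nu}^{3}$.

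\emph{The case $m=3$.} Fix $\alpha>0$ and set $x_{n}^{\alpha}\coloneqq2^{n(\alpha+1)+1}$, for which a direct computation gives $n_{x_{n}^{\alpha}}^{\alpha}=n$; the first inequality of Lemma~\ref{lem:CoarseEstimate} then yields $\mathcal{N}_{3}^{L}(x_{n}^{\alpha})\ge\mathcal{N}_{\alpha}(n)/3-3$ for all large $n$. We may assume $\underline{F}_{\nu}(\alpha)>0$ and $\overline{F}_{\nu}(\alpha)>0$, the remaining cases being trivial, so that $\mathcal{N}_{\alpha}(n)\to\infty$ and eventually $\log^{+}(\mathcal{N}_{\alpha}(n)/3-3)\ge\log^{+}(\mathcal{N}_{\alpha}(n))-\log6$. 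Using that $\mathcal{N}_{3}^{L}$ is non-decreasing on each block $[x_{n}^{\alpha},x_{n+1}^{\alpha})$, that $\log x_{n}^{\alpha}=(n(\alpha+1)+1)\log2\sim(\alpha+1)\,n\log2$, and passing to the limit inferior over $x\to\infty$ (which runs through all these blocks) and to the limit superior along the subsequence $(x_{n}^{\alpha})_{n}$, we obtain
\[
\underline{h}_{\nu}^{3}\ge\frac{1}{1+\alpha}\liminf_{n\to\infty}\frac{\log^{+}(\mathcal{N}_{\alpha}(n))}{n\log2}=\frac{\underline{F}_{\nu}(\alpha)}{1+\alpha}\qquad\text{and}\qquad\overline{h}_{\nu}^{3}\ge\frac{\overline{F}_{\nu}(\alpha)}{1+\alpha}.
\]
Taking the supremum over $\alpha>0$ gives $\underline{h}_{\nu}^{3}\ge\underline{F}_{\nu}$ and $\overline{h}_{\nu}^{3}\ge\overline{F}_{\nu}$, which together with the reduction step and Proposition~\ref{prop:estimateOfSpectralDim_N^L_m_N^R} completes the proof.

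\emph{Main obstacle.} There is no conceptual difficulty once Lemma~\ref{lem:CoarseEstimate} is available; the real work is the bookkeeping in the last step --- one must verify that the additive constant $-3$, the floor hidden in $n_{x}^{\alpha}$, the discrepancy between $\log$ and $\log^{+}$ for small values of $\mathcal{N}_{\alpha}(n)$, and the order of the two limits all disappear in the $\liminf/\limsup$, and that the supremum over $\alpha$ may be pulled out at the end. As an alternative to the reduction step one can instead rerun the construction of Lemma~\ref{lem:CoarseEstimate} with $\langle\,\cdot\,\rangle_{m}$ in place of $\langle\,\cdot\,\rangle_{3}$: since $m\le3$, buffer intervals of length $2^{-n}$ on each side still keep the blocks $D_{n,i}$ pairwise disjoint and still force $\langle D_{n,i}\rangle_{m}$ to contain the underlying dyadic cell, and the threshold $2^{-n(\alpha+1)}\ge4/(x(m-1))$ then produces the analogous bound.
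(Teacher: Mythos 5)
Your proposal is correct and follows essentially the same route as the paper: it invokes Proposition~\ref{prop:estimateOfSpectralDim_N^L_m_N^R} for $\underline{h}_{\nu}^{m}\leq\underline{s}_{\nu}$, $\overline{h}_{\nu}^{m}\leq\overline{s}_{\nu}$, and derives $\underline{F}_{\nu}\leq\underline{h}_{\nu}^{m}$, $\overline{F}_{\nu}\leq\overline{h}_{\nu}^{m}$ from Lemma~\ref{lem:CoarseEstimate}, taking the liminf over all $x$ (via the blocks $[x_n^{\alpha},x_{n+1}^{\alpha})$, which matches the paper's use of $n_x^{\alpha}$) and the limsup along $x_n^{\alpha}$, before optimizing over $\alpha$. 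The only difference is cosmetic: you spell out the rescaling argument $\mathcal{N}_{m}^{L}\bigl(2x/(m-1)\bigr)\geq\mathcal{N}_{3}^{L}(x)$ behind the reduction $\underline{h}_{\nu}^{m}\geq\underline{h}_{\nu}^{3}$, $\overline{h}_{\nu}^{m}\geq\overline{h}_{\nu}^{3}$, which the paper states without further comment.
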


\begin{proof}
First note that Proposition \ref{prop:estimateOfSpectralDim_N^L_m_N^R}
gives $\underline{h}_{\nu}^{m}\leq\underline{s}_{\nu}$ and $\overline{h}_{\nu}^{m}\leq\overline{s}_{\nu}$
for all $m>1$. With $n_{x}^{\alpha}\coloneqq\left\lfloor \frac{\log\left(\frac{x}{2}\right)}{\log(2)(\alpha+1)}\right\rfloor $
and $x>2^{3+\alpha}$ with Lemma \ref{lem:CoarseEstimate} for every
$\alpha\geq0$ and $x>2^{3+\alpha}$, we have
\[
\frac{\log^{+}\left(3\mathcal{N}_{3}^{L}\left(x\right)+3\right)}{\log(x)}\geq\frac{\log^{+}\left(\mathcal{N}_{\alpha}\left(n_{x}^{\alpha}\right)\right)}{\log x}.
\]
Hence, for all $1<m\leq3$,
\begin{align*}
\underline{h}_{\nu}^{m}\geq\underline{h}_{\nu}^{3} & =\liminf_{x}\frac{\log^{+}\left(3\mathcal{N}_{3}^{L}\left(x\right)+3\right)}{\log x}\geq\liminf_{x}\frac{\log^{+}\left(\mathcal{N}_{\alpha}\left(n_{x}^{\alpha}\right)\right)}{\log x}\\
 & =\liminf_{x}\frac{\log^{+}\left(\mathcal{N}_{\alpha}\left(n_{x}^{\alpha}\right)\right)}{(1+\alpha)\log\left(2^{\log_{2}\left(\frac{x}{2}\right)/(1+\alpha)}\right)+\log\left(2\right)}\\
 & \geq\liminf_{x}\frac{\log^{+}\left(\mathcal{N}_{\alpha}\left(n_{x}^{\alpha}\right)\right)}{(1+\alpha)\log2^{n_{x}^{\alpha}}+\log\left(2\right)(2+\alpha)}\\
 & =\liminf_{x}\frac{\log^{+}\left(\mathcal{N}_{\alpha}\left(n_{x}^{\alpha}\right)\right)}{\left(1+\alpha\right)\log2^{n_{x}^{\alpha}}}\geq\liminf_{n\rightarrow\infty}\frac{\log^{+}\left(\mathcal{N}_{\alpha}\left(n\right)\right)}{\left(1+\alpha\right)\log2^{n}},
\end{align*}
which implies $\underline{h}_{\nu}^{m}\geq\underline{F}_{\nu}$. We
also have for $x_{m}^{\eta}\coloneqq2^{m\left(\eta+1\right)+1}$ with
$\eta>0$,
\begin{align*}
\overline{h}_{\nu}^{m} & \geq\overline{h}_{\nu}^{3}\geq\limsup_{m\rightarrow\infty}\frac{\log^{+}\left(3\mathcal{N}_{3}^{L}\left(x_{m}^{\eta}\right)+3\right)}{\log(x_{m}^{\eta})}\geq\limsup_{m\rightarrow\infty}\frac{\log^{+}\left(\mathcal{N}_{\eta}\left(n_{x_{m}^{\eta}}^{\eta}\right)\right)}{\log x_{m}^{\eta}}\\
 & \geq\limsup_{m\rightarrow\infty}\frac{\log^{+}\left(\mathcal{N}_{\eta}\left(n_{x_{m}^{\eta}}^{\eta}\right)\right)}{\left(1+\eta\right)\log2^{n_{x_{m}^{\eta}}^{\eta}}}=\limsup_{m\rightarrow\infty}\frac{\log^{+}\left(\mathcal{N}_{\eta}\left(m\right)\right)}{\left(1+\eta\right)\log2^{m}},
\end{align*}
where we used\textbf{ $n_{x_{m}^{\eta}}^{\eta}=m$}. Thus, $\overline{h}_{\nu}^{m}\geq\overline{F}_{\nu}$.
\end{proof}
We need the following elementary observation from large deviation
theory which seems not to be standard in the relevant literature.
\begin{lem}
\label{lem:exponential decay-1} Suppose $\left(X_{n}\right)_{n\in\N}$
are real-valued random variables on some probability spaces $\left(\Omega_{n},\mathcal{A}_{n},\mu_{n}\right)$
such that the rate function $\mathfrak{c}\left(t\right)\coloneqq\limsup_{n\to\infty}\mathfrak{c}_{n}\left(t\right)$
is a proper convex function with $\mathfrak{c}_{n}\left(t\right)\coloneqq a_{n}^{-1}\log\int\exp tX_{n}\d\mu_{n}$,
$t\in\R$, $a_{n}\rightarrow\text{\ensuremath{\infty}}$ and such
that $0$ belongs to the interior of the domain of finiteness $\left\{ t\in\R\colon\mathfrak{c}\left(t\right)<\infty\right\} $.
Let $I=(a,d)$ be an open interval containing the subdifferential
$\partial\mathfrak{c}\left(0\right)=[b,c]$ of $\mathfrak{c}$ in
$0$. Then there exists $r>0$ such that for all $n$ sufficiently
large, 
\[
\mu_{n}\left(a_{n}^{-1}X_{n}\notin I\right)\leq2\exp\left(-ra_{n}\right).
\]
\end{lem}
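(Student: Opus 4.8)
The plan is to run the one-dimensional Chernoff (exponential Markov) bound separately on each of the two tails $\{a_n^{-1}X_n\ge d\}$ and $\{a_n^{-1}X_n\le a\}$, exploiting that the endpoints of $\partial\mathfrak{c}(0)=[b,c]$ are precisely the left and right derivatives of $\mathfrak{c}$ at $0$. Since $\mathfrak{c}_n(0)=0$ for every $n$, we have $\mathfrak{c}(0)=0$; and since $0$ is interior to $\{t\colon\mathfrak{c}(t)<\infty\}$, the convex function $\mathfrak{c}$ is finite, hence continuous, on a neighbourhood of $0$ and possesses one-sided derivatives there, with $\partial\mathfrak{c}(0)=[\mathfrak{c}'_-(0),\mathfrak{c}'_+(0)]$ by standard convex analysis; thus $b=\mathfrak{c}'_-(0)$ and $c=\mathfrak{c}'_+(0)$. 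In particular $\mathfrak{c}(t)=ct+o(t)$ and $\mathfrak{c}(-t)=-bt+o(t)$ as $t\searrow0$.

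For the upper tail I would use, for $t>0$ in the (open) domain of finiteness of $\mathfrak{c}$, the Markov estimate
\[
\mu_n\bigl(a_n^{-1}X_n\ge d\bigr)=\mu_n\bigl(\exp(tX_n)\ge\exp(ta_nd)\bigr)\le\exp\bigl(a_n\bigl(\mathfrak{c}_n(t)-td\bigr)\bigr).
\]
Because $\mathfrak{c}(t)-td=(c-d)t+o(t)$ with $c-d<0$ (here $c<d$ since $[b,c]\subset(a,d)$), one can fix some $t_{+}>0$, still inside the domain of finiteness, with $\mathfrak{c}(t_{+})-t_{+}d<0$. Writing $3r_{+}\coloneqq t_{+}d-\mathfrak{c}(t_{+})>0$ and using $\mathfrak{c}(t_{+})=\limsup_{n}\mathfrak{c}_n(t_{+})$, for all sufficiently large $n$ we get $\mathfrak{c}_n(t_{+})-t_{+}d<-2r_{+}$, hence $\mu_n(a_n^{-1}X_n\ge d)\le\exp(-2r_{+}a_n)$. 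The lower tail is symmetric: for $t>0$ in the domain of finiteness,
\[
\mu_n\bigl(a_n^{-1}X_n\le a\bigr)=\mu_n\bigl(\exp(-tX_n)\ge\exp(-ta_na)\bigr)\le\exp\bigl(a_n\bigl(\mathfrak{c}_n(-t)+ta\bigr)\bigr),
\]
and since $\mathfrak{c}(-t)+ta=(a-b)t+o(t)$ with $a-b<0$, a small enough $t_{-}>0$ in the domain of finiteness gives $\mathfrak{c}(-t_{-})+t_{-}a<0$; the same $\limsup$ argument yields $\mu_n(a_n^{-1}X_n\le a)\le\exp(-2r_{-}a_n)$ for $n$ large, with $3r_{-}\coloneqq-t_{-}a-\mathfrak{c}(-t_{-})>0$. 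Putting $r\coloneqq\min\{r_{+},r_{-}\}>0$ and adding the two bounds gives $\mu_n(a_n^{-1}X_n\notin I)\le2\exp(-ra_n)$ for all large $n$. (If $d=+\infty$, resp. $a=-\infty$, the corresponding tail event is empty since each $X_n$ is real-valued, so only one side needs treatment.)

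The only delicate point — the ``main obstacle'', such as it is — is the bookkeeping: one must take the Chernoff parameters $t_{\pm}$ small enough to lie in the open interval on which $\mathfrak{c}$ is finite, so that $\mathfrak{c}_n(\pm t_{\pm})$ is finite and eventually bounded above by $\mathfrak{c}(\pm t_{\pm})+r_{\pm}$; this is exactly what the hypothesis ``$0$ is interior to the domain of finiteness'' supplies, and one also needs the elementary convex-analysis fact identifying $[b,c]$ with the interval of one-sided slopes at $0$. Everything else is the textbook one-dimensional Cramér upper bound, transcribed to a $\limsup$ rate function and to a general normalising sequence $a_n\to\infty$ in place of $a_n=n$.
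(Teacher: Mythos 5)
Your proof is correct and follows essentially the same route as the paper: an exponential Markov (Chernoff) bound on each of the two tails, with strict negativity of the exponent coming precisely from $a,d\notin\partial\mathfrak{c}(0)$, followed by passing from the $\limsup$ rate to the bound for all large $n$. The only cosmetic difference is that you fix a single small parameter $t_{\pm}$ via the one-sided derivatives $b=\mathfrak{c}'_{-}(0)$, $c=\mathfrak{c}'_{+}(0)$, whereas the paper takes the infimum over all $q$ and shows the resulting Legendre-type bound is negative by a contradiction with the definition of the subdifferential; the two verifications are equivalent.
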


\begin{proof}
We assume that $\partial\mathfrak{c}\left(0\right)=\left[b,c\right]$
and $I=\left(a,d\right)$ with $a<b\leq c<d$. First note that the
assumptions ensure that $-\infty<b\leq c<\infty.$ By Chebyshev inequality
for all $q>0$,
\begin{align*}
\mu_{n}\left(a_{n}^{-1}X_{n}\geq d\right) & =\mu_{n}\left(qX_{n}\geq qa_{n}d\right)\leq\exp\left(-qa_{n}d\right)\int\exp\left(qX_{n}\right)\d\mu_{n}
\end{align*}
 implying 
\[
\limsup a_{n}^{-1}\log\mu_{n}\left(a_{n}^{-1}X_{n}\geq d\right)\leq\inf_{q>0}\mathfrak{c}\left(q\right)-qd=\inf_{q\in\R}\mathfrak{c}\left(q\right)-qd,
\]
where the equality follows from the assumption $d\notin\partial\mathfrak{c}\left(0\right)$
and $\mathfrak{c}\left(q\right)-qd\geq(c-d)q\geq0$ for all $q\leq0.$
Similarly, we find 
\[
\limsup a_{n}^{-1}\log\mu_{n}\left(a_{n}^{-1}X_{n}\leq a\right)\leq\inf_{q<0}\mathfrak{c}\left(q\right)-qa=\inf_{q\in\R}\mathfrak{c}\left(q\right)-qa.
\]
We are left to show that both upper bounds are negative. We show the
first case by contradiction -- the other case follows in exactly
the same way. Assuming $\inf_{q\in\R}\mathfrak{c}\left(q\right)-qd=0$
implies for all $q\in\R$ that $\mathfrak{c}\left(q\right)-qd\geq0$,
or after rearranging, $\mathfrak{c}\left(q\right)-\mathfrak{c}\left(0\right)\geq dq$.
This means, according to the definition of the subdifferential, that
$d\in\partial\mathfrak{c}\left(0\right)$, contradicting our assumptions.
\end{proof}
\begin{prop}
\label{prop:GeneralBound.}Assume for some $q\in\left[0,1\right]$
that $\beta_{\nu}(q)=\lim\beta_{n_{k}}^{\nu}\left(q\right)$ on a
subsequence $\left(n_{k}\right)$. Then we have $[a,b]=-\partial\beta_{\nu}\left(q\right)\supset\left[a',b'\right]=-\partial\limsup_{k}\beta_{n_{k}}\left(q\right)$
and 
\begin{align*}
\frac{aq+\beta_{\nu}(q)}{1+b} & \leq\frac{a'q+\beta_{\nu}(q)}{1+b'}\leq\sup_{t>b}\liminf_{k\to\infty}\frac{\log\mathcal{N}_{t}\left(n_{k}\right)}{\log2^{n_{k}}(1+t)}\leq\sup_{t>b}\limsup_{k\to\infty}\frac{\log\mathcal{N}_{t}\left(n_{k}\right)}{\log2^{n_{k}}(1+t)}.
\end{align*}
\end{prop}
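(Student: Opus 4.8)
The plan is to connect the coarse multifractal counting quantities $\mathcal{N}_{t}(n_k)$ to the $L^q$-spectrum via the large deviation lemma (Lemma \ref{lem:exponential decay-1}), applied to the sequence of (sub)probability measures supported on $\mathcal{D}_{\nu,n_k}$ obtained by $q$-th power weighting. Concretely, fix $q\in[0,1]$ with $\beta_\nu(q)=\lim_k\beta_{n_k}^\nu(q)$, write $S_{n}(q)\coloneqq\sum_{C\in\mathcal{D}_{\nu,n}}\nu(C)^q = 2^{n\beta_n(q)}$, and define on the finite set $\mathcal{D}_{\nu,n_k}$ the probability vector $\mu_{n_k}(\{C\})\coloneqq\nu(C)^q/S_{n_k}(q)$. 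Let $X_{n_k}(C)\coloneqq -\log_2\nu(C)$ (so that $X_{n_k}/n_k$ is the ``coarse Hölder exponent''). Then with $a_{n_k}=n_k\log 2$ one checks that the scaled cumulant generating function is $\mathfrak{c}_{n_k}(t)= a_{n_k}^{-1}\log\int 2^{tX_{n_k}}\d\mu_{n_k} = \beta_{n_k}(q-t)-\beta_{n_k}(q)$ (after converting the base-$2$ logarithms), so that $\mathfrak{c}(t)=\limsup_k\mathfrak{c}_{n_k}(t)$ relates to $\beta_\nu$ near $q$; in particular $0$ is interior to the domain of finiteness since $\beta_\nu$ is finite (real-valued and convex) on a neighbourhood of $q\in[0,1]$. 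The subdifferential identity $\partial\mathfrak{c}(0)=-\partial(\limsup_k\beta_{n_k})(q)=[a',b']$ is then immediate from the definitions, and the inclusion $[a',b']\subset[a,b]=-\partial\beta_\nu(q)$ follows because $\beta_\nu=\limsup_n\beta_n\ge\limsup_k\beta_{n_k}$ with equality of values at $q$, so $\beta_\nu$ lies above the chord-supporting lines of $\limsup_k\beta_{n_k}$ at $q$, forcing the subdifferential to be wider (this is the elementary convexity fact that a larger convex function agreeing at a point has a larger subdifferential there). The first two displayed inequalities, $(aq+\beta_\nu(q))/(1+b)\le(a'q+\beta_\nu(q))/(1+b')$, then reduce to the observation that $x\mapsto (x q+\beta_\nu(q))/(1+x)$ is monotone in the relevant range when $\beta_\nu(q)\le q$ (true since $q\ge\overline q_\nu$ is not forced, but $aq+\beta_\nu(q)\ge 0$ holds because $\beta_\nu\ge -q$ wait— I would instead argue directly: the map $(a,b)\mapsto(aq+\beta_\nu(q))/(1+b)$ is nondecreasing in $a$ and nonincreasing in $b$, and $[a',b']\subset[a,b]$ means $a\le a'$ and $b'\le b$, giving the inequality provided the numerator at $(a',b')$ is nonnegative, which one gets from $a'\ge 0$ when $q\in[0,1]$ — indeed for $q\le 1$, $\beta_\nu$ is nonincreasing so $a,a'\ge 0$).

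The heart of the argument is the third inequality. Fix any $t>b\ge b'$. Since $b'=\max(-\partial(\limsup_k\beta_{n_k})(q))$, the point $-t$ lies strictly outside $\partial\mathfrak{c}(0)$ on the appropriate side, so Lemma \ref{lem:exponential decay-1} applies with an open interval $I=(a'',d'')$ chosen with $\partial\mathfrak{c}(0)\subset I$ and $d''$ corresponding to exponent $t$: there is $r>0$ such that for all large $k$,
\[
\mu_{n_k}\!\left(\tfrac{X_{n_k}}{n_k}\ge t\right)\le 2^{-r n_k}.
\]
Unwinding the definition of $\mu_{n_k}$, the left-hand side is $S_{n_k}(q)^{-1}\sum_{C:\,\nu(C)\le 2^{-tn_k}}\nu(C)^q$. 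Now I split the sum $S_{n_k}(q)$ over the cylinders with $\nu(C)\ge 2^{-tn_k}$ (these are exactly the $\mathcal{N}_t(n_k)$ cylinders counted by $M_{n_k}(t)$, up to the trivial remark that $\mathcal N_\alpha(n)$ counts $k$ with $\nu(A_k^n)\ge 2^{-\alpha n}$) and the complementary sum, which by the large-deviation bound is at most $2^{-rn_k}S_{n_k}(q)$, hence at most $\tfrac12 S_{n_k}(q)$ for large $k$. Therefore
\[
\tfrac12\, 2^{n_k\beta_{n_k}(q)}\le \sum_{C:\,\nu(C)\ge 2^{-tn_k}}\nu(C)^q\le \mathcal{N}_t(n_k)\cdot 1
\]
using $q\ge 0$ and $\nu(C)\le 1$ (so $\nu(C)^q\le 1$). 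Hence $\mathcal{N}_t(n_k)\ge \tfrac12 2^{n_k\beta_{n_k}(q)}$, and since $\beta_{n_k}(q)\to\beta_\nu(q)$ along this subsequence, taking $\log_2$, dividing by $n_k(1+t)$ and passing to the liminf gives
\[
\liminf_{k\to\infty}\frac{\log\mathcal{N}_t(n_k)}{\log 2^{n_k}(1+t)}\ \ge\ \frac{\beta_\nu(q)}{1+t}.
\]
The last display is not yet what we want: to get the numerator $a'q+\beta_\nu(q)$ rather than just $\beta_\nu(q)$, I need the sharper lower bound that separates cylinders by exponent \emph{two-sidedly}. So I instead apply Lemma \ref{lem:exponential decay-1} with $I$ a small open interval around $\partial\mathfrak c(0)$, concluding $\mu_{n_k}(X_{n_k}/n_k\notin I)\le 2\cdot 2^{-rn_k}$; then the mass of $\mu_{n_k}$ concentrated on exponents in $I=(a'-\varepsilon,b'+\varepsilon)$ is $\ge 1-2\cdot 2^{-rn_k}\ge\tfrac12$. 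On that exponent range, each such $C$ satisfies $\nu(C)^q\le 2^{-(a'-\varepsilon)n_k q}$, so $\tfrac12 2^{n_k\beta_{n_k}(q)}\le S_{n_k}(q)\cdot\mu_{n_k}(\text{exp}\in I)\le \mathcal N_{b'+\varepsilon}(n_k)\cdot 2^{-(a'-\varepsilon)n_k q}$, i.e. $\mathcal N_{b'+\varepsilon}(n_k)\ge\tfrac12 2^{n_k(\beta_{n_k}(q)+(a'-\varepsilon)q)}$. Dividing $\log_2$ by $n_k(1+b'+\varepsilon)$, letting $k\to\infty$ and then $\varepsilon\downarrow 0$ yields $\sup_{t>b'}\liminf_k \log\mathcal N_t(n_k)/(\log 2^{n_k}(1+t))\ge (a'q+\beta_\nu(q))/(1+b')$, and since $b\ge b'$ the supremum over $t>b$ is bounded below by the same quantity via $b'+\varepsilon$ ranging over values $>b$ — one must take $\varepsilon$ large enough that $b'+\varepsilon>b$, which is harmless since we then only get a weaker-looking but still valid bound after reoptimizing; more carefully, one uses that $[a',b']\subset[a,b]$ together with monotonicity to route the estimate through $t>b$. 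The final inequality ($\liminf\le\limsup$) is trivial.

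The main obstacle I anticipate is bookkeeping the two-sided concentration correctly so that the exponent $a'$ (the left endpoint of $-\partial\limsup_k\beta_{n_k}(q)$) enters the numerator with the right sign, and simultaneously the denominator carries $1+b'$ rather than $1+t$ for some larger $t$ — this is delicate because improving the denominator (smaller $t$) and improving the numerator (using the full concentration interval) pull in the same direction only if one is careful to take the concentration interval $I$ to have right endpoint as close to $b'$ as possible. A secondary technical point is the precise translation between the base-$2$ normalization in $\beta_n$ and the natural-logarithm normalization $\mathfrak c_n(t)=a_n^{-1}\log\int e^{tX_n}\d\mu_n$ in Lemma \ref{lem:exponential decay-1}; I will absorb this by working throughout with $2^{tX_n}$ and $a_n=n\log 2$, so that $\mathfrak c_n(t)=\beta_n(q-t)-\beta_n(q)$ exactly, and the subdifferential at $0$ is literally $-\partial\beta_n(q)$ (in the limsup sense). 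Everything else — the inclusion of subdifferentials, monotonicity of the fractional-linear map, and the passage from the counting inequality to the dimension inequality — is routine convexity and elementary asymptotics.
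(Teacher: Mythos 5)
Your proposal is essentially the paper's own argument: the same exponentially tilted measures $\mu_{n_k}(\{C\})=\nu(C)^q2^{-n_k\beta_{n_k}(q)}$ on dyadic cells, the same free-energy computation $\mathfrak{c}(t)=\limsup_k\beta_{n_k}(q\pm t)-\beta_\nu(q)$ feeding into Lemma \ref{lem:exponential decay-1}, the same two-sided concentration of coarse exponents in a window around $[a',b']\subset[a,b]$, and the same counting estimate $\mathcal{N}_t(n_k)\gg 2^{n_k(sq+\beta_{n_k}(q))}$ followed by taking logarithms and optimizing, plus the identical convexity argument for the subdifferential inclusion and the monotonicity of $(a,b)\mapsto(aq+\beta_\nu(q))/(1+b)$. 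The only soft spot you flag yourself --- reconciling the denominator $1+b'$ in the middle term with the supremum being taken over $t>b$ --- is not resolved any more sharply in the paper either, whose proof in effect establishes the outer bound $(aq+\beta_\nu(q))/(1+b)\le\sup_{t>b}\liminf_k(\cdot)$ (which is all that is used later), so your write-up matches the paper's proof both in method and in the level of care at that point.
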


\begin{proof}
Without of loss of generality we can assume $b<\infty.$ The fact
that $-\partial\beta_{\nu}\left(q\right)\supset-\partial\limsup_{k}\beta_{n_{k}}\left(q\right)$
follows immediately from $\limsup_{k}\beta_{n_{k}}\leq\beta_{\nu}$.
It suffices to prove the second inequality. Now, for all $k\in\N$
and $s<a\leq b<t$, we have
\begin{align*}
\mathcal{N}_{t}\left(n_{k}\right) & \geq\card\underbrace{\left\{ 0\leq\ell<2^{n_{k}}:2^{-sn_{k}}>\nu\left(A_{n_{k}}^{\ell}\right)>2^{-tn_{k}}\right\} }_{\coloneqq L_{n_{k}}^{s,t}}\\
 & \geq\sum_{\ell\in L_{n_{k}}^{s,t}}\nu\left(A_{n_{k}}^{\ell}\right)^{q}2^{sn_{k}q}\geq2^{sn_{k}q+n_{k}\beta_{n_{k}}\left(q\right)}\sum_{1\leq\ell\leq2^{n_{k}}}\1_{L_{n_{k}}^{s,t}}\left(\ell\right)\nu\left(A_{n_{k}}^{\ell}\right)^{q}2^{-n_{k}\beta_{n_{k}}\left(q\right)}\\
 & =2^{sn_{k}q+n_{k}\beta_{n_{k}}\left(q\right)}\left(1-\sum_{1\leq\ell\leq2^{n_{k}}}\1_{\left(L_{n_{k}}^{s,t}\right)^{\complement}}\left(\ell\right)\nu\left(A_{n_{k}}^{\ell}\right)^{q}2^{-n_{k}\beta_{n_{k}}\left(q\right)}\right).
\end{align*}
We use the lower large deviation principle for the process $X_{k}\left(\ell\right)\coloneqq\log^{+}\nu\left(A_{n_{k}}^{\ell}\right)$
with probability measure on $\left\{ 1,\ldots,2^{n_{k}}\right\} $
given by $\mu_{k}\left(\left\{ \ell\right\} \right)\coloneqq\nu\left(A_{n_{k}}^{\ell}\right)^{q}2^{-n_{k}\beta_{n_{k}}\left(q\right)}$.
We find for the free energy function
\begin{align*}
\mathfrak{c}\left(x\right) & \coloneqq\limsup_{n_{k}}\frac{1}{\log2^{n_{k}}}\log\left(\mathbb{E}_{\mu_{k}}\left(\exp xX_{k}\right)\right)=\limsup_{n_{k}}\frac{1}{\log2^{n_{k}}}\log\left(\sum_{C\in\mathcal{D}_{n_{k}}}\nu\left(C\right)^{x+q}/2^{n_{k}\beta_{n_{k}}\left(q\right)}\right)\\
 & =\limsup_{n_{k}}\beta_{n_{k}}(q+x)-\beta_{\nu}\left(q\right),
\end{align*}
with $-\partial\mathfrak{c}\left(0\right)=\left[a',b'\right]\subset[a,b]\subset(s,t)$
and hence, by Lemma \ref{lem:exponential decay-1}, there exists a
constant $r>0$ depending on $s,t$ and $q$ such that for $k$ large,
\[
\sum_{1\leq\ell\leq2^{n_{k}}}\1_{\left(L_{n_{k}}^{s,t}\right)^{\complement}}\left(\ell\right)\nu\left(A_{n_{k}}^{\ell}\right)^{q}/2^{n_{k}\beta_{n_{k}}\left(q\right)}=\mu_{k}\left(\frac{X_{k}}{\log\left(2^{n_{k}}\right)}\notin(-t,-s)\right)\leq2\exp\left(-rn_{k}\right).
\]
Therefore,
\begin{align*}
\liminf_{k\to\infty}\frac{\log\mathcal{N}_{t}\left(n_{k}\right)}{\log2^{n_{k}}} & \geq sq+\beta_{\nu}(q)
\end{align*}
for all $s<a$ and $t>b$ and therefore
\[
\sup_{t>b}\liminf_{k\to\infty}\frac{\log\mathcal{N}_{t}\left(n_{k}\right)}{\log2^{n_{k}}(1+t)}\geq\sup_{t>b}\frac{aq+\beta_{\nu}(q)}{1+t}=\frac{aq+\beta_{\nu}(q)}{1+b}.
\]
\end{proof}
\begin{prop}
\label{prop:FixpointLowerBound}For all $m\in(1,3]$, we have 
\[
\overline{q}_{\nu}=\overline{F}_{\nu}=\overline{h}_{\nu}^{m}=\overline{s}_{\nu}.
\]
\end{prop}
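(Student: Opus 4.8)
The plan is to close the chain of inequalities that is already available. Combining Proposition~\ref{prop:lower_Bound_Multifractal_F_=00005Cnu} with Proposition~\ref{prop:UpperBoundFixpoint}, we have for every $m\in(1,3]$
\[
\overline{F}_{\nu}\leq\overline{h}_{\nu}^{m}\leq\overline{s}_{\nu}\leq\overline{h}_{\nu}\leq\overline{q}_{\nu},
\]
so the proposition reduces to the single reverse inequality $\overline{q}_{\nu}\leq\overline{F}_{\nu}$. If $\overline{q}_{\nu}=0$ this is immediate since $\overline{F}_{\nu}\geq0$, so from now on I assume $\overline{q}_{\nu}>0$. By Fact~\ref{fact:CriticalExponentBETA} this forces $\overline{q}_{\nu}\in(0,1)$ and $\beta_{\nu}(\overline{q}_{\nu})=\overline{q}_{\nu}$, and on the open interval $(0,1)$ the function $\beta_{\nu}$ is finite, convex (hence continuous) and non-increasing; in particular $\beta_{\nu}(q)\geq\beta_{\nu}(\overline{q}_{\nu})=\overline{q}_{\nu}$ for every $q\in(0,\overline{q}_{\nu})$.

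The core step is to apply Proposition~\ref{prop:GeneralBound.} with exponents $q$ approaching $\overline{q}_{\nu}$ from the left. Fix $q\in(0,\overline{q}_{\nu})$. Since $\beta_{\nu}(q)=\limsup_{n}\beta_{n}^{\nu}(q)$, there is a subsequence $(n_{k})$ (depending on $q$) along which $\beta_{n_{k}}^{\nu}(q)\to\beta_{\nu}(q)$; as $q$ lies in the interior of the domain of $\beta_{\nu}$, the set $-\partial\beta_{\nu}(q)=[a(q),b(q)]$ is a bounded interval, and Proposition~\ref{prop:GeneralBound.} yields
\[
\frac{a(q)\,q+\beta_{\nu}(q)}{1+b(q)}\ \leq\ \sup_{t>b(q)}\limsup_{k\to\infty}\frac{\log\mathcal{N}_{t}(n_{k})}{\log 2^{n_{k}}(1+t)}\ \leq\ \sup_{t>0}\limsup_{n\to\infty}\frac{\log^{+}\bigl(\mathcal{N}_{t}(n)\bigr)}{\log 2^{n}(1+t)}\ =\ \overline{F}_{\nu},
\]
where the middle inequality uses only that a subsequential $\limsup$ is dominated by the full $\limsup$ and that $\log\leq\log^{+}$.

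It remains to pass to the limit $q\nearrow\overline{q}_{\nu}$. By continuity of $\beta_{\nu}$ at the interior point $\overline{q}_{\nu}$ we have $\beta_{\nu}(q)\to\overline{q}_{\nu}$. Moreover, by the standard one-sided continuity of the subdifferential of a convex function at an interior point, the interval $-\partial\beta_{\nu}(q)$ shrinks, as $q\nearrow\overline{q}_{\nu}$, to the single point $b^{*}:=\max\bigl(-\partial\beta_{\nu}(\overline{q}_{\nu})\bigr)\in[0,\infty)$; in particular both endpoints satisfy $a(q)\to b^{*}$ and $b(q)\to b^{*}$. Hence the left-hand side of the displayed inequality converges and, using $\beta_{\nu}(\overline{q}_{\nu})=\overline{q}_{\nu}$,
\[
\overline{F}_{\nu}\ \geq\ \lim_{q\nearrow\overline{q}_{\nu}}\frac{a(q)\,q+\beta_{\nu}(q)}{1+b(q)}\ =\ \frac{b^{*}\overline{q}_{\nu}+\overline{q}_{\nu}}{1+b^{*}}\ =\ \overline{q}_{\nu}.
\]
This is the desired inequality $\overline{q}_{\nu}\leq\overline{F}_{\nu}$, so all four quantities coincide, and since $\overline{h}_{\nu}^{m}$ is squeezed between $\overline{F}_{\nu}$ and $\overline{s}_{\nu}$ for every $m\in(1,3]$, the equality holds for all such $m$.

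I expect the delicate point to be precisely this limiting step. For any individual $q<\overline{q}_{\nu}$ — and even for the direct application at $q=\overline{q}_{\nu}$ itself — the estimate produced by Proposition~\ref{prop:GeneralBound.} is strictly smaller than $\overline{q}_{\nu}$ whenever $\beta_{\nu}$ fails to be differentiable at $\overline{q}_{\nu}$ (because then $a(q)<b(q)$ in the limit); the loss disappears only in the limit, and it does so because of the fixed-point identity $\beta_{\nu}(\overline{q}_{\nu})=\overline{q}_{\nu}$ together with the convergence of the \emph{left} endpoint $a(q)$ of $-\partial\beta_{\nu}(q)$, not merely of the right endpoint, to $b^{*}$. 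Everything else (the extraction of the subsequences, the monotonicity bound $\beta_{\nu}(q)\geq\overline{q}_{\nu}$ on $(0,\overline{q}_{\nu})$, and the comparison of the subsequential quantity with $\overline{F}_{\nu}$) is routine, and the trivial case $\overline{q}_{\nu}=0$ needs no more than non-negativity of $\overline{F}_{\nu}$.
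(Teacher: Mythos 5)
Your proof is correct, and it reaches the key inequality $\overline{q}_{\nu}\leq\overline{F}_{\nu}$ by a genuinely different route than the paper. The paper does not apply Proposition~\ref{prop:GeneralBound.} as a black box: it first uses the uniform Lipschitz bound on $(\beta_{n})$ and Arzelà--Ascoli to extract one subsequence $(n_{k})$ along which $\beta_{n_{k}}$ converges uniformly near $\overline{q}_{\nu}$ to a convex limit $\beta^{\sharp}$ with $\beta^{\sharp}(\overline{q}_{\nu})=\overline{q}_{\nu}$, then picks a differentiability point $q$ of $\beta^{\sharp}$ just left of $\overline{q}_{\nu}$ with $-(\beta^{\sharp})'(q)$ close to $\max\bigl(-\partial\beta^{\sharp}(\overline{q}_{\nu})\bigr)$, and reruns the large-deviation argument of Proposition~\ref{prop:GeneralBound.} for $\beta^{\sharp}$. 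You instead observe that the hypothesis of Proposition~\ref{prop:GeneralBound.} is automatically satisfied at every $q\in[0,1]$ (a finite $\limsup$ is always attained along some subsequence), apply it verbatim at points $q\nearrow\overline{q}_{\nu}$ with $q$-dependent subsequences, and close the argument with the standard left-continuity of the one-sided derivatives of the convex function $\beta_{\nu}$ (both endpoints of $-\partial\beta_{\nu}(q)$ tend to $-\beta_{\nu,-}'(\overline{q}_{\nu})$ as $q\nearrow\overline{q}_{\nu}$), together with the fixed-point identity $\beta_{\nu}(\overline{q}_{\nu})=\overline{q}_{\nu}$ and continuity of $\beta_{\nu}$ at the interior point $\overline{q}_{\nu}$; the steps $b(q)\geq0$ (so that $\sup_{t>b(q)}$ is dominated by $\sup_{t>0}$) and the comparison with the full-sequence $\limsup$ are fine. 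What each approach buys: yours stays entirely at the level of the stated proposition plus elementary convex analysis of $\beta_{\nu}$ itself, avoiding the compactness extraction and the re-derivation of the large-deviation estimate; the paper's $\beta^{\sharp}$ device keeps a single subsequence and substitutes density of differentiability points of $\beta^{\sharp}$ for the subdifferential-limit argument, at the cost of having to repeat (rather than cite) the proof of Proposition~\ref{prop:GeneralBound.} for the auxiliary function. Both ultimately rest on the same large-deviation lemma, and your reduction of the remaining equalities to the chain from Propositions~\ref{prop:lower_Bound_Multifractal_F_=00005Cnu} and~\ref{prop:UpperBoundFixpoint}, as well as the trivial case $\overline{q}_{\nu}=0$, matches the paper.
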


\begin{proof}
We only have to consider the case $\overline{q}_{\nu}>0$. First note
that the family of convex function $\left(\beta_{n}\right)$ restricted
to $\left[0,1\right]$ only takes values in $\left[0,1\right]$ and
on any compact interval $\left[c,d\right]\subset\left(0,1\right)$
and for all $c\leq x\leq y\leq d$, we have

\[
\frac{\beta_{n}\left(x\right)-\beta_{n}\left(0\right)}{x-0}\leq\frac{\beta_{n}\left(y\right)-\beta_{n}\left(x\right)}{y-x}\leq\frac{\beta_{n}\left(1\right)-\beta_{n}\left(y\right)}{1-y},
\]
 which implies 
\[
\left|\beta_{n}\left(y\right)-\beta_{n}\left(x\right)\right|\leq\max\left\{ \frac{1}{c},\frac{1}{1-d}\right\} \left|x-y\right|
\]
and hence $\left(\beta_{n}\right)$ is uniformly Lipschitz and by
Arzelà--Ascoli relatively compact. Using this fact we find a subsequence
$\left(n_{k}\right)$ such that $\lim_{k}\beta_{n_{k}}\left(\overline{q}_{\nu}\right)=\limsup\beta_{n}\left(\overline{q}_{\nu}\right)=\overline{q}_{\nu}$
and $\beta_{n_{k}}$ converges uniformly to the convex function $\beta^{\sharp}$
on $[\overline{q}_{\nu}-\delta,\overline{q}_{\nu}+\delta]\subset[0,1]$
for $\delta$ sufficiently small and put $[a,b]=-\partial\beta^{\sharp}\left(\overline{q}_{\nu}\right)$.
Then by the convexity of $\beta^{\sharp}$ we find for every $\delta>\epsilon>0$
an element $q\in\left(\overline{q}_{\nu}-\epsilon,\overline{q}_{\nu}\right)$
such that $\beta^{\sharp}$ is differentiable in $q$ with $-\left(\beta^{\sharp}\right)'\left(q\right)\in[b,b+\epsilon]$
since the points where $\beta^{\sharp}$ is differentiable on $(0,1)$
lie dense in $(0,1)$ which follows from the fact that $\beta^{\sharp}$
is a decreasing function and left-hand derivatives of convex functions
are left-continuous and non-decreasing. In the same way as in the
proof Proposition \ref{prop:GeneralBound.}, we obtain,
\begin{align*}
\sup_{t>b}\limsup_{n\to\infty}\frac{\log\mathcal{N}_{t}\left(n\right)}{\log2^{n}(1+t)} & \geq\sup_{t>b}\limsup_{k\to\infty}\frac{\log\mathcal{N}_{t}\left(n_{k}\right)}{\log2^{n_{k}}(1+t)}\\
 & \geq\frac{(-\left(\beta^{\sharp}\right)'\left(q\right)q+\beta^{\sharp}\left(q\right)}{1-\left(\beta^{\sharp}\right)'\left(q\right)}\\
 & \geq\frac{b(\overline{q}_{\nu}-\varepsilon)+\beta^{\sharp}\left(\overline{q}_{\nu}\right)}{1+b+\varepsilon}\\
 & =\frac{b(\overline{q}_{\nu}-\varepsilon)+\overline{q}_{\nu}}{1+b+\varepsilon}.
\end{align*}
Taking $\epsilon\to0$ 
\[
\overline{F}_{\nu}=\sup_{\alpha\geq0}\frac{\overline{F}_{\nu}\left(\alpha\right)}{1+\alpha}\geq\overline{q}_{\nu}.
\]
The second part of Proposition \ref{prop:lower_Bound_Multifractal_F_=00005Cnu}
and Proposition \ref{prop:UpperBoundFixpoint} give the claimed equalities.
\end{proof}

\subsection{Proof of main results}

Now, we are able to prove the main results by combining the results
of Section \ref{subsec:Lower-bounds-in} and Section \ref{subsec:Upper-bounds-in}.
Recall from the introduction that we call $\nu$ \emph{regular} if
$\underline{F}_{\nu}=\overline{F}_{\nu}$ and $L^{q}$\emph{-regular},
if $\beta_{\nu}\left(q\right)=\liminf_{n}\beta_{n}^{\nu}\left(q\right)$
for $q\in(\overline{q}_{\nu}-\varepsilon,\overline{q}_{\nu})$, for
some $\varepsilon>0$, or $\beta_{\nu}\left(\overline{q}_{\nu}\right)=\liminf_{n}\beta_{n}^{\nu}\left(\overline{q}_{\nu}\right)$
and $\beta_{\nu}$ is differentiable in $\overline{q}_{\nu}$.
\begin{proof}
[Proof of Theorem \ref{thm:LqRegularImpliesRegular}]Since by Proposition
\ref{prop:FixpointLowerBound} we have $\overline{F}_{\nu}=\overline{q}_{\nu}$,
it suffices to show that $L^{q}$-regularity implies
\[
\underline{F}_{\nu}\geq\overline{q}_{\nu}.
\]
Indeed, let us assume $[a,b]=-\partial\beta_{\nu}\left(\overline{q}_{\nu}\right)$.
Then by the convexity of $\beta_{\nu}$ we find for every $\epsilon\in\left(0,\overline{q}_{\nu}\right)$
an element $q\in\left(\overline{q}_{\nu}-\epsilon,\overline{q}_{\nu}\right)$
such that $\beta_{\nu}$ is differentiable in $q$ with $-\beta_{\nu}'\left(q\right)\in[b,b+\epsilon]$
since the points where $\beta_{\nu}$ is differentiable on $(0,1)$
lie dense in $(0,1)$ which follows from the fact that $\beta_{\nu}$
is a decreasing function and the left-hand continuity of left-hand
derivative of the convex function $\beta_{\nu}$. Then by Proposition
\ref{prop:GeneralBound.}, we have
\begin{align*}
\sup_{t\geq0}\liminf_{n\to\infty}\frac{\log^{+}\left(\mathcal{N}_{t}\left(n\right)\right)}{\log2^{n}(1+t)} & \geq\sup_{t>-\beta'\left(q\right)}\liminf_{n\to\infty}\frac{\log^{+}\left(\mathcal{N}_{t}\left(n\right)\right)}{\log2^{n}(1+t)}\\
 & \geq\frac{-\beta_{\nu}'\left(q\right)q+\beta_{\nu}\left(q\right)}{1-\beta_{\nu}'\left(q\right)}\geq\frac{b(\overline{q}_{\nu}-\varepsilon)+\overline{q}_{\nu}}{1+b+\varepsilon}.
\end{align*}
Taking the limit $\epsilon\to0$ 

\[
\sup_{\alpha\geq0}\liminf_{n\to\infty}\frac{\log^{+}\left(\mathcal{N}_{\alpha}\left(n\right)\right)}{\log2^{n}(1+\alpha)}\geq\frac{b\overline{q}_{\nu}+\overline{q}_{\nu}}{1+b}=\overline{q}_{\nu}.
\]

The case $\beta_{\nu}$ exists as limit in $\overline{q}_{\nu}$ and
is differentiable in $\overline{q}_{\nu}$ is covered by Proposition
\ref{prop:GeneralBound.}.
\end{proof}
\begin{proof}
[Proof of Theorem \ref{Thm:MainChain_of_Inequalities+Regularity}]The
first claim follows from Propositions \ref{prop:UpperBoundFixpoint},
\ref{prop:lower_Bound_Multifractal_F_=00005Cnu}, \ref{prop:estimateOfSpectralDim_N^L_m_N^R}
and \ref{prop:FixpointLowerBound}.
\end{proof}
\begin{proof}
[Proof of Corollary \ref{cor:Sum_of_Measures}] The first part follows
immediately from Theorem \ref{Thm:MainChain_of_Inequalities+Regularity}
in tandem with Fact \ref{fact:MGF_for_sum}. For the second part note
that by Lemma \ref{lem general} 
\[
s_{\nu_{1}}\leq\max\left\{ \underline{s}_{\nu_{1}},\underline{s}_{\nu_{1}}\right\} \leq\underline{s}_{\nu_{1}+\nu_{2}}\leq\overline{s}_{\nu_{1}+\nu_{2}}=\overline{F}_{\nu_{1}+\nu_{2}}=\max\left\{ \overline{s}_{\nu_{1}},\overline{s}_{\nu_{2}}\right\} =s_{\nu_{1}}
\]
and using monotonicity properties\textbf{ }of the coarse optimized
multifractal dimension
\[
\overline{F}_{\nu_{1}}=\max\left\{ \underline{F}_{\nu_{1}},\underline{F}_{\nu_{2}}\right\} \leq\underline{F}_{\nu_{1}+\nu_{2}}\leq\overline{F}_{\nu_{1}+\nu_{2}}=s_{\nu_{1}}=\overline{F}_{\nu_{1}}.
\]
\end{proof}
\begin{proof}
[Proof of  Corollary \ref{cor:spec_absolutely_cont}] First, we
show that for an absolutely continuous measure $\nu$ the $L^{q}$-spectrum
always exists as limit on $[0,1]$. By Hölder's inequality for all
$C\in\mathcal{D}_{\nu,n}$ and $q\in\left(0,1\right)$, we have
\begin{align*}
2^{n}\int_{C}\rho^{q}\d\Lambda & \leq\left(\int_{C}2^{n/(1-q)}\d\Lambda\right)^{1-q}\left(\int_{C}\rho\d\Lambda\right)^{q}=\left(2^{n\frac{q}{1-q}}\right)^{1-q}\left(\int_{C}\rho\d\Lambda\right)^{q}=2^{nq}\left(\int_{C}\rho\d\Lambda\right)^{q}.
\end{align*}
This leads to
\[
\beta_{n}^{\nu}(q)\geq\frac{\log\left(\int_{[0,1]}\rho^{q}\d\Lambda\right)}{\log(2^{n})}+1-q,
\]
implying $1-q\leq\liminf_{n}\beta_{n}^{\nu}(q)\leq\limsup_{n}\beta_{n}^{\nu}(q)\leq1-q$
for all $q\in[0,1].$ Now, the remaining claims follow from Fact \ref{fact:MGF_for_sum}
and Corollary \ref{cor:q-1 implies s0=00003D1/2}.
\end{proof}
\begin{proof}
[Proof of  Proposition \ref{prop:loverbound_by_diff_in_1}]By Proposition
\ref{prop:GeneralBound.}, 
\[
\frac{aq+\beta_{\nu}(q)}{1+b}\leq\sup_{t>b}\liminf_{n\to\infty}\frac{\log\mathcal{N}_{t}\left(n\right)}{\log2^{n}(1+t)}\leq\underline{F}_{\nu}\leq\underline{s}_{\nu}.
\]
\end{proof}
\begin{proof}
[Proof of Corollary \ref{cor:spectralDimGeneralUpperBound}] For
the case $\delta_{\nu}^{*}=$0 we combine Fact \ref{fact:SmallSupport}
and the upper estimate from Proposition \ref{prop:UpperBoundFixpoint}.
If $\delta_{\nu}^{*}>0$, this is a direct consequence of Proposition
\ref{prop:UpperBoundFixpoint} and the general properties of $\beta_{\nu}$
on $\left[0,1\right]$ as a convex non-increasing function not exceeding
the line $t\mapsto\delta_{\nu}^{*}\left(1-t\right)$ which intersects
the bisector in $\delta_{\nu}^{*}/\left(1+\delta_{\nu}^{*}\right)$.
The last claim follows from the fact that $\beta_{\nu}$ is linear
on $\left[0,1\right]$ if and only if $\overline{\delta}_{\nu}=\delta_{\nu}^{*}$
and in this case $\overline{q}_{\nu}=\delta_{\nu}^{*}/\left(1+\delta_{\nu}^{*}\right)$.
The lower bound follows from Proposition \ref{prop:loverbound_by_diff_in_1}
using the fact that $\beta_{\nu}$ always exists as a limit in $1$.
\end{proof}
\begin{rem}
The case $\delta_{\nu}^{*}=0$ immediately gives $\overline{s}_{\nu}=0$.
If we use more information on $\beta_{\text{\ensuremath{\nu}}}$ for
the case $\delta_{\nu}^{*}>0$ we find a better upper bound; namely,
with $q_{1}\coloneqq\inf\left\{ s:\beta_{\nu}\left(s\right)\leq0\right\} $
we have
\[
\overline{s}_{\nu}\leq\overline{q}_{\nu}\leq\frac{q_{1}\delta_{\nu}^{*}}{q_{1}+\delta_{\nu}^{*}}.
\]
\end{rem}

\begin{proof}
[Proof of  Corollary \ref{cor:Strongly regular max dim}] By Theorem
\ref{Thm:MainChain_of_Inequalities+Regularity} we have $\overline{q}_{\nu}=\overline{s}_{\nu}$
and if $\nu$ is $L^{q}$-regular, the claim then follows from Corollary
\ref{cor:spectralDimGeneralUpperBound}. In \citep[Theorem 3.1]{MR2334791}
it has been shown that $\overline{\delta}_{\nu}\geq\dim_{P}\left(E\right)\geq\dim_{H}\left(\nu\right)\geq\underline{\dim}_{H}\left(\nu\right)\geq\underline{\delta}_{\nu}$
and this gives the second claim as a consequence of the first equivalence.

The last claim follows from the convexity of $\beta_{\nu}$, $\beta_{\nu}(0)\leq1$
and $\beta_{\nu}(1)=0$ in tandem with $\beta_{\nu}\left(\delta_{\nu}^{*}/\left(1+\delta_{\nu}^{*}\right)\right)=\delta_{\nu}^{*}/\left(1+\delta_{\nu}^{*}\right)$
which implies $\beta_{\nu}(q)=\delta_{\nu}^{*}(1-q)$ for $q\in[0,1]$.
Now, \citep[Theorem 3.1]{MR2334791} implies $\delta_{\nu}^{*}=\dim_{H}(\nu)=\dim_{P}\left(\nu\right).$
\end{proof}
\begin{proof}
[Proof of  Corollary \ref{cor:q-1 implies s0=00003D1/2}] If $\overline{s}_{\nu}=1/2$
then it follows from Proposition \ref{prop:UpperBoundFixpoint} that
$1/2=\overline{s}_{\nu}\leq\overline{q}_{\nu}\leq1/2.$ The convexity
of $\beta_{\nu}$ and the fact that $\beta_{\nu}\left(1\right)=0$
and $\beta_{\nu}\left(0\right)\leq1$ forces $\beta_{\nu}(q)=1-q$
for all $q\in[0,1]$. The second statement is an immediate consequence
of Proposition \ref{prop:GeneralBound.} by observing that, as in
case (1), by convexity we have $\beta_{\nu}(q)=1-q$ for all $q\in[0,1]$.
This implies the differentiability of $\beta_{\nu}$ in the particular
point $q\in\left(0,1\right)$, where by assumption $\beta_{\nu}\left(q\right)=\lim\beta_{n}^{\nu}\left(q\right)$.
Now, applying Proposition \ref{prop:GeneralBound.} gives
\[
\frac{1}{2}=\frac{q+1-q}{1+1}\leq\underline{s}_{\nu}.
\]
Since we always have $\overline{s}_{\nu}\leq1/2$, our claim follows.
\end{proof}

\section{Examples\label{sec:Examples}}

\subsection{$C^{1}$-self-conformal measures\label{sec:Applicaiton-to-self-conformal}}

For simplicity, let us start with the self-similar case.
\begin{example}
Here, we consider for fixed $m\in\N$ self-similar probability measure
$\nu$ with probability vector $(p_{1},\dots,p_{m})\in(0,1)^{m}$
and contractions $T_{i}(x)=\sigma_{i}x+b_{i}$ for $i=1,\dots,m$
with $\sigma_{i}\in(0,1)$ which maps from $[0,1]$ to $[0,1]$ and
fulfills the open set condition (i.~e\@. $T_{i}\left(\left(0,1\right)\right)\cap T_{j}\left(\left(0,1\right)\right)=\emptyset$
for all $j\neq i)$. That is the unique Borel measure which satisfy
\[
\nu(A)=\sum_{i=1}^{m}p_{i}\nu\left(T_{i}^{-1}(A)\right),A\mathfrak{\in B}([0,1]).
\]
 Using \citep[Theorem 15]{MR1312056} we obtain for $q\geq0$ that
$\beta_{\nu}(q)$ is given by 
\[
\sum_{i=1}^{m}p_{i}^{q}\sigma_{i}^{\beta_{\nu}(q)}=1.
\]
Thus, we can conclude that $\overline{q}_{\nu}$ is determined by
the equality $\sum_{i=1}^{m}\left(p_{i}\sigma_{i}\right)^{\overline{q}_{\nu}}=1$.
Moreover, let us consider the natural choice of weights given by $p_{i}:=\sigma_{i}^{u}$,
where $u$ is the zero of the associated pressure function, i.~e\@.
$\sum_{i=1}^{m}\sigma_{i}^{u}=1$. Then for $q\geq0$, we have $\beta_{\nu}\left(q\right)=u-uq$
and consequently $u=\delta_{\nu}=\delta_{\nu}^{*}$. Hence, Corollary
\ref{cor:Strongly regular max dim} gives
\[
s_{\nu}=\overline{q}_{\nu}=\underline{q}_{\nu}=\frac{\delta_{\nu}^{*}}{1+\delta_{\nu}^{*}}.
\]
Now, we consider $C^{1}$-self-conformal measures as in \citep{MR2322179}
which do not have to satisfy any separation conditions. Let $U\subset\R$
be a open set and $S:U\rightarrow\R$ a $C^{1}$-map with $\left|S'(x)\right|>0$
for all $x\in U$. Moreover, assume that $S$ is a contracting, i.~e\@.
there exists $0<c<1$ such for all $x,y\in U$ we have $\left|S(x)-S(y)\right|\leq c|x-y|$.
We call $\left\{ S_{i}:[0,1]\rightarrow[0,1]\right\} _{i=1,\dots\ell}$
a $C^{1}$-conformal iterated function system on $[0,1]$, if each
$S_{i}$ extends to an injective contracting $C^{1}$-map $S_{i}:U\rightarrow U$
on an open set $U\supset\left[0,1\right].$
\end{example}

Given a $C^{1}$-conformal iterated function system $\left\{ S_{i}:\left[0,1\right]\rightarrow\left[0,1\right]\right\} _{i=1,\dots,\ell}$
then there exists a non-empty compact set $K\subset[0,1]$ such that
\[
K=\bigcup_{i=1}^{\ell}S_{i}(K).
\]
For a probability vector $(p_{1},\dots,p_{\ell})$ there is a unique
Borel probability measure $\nu$ satisfying
\[
\nu=\sum_{i=1}^{\ell}p_{i}\nu\circ S_{i}^{-1}.
\]
The measure $\nu$ with support $\supp\nu=K$ is called \emph{self-conformal
measure}. Furthermore, we assume that $S_{i}$ do not have all the
same fixed point, then $\nu$ is atomless (\citep{MR1838304}). This
can be seen as follows. Fix $x\in\left[0,1\right]$ such that $\nu\left(\left\{ x\right\} \right)=\max\left(\nu\left(\left\{ y\right\} \right):y\in\left[0,1\right]\right)\eqqcolon m$.
Since with $I\coloneqq\left\{ 1,\ldots,\ell\right\} $
\[
\nu\left(\left\{ x\right\} \right)=\sum_{\omega\in I^{n}}p_{\omega}\nu\left(T_{\omega}^{-1}\left\{ x\right\} \right)
\]
we find $\text{\ensuremath{\nu}}\left(T_{\omega}^{-1}\left\{ x\right\} \right)=m$
for every $\omega\in I^{n}$ and $n\in\N$. Hence, if $m>0$, we have
$x\in T_{\omega}\left(\left[0,1\right]\right)$ for all $\omega\in I^{n}$
and $n\in\N$ and $x$ is the common fixed point for all contractions.
In \citep[Corollary 4.5.]{MR2322179}, building on the earlier work
\citep{MR1838304}, it has been proved that the $L^{q}$-spectrum
$\beta_{\nu}$ of the self-conformal measure $\nu$ exists as a limit.
\begin{prop}[{\citep[Corollary 4.5.]{MR2322179},\citep{MR1838304}}]
 For every $C^{1}$-self-conformal measure $\nu$ we have that the
$L^{q}$-spectrum $\beta_{\nu}$ exists as a limit on $\R_{>0}$.
\end{prop}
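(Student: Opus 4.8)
The statement is \citep[Corollary 4.5]{MR2322179} (resting on \citep{MR1838304}), and the plan is to reproduce its argument. The key point is that the self-conformal functional equation $\nu=\sum_{i=1}^{\ell}p_i\,\nu\circ S_i^{-1}$, once iterated to all generations and combined with the bounded distortion property of the $C^{1}$-conformal system, turns the partition sums $b_n(q):=\sum_{C\in\mathcal D_{\nu,n}}\nu(C)^q$ into a \emph{finite linear renewal recursion}, whose solution necessarily grows at a genuine exponential rate.

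First I would fix the geometry. For a word $\omega$ of length $N$ set $S_\omega:=S_{\omega_1}\circ\cdots\circ S_{\omega_N}$ and $p_\omega:=p_{\omega_1}\cdots p_{\omega_N}$; bounded distortion gives $D\ge1$ with $\sup|S_\omega'|\le D\inf|S_\omega'|$ for every $\omega$, so $\sigma_\omega:=\sup|S_\omega'|$ satisfies $\operatorname{diam}S_\omega([0,1])\asymp\sigma_\omega$ and $\sigma_\omega\le c^N$ with $c:=\max_i\sup|S_i'|<1$. Iterating the functional equation yields $\nu=\sum_{|\omega|=N}p_\omega\,\nu\circ S_\omega^{-1}$. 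For a dyadic cell $A_k^n$ the sets $S_\omega^{-1}(A_k^n)$ (those meeting $[0,1]$) partition $[0,1]$ into intervals of length $\asymp 2^{-n}\sigma_\omega^{-1}$, hence --- up to a multiplicity $O(1)$ --- form a partition of the same resolution as the dyadic generation $m_\omega:=n+\lfloor\log_2\sigma_\omega\rfloor$; since $q>0$, comparing the two partitions gives $\sum_k\nu(S_\omega^{-1}(A_k^n))^q\asymp b_{m_\omega}(q)$, uniformly in $\omega$, where the lag $\ell_\omega:=n-m_\omega=\lceil-\log_2\sigma_\omega\rceil$ is a fixed positive integer independent of $n$.

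Next, because only the $\ell^N$ words of length $N$ occur, the elementary power-mean bounds $c_q\sum_j a_j^q\le(\sum_j a_j)^q\le C_q\sum_j a_j^q$ --- valid for all $q>0$, with $c_q,C_q$ depending only on $q$ and $\ell^N$ --- applied to $\nu(A_k^n)=\sum_{|\omega|=N}p_\omega\,\nu(S_\omega^{-1}(A_k^n))$ and summed over $k$ yield the two-sided recursion
\[
 b_n(q)\ \asymp\ \sum_{|\omega|=N}p_\omega^{\,q}\,b_{n-\ell_\omega}(q),\qquad n\ \text{large}.
\]
Since the lags $\ell_\omega$ are fixed positive integers, a standard Fekete-type / renewal argument then shows that $\frac{1}{n}\log b_n(q)$ converges (the limit being characterized by a Perron-type equation, though only existence is needed); equivalently $\beta_\nu(q)=\lim_n\beta_n^{\nu}(q)$ exists for every $q>0$. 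It is worth noting that overlaps never appear explicitly: they are already absorbed into the measure equation for $\nu$, and the only combinatorial quantity entering the estimates, $\ell^N$, is finite.

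The one step that will need genuine care --- and the only place the $C^{1}$-conformality is used --- is the uniform comparison in the second paragraph between the partition $\{S_\omega^{-1}(A_k^n)\}_k$ and a genuine dyadic partition; this rests precisely on bounded distortion (so that all these preimage intervals have comparable length $\asymp 2^{-n}\sigma_\omega^{-1}$) together with the finiteness of $\ell^N$ (so that all multiplicative constants stay uniform in $n$). As these points are carried out in \citep{MR2322179,MR1838304}, the proposition may simply be invoked.
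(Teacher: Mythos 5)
The paper itself offers no proof of this proposition: it is imported verbatim from \citep[Corollary 4.5]{MR2322179} (building on \citep{MR1838304}), so your closing move of simply invoking those references is exactly what the paper does, and is legitimate. The problem lies in the sketch you present as the argument behind the citation: its decisive step has a genuine gap. From a decomposition over words of a \emph{fixed} length $N$ you obtain, with constants depending on $N$ (through $\ell^{N}$ and the distortion of the finitely many maps $S_{\omega}$, $|\omega|=N$), a two-sided approximate recursion $c\sum_{|\omega|=N}p_{\omega}^{q}b_{n-\ell_{\omega}}(q)\le b_{n}(q)\le C\sum_{|\omega|=N}p_{\omega}^{q}b_{n-\ell_{\omega}}(q)$ with bounded lags. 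Such a recursion does \emph{not} force $\tfrac{1}{n}\log b_{n}(q)$ to converge: already in depth one, $cw\,b_{n-1}\le b_{n}\le Cw\,b_{n-1}$ allows $\tfrac1n\log b_{n}$ to oscillate between rates close to $\log(cw)$ and $\log(Cw)$; in general one only traps the $\liminf$ and $\limsup$ between the roots of the two perturbed characteristic equations attached to $c$ and to $C$, and these differ. Iterating the recursion does not repair this, since after $k\asymp n$ iterations the constants compound to $C^{k}$ and therefore pollute the exponential rate. So the phrase ``a standard Fekete-type / renewal argument then shows convergence'' is precisely the point that fails; $b_{n}(q)$ is not a solution of the exact linear recursion, only squeezed by it up to fixed multiplicative slack.

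What the cited proofs actually establish is genuine approximate sub-/super-multiplicativity in \emph{two free indices}, namely inequalities of the form $b_{j+m}(q)\le C\,b_{j}(q)\,b_{m}(q)$ (direction depending on whether $q\ge1$ or $0<q<1$) with $C$ independent of both $j$ and $m$; Fekete's lemma then gives the limit. To achieve this one decomposes $\nu=\sum_{\omega\in\Lambda_{j}}p_{\omega}\,\nu\circ S_{\omega}^{-1}$ over the \emph{stopping set} $\Lambda_{j}$ of words whose contraction ratio is comparable to $2^{-j}$, so the splitting scale grows with $n$ rather than being a fixed generation $N$. In that regime your power-mean bound with constant $(\#\text{terms})^{q-1}$ is unusable, because $\card\Lambda_{j}$ grows exponentially in $j$; instead one applies Jensen with the probability weights $p_{\omega}$ and bounds $\sum p_{\omega}$ over the words whose images meet a given dyadic interval by the $\nu$-measure of a slightly enlarged interval, which is what reconstitutes the factor $b_{j}(q)$ with uniform constants (and is also where distortion control beyond finitely many words, i.e.\ the genuinely $C^{1}$-conformal issue, enters). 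Since your sketch replaces this mechanism by a fixed-generation linear recursion with $N$-dependent constants, it would not stand on its own; the proposition is safe only because, like the paper, you may fall back on the references.
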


\begin{proof}
[Proof of  Theorem \ref{thm:self-conforma-casel}] Combining Theorems
\ref{thm:LqRegularImpliesRegular} and \ref{Thm:MainChain_of_Inequalities+Regularity}
readily proves Theorem \ref{thm:self-conforma-casel}.
\end{proof}
Note, that every measure on $[0,1]$ which is absolutely continuous
with respect to the Lebesgue measure has spectral dimension $1/2$.
Hence, we obtain the following corollary.
\begin{cor}
If the fixed point $\overline{q}_{\nu}$ of $\beta_{\nu}$ is smaller
than $1/2$, then $\nu$ is singular with respect to the Lebesgue
measure.
\end{cor}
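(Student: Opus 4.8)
The plan is to argue by contraposition: I will show that if $\nu$ is \emph{not} singular with respect to $\Lambda$, then necessarily $\overline{q}_{\nu}=1/2$. So assume $\nu$ has a non-trivial absolutely continuous part with respect to $\Lambda$. Then Corollary \ref{cor:spec_absolutely_cont} applies directly and yields that the $L^{q}$-spectrum exists as a limit on $[0,1]$ with $\beta_{\nu}(q)=1-q$ for every $q\in[0,1]$.

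Next I would identify $\overline{q}_{\nu}$ with the fixed point of $\beta_{\nu}$. Since $\beta_{\nu}(q)=1-q$ on $[0,1]$, we have $\lim_{t\searrow 0}\beta_{\nu}(t)=1>0$, so by Fact \ref{fact:CriticalExponentBETA} the number $\overline{q}_{\nu}$ is precisely the unique fixed point of $\beta_{\nu}$ in $(0,1)$. Solving $\overline{q}_{\nu}=\beta_{\nu}(\overline{q}_{\nu})=1-\overline{q}_{\nu}$ gives $\overline{q}_{\nu}=1/2$. Contrapositively, $\overline{q}_{\nu}<1/2$ forces $\nu$ to be singular with respect to $\Lambda$, which is the claim.

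I would also point out the equivalent spectral route, which fits the context of this section: by Theorem \ref{thm:self-conforma-casel} a $C^{1}$-self-conformal measure satisfies $s_{\nu}=\overline{q}_{\nu}$, whereas the presence of an absolutely continuous part forces $s_{\nu}=1/2$ (Corollary \ref{cor:spec_absolutely_cont}, cf.\ also the remark preceding that corollary), and the two descriptions therefore coincide. I do not expect any genuine obstacle here: the only step requiring a line of care is the passage from ``$\nu$ has an absolutely continuous part'' to ``$\overline{q}_{\nu}$ is the fixed point of the affine function $q\mapsto 1-q$'', which is exactly what Fact \ref{fact:CriticalExponentBETA} supplies once Corollary \ref{cor:spec_absolutely_cont} has pinned down $\beta_{\nu}$ on $[0,1]$.
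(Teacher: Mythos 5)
Your proposal is correct. The paper does not write out a proof; its one-line justification preceding the corollary is the spectral route: a measure with a (non-trivial) absolutely continuous part has spectral dimension $1/2$ by Corollary \ref{cor:spec_absolutely_cont}, and since $s_{\nu}=\overline{q}_{\nu}$ for the self-conformal measures of that section (indeed $\overline{s}_{\nu}=\overline{q}_{\nu}$ for any finite Borel measure by Theorem \ref{Thm:MainChain_of_Inequalities+Regularity}), the contrapositive gives singularity. Your primary argument stops one level earlier: you use only the first assertion of Corollary \ref{cor:spec_absolutely_cont}, namely $\beta_{\nu}(q)=1-q$ on $[0,1]$ whenever an absolutely continuous part is present, and then identify $\overline{q}_{\nu}$ as the unique fixed point of $\beta_{\nu}$ via Fact \ref{fact:CriticalExponentBETA}, so $\overline{q}_{\nu}=1/2$. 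This is a purely measure-geometric derivation that never invokes the eigenvalue counting function or Theorem \ref{thm:self-conforma-casel}, and it makes explicit that the corollary holds for arbitrary finite Borel measures on $(0,1)$, not only for self-conformal ones; the paper's route buys nothing extra here beyond consistency with the spectral picture, which you also record as the alternative argument. No gaps.
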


\begin{rem}
Note that Example \ref{exmpl:RegularBehaviour_full_dimension=000023}
shows that there exist singular measures with spectral dimension $1/2.$
Hence, in general we can not expect the equivalence of $\overline{q}_{\nu}<1/2$
and $\nu\perp\Lambda$.
\end{rem}

\subsection{Homogeneous Cantor measures \label{NonExistenceSpectralDim}}

Let us recall the construction of general homogeneous Cantor measures
as in \citep{Arzt_diss,MR4048458,MR1473565} allowing us to construct
examples for which the spectral dimension does not exist. Let $J$
be finite or countably infinite subset of $\N$. For every $j\in J$
we define an iterated function system $\mathcal{S}^{(j)}.$ For $i=1,2$
let $S_{i}^{(j)}:[a,b]\rightarrow[a,b]$ be defined by 
\[
S_{i}^{(j)}(x)=r_{i}^{(j)}x+c_{i}^{(j)}
\]
with $r_{i}^{(j)}\in(0,1)$ and $c_{i}^{(j)}\in\R$ are chosen such
that 
\[
a=S_{1}^{(j)}(a)<S_{1}^{(j)}(b)\leq S_{2}^{(j)}(a)<S_{2}^{(j)}(b)=b.
\]
This ensures the open set condition. We define $\mathcal{S}^{(j)}=\left(S_{1}^{(j)},S_{2}^{(j)}\right).$
Moreover let us define an $\textit{environment sequence}$ $\xi\coloneqq\left(\text{\ensuremath{\xi}}_{i}\right)_{i\in\N}\in J^{\N}$.
Each $\xi_{i}$ represent an iterated function system $\mathcal{S}^{(\xi_{i})}$.
To give a suitable coding we define a word space $W_{n}\coloneqq\left\{ 1,2\right\} ^{n}$
of words with length $n$. For $n\in\N$ and $\omega\coloneqq(\omega_{1},\dots,\omega_{n})\in W_{n}$
we set 
\[
S_{\omega}^{(\xi)}\coloneqq S_{\omega_{1}}^{(\xi_{1})}\circ S_{\omega_{2}}^{(\xi_{2})}\circ\dots\circ S_{\omega_{n}}^{(\xi_{n})}.
\]
Now, for any environment sequence $\xi$, we construct a probability
measure $\nu^{(\xi)}$ on $[a,b]$ with support $K^{(\xi)}$ defined
by 
\[
K^{(\xi)}\coloneqq\bigcap_{n=1}^{\infty}\bigcup_{\omega\in W_{n}}\left(S_{\omega_{1}}^{(\xi_{1})}\circ S_{\omega_{2}}^{(\xi_{2})}\circ\dots\circ S_{\omega_{n}}^{(\xi_{n})}\right)\left(\left[a,b\right]\right).
\]
For any $j\in J$, let $\left(p_{i}^{j}\right)_{i=1,2}\in(0,1)^{2}$
with $\sum_{i=1}^{2}p_{i}^{j}=1.$ Moreover, let $p_{\omega}^{(\xi)}\coloneqq\prod_{i=1}^{n}p_{\omega_{i}}^{(\xi_{i})}$
be for $n\in\N$ and $\omega\coloneqq(\omega_{1},\dots,\omega_{n})\in W_{n}$.
Then define the following sequence of probability measures: $\nu_{0}\coloneqq\frac{1}{b-a}\Lambda|_{[a,b]}$
and for $n\in\N$ and $A\in\mathfrak{B}\left(\left[a,b\right]\right)$
\[
\nu_{n}(A)\coloneqq\sum_{\omega\in W_{n}}p_{\omega}^{(\xi)}\nu_{0}\left(\left(S_{\omega}^{(\xi)}\right)^{-1}(A)\right)
\]
then $\nu^{(\xi)}$ is defined by
\[
\nu^{(\xi)}(A)=\lim_{n\rightarrow\infty}\nu_{n}(A).
\]
The theorem of Vitali-Hahn-Saks ensures that $\nu^{(\xi)}$ is a probability
measure on $\mathfrak{B}\left(\left[a,b\right]\right)$ (see \citep[Lemma 3.1.2.]{Arzt_diss})
and for every $n\in\N$ and $\omega\in W_{n}$, we have $\nu^{(\xi)}\left(S_{\omega}^{(\xi)}([a,b])\right)=p_{\omega}^{(\xi)}.$
\begin{example}[Homogeneous Cantor measure with non-converging $L^{q}$-spectrum]
 \label{exa:Non_ex1}Now, let us consider the following environment
\[
\xi_{i}\coloneqq\begin{cases}
1 & ,\exists\ell\in\N_{0}:2^{2\ell}<i\leq2^{2\ell+1}\\
2 & ,\exists\ell\in\N_{0}:2^{2\ell+1}<i\leq2^{2\ell+2}
\end{cases}
\]
and $S_{1}^{(1)}(x)\coloneqq\frac{x}{4},S_{2}^{(1)}(x)\coloneqq\frac{x}{4}+\frac{3}{4}$,
$S_{1}^{(2)}(x)\coloneqq\frac{x}{16},S_{2}^{(2)}(x)\coloneqq\frac{x}{16}+\frac{15}{16}$,
$x\in[0,1]$. Furthermore, let $p_{1},p_{2}\in(0,1)$ be with $p_{1}+p_{2}=1$
then define for every $j\in\left\{ 1,2\right\} $ $p_{1}^{j}=p_{1}$
and $p_{2}^{j}=p_{2}$. First, observe that for $n\in\N$ and $\omega\in W_{n}$,
\[
\Lambda\left(S_{\omega}^{(\xi)}\left(\left[0,1\right]\right)\right)=\begin{cases}
2^{-2^{2l+1}\cdot8/3+10/3-4\left(n-2^{2l+1}\right)} & ,2^{2l+1}<n\leq2^{2l+2}\\
2^{-10/3\cdot2^{2l+2}+10/3-2\left(n-2^{2l+2}\right)} & ,2^{2l+2}<n\leq2^{2l+3}
\end{cases}.
\]
Hence, we obtain for all $n\in\N$ with $2^{2l+1}<n\leq2^{2l+2}$,
$q\geq0$ and $\omega\in W_{n}$ 
\begin{align*}
 & \!\!\!\!\!\!\!\!\!\!\!\!\!\!\frac{1}{-\log\left(\Lambda\left(S_{\omega}^{(\xi)}\left(\left[0,1\right]\right)\right)\right)}\log\left(\sum_{l=0,\nu\left(A_{l}^{n}\right)>0}^{2^{-\log_{2}\left(\Lambda\left(S_{\omega}^{(\xi)}\left(\left[a,b\right]\right)\right)\right)}-1}\nu^{(\xi)}\left(A_{l}^{n}\right)^{q}\right)\\
 & =\frac{1}{-\log\left(\Lambda\left(S_{\omega}^{(\xi)}\left(\left[0,1\right]\right)\right)\right)}\log\left(\sum_{l\in W_{n}}\nu^{(\xi)}\left(S_{l}^{(\xi)}\left(\left[a,b\right]\right)\right)^{q}\right)\\
 & =\frac{\log_{2}\left(p_{1}^{q}+p_{2}^{q}\right)}{-4/3\cdot2^{2l+1}/n+4-10/(3n)}
\end{align*}
and, for $2^{2l+2}<n\leq2^{2l+3}$,
\begin{align*}
\frac{1}{-\log\left(\Lambda\left(S_{\omega}^{(\xi)}\left(\left[0,1\right]\right)\right)\right)}\log\left(\sum_{l\in W_{n}}\nu^{(\xi)}\left(S_{l}^{(\xi)}\left(\left[a,b\right]\right)\right)^{q}\right) & =\frac{\log\left(p_{1}^{q}+p_{2}^{q}\right)}{-\log\left(\Lambda\left(S_{\omega}^{(\xi)}\left(\left[0,1\right]\right)\right)\right)}\\
 & =\frac{\log_{2}\left(p_{1}^{q}+p_{2}^{q}\right)}{4/3\frac{2^{2l+2}}{n}+2-10/(3n)}.
\end{align*}
Therefore, by Fact \ref{Fact IndependenceOFChoiceCordinates_and_factor},
\[
\beta_{\nu^{(\xi)}}\left(q\right)=\begin{cases}
\frac{3}{8}\log_{2}\left(p_{1}^{q}+p_{2}^{q}\right) & \text{for 0\ensuremath{\le}}q\leq1\\
\frac{3}{10}\log_{2}\left(p_{1}^{q}+p_{2}^{q}\right) & \text{for }q>1
\end{cases}
\]
and 
\[
\underline{\beta}_{\nu^{(\xi)}}\left(q\right)\coloneqq\liminf\beta_{n}\left(q\right)=\begin{cases}
\frac{3}{10}\log_{2}\left(p_{1}^{q}+p_{2}^{q}\right) & \text{for }0\leq q\leq1\\
\frac{3}{8}\log_{2}\left(p_{1}^{q}+p_{2}^{q}\right) & \text{for }q>1
\end{cases}.
\]
Hence, by Theorem \ref{Thm:MainChain_of_Inequalities+Regularity},
the upper spectral dimension of $\Delta_{\nu^{(\xi)}}$ is given by
the unique solution of
\[
p_{1}^{\overline{q}_{\nu^{(\xi)}}}+p_{2}^{\overline{q}_{\nu^{(\xi)}}}=2^{\frac{8\overline{q}_{\nu^{(\xi)}}}{3}}.
\]
Furthermore, $\underline{q}_{\nu^{(\xi)}}$is the unique solution
of 
\[
\left(2^{-\frac{10}{3}}p_{1}\right)^{\underline{q}_{\nu^{(\xi)}}}+\left(2^{-\frac{10}{3}}p_{2}\right)^{\underline{q}_{\nu^{(\xi)}}}=1.
\]
See Figure \ref{SumofMeasure} for the two graphs. 
\begin{figure}
\begin{tikzpicture}[line cap=round,line join=round,>=triangle 45,x=1cm,y=1cm]
\begin{axis}[ x=5cm,y=5cm, axis lines=middle, axis line style={very thick},ymajorgrids=false, xmajorgrids=false, grid style={thick,densely dotted,black!20}, xmin=-0.55 , xmax=1.4 , ymin=-0.2, ymax=0.5, xtick={0,1}, ytick={0},] \clip(-1,-0.2) rectangle (2,1.8); 
 \draw[line width=1pt, smooth,samples=100,domain=-0.05:1.4]   
plot(\x,{(3/10)*log10((0.25)^((\x))+(0.75)^((\x)))/log10(2)}); 
 \draw[line width=1pt, smooth,samples=100,domain=-0.2:1.4]   
plot(\x,{(\x))}); 
\draw [line width=1pt, domain=-0.05 :1.4] 
plot(\x,{(3/8)*log10((0.25)^((\x))+(0.75)^((\x)))/log10(2)}); 
\draw (1.3 ,0.1) node[anchor=north west] {$\Large q$};
\draw (0.01 ,0.52 ) node[anchor=north west] {$\Large \displaystyle{\beta_{\nu^{(\xi)}}(q)}$};
 \draw (-0.43 ,0.467 ) node[anchor=north west] {$\Large \displaystyle{\overline{\dim}_M\left(\nu^{(\xi)}\right)}$}; 
\end{axis} 
\end{tikzpicture}

\caption{$\beta_{\nu^{(\xi)}}$ and $\liminf\beta_{n}^{\nu^{(\xi)}}$with $p_{1}=0.25$.
\label{SumofMeasure}}
\end{figure}
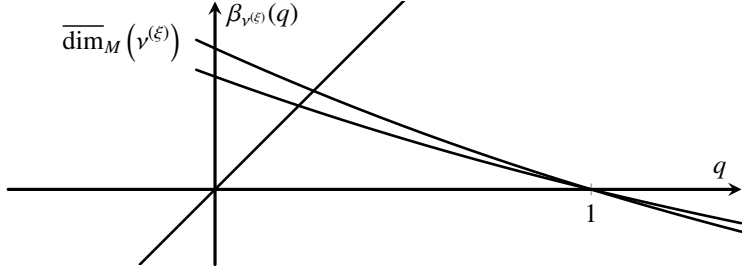
For the special case $p_{1}=1/2$ we obtain 
\[
\beta_{\nu^{(\xi)}}\left(q\right)=\begin{cases}
\frac{3}{8}(1-q) & \text{for 0\ensuremath{\le}}q\leq1\\
\frac{3}{10}(1-q) & \text{for }q>1
\end{cases}
\]
and 
\[
\underline{\beta}_{\nu^{(\xi)}}\left(q\right)\coloneqq\liminf\beta_{n}\left(q\right)=\begin{cases}
\frac{3}{10}(1-q) & \text{for }0\leq q\leq1\\
\frac{3}{8}(1-q) & \text{for }q>1
\end{cases}.
\]
In particular in this special case, we have, $\underline{\dim}_{M}\left(\supp\nu\right)=3/10$
and $\overline{\dim}_{M}\left(\nu\right)=3/8$. Now, applying Theorem
\ref{Thm:MainChain_of_Inequalities+Regularity} in tandem with Proposition
\ref{prop:UppperboundForLinear_s_<liminfq_n}, we conclude
\[
\underline{s}_{\nu^{(\xi)}}\leq\frac{\underline{\dim}_{M}\left(\nu\right)}{1+\underline{\dim}_{M}\left(\nu\right)}=\frac{3}{13}<\frac{3}{11}=\frac{\overline{\dim}_{M}\left(\nu\right)}{1+\overline{\dim}_{M}\left(\nu\right)}=\overline{s}_{\nu^{(\xi)}}.
\]
Now, let us prove that $\underline{s}_{\nu^{(\xi)}}=\frac{3}{13}$.
Define for $n\in\N$
\[
R(n)=\begin{cases}
-2^{2\ell+1}\cdot8/3+10/3-4\left(n-2^{2\ell+1}\right) & ,2^{2\ell+1}<n\leq2^{2\ell+2},\\
-10/3\cdot2^{2\ell+2}+10/3-2\left(n-2^{2\ell+2}\right) & ,2^{2\ell+2}<n\leq2^{2\ell+3}.
\end{cases}
\]
First note that for $m=1+\frac{1}{2^{7}}$ and all $\omega\in W_{n}$,
there exists $\eta\in W_{2}$ such that
\[
\text{\ensuremath{S_{\omega\eta}^{(\xi)}\left(\left[0,1\right]\right)\subset\left\langle S_{\omega}^{(\xi)}\left(\left[0,1\right]\right)\right\rangle _{m}}}.
\]
Hence, we obtain
\begin{equation}
\nu\left(\left\langle S_{\omega}^{(\xi)}\left(\left[0,1\right]\right)\right\rangle _{m}\right)\Lambda\left(\left\langle S_{\omega}^{(\xi)}\left(\left[0,1\right]\right)\right\rangle _{m}\right)\geq\frac{2^{-n+R(n)-2}}{m}.\label{eq:-2}
\end{equation}
Recall that by Proposition \ref{prop:estimateOfSpectralDim_N^L_m_N^R},
for $x>0$,
\[
\mathcal{N}_{m}^{L}\left(x\right)=\sup\left\{ \card\left(P\right):P\in\Pi_{0}:\min_{C\in P}\nu\left(\left\langle I\right\rangle _{m}\right)\Lambda\left(\left\langle I\right\rangle _{m}\right)\geq\frac{4}{x(m-1)}\right\} \leq N_{\nu}(x).
\]
Now, with $x_{n}=m\frac{2^{n-R(n)+4}}{m-1}$ by Eq. \eqref{eq:-2}
we have that $\mathcal{N}_{m}^{L}(x_{n})\geq2^{n}$, which implies
\begin{align*}
\frac{\log\left(N_{\nu}(x_{n})\right)}{\log(x_{n})} & \geq\left(1-\frac{R(n)}{n}+4/n+\frac{\log\left(m/\left(m-1\right)\right)}{n\log(2)}\right)^{-1}\\
 & =\begin{cases}
\left(1+\frac{2^{2l+1}\cdot8/3-10/3+4\left(n-2^{2l+1}\right)}{n}+4/n-\frac{\log\left(m/\left(m-1\right)\right)}{n\log(2)}\right)^{-1} & ,2^{2l+1}<n\leq2^{2l+2},\\
\left(1+\frac{10/3\cdot2^{2l+2}-10/3+2\left(n-2^{2l+2}\right))}{n}+4/n-\frac{\log\left(m/\left(m-1\right)\right)}{n\log(2)}\right)^{-1} & ,2^{2l+2}<n\leq2^{2l+3},
\end{cases}\\
 & \geq\begin{cases}
\left(1+\frac{8}{3}+\left(4-10/3\right)/n-\frac{\log\left(m/\left(m-1\right)\right)}{n\log(2)}\right)^{-1} & ,2^{2l+1}<n\leq2^{2l+2},\\
\left(1+\frac{10}{3}+\left(4-10/3\right)/n-\frac{\log\left(m/\left(m-1\right)\right)}{n\log(2)}\right)^{-1} & ,2^{2l+2}<n\leq2^{2l+3}.
\end{cases}
\end{align*}
This shows
\[
\liminf_{n\rightarrow\infty}\frac{\log\left(N_{\nu}(x_{n})\right)}{\log(x_{n})}\geq\frac{1}{1+10/3}.
\]
Since $x_{n}\leq x_{n+1}\leq2^{4}x_{n}$, 
\[
\liminf_{x\rightarrow\infty}\frac{\log\left(N_{\nu}(x)\right)}{\log(x)}\geq\liminf_{n\rightarrow\infty}\frac{\log\left(N_{\nu}(x_{n})\right)}{\log(x_{n})}\geq\frac{1}{1+10/3}.
\]
For $x>0$, choose $n\in\N$ such that $x_{n}<x\leq x_{n+1}$ and
therefore
\[
\liminf_{x\rightarrow\infty}\frac{\log\left(N_{\nu}(x)\right)}{\log(x)}\geq\liminf_{n\rightarrow\infty}\frac{\log\left(N_{\nu}(x_{n})\right)}{\log(x_{n})+\log(x_{n+1}/x_{n})}\geq\frac{1}{1+10/3}.
\]
\end{example}

\begin{example}[Homogeneous Cantor measure with vanishing $L^{q}$-spectrum]
 \label{exa:Atomic_s=00003D0}Let us now construct an example for
which the spectral dimension equals $0.$ Let us consider the following
environment $\xi_{n}\coloneqq n$, $n\in\N$, and $S_{1}^{(n)}(x)\coloneqq\frac{x}{4^{n}},S_{2}^{(n)}(x)\coloneqq\frac{x}{4^{n}}+\frac{4^{n}-1}{4^{n}}$,
$x\in[0,1]$, $p_{1}^{n}=p_{2}^{n}=1/2$. It follows that $\overline{\dim}_{M}\left(K^{(\xi)}\right)=0$
with Corollary \ref{cor:spectralDimGeneralUpperBound} that $s_{\nu}=0.$
Indeed, let us verify that $\dim_{M}\left(K^{(\xi)}\right)=0$. For
every $\omega\in W_{m}$, we have
\[
\Lambda\left(S_{\omega}^{(\xi)}\left(\left[0,1\right]\right)\right)=2^{-(m+1)m}.
\]
Now, for $n\in\N$, there exists $m(n)=m\in\N$ such that $2^{-(m+2)(m+1)}\leq2^{-n}\leq2^{-(m+1)m}$.
In particular, $m\left(n\right)\ll\sqrt{n}$ and we conclude
\[
\frac{\log\left(\beta_{n}(0)\right)}{\log(2^{n})}\leq\frac{\log\left(\beta_{(m+2)(m+1)}(0)\right)}{\log(2^{n})}=\frac{m(n)+1}{n}\rightarrow0\;\text{for }\,\,n\rightarrow\infty.
\]
\end{example}

\subsection{Pure point measures\label{subsec:The-pure-point-1}}

In this section we give examples of singular measures $\eta$ on $\left(0,1\right)$
of pure point type such that the spectral dimension attains any value
in $\left[0,1/2\right]$. To fix notation, throughout this section,
we write $\eta\coloneqq\sum p_{k}\delta_{x_{k}}$ with $\left(p_{k}\right)\in\left(\R_{>0}\right)^{\N}$,
$\sum p_{k}<\infty$ and $\left(x_{k}\right)\in\left(0,1\right)^{\N}$.

The first example shows that it is possible for the spectral dimension
to be $0$ even though the Minkowski dimension is $1$.
\begin{example}
\label{Example:fastDecay}In this example we consider pure atomic
measures $\eta$ with $\left(x_{n}\right){}_{n\in\N}\in\left(\mathbb{Q}\cap(0,1)\right)^{\N}$
such that $x_{n}\neq x_{m}$ for $m\neq n$, and $p_{n}\leq C_{1}\e^{-n}$,
for all $n\in\N$. We will show that the spectral dimension exists
and equals $s_{\eta}=0$. Define for $I_{k}^{n}\coloneqq\left(x_{k}-\frac{b_{n}}{\e^{n}},x_{k}+\frac{b_{n}}{\e^{n}}\right]\cap[0,1]$
with $b_{n}\coloneqq\min\left\{ \left|x_{l_{1}}-x_{l_{2}}\right|\colon l_{1}\neq l_{2},\ l_{1},l_{2}\leq n\right\} $
for all $k=1,\dots,n$. Then we have $x_{k}-\frac{b_{n}}{\e^{n}},x_{k}+\frac{b_{n}}{\e^{n}}\notin\mathbb{Q}$
and
\[
\max_{k=1,\dots,n}\Lambda\left(I_{k}^{n}\right)\nu\left(I_{k}^{n}\right)\leq\sum_{l=1}^{\infty}p_{l}\max_{k=1,\dots,n}\Lambda(I_{k}^{n})\leq\frac{C_{1}}{\e-1}\frac{2}{\e^{n}}.
\]
Let $A_{k}^{n}$ denote the disjoint half open intervals such that
$(0,1]\setminus\bigcup_{k=1}^{n}I_{k}^{n}=\bigcup_{k=1}^{m(n)}A_{k}^{n}$
such that $m(n)\leq n+1$. Then we conclude
\[
\max_{k=1,\dots,m(n)}\Lambda(A_{k}^{n})\nu(A_{k}^{n})\leq\sum_{l=n+1}^{\infty}p_{l}\leq C_{1}\frac{\e^{-n}}{\e-1},
\]
implying $\gamma_{2n+1}\leq\frac{2C_{1}}{\e-1}\e^{-n}$ and therefore,
for every $m\in\N$ with $2n+1\leq m\leq2n+3$, 
\[
\gamma_{m}\leq\text{\ensuremath{\gamma}}_{2n+1}\ll\e^{-(m-3)/2}.
\]
This shows $\underline{\alpha}_{\nu}=\infty$ implying by Lemma \ref{lem: SolomyakEquivalence}
that $s_{\eta}=0.$
\end{example}

If $\left(x_{k}\right)\in\left(0,1\right)^{\N}$ is strictly decreasing,
then in \citep{MR3648085} $\Delta_{\eta}$ is called Kre\u{\i}n-Feller
operator of Stieltjes type. We start with a general observation which
is a consequence of the Poincaré inequality as stated in Lemma \ref{lem:Poincare_inequality}.
\begin{lem}
\label{lem:StrongDecay-1}Assume that $\left(x_{k}\right)\in\left(0,1\right)^{\N}$
is strictly decreasing such that for an increasing function $f:\mathbb{Z}_{+}\rightarrow\R_{+}$
and all $k\in\N_{>1}$,
\[
\frac{x_{k}+x_{k-1}}{2}\sum_{l=k}^{\infty}p_{l}\leq1/f(k).
\]
 Then, for all $x\geq0$, we have $N_{\eta}(x)-2\leq\check{f}^{-1}\left(x\right)\coloneqq\inf\left\{ n\in\mathbb{\N}_{>0}:f\left(n\right)\geq x\right\} .$
\end{lem}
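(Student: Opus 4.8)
The plan is to exhibit, for each $x > 0$, an explicit finite $\eta$-partition of $[0,1]$ with at most $\check f^{-1}(x) + 2$ intervals on each of which the product $\Lambda(C)\eta(C)$ is strictly less than $1/x$; the Poincaré inequality (Lemma \ref{lem:Poincare_inequality}) together with the superadditivity of the eigenvalue counting function (Proposition \ref{prop:SubandSupperAdditivity}, in the form $N_\nu(x) \le \card(P)$ when every cell has $\Lambda(C)\eta(C) < 1/x$) then immediately yields $N_\eta(x) \le \check f^{-1}(x) + 2$, which is the asserted bound after moving the $2$ to the left-hand side. So the whole argument reduces to a combinatorial construction of a good partition.

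First I would fix $x > 0$ and set $n \coloneqq \check f^{-1}(x) = \inf\{m \in \N_{>0} : f(m) \ge x\}$, so that $f(n) \ge x$. The natural separation point is $c \coloneqq (x_n + x_{n+1})/2$, which lies strictly between $x_{n+1}$ and $x_n$ since $(x_k)$ is strictly decreasing. Consider the interval $(0, c]$: it contains exactly the atoms $x_{n+1}, x_{n+2}, \dots$, so $\eta((0,c]) = \sum_{l=n+1}^\infty p_l$, and its length is at most $c \le (x_n + x_{n-1})/2$ (using $x_{n+1} < x_{n-1}$, or more simply $c < x_n < x_{n-1}$, so $c \le (x_n+x_{n-1})/2$ only after a mild adjustment — I would instead directly bound $\Lambda((0,c])\,\eta((0,c]) \le c \sum_{l=n+1}^\infty p_l$). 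Here the hypothesis enters: applied with index $k = n+1$ it gives $\tfrac{x_{n+1}+x_n}{2}\sum_{l=n+1}^\infty p_l \le 1/f(n+1)$, but we want to compare with $f(n)$; since $f$ is increasing, $f(n+1) \ge f(n) \ge x$, hence $\Lambda((0,c])\,\eta((0,c]) = c\sum_{l\ge n+1} p_l \le 1/f(n+1) \le 1/x$. To get strict inequality one perturbs $c$ slightly downward (shrinking the left interval), which is harmless since no atom lies in the gap $(x_{n+1}, x_n)$; alternatively one absorbs this into the additive constant.

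On the remaining interval $(c, 1]$ there are only the finitely many atoms $x_1, \dots, x_n$. Around each $x_k$, $k = 1,\dots,n$, I would place a tiny half-open interval of total $\eta$-measure at most $p_k$ and length as small as we like, chosen so that its endpoints carry no $\eta$-mass and its $\Lambda$-$\eta$ product is $< 1/x$; since each such cell has length shrinkable to zero while its measure is the fixed finite number $\le p_k + (\text{small})$, this is trivially arrangeable. The complement of these $n$ small intervals inside $(c,1]$ splits into at most $n+1$ further intervals, each of which is disjoint from $\supp\eta \cap (c,1]$ up to a set of measure zero and hence has $\eta$-measure $0$; these contribute $0$ to the product. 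Counting: one left interval, $n$ atom-cells, and $\le n+1$ gap-cells in $(c,1]$ — but the gap-cells carry no mass and can be merged with neighbours or simply ignored in the partition-entropy sense. A careful bookkeeping (exactly as in the $n-2$ term of Proposition \ref{prop:SubandSupperAdditivity}) shows the total cell count needed to make every cell satisfy the Poincaré cutoff is $n + 2 = \check f^{-1}(x) + 2$. The main obstacle is precisely this bookkeeping of the additive constant: one must be careful that the gap intervals and the slight perturbations of endpoints (needed to avoid atoms on boundaries and to turn $\le$ into $<$) do not inflate the count beyond $+2$; aligning the construction with the already-proved superadditivity estimate, rather than reproving it, keeps this under control.
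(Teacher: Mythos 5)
Your reduction and your treatment of the left interval are sound: with $n=\check f^{-1}(x)$ and $c=(x_{n}+x_{n+1})/2$, the hypothesis applied at $k=n+1$ together with the monotonicity of $f$ gives $\Lambda((0,c])\,\eta((0,c])\le 1/f(n+1)\le 1/x$, so Lemma \ref{lem:Poincare_inequality} disposes of that cell (this is essentially the paper's step, shifted by one index). The genuine gap is the final bookkeeping claim for $(c,1]$. A partition in which \emph{every} cell satisfies $\Lambda(C)\eta(C)<1/x$ cannot, in general, have only about $n+1$ cells there: no admissible cell may contain two of the atoms $x_{1},\dots,x_{n}$, since such a cell has product at least $(x_{k}-x_{k+1})(p_{k}+p_{k+1})$, which exceeds $1/x$ for all but boundedly many $k\le n$ already in the exponential Dirac comb of Example \ref{exmpl:ExplicitAssymptoticCase_Exp_decay}; hence each such atom needs its own cell of length less than $1/(xp_{k})$, and the macroscopic gaps between consecutive atom cells then each require an additional zero-measure cell. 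So any admissible partition has of order $2n$ cells, and Proposition \ref{prop:SubandSupperAdditivity} only yields $N_{\eta}(x)\le 2\check f^{-1}(x)+O(1)$, not $\check f^{-1}(x)+2$. Neither proposed repair works: merging a gap cell into a neighbouring atom cell produces a cell of macroscopic length and mass $p_{k}$, destroying the product bound, and the zero-measure cells cannot simply be ``ignored'', because the additive constant in Proposition \ref{prop:SubandSupperAdditivity} counts interior partition points; the introduction's remark about discarding null cells concerns the asymptotic partition entropy, not a fixed-$x$ counting estimate. Thus the assertion that careful bookkeeping gives $n+2$ cells is not a matter of care --- it is false.

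The missing idea, and the paper's route, is that the right piece should not be subdivided at all. The measure $\eta$ restricted to $[c,1]$ has only finitely many atoms (at most $\check f^{-1}(x)$ with your split point, $\check f^{-1}(x)-1$ with the paper's split at $(x_{k_{x}}+x_{k_{x}-1})/2$), so $L^{2}_{\eta|_{[c,1]}}$ is finite dimensional and $N_{\eta,[c,1]}(x)$ is bounded by the number of atoms for every $x$, with no Poincar\'e estimate needed. Applying Proposition \ref{prop:SubandSupperAdditivity} to the two-cell partition determined by the single point $c$, the Poincar\'e bound on the left piece and this dimension count on the right piece give $N_{\eta}(x)\le N_{\eta,[0,c]}(x)+N_{\eta,[c,1]}(x)+2\le\check f^{-1}(x)+2$, which is the assertion. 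Your left-interval estimate can be kept verbatim; only the right-hand side needs the dimension argument in place of the partition construction.
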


\begin{proof}
The Poincaré inequality (Lemma \ref{lem:Poincare_inequality}) guarantees
for all $k\in\N$
\[
\lambda_{\eta,\left[0,\frac{x_{k}+x_{k-1}}{2}\right]}^{1}\geq\frac{1}{\frac{x_{k}+x_{k-1}}{2}\sum_{l=k}^{\infty}p_{l}}.
\]
Since
\[
\frac{1}{\frac{x_{k}+x_{k-1}}{2}\sum_{l=k}^{\infty}p_{l}}\geq f\left(k\right)\geq x\implies k\geq\check{f}^{-1}\left(x\right),
\]
 using the superadditivity of the Neumann eigenvalue counting function
with $k_{x}\coloneqq\check{f}^{-1}\left(x\right)$, by Proposition
\ref{prop:SubandSupperAdditivity}, we have
\[
N_{\eta}(x)\leq N_{\eta,\left[0,\frac{x_{k_{x}+}x_{k_{x}-1}}{2}\right]}(x)+N_{\eta,\left[\frac{x_{k_{x}+}x_{k_{x}-1}}{2},1\right]}(x)+2\leq2+\check{f}^{-1}\left(x\right).
\]
\end{proof}
\begin{example}[Dirac comb with exponential decay]
 \label{exmpl:ExponentialDecay}If $\left(x_{n}\right)$ or $\left(\sum_{k\geq m}p_{k}\right)_{m}$
decays exponentially, then $N_{\eta}(x)\ll\log(x)$, hence the spectral
dimension $s_{\eta}$ equals $0$. Further, the $L^{q}$-spectrum
is given by
\[
\beta_{\eta}\left(q\right)=\begin{cases}
\overline{\dim}_{M}\left(\eta\right) & q=0,\\
0 & q>0.
\end{cases}
\]
\end{example}

\begin{example}[Dirac comb with at most power law decay]
\label{exmpl:PurePointoneLimit_at_most_power_law} Assume that $\left(x_{k}\right)\in\left(0,1\right)^{\N}$
is strictly decreasing and
\[
p_{n}\gg n^{-u_{1}}f_{1}(n),\;(x_{n}-x_{n+1})\gg n^{-u_{2}}f_{2}(n),
\]
with $u_{1},u_{2}\geq1$ and $\lim_{n\rightarrow\infty}\log(f_{i}(n))/\log(n)=0$
for $i=1,2$, then
\[
\frac{1}{u_{1}+u_{2}}\leq\underline{s}_{\eta}
\]
and, in particular, if $u_{1}+u_{2}=2$, we have $s_{\eta}=\frac{1}{2}.$
To see this, define
\[
I_{k}\coloneqq\left[x_{k}-\frac{\min\left\{ x_{k}-x_{k+1},x_{k-1}-x_{k}\right\} }{2},x_{k}+\frac{\min\left\{ x_{k}-x_{k+1},x_{k-1}-x_{k}\right\} }{2}\right]
\]
 for $k=1,\ldots,n$. Then we have $\nu\left(\left\langle I_{k}\right\rangle _{2}\right)=\nu\left(I_{k}\right)$
and for $n$ large enough
\[
Cn^{-(u_{1}+u_{2})}f_{1}(n)f_{2}(n)\leq\min_{k=1,\dots,n}\nu\left(\left\langle I_{k}\right\rangle _{2}\right)\Lambda\left(\left\langle I_{k}\right\rangle _{2}\right)
\]
with $C>0$ suitable, which implies for every $\varepsilon>0$ 
\[
\mathcal{N}_{2}^{L}\left(x\right)\gg x^{\frac{1}{u_{1}+u_{2}+\varepsilon}}
\]
proving the claim. Since, in the case $u_{1}+u_{2}=2$, we always
have $\overline{s}_{\eta}\leq1/2$, it readily follows that $s_{\eta}=1/2.$
\end{example}

\begin{example}[Dirac comb with at most geometric decay and full dimension]
\label{exmpl:RegularBehaviour_full_dimension=000023}Assume that
$\left(x_{k}\right)\in\left(0,1\right)^{\N}$ is strictly decreasing
and
\[
\frac{f_{1}\left(n\right)}{n}\ll p_{n},\:\frac{f_{2}\left(n\right)}{n}\ll\left(x_{n-1}-x_{n}\right)
\]
with $\lim_{n\rightarrow\infty}\log(f_{i}(n))/\log(n)=0$, for $i=1,2$,
then spectral dimension exists and equals 
\[
s_{\eta}=1/2
\]
and, for $q\in[0,1]$,
\[
\beta_{\eta}\left(q\right)=\begin{cases}
1-q & ,q\in\left[0,1\right],\\
0 & ,q>1.
\end{cases}
\]
Indeed, observe that Example \ref{exmpl:PurePointoneLimit_at_most_power_law}
implies $1/2\leq\underline{s}_{\eta}$ and by Theorem \ref{Thm:MainChain_of_Inequalities+Regularity}
and Corollary \ref{cor:spectralDimGeneralUpperBound} we also have
$\underline{s}_{\eta}\leq\overline{s}_{\eta}=\overline{q}_{\eta}\leq1/2$
which shows $s_{\eta}=1/2.$ The second statement is then a direct
consequence of the first part of Corollary \ref{cor:q-1 implies s0=00003D1/2}.
\end{example}

The following example shows that the spectral dimension attains every
value in $(\text{0,1/2)}.$

\begin{example}[Dirac comb with power law decay]
\label{exmpl:GeneralBound_power_law}If
\[
\lim_{n\to\infty}-\log p_{n}/\log n=u_{1}>1\;\text{and }x_{k}\coloneqq(k+1)^{-u_{2}},k\in\N,u_{2}>0,
\]
then the $L^{q}$-spectrum exists as a limit on the positive half-line
and we have
\[
\beta_{\eta}\left(q\right)=\begin{cases}
\frac{1}{u_{2}+1}-q\frac{u_{1}}{u_{2}+1} & \text{for }q\in\left[0,1/u_{1}\right],\\
0 & \text{for }q>1/u_{1}.
\end{cases}
\]
Consequently, $\eta$ is $L^{q}$-regular and the spectral dimension
exists and equals
\[
s_{\eta}=\frac{1}{u_{1}+u_{2}+1}.
\]
In particular, for $u_{1}=u_{2}+1$ we have
\[
s_{\eta}=\frac{\dim_{M}\left(\eta\right)}{2}.
\]
This can be seen as follows. For every $\varepsilon>0$ uniformly
in $n\in\N$, we have $n^{-(u_{1}+\varepsilon)}\ll p_{n}\ll n^{-u_{1}+\varepsilon}$.
For suitable $C>0$, 
\[
x_{m}-x_{m+1}=\frac{(m+1)^{u_{2}}-m^{u_{2}}}{m^{u_{2}}(m+1)^{u_{2}}}=\frac{1}{(m+1)^{u_{2}}m}\frac{\left(\frac{m+1}{m}\right)^{u_{2}}-1}{1/m}\geq\frac{C}{m^{u_{2}+1}}.
\]
If $2^{-n}<C(m+1)^{-(u_{2}+1)}$, then $m<\left(2^{n}C\right)^{\frac{1}{u_{2}+1}}$.
Combining these observations, we obtain

\begin{align*}
\sum_{C\in\mathcal{D}_{n}}\eta\left(C\right)^{q} & \geq\sum_{k=1}^{C^{1/(u_{2}+1)}2^{n/\left(u_{2}+1\right)}}p_{k}^{q}\gg\sum_{k=1}^{C^{1/(u_{2}+1)}2^{n/\left(u_{2}+1\right)}}k^{-\left(u_{1}+\epsilon\right)q}\asymp2^{n\left(-\left(u_{1}+\epsilon\right)q+1\right)/\left(u_{2}+1\right)}.
\end{align*}
For $q\in\left[0,1/(u_{1}+\varepsilon)\right)$, this gives
\[
\beta_{\eta}\left(q\right)\geq\liminf_{n}\beta_{n}^{\eta}\left(q\right)\geq\frac{1}{u_{2}+1}-q\frac{u_{1}+\varepsilon}{u_{2}+1}.
\]
 Letting $\varepsilon\rightarrow0$ this shows for $q\in[0,1/u_{1}]$
\[
\beta_{\eta}\left(q\right)\geq\liminf_{n}\beta_{n}^{\eta}\left(q\right)\geq\begin{cases}
\frac{1}{u_{2}+1}-q\frac{u_{1}}{u_{2}+1} & \text{for }q\in\left[0,1/u_{1}\right],\\
0 & \text{for }q>1/u_{1}.
\end{cases}
\]
Moreover, for $m\geq\left(2^{n}C\right)^{\frac{1}{u_{2}+1}}$ and
$k2^{-n}<x_{m}$, we have $k<2^{n}\left(2^{n}C\right)^{-\frac{u_{2}}{u_{2}+1}}$.
From this inequality, using the integral test for convergence, we
obtain for $q>1/\left(u_{1}-\varepsilon\right)$,
\begin{align*}
\sum_{k=0}^{2^{n}-1}\eta\left(A_{k}^{n}\right)^{q} & =\sum_{k=0}^{2^{n}-1}\left(\sum_{k2^{-n}<x_{m}\leq(k+1)2^{-n}}p_{m}\right)^{q}\\
 & \ll\sum_{m<C^{1/u_{2}}2^{n/u_{2}}}m^{q\left(-u_{1}+\varepsilon\right)}+\sum_{k=0}^{C^{-\frac{u_{2}}{u_{2}+1}}2^{\frac{n}{u_{2}+1}}}\left(\sum_{k2^{-n}<x_{m}\leq(k+1)2^{-n}}p_{m}\right)^{q}\\
 & \ll1+2^{n\left(\frac{1}{u_{2}+1}+q\left(-\frac{(u_{1}-\varepsilon-1)}{u_{2}}+\frac{(u_{1}-\varepsilon-1)-u_{2}}{u_{2}(u_{2}+1)}\right)\right)}=1+2^{n\left(\frac{1}{u_{2}+1}-q\frac{u_{1}-\varepsilon}{u_{2}+1}\right)}\ll1.
\end{align*}
Hence, $\beta_{\eta}(q)=0$ for $q\geq1/u_{1}$. Since $\beta_{\eta}\left(0\right)=\delta_{\eta}^{*}=1/(u_{2}+1)$,
by the convexity of $\beta_{\eta}$, it follows that for all $q\in[0,1/u_{1}]$,
\[
\frac{1}{u_{2}+1}-q\frac{u_{1}}{u_{2}+1}\leq\liminf_{n}\beta_{n}^{\eta}\left(q\right)\leq\beta_{\eta}\left(q\right)\leq\frac{1}{u_{2}+1}-q\frac{u_{1}}{u_{2}+1}.
\]
Theorem \ref{Thm:MainChain_of_Inequalities+Regularity} in tandem
with Theorem \ref{thm:LqRegularImpliesRegular} then gives $s_{\eta}=\overline{q}_{\eta}=\left(u_{1}+u_{2}+1\right)^{-1}$.
\end{example}

The last example demonstrates how one can improve Example \ref{exmpl:ExponentialDecay},
if one knows the exact exponential asymptotics of $p_{n}$ and $x_{n}$,
which in turn forces an exact logarithmic asymptotics for the eigenvalue
counting function. The following elementary lemma is provided for
preparation.

\begin{lem}
\label{lem:ExpliciFirsttEigenvalueDirc}Let $\nu\coloneqq p\delta_{z}$
be with $z\in\left(0,1\right)$, $p>\text{0 }$ and $0<a<z$, $0<b<1-z$.
Then the first eigenvalue of $\Delta_{\nu,[z-a,z-b]}$ is given by
\[
\lambda_{\nu,[z-a,z+b]}^{1}=\frac{a+b}{pab}.
\]
\end{lem}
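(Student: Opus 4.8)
The plan is to exploit that, since $\supp\nu=\{z\}$ is a single point, the form domain $\dom(\E_{\nu,[z-a,z+b]})$ --- after identification in $L^2_\nu$, and in fact already as a space of functions --- is one-dimensional. First I would describe $\dom(\E_{\nu,[z-a,z+b]})=H^1_0(z-a,z+b)\cap C_\nu([z-a,z+b])$ explicitly: such an $f$ vanishes at the endpoints $z-a$ and $z+b$ and is affine linear on each of the two components $[z-a,z)$, $(z,z+b]$ of $[z-a,z+b]\setminus\{z\}$, hence by continuity at $z$ it is the unique piecewise linear function with $f(z-a)=f(z+b)=0$ and prescribed value $f(z)=t$, namely $f(x)=t\,(x-(z-a))/a$ on $[z-a,z]$ and $f(x)=t\,((z+b)-x)/b$ on $[z,z+b]$, so that $\nabla_\Lambda f$ equals $t/a$ on $(z-a,z)$ and $-t/b$ on $(z,z+b)$.

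Next I would evaluate the form on two such functions $f,g$ with $f(z)=t$, $g(z)=s$. A direct integration gives $\E_{\nu,[z-a,z+b]}(f,g)=\tfrac{ts}{a}+\tfrac{ts}{b}=ts\,\tfrac{a+b}{ab}$, while $\langle f,g\rangle_\nu=p\,f(z)g(z)=p\,ts$. Hence $\E_{\nu,[z-a,z+b]}(f,g)=\tfrac{a+b}{pab}\,\langle f,g\rangle_\nu$ for all $f,g\in\dom(\E_{\nu,[z-a,z+b]})$, which by Definition \ref{def:WeakEV and EF} means precisely that every nonzero element of $\dom(\E_{\nu,[z-a,z+b]})$ is an eigenfunction with eigenvalue $\tfrac{a+b}{pab}$.

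Finally, to see that this is the \emph{first} eigenvalue I would invoke the min-max principle: the inclusion of the (one-dimensional, hence complete) space $\dom(\E_{\nu,[z-a,z+b]})$ into the one-dimensional space $L^2_\nu$ is trivially compact, so the variational characterization applies even though $\supp\nu$ is finite; the Rayleigh quotient $\E_{\nu,[z-a,z+b]}(\psi,\psi)/\langle\psi,\psi\rangle_\nu$ is constantly $\tfrac{a+b}{pab}$ on $\dom(\E_{\nu,[z-a,z+b]})\setminus\{0\}$ and there is exactly one one-dimensional subspace, so $\lambda^1_{\nu,[z-a,z+b]}=\tfrac{a+b}{pab}$. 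Every step is an explicit elementary computation, so there is no genuine obstacle; the only points needing a little care are the correct identification of the form domain (the Dirichlet condition at the ambient endpoints together with affine-linearity off $\supp\nu$ pin $f$ down up to the scalar $f(z)$) and the remark that the spectral framework of Section 2 still produces a well-defined first eigenvalue when the support is a single atom.
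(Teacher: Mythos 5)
Your proposal is correct and follows essentially the same route as the paper: the paper's proof likewise observes that every element of $\dom(\E_{\nu,[z-a,z+b]})$ is the triangular function determined by its value at $z$, so the Rayleigh quotient is constantly $\frac{a+b}{pab}$, and concludes by the min-max principle. Your additional verification that every nonzero domain element satisfies the eigenvalue equation is a harmless elaboration of the same computation.
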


\begin{proof}
Observe that each function $f\in\dom\left(\E_{\nu,[z-a,z+b]}\right)$
is of triangular shape, i\@.\,e\@. $f(x)=f_{a,b,c}(x)\coloneqq c\frac{(x-(z-a))}{a}\1_{[z-a,z]}+c\frac{(z+b-x)}{b}\1_{(z,z+b]}.$
Therefore, using the min-max principle we get
\begin{align*}
\lambda_{\eta,[z-a,z+b]}^{1} & =\inf\left\{ \frac{\E(\psi,\psi)}{\langle\psi,\psi\rangle_{\eta}}:\psi\in\dom\left(\E_{\nu,[z-a,z+b]}\right)^{\star}\right\} =\inf_{c>0}c^{2}\left(\frac{1}{a}+\frac{1}{b}\right)/\left(c^{2}p\right)=\frac{a+b}{pab}.
\end{align*}
\end{proof}
\begin{example}[Dirac comb with exponential decay -- precise asymptotics]
\label{exmpl:ExplicitAssymptoticCase_Exp_decay}Let us consider the
case $\sum_{k=1}^{n}p_{k}=1-\e^{-\alpha n}$, and $x_{k}\coloneqq\e^{-\beta k}$
for some $\alpha,\mathbf{\beta>\text{0}}$. Then 
\[
\text{\ensuremath{\lim_{x\rightarrow\infty}}}\frac{N_{\eta}(x)}{\log\left(x\right)}=\frac{1}{\alpha+\beta}.
\]
To see this, we consider the intervals $A_{n}\coloneqq\left[x_{n}-\left(x_{n}-x_{n+1}\right)/2,x_{n}+\left(x_{n-1}-x_{n}\right)/2\right],$for
$n>1$. Then by Lemma \ref{lem:ExpliciFirsttEigenvalueDirc}
\begin{align*}
\lambda_{\eta,A_{n}}^{1} & =\frac{\e^{(\alpha+\beta)n}2\left(\e^{\beta}-\e^{-\beta}\right)}{\left(1-\e^{-\beta}\right)\left(\e^{\alpha}-1\right)\left(\e^{\beta}-1\right)}
\end{align*}
implying 
\[
N_{\eta,[0,1]}\left(x\right)\geq\left\lfloor \frac{1}{(\alpha+\beta)}\log\left(x\frac{\left(1-\e^{-\beta}\right)\left(\e^{\alpha}-1\right)\left(\e^{\beta}-1\right)}{2\left(\e^{\beta}-\e^{-\beta}\right)}\right)\right\rfloor .
\]
For the upper bound, we observe that for $A^{k}\coloneqq[0,\frac{x_{k}+x_{k-1}}{2}]$,
we have 
\[
\lambda_{\eta,A^{k}}^{1}\geq\frac{1}{\frac{x_{k}+x_{k-1}}{2}\sum_{\ell=k}^{\infty}p_{\ell}}=\frac{2}{\e^{-\beta k+(k-1)\alpha}(1+\e^{-\beta})}=2\frac{\e^{(k-1)\alpha+k\beta}}{1+\e^{-\beta}}.
\]
Then, for $x>4$, we obtain 
\begin{align*}
2\frac{\e^{(k-1)\alpha+k\beta}}{1+\e^{-\beta}}>x & \Longleftrightarrow k>\frac{1}{(\alpha+\beta)}\log\left(\left(1+\e^{-\beta}\right)\frac{\e^{\alpha}x}{2}\right).
\end{align*}
This gives for $n_{x}=\left\lceil \frac{1}{(\alpha+\beta)}\log\left(\left(1+\e^{-\beta}\right)\frac{\e^{\alpha}x}{2}\right)\right\rceil $
\[
N_{\eta,[0,1]}(x)\leq N_{\eta,A^{k}}(x)+N_{\eta,\left[\frac{x_{n_{x}}+x_{n_{x}-1}}{2},1\right]}(x)+2\leq3+\left(n_{x}-1\right)\leq2+\frac{1}{(\alpha+\beta)}\log\left(\left(1+\e^{-\beta}\right)\frac{\e^{\alpha}x}{2}\right)
\]
and the claim follows.
\end{example}

\bibliographystyle{amsalpha}
\bibliography{../../bib}

\providecommand{\bysame}{\leavevmode\hbox to3em{\hrulefill}\thinspace}
\providecommand{\MR}{\relax\ifhmode\unskip\space\fi MR }
\providecommand{\MRhref}[2]{%
  \href{http://www.ams.org/mathscinet-getitem?mr=#1}{#2}
}
\providecommand{\href}[2]{#2}
\begin{thebibliography}{KKVW09}

\bibitem[Arz14]{Arzt_diss}
P.~Arzt, \emph{Eigenvalues of {M}easure {T}heoretic {L}aplacians on
  {C}antor-like {S}ets}, PhD thesis, Universit{\"a}t Siegen, 2014.

\bibitem[BB71]{MR0301568}
M.~\v{S}. Birman and V.~V. Borzov, \emph{The asymptotic behavior of the
  discrete spectrum of certain singular differential operators}, Problems of
  mathematical physics, {N}o. 5: spectral theory ({R}ussian), 1971, pp.~24--38.
  \MR{0301568}

\bibitem[BF20]{Barral2020}
Julien Barral and De-Jun Feng, \emph{On multifractal formalism for self-similar
  measures with overlaps}, Mathematische Zeitschrift (2020), 1432--1823.

\bibitem[BH97]{MR1473565}
M.~T. Barlow and B.~M. Hambly, \emph{Transition density estimates for
  {B}rownian motion on scale irregular {S}ierpinski gaskets}, Ann. Inst. H.
  Poincar\'{e} Probab. Statist. \textbf{33} (1997), no.~5, 531--557.
  \MR{1473565}

\bibitem[BNT03]{MR2103098}
Erik~J. Bird, Sze-Man Ngai, and Alexander Teplyaev, \emph{Fractal {L}aplacians
  on the unit interval}, Ann. Sci. Math. Qu\'{e}bec \textbf{27} (2003), no.~2,
  135--168. \MR{2103098}

\bibitem[Bor70]{MR0281860}
V.~V. Borzov, \emph{The quantitative characteristics of singular measures},
  Problems of {M}athematical {P}hysics, {N}o. 4: {S}pectral {T}heory, {W}ave
  {P}rocesses, Izdat. Leningrad. Univ., Leningrad, 1970, pp.~42--47.
  \MR{0281860}

\bibitem[Bor71]{Borzov1971}
\bysame, \emph{Quantitative characteristics of singular measures}, pp.~37--42,
  Springer US, Boston, MA, 1971.

\bibitem[BS66]{MR0209733}
M.~\v{S}. Birman and M.~Z. Solomjak, \emph{Approximation of functions of the
  {$W_{p}{}^{\alpha }$}-classes by piece-wise-polynomial functions}, Dokl.
  Akad. Nauk SSSR \textbf{171} (1966), 1015--1018. \MR{0209733}

\bibitem[BS67]{MR0217487}
\bysame, \emph{Piecewise polynomial approximations of functions of classes
  {$W_{p}{}^{\alpha }$}}, Mat. Sb. (N.S.) \textbf{73 (115)} (1967), 331--355.
  \MR{0217487}

\bibitem[BS70]{MR0278126}
\bysame, \emph{The principal term of the spectral asymptotics for
  ''non-smooth''\ elliptic problems}, Funkcional. Anal. i Prilo\v{z}en.
  \textbf{4} (1970), no.~4, 1--13. \MR{0278126}

\bibitem[BS74]{MR0482138}
\bysame, \emph{Quantitative analysis in {S}obolev's imbedding theorems and
  applications to spectral theory}, Tenth {M}athematical {S}chool ({S}ummer
  {S}chool, {K}aciveli/{N}alchik, 1972) ({R}ussian), 1974, pp.~5--189.
  \MR{0482138}

\bibitem[Dav95]{davies_1995}
E.~Brian Davies, \emph{Spectral theory and differential operators}, Cambridge
  Studies in Advanced Mathematics, Cambridge University Press, 1995.

\bibitem[DN15]{MR3318648}
Da-Wen Deng and Sze-Man Ngai, \emph{Eigenvalue estimates for {L}aplacians on
  measure spaces}, J. Funct. Anal. \textbf{268} (2015), no.~8, 2231--2260.
  \MR{3318648}

\bibitem[DN21]{MR4241300}
\bysame, \emph{Estimates for sums and gaps of eigenvalues of {L}aplacians on
  measure spaces}, Proc. Roy. Soc. Edinburgh Sect. A \textbf{151} (2021),
  no.~2, 842--861. \MR{4241300}

\bibitem[Fag12]{MR2892328}
Alessandra Faggionato, \emph{Spectral analysis of 1{D} nearest-neighbor random
  walks and applications to subdiffusive trap and barrier models}, Electron. J.
  Probab. \textbf{17} (2012), no. 15, 36. \MR{2892328}

\bibitem[Fal14]{MR3236784}
Kenneth Falconer, \emph{Fractal geometry}, third ed., John Wiley \& Sons, Ltd.,
  Chichester, 2014, Mathematical foundations and applications. \MR{3236784}

\bibitem[Fel57]{Fe57}
W.~Feller, \emph{Generalized second order differential operators and their
  lateral conditions}, Illinois J. Math. \textbf{1} (1957), 459--504.
  \MR{0092046}

\bibitem[Fen07]{MR2322179}
De-Jun Feng, \emph{Gibbs properties of self-conformal measures and the
  multifractal formalism}, Ergodic Theory Dynam. Systems \textbf{27} (2007),
  no.~3, 787--812. \MR{2322179}

\bibitem[Fre05]{MR2110540}
Uta Freiberg, \emph{Spectral asymptotics of generalized measure geometric
  {L}aplacians on {C}antor like sets}, Forum Math. \textbf{17} (2005), no.~1,
  87--104. \MR{2110540}

\bibitem[Fre08]{MR2407555}
\bysame, \emph{Pr\"{u}fer angle methods in spectral analysis of
  {K}rein-{F}eller-operators}, Proceedings of {RIMS} {W}orkshop on {S}tochastic
  {A}nalysis and {A}pplications, RIMS K\^{o}ky\^{u}roku Bessatsu, B6, Res.
  Inst. Math. Sci. (RIMS), Kyoto, 2008, pp.~71--84. \MR{2407555}

\bibitem[Fre11]{MR2787628}
\bysame, \emph{Refinement of the spectral asymptotics of generalized {K}rein
  {F}eller operators}, Forum Math. \textbf{23} (2011), no.~2, 427--445.
  \MR{2787628}

\bibitem[FT99]{MR1721826}
Walter Farkas and Hans Triebel, \emph{The distribution of eigenfrequencies of
  anisotropic fractal drums}, J. London Math. Soc. (2) \textbf{60} (1999),
  no.~1, 224--236. \MR{1721826}

\bibitem[Fuj87]{Fu87}
T.~Fujita, \emph{A fractional dimension, self-similarity and a generalized
  diffusion operator}, Probabilistic methods in mathematical physics
  ({K}atata/{K}yoto, 1985), Academic Press, Boston, MA, 1987, pp.~83--90.
  \MR{933819}

\bibitem[FW17]{MR3648085}
Andreas Fleige and Henrik Winkler, \emph{An indefinite inverse spectral problem
  of {S}tieltjes type}, Integral Equations Operator Theory \textbf{87} (2017),
  no.~4, 491--514. \MR{3648085}

\bibitem[FZ02]{FZ02}
U.~Freiberg and M.~Z\"ahle, \emph{Harmonic calculus on fractals---a measure
  geometric approach. {I}}, Potential Anal. \textbf{16} (2002), no.~3,
  265--277. \MR{1885763}

\bibitem[Heu98]{MR1625871}
Yanick Heurteaux, \emph{Estimations de la dimension inf\'{e}rieure et de la
  dimension sup\'{e}rieure des mesures}, Ann. Inst. H. Poincar\'{e} Probab.
  Statist. \textbf{34} (1998), no.~3, 309--338. \MR{1625871}

\bibitem[Heu07]{MR2334791}
\bysame, \emph{Dimension of measures: the probabilistic approach}, Publ. Mat.
  \textbf{51} (2007), no.~2, 243--290. \MR{2334791}

\bibitem[HLN06]{MR2261337}
Jiaxin Hu, Ka-Sing Lau, and Sze-Man Ngai, \emph{Laplace operators related to
  self-similar measures on {$\mathbb{R}^d$}}, J. Funct. Anal. \textbf{239}
  (2006), no.~2, 542--565. \MR{2261337}

\bibitem[Hoc14]{MR3224722}
Michael Hochman, \emph{On self-similar sets with overlaps and inverse theorems
  for entropy}, Ann. of Math. (2) \textbf{180} (2014), no.~2, 773--822.
  \MR{3224722}

\bibitem[JKL14]{MR3190211}
Johannes Jaerisch, Marc Kesseb\"{o}hmer, and Sanaz Lamei, \emph{Induced
  topological pressure for countable state {M}arkov shifts}, Stoch. Dyn.
  \textbf{14} (2014), no.~2, 1350016, 31. \MR{3190211}

\bibitem[Kat76]{MR0407617}
Tosio Kato, \emph{Perturbation theory for linear operators}, second ed.,
  Springer-Verlag, Berlin-New York, 1976, Grundlehren der Mathematischen
  Wissenschaften, Band 132. \MR{0407617}

\bibitem[Kig01]{kigami_2001}
Jun Kigami, \emph{Analysis on fractals}, Cambridge Tracts in Mathematics,
  Cambridge University Press, 2001.

\bibitem[KK58]{KK68}
I.~S. Kac and M.~G. Kre\u{\i}n, \emph{Criteria for the discreteness of the
  spectrum of a singular string}, Izv. Vys\v s. U\v cebn. Zaved. Matematika
  \textbf{1958} (1958), no.~2 (3), 136--153. \MR{0139804}

\bibitem[KKVW09]{MR2563669}
Ulrike Kant, Tobias Klauss, J\"{u}rgen Voigt, and Matthias Weber,
  \emph{Dirichlet forms for singular one-dimensional operators and on graphs},
  J. Evol. Equ. \textbf{9} (2009), no.~4, 637--659. \MR{2563669}

\bibitem[KL93]{MR1243717}
Jun Kigami and Michel~L. Lapidus, \emph{Weyl's problem for the spectral
  distribution of {L}aplacians on p.c.f. self-similar fractals}, Comm. Math.
  Phys. \textbf{158} (1993), no.~1, 93--125. \MR{1243717}

\bibitem[KN21a]{KN21b}
Marc Kesseb\"{o}hmer and Aljoscha Niemann, \emph{{Approximation order of
  Kolmogorov diameters via $L^q$-spectra and applications to polyharmonic
  operators}}, arXiv: \href{https://arxiv.org/abs/2107.07932}{2107.07932}
  (2021).

\bibitem[KN21b]{KN21}
\bysame, \emph{{Spectral asymptotics of Kre\u{\i}n-Feller operators for weak
  Gibbs measures} on self-conformal fractals with overlaps}, arXiv:
  \href{https://arxiv.org/abs/2107.02616}{2107.02616} (2021).

\bibitem[KN22]{KN2022}
\bysame, \emph{{S}pectral dimensions of {K}re\u{\i}n-{F}eller operators in
  higher dimensions}, arXiv:
  \href{https://arxiv.org/abs/2202.05247}{2202.05247} (2022).

\bibitem[KNSW19]{KSW2019}
M.~Kesseb\"ohmer, A.~Niemann, T.~Samuel, and H.~Weyer, \emph{{G}eneralised
  {K}re\u{\i}n-{F}eller operators and gap diffusions under transformations of
  measure spaces}, arXiv: \href{https://arxiv.org/abs/1909.08832}{1909.08832}
  (2019).

\bibitem[Kre51]{MR0042045}
M.~G. Kre\u{\i}n, \emph{Determination of the density of a nonhomogeneous
  symmetric cord by its frequency spectrum}, Doklady Akad. Nauk SSSR (N.S.)
  \textbf{76} (1951), 345--348. \MR{0042045}

\bibitem[K{\"{u}}c86]{kuechler1986}
U.~K{\"{u}}chler, \emph{On sojourn times, excursions and spectral measures
  connected with quasidiffusions}, J. Math. Kyoto Univ. \textbf{26} (1986),
  no.~3, 403--421.

\bibitem[KZ04]{MR2083820}
Marc Kesseb\"{o}hmer and Sanguo Zhu, \emph{Quantization dimension via
  quantization numbers}, Real Anal. Exchange \textbf{29} (2003/04), no.~2,
  857--866. \MR{2083820}

\bibitem[Min20]{MR4048458}
Lenon~Alexander Minorics, \emph{Spectral asymptotics for {K}rein-{F}eller
  operators with respect to {$V$}-variable {C}antor measures}, Forum Math.
  \textbf{32} (2020), no.~1, 121--138. \MR{4048458}

\bibitem[Naz04]{MR2092208}
A.~I. Nazarov, \emph{Logarithmic asymptotics of small deviations for some
  {G}aussian processes in the {$L_2$}-norm with respect to a self-similar
  measure}, Zap. Nauchn. Sem. S.-Peterburg. Otdel. Mat. Inst. Steklov. (POMI)
  \textbf{311} (2004), no.~Veroyatn. i Stat. 7, 190--213, 301. \MR{2092208}

\bibitem[Nga97]{MR1402878}
Sze-Man Ngai, \emph{A dimension result arising from the {$L^q$}-spectrum of a
  measure}, Proc. Amer. Math. Soc. \textbf{125} (1997), no.~10, 2943--2951.
  \MR{1402878}

\bibitem[Nga11]{MR2828537}
\bysame, \emph{Spectral asymptotics of {L}aplacians associated with
  one-dimensional iterated function systems with overlaps}, Canad. J. Math.
  \textbf{63} (2011), no.~3, 648--688. \MR{2828537}

\bibitem[NS95]{MR1338787}
K.~Naimark and M.~Solomyak, \emph{The eigenvalue behaviour for the boundary
  value problems related to self-similar measures on {${\mathbb{R}}^d$}}, Math.
  Res. Lett. \textbf{2} (1995), no.~3, 279--298. \MR{1338787}

\bibitem[NS01]{MR1839473}
\bysame, \emph{Eigenvalue distribution of some fractal semi-elliptic
  differential operators: combinatorial approach}, Integral Equations Operator
  Theory \textbf{40} (2001), no.~4, 495--506. \MR{1839473}

\bibitem[NTX18]{MR3809018}
Sze-Man Ngai, Wei Tang, and Yuanyuan Xie, \emph{Spectral asymptotics of
  one-dimensional fractal {L}aplacians in the absence of second-order
  identities}, Discrete Contin. Dyn. Syst. \textbf{38} (2018), no.~4,
  1849--1887. \MR{3809018}

\bibitem[NX19]{MR3897401}
Sze-Man Ngai and Yuanyuan Xie, \emph{{$L^q$}-spectrum of self-similar measures
  with overlaps in the absence of second-order identities}, J. Aust. Math. Soc.
  \textbf{106} (2019), no.~1, 56--103. \MR{3897401}

\bibitem[NX20]{MR4176086}
\bysame, \emph{Spectral asymptotics of {L}aplacians related to one-dimensional
  graph-directed self-similar measures with overlaps}, Ark. Mat. \textbf{58}
  (2020), no.~2, 393--435. \MR{4176086}

\bibitem[NX21]{Ngai_2021}
\bysame, \emph{Spectral asymptotics of {L}aplacians associated with a class of
  higher-dimensional graph-directed self-similar measures}, Nonlinearity
  \textbf{34} (2021), no.~8, 5375--5398.

\bibitem[PS00]{MR1838304}
Yuval Peres and Boris Solomyak, \emph{Existence of {$L^q$} dimensions and
  entropy dimension for self-conformal measures}, Indiana Univ. Math. J.
  \textbf{49} (2000), no.~4, 1603--1621. \MR{1838304}

\bibitem[Rie93]{Riedi_diss}
Rolf Riedi, \emph{{A}n improved {M}ultifractal {F}ormalism and {S}elf-{A}ffine
  {M}easures}, Ph.D. thesis, ETH Zuerich, 1993.

\bibitem[Rie95]{MR1312056}
\bysame, \emph{An improved multifractal formalism and self-similar measures},
  J. Math. Anal. Appl. \textbf{189} (1995), no.~2, 462--490. \MR{1312056}

\bibitem[Shm19]{MR3919361}
Pablo Shmerkin, \emph{On {F}urstenberg's intersection conjecture, self-similar
  measures, and the {$L^q$} norms of convolutions}, Ann. of Math. (2)
  \textbf{189} (2019), no.~2, 319--391. \MR{3919361}

\bibitem[Sol94]{MR1298682}
Michael Solomyak, \emph{Eigenvalue estimates for a class of operators related
  to self-similar measures}, Journ\'{e}es ``\'{E}quations aux
  {D}\'{e}riv\'{e}es {P}artielles'' ({S}aint-{J}ean-de-{M}onts, 1994),
  \'{E}cole Polytech., Palaiseau, 1994, pp.~Exp. No. XI, 6. \MR{1298682}

\bibitem[SV95]{MR1328700}
M.~Solomyak and E.~Verbitsky, \emph{On a spectral problem related to
  self-similar measures}, Bull. London Math. Soc. \textbf{27} (1995), no.~3,
  242--248. \MR{1328700}

\bibitem[Tri92]{MR1193032}
Hans Triebel, \emph{Higher analysis}, Hochschulb\"{u}cher f\"{u}r Mathematik.
  [University Books for Mathematics], Johann Ambrosius Barth Verlag GmbH,
  Leipzig, 1992, Translated from the German by Bernhardt Simon [Bernhard Simon]
  and revised by the author. \MR{1193032}

\bibitem[Tri97]{MR1484417}
\bysame, \emph{Fractals and spectra}, Monographs in Mathematics, vol.~91,
  Birkh\"{a}user Verlag, Basel, 1997, Related to Fourier analysis and function
  spaces. \MR{1484417}

\bibitem[UH59]{MR118891}
Toshio Uno and Imsik Hong, \emph{Some consideration of asymptotic distribution
  of eigenvalues for the equation {$d^{2}u/dx^{2}+\lambda \rho (x)u=0$}}, Jpn.
  J. Math. \textbf{29} (1959), 152--164. \MR{118891}

\bibitem[Vol05]{Volkmer05}
Hans Volkmer, \emph{Eigenvalue problems of atkinson, feller and krein, and
  their mutual relationship}, Electronic Journal of Differential Equations
  \textbf{2005} (2005).

\bibitem[{Wey}11]{zbMATH02629881}
H.~{Weyl}, \emph{{\"Uber die asymptotische Verteilung der Eigenwerte}}, {Nachr.
  Ges. Wiss. G\"ottingen, Math.-Phys. Kl.} \textbf{1911} (1911), 110--117
  (German).

\end{thebibliography}

\end{document}